\title[QuickQuant and QuickVal density]
{Density functions for {\tt QuickQuant} and {\tt QuickVal}}
\newcommand\urladdrx[1]{{\urladdr{\def~{{\tiny$\sim$}}#1}}}
\author{James Allen Fill}
\address{Department of Applied Mathematics and Statistics,
The Johns Hopkins University,
3400 N.~Charles Street,
Baltimore, MD 21218-2682 USA}
\email{jimfill@jhu.edu}
\thanks{Research of both authors supported by 
the Acheson~J.~Duncan Fund for the Advancement of Research in
Statistics.}
\author{Wei-Chun Hung}
\address{Department of Applied Mathematics and Statistics,
The Johns Hopkins University,
3400 N.~Charles Street,
Baltimore, MD 21218-2682 USA}
\email{whung6@jhu.edu}
\keywords{{\tt QuickQuant}, {\tt QuickSelect}, {\tt QuickVal}, searching, 
convolutions of distributions, densities, integral equations, asymptotic bounds, tails of distributions, tails of densities, large deviations, moment generating functions}
\subjclass[2020]{Primary: 68P10; Secondary: 60E05, 60C05} 
\numberwithin{equation}{section}
\theoremstyle{plain}
\newtheorem{theorem}{Theorem}[section]
\newtheorem{lemma}[theorem]{Lemma}
\newtheorem{proposition}[theorem]{Proposition}
\newtheorem{corollary}[theorem]{Corollary}
\theoremstyle{definition}
\newtheorem{remark}[theorem]{Remark}
\newtheorem*{ack}{Acknowledgement}
\theoremstyle{remark}
\newenvironment{romenumerate}[1][-10pt]{
\addtolength{\leftmargini}{#1}\begin{enumerate}
 }{\end{enumerate}}
\newcounter{oldenumi}
{\setcounter{oldenumi}{\value{enumi}}
\begin{romenumerate} \setcounter{enumi}{\value{oldenumi}}}
{\end{romenumerate}}
\newcounter{thmenumerate}
\newcounter{xenumerate}   
\newcommand{\refT}[1]{Theorem~\ref{#1}}
\newcommand{\refC}[1]{Corollary~\ref{#1}}
\newcommand{\refL}[1]{Lemma~\ref{#1}}
\newcommand{\refR}[1]{Remark~\ref{#1}}
\newcommand{\refS}[1]{Section~\ref{#1}}
\newcommand{\refP}[1]{Proposition~\ref{#1}}
\newcommand{\ignore}[1]{}
\newcommand\marginal[1]{\marginpar{\raggedright\parindent=0pt\tiny #1}}
\newcommand\REM[1]{{\raggedright\texttt{[#1]}\par\marginal{XXX}}}
\xdef\klockan{\the\count1.0\the\count255}
\xdef\klockan{\the\count1.\the\count255}\fi
\def\rompar(#1){\textup(#1\textup)}    
\def\xexp(#1){e^{#1}}
\newcommand\floor[1]{\lfloor#1\rfloor}
\newcommand\punkt{.\spacefactor=1000}    
\newcommand\ie{i.e\punkt}
\newcommand\cf{cf\punkt}
\newcommand\bbR{\mathbb R}
\newcounter{CC}
\newcounter{cc}
\newcommand\E{\operatorname{\mathbb E{}}}
\renewcommand\P{\operatorname{\mathbb P{}}}
\newcommand\cF{\mathcal F}
\newcommand\cT{{\mathcal T}}
\newcommand\dd{\,\mathrm{d}}
\newcommand\ddx{\mathrm{d}}
\newcommand{\tc}{\tilde{c}}
\newcommand{\Polya}{P\'olya}
\newcommand{\ro}[1]{\uppercase\expandafter{\romannumeral #1}}
\begin{document}

\maketitle

\vspace{-.3in}
\begin{center}
\begin{small}
September~29, 2021
\end{small}
\end{center}

\begin{abstract}
We prove that, for every $0 \leq t \leq 1$, the limiting distribution of the scale-normalized number of key comparisons used by the celebrated algorithm {\tt QuickQuant} to find the $t$th quantile in a randomly ordered list has a Lipschitz continuous density function $f_t$ that is bounded above by~$10$. Furthermore, this density $f_t(x)$ is positive for every $x > \min\{t, 1 - t\}$ and, uniformly in~$t$, enjoys superexponential decay in the right tail.  We also prove that the survival function $1 - F_t(x) = \int_x^{\infty}\!f_t(y) \dd y$ and the density function $f_t(x)$ both 
have the right tail asymptotics 
$\exp [-x \ln x - x \ln \ln x + O(x)]$.  We use the right-tail asymptotics to bound large deviations for the scale-normalized number of key comparisons used by {\tt QuickQuant}.
\end{abstract}

\section{Introduction and summary}
\label{S:intro}

\subsection{Introduction}

{\tt QuickQuant} is closely related to an algorithm called {\tt QuickSelect}, which in turn can be viewed as a one-sided analogue of {\tt QuickSort}.  In brief, {\tt QuickSelect}$(n, m)$ is an algorithm designed to find a number of rank 
$m$ in an unsorted list of size~$n$.  It works by recursively applying the same partitioning step as {\tt QuickSort} to the sublist that contains the item of rank~$m$ until the pivot we pick \emph{has} the desired rank or the size of the sublist to be explored has size one. Let $C_{n, m}$ denote the number of comparisons needed by {\tt QuickSelect}$(n, m)$, and note that $C_{n, m}$ and $C_{n, n + 1 - m}$ have the same distribution, by symmetry. Knuth~\cite{MR0403310} finds the formula
\begin{equation}
\label{E:exp_select}
\E C_{n,m} = 2 \left[ (n+1)H_n - (n+3-m) H_{n+1-m} - (m+2) H_m + (n+3) \right]
\end{equation}
for the expectation.

The algorithm {\tt QuickQuant}$(n, t)$ refers to {\tt QuickSelect}$(n, m_n)$ such that the ratio $m_n / n$ converges to a specified value $t \in [0, 1]$ as $n \to \infty$.  It is easy to see that \eqref{E:exp_select} tells us about the limiting behavior of the expected number of comparisons after standardizing:
\begin{equation}
\label{EZ}
\lim_{n \to \infty} \E[n^{-1} C_{n,m_n}] = 2 + 2 H(t),
\end{equation}
where $H(x) := - x \ln x - (1-x) \ln(1 - x)$ with $0 \ln 0 := 0$. 

We 
follow the set-up and notation of Fill and Nakama~\cite{MR3102458}, who use an infinite sequence $(U_i)_{i \geq 1}$ of independent Uniform$(0,1)$-distributed random variables to couple the number of key comparisons $C_{n,m_n}$ for all $n$. Let $L_0(n) := 0$ and $R_0(n) := 1$. For $k \geq 
1$, inductively define
\[
\tau_k(n) := \inf \{i \leq n:L_{k-1}(n) < U_i < R_{k-1}(n)\},
\]
and let $r_k(n)$ be the rank of the pivot $U_{\tau_k(n)}$ in the set $\{U_1, \dots, U_n\}$ if $\tau_k(n) < \infty$ and be $m_n$ otherwise. 
[Recall that the infimum of the empty set is~$\infty$; hence $\tau_k(n) = 
\infty$ if and only if $L_{k - 1}(n) =R_{k - 1}(n)$.]
Also, inductively define
\begin{align}
\label{left}
L_k(n) :=& \mathbb{1}(r_k(n) \leq m_n) U_{\tau_k(n)} + \mathbb{1}(r_k(n) > m_n) L_{k-1}(n),\\
\label{right}
R_k(n) :=& \mathbb{1}(r_k(n) \geq m_n) U_{\tau_k(n)} + \mathbb{1}(r_k(n) < m_n) R_{k-1}(n),
\end{align}
if $\tau_k(n) < \infty$, but
\[
(L_k(n), R_k(n)) := (L_{k-1}(n), R_{k-1}(n)).
\]
if $\tau_k(n) = \infty$. The number of comparisons at the $k^{th}$ step 
is then
\[
S_{n,k} := \sum_{i: \tau_k < i \leq n} \mathbb{1}(L_{k-1}(n) < U_i < R_{k-1}(n)),
\]
and the normalized total number of comparisons equals 
\begin{equation}
\label{E:1}
n^{-1} C_{n,m_n} := n^{-1} \sum_{k \geq 1} S_{n,k}.
\end{equation}

Mahmoud, Modarres and Smythe~\cite{MR1359052} studied {\tt QuickSelect} in the case that the rank~$m$ is taken to be a random variable $M_n$ uniformly distributed on $\{1, \dots, n\}$ and assumed to be independent of the numbers in the list.  They used the Wasserstein metric to prove that $Z_n := n^{-1} C_{n,M_n} \xrightarrow{\mathcal{L}} Y$ as $n \to \infty$ and identified the distribution of $Y$. 
In particular, they proved that $Y$ has an absolutely continuous distribution function. Gr\"{u}bel and R\"{o}sler~\cite{MR1372338} treated all the 
quantiles $t$ simultaneously by letting $m_n \equiv m_n(t)$. Specifically, they considered the normalized process $X_n$ defined by
\begin{equation}
\label{E:2}
X_n(t) := n^{-1} C_{n,\floor{n t} +1} \mbox{ for } 0 \leq t < 1, \quad X_n(t) := n^{-1} C_{n, n} \mbox{\ for\ } t = 1.
\end{equation} 
Working in the Skorohod topology (see Billingsley~\cite[Chapter~3]{MR1700749}), they proved that this process has a limiting distribution as $n \to \infty$, and the value of the limiting process at argument $t$ 
is the sum of the lengths of all the intervals encountered in all the steps of searching for population quantile~$t$. We can use the \emph{same} sequence $(U_i)_{i \geq 1}$ of Uniform$(0, 1)$ random variables to express 
the limiting stochastic process. For $t \in [0, 1]$, 
let $L_0(t) := 0$ and $R_0(t) := 1$, and
let $\tau_0(t) := 0$. 
For $t \in [0, 1]$ and $k \geq 1$, inductively define
\begin{align}
\label{taukt}
\tau_k(t) :=& \inf \{i > \tau_{k - 1}(t) : L_{k-1}(t) \leq U_i \leq R_{k-1}(t) \},\\
\label{Lkt}
L_k(t) :=& \mathbb{1} {(U_{\tau_k(t)} \leq t)}\, U_{\tau_k(t)} + \mathbb{1} {(U_{\tau_k(t)} > t)}\, L_{k-1}(t),\\
\label{Rkt}
R_k(t) :=& \mathbb{1} {(U_{\tau_k(t)} \leq t)}\, R_{k-1}(t) + \mathbb{1} {(U_{\tau_k(t)} > t)}\, U_{\tau_k(t)}.
\end{align}
It is not difficult to see that
\[
\P(\tau_k(t) < \infty\mbox{\ and\ }0 \leq L_k(t) \leq t \leq R_k(t) \leq 1 \mbox{\ for all $0 \leq t \leq 1$ and $k \geq 0$}) = 1
\]
and that for each fixed $t \in (0, 1)$ we have
\[
\P(L_k(t) < t < R_k(t)\mbox{\ for all $k \geq 0$}) = 1.
\]
The limiting process~$Z$ can be expressed as
\begin{equation}
\label{E:3}
Z(t) := \sum_{k=0}^{\infty}{\left[R_k(t) - L_k(t)\right]} = 1 + \sum_{k=1}^{\infty}{\left[R_k(t) - L_k(t)\right]};
\end{equation}
it is not hard to see that
\[
\P(1 < Z(t) < \infty\mbox{\ for all $0 \leq t \leq 1$}) = 1.
\]
Note also that the processes~$Z$ and $(Z(1 - t))_{t \in [0, 1]}$ have the same finite-dimensional distributions.
Gr\"{u}bel and R\"{o}sler~\cite[Theorem 8]{MR1372338} proved that 
we can replace the subscript $\floor{n t} +1$ in \eqref{E:2} by any $m_n(t)$ with $0 \leq m_n(t) \leq n$ such that $m_n(t) / n \to t$ as $n \to \infty$, and then the normalized random variables 
$n^{-1} C_{n, m_n(t)}$ converge (univariately) to the limiting random variable $Z(t)$ for each $t \in [0, 1]$.

Among the univariate distributions of $Z(t)$ for $t \in [0, 1]$, only the common distribution of $Z(0)$ and $Z(1)$ is known at all explicitly.  As established by Hwang and Tsai~\cite{MR1918722}, this distribution is the Dickman distribution; see their paper for a wealth of information about the distribution.  The overarching goal of this paper is to establish basic properties of the distributions of $Z(t)$ for other values of~$t$.

Kodaj and M\'ori~\cite{MR1454110} proved the (univariate) convergence of \eqref{E:1} to $Z(t)$
in the Wasserstein metric. Using the coupling technique and induction, they proved that \eqref{E:1} is stochastically smaller than its continuous counterpart \eqref{E:3}. Combining this fact with knowledge of their expectations (see \eqref{E:exp_select} and~\cite[Lemma 2.2]{MR1454110}), they 
proved that \eqref{E:1} converges to \eqref{E:3} in the Wasserstein metric and thus in distribution.

Gr\"{u}bel~\cite{MR1622443} 
connected {\tt QuickSelect}$(n,m_n)$ to a Markov chain to identify the limiting process. For each fixed $n \geq 1$, he considered the Markov chain 
$(Y_m^{(n)})_{m \geq 0}$ on the state space 
$I_n := \left\{(i, j):1 \leq j \leq i \leq n \right\}$ 
with $Y_0^{(n)} := (n, m_n)$. 
Transition probabilities of $Y^{(n)}$ from the state $(i, j)$ are determined by the partition step of 
{\tt QuickSelect}$(i, j)$ as follows.  If $Y_m^{(n)} = (i, j)$, then $Y_{m+1}^{(n)}$ is selected uniformly at random from the set
\[
\left\{ (i - k, j - k):k = 1, \dots, j - 1 \right\} \cup \left\{ (1, 1) 
\right\} 
\cup \left\{ (i-k, j):k = 1, \dots, i-j \right\};
\]
in particular, $(1, 1)$ is an absorbing state for $Y^{(n)}$.
If we write $Y_m^{(n)} = (S_m^{(n)}, Q_m^{(n)})$, then we know
\begin{equation}
\label{E:4}
n^{-1} C_{n,m_n} \stackrel{\mathcal{L}}{=} n^{-1} \sum_{m \geq 0} \left(S_m^{(n)} - 1\right). 
\end{equation}
Gr\"{u}bel~\cite{MR1622443} 
constructed another Markov chain $Y = (Y_m) = ((S_m, Q_m))$, which is 
a continuous-value counterpart of the process $Y^{(n)}$, and he proved that for all $m \geq 0$, the random vector $Y^{(n)}_m$ converges to $Y_m$ almost surely. Using the dominated convergence theorem, he proved that the 
random variables $n^{-1} \sum_{m=0}^{\infty}{\left(S_m^{(n)}-1\right)}$ 
converge almost surely to 
$\sum_{m=0}^{\infty}{S_m}$; the limiting random variable here is exactly $Z(t)$ of~\eqref{E:3}. Combining with \eqref{E:4}, he concluded that $n^{-1} C_{n,m_n}$ converges in distribution to \eqref{E:3}.  As previously mentioned, Hwang and Tsai~\cite{MR1918722} identified the limiting distribution of \eqref{E:1} when $m_n = o(n)$ as the Dickman distribution.

Fill and Nakama~\cite{MR3102458} 
studied the limiting distribution of the cost of using {\tt QuickSelect} for a variety of cost functions. In particular, when there is simply \emph{unit} cost of comparing any two keys, then their work reduces to study of the number of key comparisons, to which we limit our focus here. They proved 
$L^p$-convergence of \eqref{E:1} for {\tt QuickQuant}$(n, t)$ to \eqref{E:3} for $1 \leq p < \infty$ by first studying the 
distribution of the number of key comparisons needed for another algorithm called {\tt QuickVal}, and then comparing the two algorithms. The algorithm {\tt QuickVal}$(n, t)$ finds the rank of the \emph{population} $t$-quantile in the sample, while its cousin {\tt QuickQuant}$(n, t)$ looks for the sample $t$-quantile. Intuitively, when the sample size is large, we 
expect the rank of the population $t$-quantile to be close to $n t$. Therefore, the two algorithms should behave similarly when $n$ is large. 
Given a set of keys $\{U_1, \dots, U_n\}$, where $U_i$ are i.i.d. Uniform$(0,1)$ random variables, one can regard the operation of {\tt QuickVal}$(n, t)$ as that of finding the rank of the value $t$ in the augmented set 
$\{U_1, ..., U_n, t\}$. 
It works by first selecting a pivot uniformly at random from the set of keys 
$\{U_1, ..., U_n\}$ and then using the pivot to partition the \emph{augmented} set (we don't count the comparison of the pivot with~$t$). We then recursively do the same partitioning step on the subset that contains $t$ 
until the set of the keys on which the algorithm operates reduces to the singleton $\{t\}$.
For {\tt QuickVal}$(n, t)$ with the definitions \eqref{taukt}--\eqref{Rkt} and with
\[
S_{n, k}(t) := \sum_{\tau_k(t) < i \leq n} \mathbbm{1}(L_{k - 1}(t) < U_i < R_{k - 1}(t)),
\]
Fill and Nakama~\cite{MR3102458} showed that $n^{-1} \sum_{k \geq 1} S_{n, k}(t)$ converges (for fixed~$t$) almost surely and also in $L^p$ for $1 
\leq p < \infty$ to \eqref{E:3}. They then used these facts to prove the $L^p$ convergence (for fixed~$t$) of \eqref{E:1} to \eqref{E:3} for 
{\tt QuickQuant}$(n, t)$ for $1 \leq p < \infty$.

Fill and Matterer~\cite{MR3249225} treated distributional convergence for 
the worst-case cost of {\tt Find} for a variety of cost functions.  Suppose, for example, that we continue, as at the start of this section, to assign unit cost to the comparison of any two keys, so that $C_{n, m}$ is the total cost for 
${\tt QuickSelect}(n, m)$.  Then (for a list of length~$n$) the cost of worst-case {\tt Find} is $\max_{1 \leq m \leq n} C_{n, m}$, and its distribution depends on the joint distribution of $C_{n, m}$ for varying~$m$.  We shall not be concerned here with worst-case {\tt Find}, but we wish to 
review the approach and some of the results of~\cite{MR3249225}, since there is relevance of their work to ${\tt QuickQuant}(n, t)$ for fixed~$t$.

Fill and Matterer~\cite{MR3249225} considered tree-indexed processes closely related to the operation of the {\tt QuickSelect} algorithm, as we now describe. For each node in a given rooted ordered binary tree, let $\theta$ denote the binary sequence (or \emph{string}) representing the path from the root to this node, where $0$ corresponds to taking the left child and $1$ to taking the right.  The value 
of~$\theta$ for the root is thus the empty string, denoted $\varepsilon$. 
 Define $L_{\varepsilon} := 0$, $R_{\varepsilon} := 1$, and 
$\tau_{\varepsilon} := 1$.  Given a sequence of i.i.d.\ Uniform$(0,1)$ random variables $U_1, U_2, \dots$, recursively define
\begin{align*}
\tau_{\theta} &:= \inf \{ i: L_{\theta} < U_i < R_{\theta}\},\\
L_{\theta 0} &:= L_{\theta}, \quad L_{\theta 1} := U_{\tau_{\theta}},\\
R_{\theta 0} &:= U_{\tau_{\theta}}, \quad R_{\theta 1} := R_{\theta}.
\end{align*}
Here the concatenated string $\theta 0$ corresponds to the left child of the node with string~$\theta$, while $\theta 1$ corresponds to the right child.  Observe that, when inserting a key $U_i$ arriving at time $i > \tau_{\theta}$ into the binary tree, this key is compared with the ``pivot'' $U_{\tau_{\theta}}$ if and only if $U_i \in (L_{\theta}, R_{\theta})$.  
For~$n$ insertions, the total cost of comparing keys with pivot $U_{\tau_{\theta}}$ is therefore
\[
S_{n, \theta} := \sum_{\tau_{\theta} < i \leq n} \mathbbm{1}(L_{\theta} 
< U_i < R_{\theta}).
\]
We define a binary-tree-indexed stochastic process $S_n = (S_{n, \theta})_{\theta \in \Theta}$, where $\Theta$ is the collection of all finite-length binary sequences.

For each $1 \leq p \leq \infty$, Fill and Matterer~\cite[Definition~3.10 and Proposition~3.11]{MR3249225} defined a Banach space 
$\mathcal{B}^{(p)}$ of binary-tree-indexed stochastic processes that corresponds in a natural way to the Banach space $L^p$ for random variables.  
Let $I_{\theta} := R_{\theta} - L_{\theta}$ and consider the process $I 
= (I_{\theta})_{\theta \in \Theta}$. 
Fill and Matterer~\cite[Theorem~4.1 with $\beta \equiv 1$]{MR3249225} proved the convergence of the processes $n^{-1} S_n$ to $I$ in the Banach space $\mathcal{B}^{(p)}$ for each $2 \leq p < \infty$.  

For the simplest application in~\cite{MR3249225}, 
namely, to {\tt QuickVal}$(n, t)$ with~$t$ fixed, let 
$\gamma(t)$ be the infinite path from the root to the key having value~$t$
in the (almost surely) complete binary search tree formed by successive insertions of $U_1, U_2, \dots$ into an initially empty tree. 
The total cost (call it $V_n$) of {\tt QuickVal}$(n,t)$ can then be computed by summing the cost of comparisons with each (pivot-)node along the path, that is,
\[
V_n := \sum_{\theta \in \gamma(t)}{S_{n,\theta}}.
\]
Using their tree-process convergence theorem described in our preceding paragraph, Fill and Matterer~\cite[Proposition~6.1 with $\beta \equiv 1$]{MR3249225} established $L^p$-convergence, for 
each $0 < p < \infty$, of $n^{-1} V_n$ to $I_{\gamma(t)}$ as $n \to \infty$, where $I_{\gamma(t)} := \sum_{\theta \in \gamma(t)} I_{\theta}$.  Moreover (\cite[Theorem~6.3 with $\beta \equiv 1$]{MR3249225}), they also proved $L^p$-convergence of $n^{-1} Q_n$ to the same limit, again for every $0 < p < \infty$, where $Q_n$ denotes the cost of 
${\tt QuickQuant}(n, t)$.

Throughout
this paper, we will use the standard notations $\mathbb{1}(A)$ to denote the indicator function of the event $A$ and 
 $\E [f;\,A]$ for $\E[f \mathbb{1}(A)]$.
 
 \subsection{Summary}
 
 In \refS{S:density}, by construction we establish the existence of densities $f_t$ for the random variables $Z(t)$ defined in~\eqref{E:3}.  In \refS{S:boundedness} we prove that these densities are uniformly bounded and in 
 \refS{S:uniform continuity} that they are uniformly continuous.  As shown in \refS{S:Integral equation}, the densities satisfy a certain integral 
equation for $0 < t < 1$.  The right-tail behavior of the density functions is examined in \refS{S:decay}, and the left-tail behavior in \refS{S:left}.  In \refS{S:other} we prove that $f_t(x)$ is positive if and only if $x > \min\{t, 1 - t\}$, and we improve the result of \refS{S:uniform continuity} by showing that $f_t(x)$ is Lipschitz continuous in~$x$ for fixed~$t$ and jointly continuous in $(t, x)$.  Sections~\ref{S:improved}--\ref{S:lower} are devoted to sharp logarithmic asymptotics for the right tail of $f_t$, and \refS{S:Large_deviation} uses the results of those two sections to treat right-tail large deviation behavior of {\tt QuickQuant}$(n, t)$ for large but finite~$n$.
 
\section{Existence (and construction) of density functions}
\label{S:density}
In this section, we prove that $Z \equiv Z(t)$ defined in \eqref{E:3} for 
fixed $0 \leq t \leq 1$ has a density. 
For notational simplification, we let $L_k \equiv L_k(t)$ and $R_k \equiv 
R_k(t)$. Let $J \equiv J(t) := Z(t)-1 =  \sum_{k=1}^{\infty}{\Delta_k}$ with $\Delta_k \equiv \Delta_k(t) := R_k(t)-L_k(t)$. We use convolution notation as in Section V.4 of Feller~\cite{MR0270403}.  The following lemma is well known and can be found, for example, in Feller\cite[Theorem V.4.4]{MR0270403} or Durrett\cite[Theorem 2.1.11]{MR2722836}. 

\begin{lemma}
\label{L:Feller}
If $X$ and $Y$ are independent random variables with respective distribution functions~$F$ and~$G$, then $Z = X + Y$ has the distribution function $F \star G$. If, in addition, $X$ has a density~$f$ (with respect to Lebesgue measure), then $Z$ has a density $f \star G$.
\end{lemma}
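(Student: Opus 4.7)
The plan is to prove this classical convolution lemma in two steps, corresponding to the two assertions of the statement, using Fubini's theorem in each case.

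For the first assertion, I would compute the distribution function of $Z = X + Y$ by conditioning on~$Y$ and exploiting independence. Concretely, write
\[
\P(Z \leq z) = \E[\P(X \leq z - Y \mid Y)] = \int_{-\infty}^{\infty} F(z - y) \dd G(y),
\]
where the first equality uses independence of~$X$ and~$Y$ and the second is the definition of expectation of a measurable function of~$Y$ with respect to its law~$G$. The resulting integral is exactly the Lebesgue--Stieltjes convolution $(F \star G)(z)$, which proves the first claim. The only point to check carefully is the measurability needed to apply the conditional-expectation formula, which is a standard consequence of Fubini's theorem applied to the joint law $F \otimes G$ on $\bbR^2$ and the indicator of the half-plane $\{(x,y) : x + y \leq z\}$.

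For the second assertion, assume additionally that $X$ has density~$f$. I would show that the function $f \star G$, defined by $(f \star G)(u) := \int f(u - y) \dd G(y)$, serves as a density for~$Z$ by verifying that its integral up to~$z$ agrees with the distribution function $F \star G$ already identified in the first step. That is, I would compute
\[
\int_{-\infty}^{z}\!(f \star G)(u) \dd u = \int_{-\infty}^{z}\!\left[\int_{-\infty}^{\infty}\!f(u - y) \dd G(y)\right]\!\dd u,
\]
and interchange the order of integration by Tonelli's theorem (the integrand is nonnegative), so that the inner integral becomes $\int_{-\infty}^{z - y} f(v) \dd v = F(z - y)$ after the substitution $v = u - y$. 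This yields $\int F(z - y) \dd G(y) = (F \star G)(z) = \P(Z \leq z)$, as desired. Since this holds for every~$z$, the function $f \star G$ is indeed a density for~$Z$ with respect to Lebesgue measure.

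I do not expect any real obstacle here: both parts are immediate applications of Fubini--Tonelli, and the nonnegativity of the relevant integrands makes the interchange of integration order unconditional. The only care needed is in writing the Lebesgue--Stieltjes integrals correctly and in noting that $f \star G$ is measurable (again by Fubini). Since the result is classical and citations to Feller and Durrett are already provided, one could alternatively simply invoke those references, but the self-contained argument above fits in a few lines.
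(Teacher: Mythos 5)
Your proposal is correct and is the standard Fubini/Tonelli argument for the convolution formula. The paper itself offers no proof of this lemma --- it simply cites Feller (Theorem~V.4.4) and Durrett (Theorem~2.1.11) --- and the cited proofs are essentially the computation you give, so there is nothing substantive to compare.
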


Let $X = \Delta_1 + \Delta_2$ and $Y = \sum_{k=3}^{\infty}{\Delta_k}$. For the remainder of this paragraph, we suppose $0 < t < 1$.  If we condition on $(L_3, R_3) = (l_3, r_3)$ for some $0 \leq l_3 < t < r_3 \leq 1$ with $(l_3, r_3) \ne (0,1) $, we then have
\begin{equation}
\label{E:Y}
Y = (r_3-l_3) \sum_{k=3}^{\infty}{\frac{R_k-L_k}{r_3-l_3}} = (r_3-l_3) \sum_{k=3}^{\infty}{(R_k'-L_k')},
\end{equation}
where we set $L_k' = (L_k - l_3) / (r_3 - l_3)$ and $R_k' = (R_k - l_3)/(r_3 - l_3)$ for $k \geq 3$. 
Observe that, by definitions 
\eqref{taukt}--\eqref{Rkt},
the stochastic process $((L_k', R_k'))_{k \geq 3}$, conditionally given $(L_3, R_3) = (l_3, r_3)$, has the same distribution as the (unconditional) stochastic process of intervals $((L_k, R_k))_{k \geq 0}$ encountered 
in all the steps of searching for population quantile 
$(t - l_3)/(r_3-l_3)$ (rather than~$t$) by {\tt QuickQuant}.  Note also that (again conditionally) the stochastic processes $((L_k, R_k))_{0 \leq k \leq 2}$ and $((L_k', R_k'))_{k \geq 3}$ are independent.
Thus (again conditionally) $Y / (r_3 - l_3)$ has the same distribution as 
the (unconditional) random variable $Z\left((t - l_3) / (r_3 - l_3)\right)$ 
and is independent of~$X$. 
We will prove later (Lemmas~\ref{L:main}--\ref{L:main01}) that, conditionally given $(L_3, R_3) = (l_3, r_3)$, the random variable $X$ has a 
density.  Let
\[
f_{l_3, r_3}(x) := \P(X \in \ddx x \mid (L_3, R_3) = (l_3, r_3)) / \ddx x
\]
be such a conditional density. We can then use \refL{L:Feller} to conclude that $J = X + Y$ has a conditional density
\[
h_{l_3, r_3}(x) := \P(J \in \ddx x \mid (L_3, R_3) = (l_3, r_3)) / \ddx x.
\]
By mixing $h_{l_3, r_3}(x)$ for all possible values of $l_3, r_3$, we will obtain an unconditional density function for~$J$, as summarized in the following theorem.
\begin{theorem}
\label{T:main}
For each $0 \leq t \leq 1$, the random variable $J(t) = Z(t) - 1$ has a 
density
\begin{equation}
\label{E:density}
f_t(x) := \int{\P((L_3, R_3) \in \ddx (l_3, r_3))} \cdot h_{l_3, r_3}(x),
\end{equation}
and hence the random variable $Z(t)$ has density $f_t(x-1)$.
\end{theorem}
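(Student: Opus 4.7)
The plan is to implement the conditioning-and-mixing strategy sketched in the paragraph preceding \refT{T:main}.

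Suppose first $0 < t < 1$. I split $J(t) = X + Y$ with $X := \Delta_1 + \Delta_2$ and $Y := \sum_{k \geq 3} \Delta_k$, and condition on the pair $(L_3, R_3)$. The inductive definitions \eqref{taukt}--\eqref{Rkt} show that, on the event $\{(L_3, R_3) = (l_3, r_3)\}$, the future interval process $((L_k, R_k))_{k \geq 3}$ is driven only by those uniforms $U_i$ with $i > \tau_3$ that land in $(l_3, r_3)$, rescaled to $(0, 1)$, whereas $X$ is determined by the earlier intervals $(L_1, R_1)$ and $(L_2, R_2)$ together with $(L_3, R_3)$ itself. Hence $X$ and $Y$ are conditionally independent given $(L_3, R_3)$, and identity \eqref{E:Y} records that $Y/(r_3 - l_3)$ then has the unconditional law of $Z((t - l_3)/(r_3 - l_3))$.

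Granting the forthcoming Lemmas~\ref{L:main}--\ref{L:main01}, which furnish a conditional density $f_{l_3, r_3}$ for~$X$, a conditional form of \refL{L:Feller}---obtained by applying that lemma to the regular conditional distributions---produces a conditional density
\[
h_{l_3, r_3}(x) := (f_{l_3, r_3} \star G_{l_3, r_3})(x)
\]
for~$J$, where $G_{l_3, r_3}$ is the conditional distribution function of~$Y$. Since $h_{l_3, r_3}(x)$ is jointly Borel measurable in $(l_3, r_3, x)$ (by Fubini applied to the convolution integral), Fubini's theorem gives, for any Borel set $B$,
\[
\P(J \in B) = \int \P((L_3, R_3) \in \ddx (l_3, r_3)) \int_B h_{l_3, r_3}(x)\, \ddx x = \int_B f_t(x)\, \ddx x,
\]
which is precisely \eqref{E:density}. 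The statement for $Z(t) = J(t) + 1$ then follows by translation. The boundary cases $t \in \{0, 1\}$ are handled separately: $Z(t)$ has the Dickman distribution (Hwang and Tsai~\cite{MR1918722}), which is known to possess a density, so $J(t)$ does as well.

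The main obstacle for \refT{T:main} itself is modest---once the conditional independence of $X$ and~$Y$ is extracted from the regenerative structure of the interval process, the rest is bookkeeping. The genuine work is in the deferred Lemmas~\ref{L:main}--\ref{L:main01}, which must exhibit a bona fide conditional density for $\Delta_1 + \Delta_2$ by dissecting the joint law of the first two pivots given $(L_3, R_3)$. A minor subtlety to verify carefully is the conditional form of \refL{L:Feller}: since \refL{L:Feller} is stated only for independent unconditioned random variables, one must apply it to the regular conditional distributions and then glue the conditional densities together measurably---which is where joint Borel measurability of $h_{l_3, r_3}(x)$ in $(l_3, r_3, x)$ enters.
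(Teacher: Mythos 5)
Your decomposition $J = X + Y$ with $X = \Delta_1 + \Delta_2$ and $Y = \sum_{k\ge 3}\Delta_k$, the conditioning on $(L_3,R_3)$, the observation of conditional independence and the scaling identity for~$Y$, and the plan to mix the conditional densities $h_{l_3,r_3}$ are exactly the paper's strategy, and you correctly identify the deferred Lemmas~\ref{L:main}--\ref{L:main01} and the need for a conditional version of \refL{L:Feller}. Your shortcut for $t\in\{0,1\}$ via the known Dickman density is a legitimate simplification; the paper instead keeps those cases in the same conditioning framework through \refL{L:main01}.

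The one place your argument is too quick is the claim that joint Borel measurability of $h_{l_3,r_3}(x)$ in $(l_3,r_3,x)$ follows ``by Fubini applied to the convolution integral.'' Fubini only helps after you already know that $(l_3,r_3,z) \mapsto f_{l_3,r_3}(z)$ is jointly Borel measurable, and this is the genuinely delicate step: a conditional density is defined only up to Lebesgue-null modification for each fixed $(l_3,r_3)$, and a careless choice of version can destroy joint measurability. The paper handles this by constructing a specific \emph{right-continuous} version of $f_{l_3,r_3}$ in Lemmas~\ref{L:main}--\ref{L:main01}, proving measurability in $(l_3,r_3)$ for each fixed~$z$ via the explicit formulas, and then invoking a theorem of Gowrisankaran (\refL{L:Gowrisankaran}: sectionwise measurability plus one-sided continuity in the real argument implies joint measurability). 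Only after that does the Fubini-type kernel argument (\refL{L:mble_h_x}, via Kallenberg) apply to pass measurability from $f_{l_3,r_3}$ to $h_{l_3,r_3}$. Your proposal flags this as ``a minor subtlety'' but does not identify the right-continuity device that actually makes it go through, and without it the step does not close.
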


Now, as promised, we prove that, conditionally given $(L_3, R_3) = (l_3, r_3)$, the random variable $X$ has a density $f_{l_3, r_3}$.
We begin with the case $0 < t < 1$.
\begin{lemma}
\label{L:main}
Let $0 \leq l_3 < t < r_3 \leq 1$ with $(l_3, r_3) \ne (0, 1)$. Conditionally given $(L_3, R_3) = (l_3, r_3)$, the random variable $X = \Delta_1 + \Delta_2$ has a right continuous density $f_{l_3, r_3}$.
\end{lemma}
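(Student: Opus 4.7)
The plan is to decompose the probability space according to the~$2^3 = 8$ possible \emph{patterns} $\pi \in \set{L, R}^3$, where $\pi_k = L$ encodes that step~$k$ updated the left endpoint (i.e., the $k$th pivot $V_k := U_{\tau_k(t)}$ satisfied $V_k < t$) and $\pi_k = R$ that it updated the right endpoint ($V_k > t$). On the event of pattern~$\pi$, the vector $(V_1, V_2, V_3)$ has an explicit Lebesgue density $\Delta_1^{-1} \Delta_2^{-1}$ times the indicator of that event, since each $V_k$ is conditionally uniform on $[L_{k-1}, R_{k-1}]$. Moreover, on each~$\pi$, the pair $(L_3, R_3)$ is an explicit simple function of $(V_1, V_2, V_3)$, with~$0$ or~$1$ substituted when no left or right update has occurred: e.g.\ $(L_3, R_3) = (V_1, V_3)$ on LRR, $(L_3, R_3) = (V_3, 1)$ on LLL, and $(L_3, R_3) = (0, V_3)$ on RRR.

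For each pattern~$\pi$ consistent with the given $(l_3, r_3)$, I would condition on $(L_3, R_3) = (l_3, r_3)$, which fixes one or two of the $V_i$'s and leaves the others free. In the six \emph{interior} patterns---those in which exactly one of $V_1, V_2, V_3$ is identified with~$l_3$ and exactly one with~$r_3$---a single $V_i$ remains free; in the boundary patterns LLL (which can occur only when $r_3 = 1$) and RRR (which can occur only when $l_3 = 0$), both $V_1$ and $V_2$ remain free. A routine change of variables in the three-dimensional density, followed by marginalization, yields a continuous joint density for the free variable(s) conditionally on $(L_3, R_3) = (l_3, r_3)$ and~$\pi$. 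In every pattern, $X = \Delta_1 + \Delta_2$ is a \emph{non-constant} affine function of the free variable(s)---for instance $X = 1 - 2 l_3 + V_2$ on LRR, $X = 2 r_3 - V_2$ on RLL, $X = V_1 + r_3$ on RRL, and $X = 2 - V_1 - V_2$ on LLL---so a further change of variables (or, in LLL and RRR, a one-dimensional integration) will produce a continuous conditional density $f_{X \mid \pi, (L_3, R_3)}(x)$ supported on a subinterval of~$[0, 2]$.

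Assembling via the law of total probability then gives $f_{l_3, r_3}(x) = \sum_{\pi} p_\pi(l_3, r_3) \, f_{X \mid \pi, (L_3, R_3)}(x)$, with weights $p_\pi(l_3, r_3) := \P(\pi \mid (L_3, R_3) = (l_3, r_3))$ read off from the per-pattern marginal densities of $(L_3, R_3)$; the assumption $(l_3, r_3) \ne (0, 1)$ ensures that at least one pattern is consistent. The main obstacle will be bookkeeping: the supports of the individual $f_{X \mid \pi, (L_3, R_3)}$ are distinct subintervals whose endpoints need not coincide, so the mixture is only piecewise continuous in~$x$, with possible jump discontinuities at pattern-boundary points; selecting the right-continuous representative will then yield the claimed $f_{l_3, r_3}$.
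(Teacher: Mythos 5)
Your proposal takes essentially the same route as the paper's proof: the eight patterns of left/right updates you enumerate correspond exactly to the paper's Case 1 (RRR), Case 2 (LLL), and the six subcases (a)--(f) of Case 3, and the mechanics (conditional joint density $\Delta_1^{-1}\Delta_2^{-1}$ for the pivots, change of variables to identify the free pivot(s), mixing over patterns with weights from the per-pattern marginal of $(L_3, R_3)$, then selecting the right-continuous representative) match the paper's derivation step for step. The only presentational difference is that the paper organizes the argument first by the regime of $(l_3, r_3)$ and then by pattern, while you organize first by pattern and then condition; the computation is identical.
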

\begin{proof}
We consider three cases based on the values of $(l_3, r_3)$.
\smallskip

\par\noindent
\emph{Case 1}: $l_3 = 0$ and $r_3 < 1$.  Since $L_k$ is nondecreasing in~$k$, from $L_3 = 0$ it follows that $L_1 = L_2 = 0$.
The unconditional joint distribution of $(L_1, R_1, L_2, R_2, L_3, R_3)$ satisfies
\begin{align}
&\P(L_1 = 0, R_1 \in \ddx r_1, L_2 = 0, R_2 \in \ddx r_2, L_3 = 0, R_3 \in \ddx r_3) \nonumber \\
&= \mathbb{1}{(t < r_3 < r_2 < r_1 < 1)} \dd r_1 \frac{\ddx r_2}{r_1} \frac{\ddx r_3}{r_2} \label{joint1}
\end{align}
and hence
\begin{align}
\P(L_3 = 0, R_3 \in \ddx r_3) &= \mathbb{1}{(t < r_3 <1)} \dd r_3\int_{r_2 = r_3}^{1}{\frac{\ddx r_2}{r_2}}\, \int_{r_1=r_2}^{1}{\frac{\ddx r_1}{r_1}} \nonumber\\
&= \mathbb{1}{(t < r_3 <1)} \dd r_3\int_{r_2 = r_3}^{1}{\frac{\ddx r_2}{r_2}}(-\ln r_2) \nonumber\\
&= \frac{1}{2}\, (\ln r_3)^2\, \mathbb{1}{(t < r_3 <1)} \dd r_3. \label{joint2}
\end{align} 
Dividing
\eqref{joint1} by \eqref{joint2}, 
we 
find
\begin{align}
\lefteqn{\hspace{-0.5in}\P(L_1 = 0, R_1 \in \ddx r_1, L_2 = 0, R_2 \in \ddx r_2 \mid L_3 = 0, R_3= r_3)} \nonumber \\
&=\frac{2\, r_1^{-1}r_2^{-1}\dd r_1\dd r_2}{(\ln r_3)^2} \, \mathbb{1}{(t < r_3 < r_2 < r_1 < 1)}.\label{cond1}
\end{align}
Thus for $x \in (2 r_3, 2)$, we 
find
\begin{align}
f_{0,r_3}(x) &= \P(X \in \ddx x \mid L_3 = 0, R_3 = r_3) / \ddx x \nonumber\\
&= \int_{r_2}\!\P(R_2 \in \ddx r_2,\,R_1 \in \ddx x - r_2 \mid L_3 = 0, R_3 = r_3) / \ddx x \nonumber\\
&= \frac{2}{(\ln r_3)^2}\int_{r_2=r_3 \vee (x-1)}^{x / 2}{(x-r_2)^{-1}r_2^{-1}} \dd r_2 \nonumber\\
\label{CD1}
&= \frac{2}{\left(\ln \frac{1}{r_3}\right)^2} \frac{1}{x} \bigg[ \ln \left(\frac{x-r_3}{r_3}\right) \mathbb{1}{(2r_3 \leq x < 1+r_3)}\nonumber\\
&{} \qquad \qquad \qquad \quad +\ln \left(\frac{1}{x-1}\right) \mathbb{1}{(1+r_3 \leq x < 2)}\bigg];
\end{align}
we set $f_{0, r_3}(x) = 0$ for $x \notin (2 r_3, 2)$.
\smallskip

\par\noindent
\emph{Case 2}: $l_3 > 0$ and $r_3 = 1$.
This condition implies that $R_1=R_2=1$. Invoking symmetry, we can skip the derivation and immediately 
write
\begin{align}
f_{l_3,1}(x) = \frac{2}{\left(\ln \frac{1}{1-l_3}\right)^2} \frac{1}{x}&\biggl[ \ln \left(\frac{x-1+l_3}{1-l_3}\right)\mathbb{1}{(2-2l_3 \leq x < 2-l_3)}\nonumber \\
\label{CD2}
&{} \qquad + \ln \frac{1}{x-1} \mathbb{1}{(2-l_3 \leq x < 2)}\biggr]
\end{align}
for $x \in (2 - 2 l_3, 2)$; we set $f_{l_3, 1}(x) = 0$ for $x \notin (2 
- 2 l_3, 2)$.
\smallskip

\par\noindent
\emph{Case 3}: $0 < l_3 < t < r_3 < 1$.
There are six possible scenarios for the random vector $(L_1, R_1, L_2, R_2, L_3, R_3)$, and to help us discuss the cases, we consider values $l_2, r_2$ satisfying $0 < l_2 < l_3 < t < r_3 < r_2 < 1$.\\
\\
(a) $L_1 = l_2, L_2 = L_3 = l_3$ and $R_1 = R_2 =1, R_3 = r_3$.\\

In this subcase, we consider the event that the first pivot we choose locates between $0$ and $l_3$, the second pivot has value $l_3$, and the third pivot has value $r_3$. Denote this event by $E_{llr}$ (with $llr$ indicating that we shrink the search intervals by moving the lefthand, lefthand, and then righthand endpoints). 
We have
\begin{align}
\lefteqn{\hspace{-.5in}\P(L_1 \in \ddx l_2, R_1 = 1, L_2 \in \ddx l_3 , 
R_2 = 1, L_3 \in \ddx l_3, R_3 \in \ddx r_3)} \nonumber \\
\label{E:3a}
&= \mathbb{1}{(0 < l_2 <  l_3 < t < r_3 < 1)} \dd l_2 \frac{\ddx l_3}{1-l_2} \frac{\ddx r_3}{1-l_3}.
\end{align}
Integrating over all possible values of $l_2$, we 
get
\begin{align*}
\lefteqn{\hspace{-0.5in}\P(L_3 \in \ddx l_3, R_3 \in \ddx r_3, E_{llr})} \\
&= \mathbb{1}{(0<l_3<t<r_3<1)}\, \frac{1}{1-l_3} \ln \left(\frac{1}{1-l_3}\right) \dd l_3 \dd r_3.
\end{align*}
(b) $L_1 = L_2 = 0, L_3 = l_3$ and $R_1 = r_2, R_2 = R_3 = r_3$.\\

In this and all subsequence subcases, we use notation like that in subcase~(a).  In this subcase, we invoke symmetry in comparison with subcase~(a).  The results 
are
\begin{align}
\lefteqn{\hspace{-0.5in}\P(L_1 = 0, R_1 \in \ddx r_2, L_2 = 0, L_3 \in \ddx l_3 , R_2 = R_3 \in \ddx r_3)} \nonumber \\
\label{E:3b}
&= \mathbb{1}{(0 < l_3 < t < r_3 < r_2 < 1)} \dd r_2 \frac{\ddx r_3}{r_2} \frac{\ddx l_3}{r_3}
\end{align}
and
\[
\P(L_3 \in \ddx l_3, R_3 \in \ddx r_3, E_{rrl}) = \mathbb{1}{(0<l_3<t<r_3<1)}\, \frac{1}{r_3} \ln \left(\frac{1}{r_3}\right) \dd l_3 \dd r_3.
\]
(c) $L_1 = L_2 = l_2, L_3 = l_3$ and $R_1 = 1, R_2 = R_3 = r_3$.\\

In this subcase we 
have

\begin{align}
\lefteqn{\hspace{-0.5in}\P(R_1 = 1, L_1 = L_2 \in \ddx l_2, L_3 \in \ddx l_3 , R_2 = R_3 \in \ddx r_3)} \nonumber \\
\label{E:3c}
&= \mathbb{1}{(0 < l_2 < l_3 < t < r_3 < 1)} \dd l_2 \frac{\ddx r_3}{1-l_2} \frac{\ddx l_3}{r_3-l_2}.
\end{align}
Integrating over the possible values of $l_2$, we find
\begin{align*}
\lefteqn{\P(L_3 \in \ddx l_3, R_3 \in \ddx r_3, E_{lrl})/(\ddx l_3 \dd r_3)} \\
&= \mathbb{1}{(0<l_3<t<r_3<1)} \, \frac{1}{1-r_3}
\left[ \ln \left(\frac{1}{r_3-l_3}\right)- \ln \left(\frac{1}{r_3}\right) 
- \ln \left(\frac{1}{1-l_3}\right) \right].
\end{align*}
(d) $L_1 = 0, L_2 = L_3 = l_3$ and $R_1 = R_2 = r_2, R_3 = r_3$.\\

In this subcase, by symmetry with subcase~(c), we have
\begin{align}
\lefteqn{\hspace{-0.5in} \P(L_1 = 0, L_2 = L_3 \in \ddx l_3 , R_1 = 
R_2 \in \ddx r_2, R_3 \in \ddx r_3)} \nonumber\\
\label{E:3d}
&= \mathbb{1}{(0 < l_3 < t < r_3 < r_2 < 1)} \dd r_2 \frac{\ddx l_3}{r_2} \frac{\ddx r_3}{r_2-l_3}.
\end{align}
and
\begin{align*}
\lefteqn{\hspace{-0.1in}\P(L_3 \in \ddx l_3, R_3 \in \ddx r_3, E_{rlr})/(\ddx l_3 \dd r_3)} \\ 
&= \mathbb{1}{(0<l_3<t<r_3<1)}\, \frac{1}{l_3} \left[ \ln \left(\frac{1}{r_3-l_3}\right) - \ln \left(\frac{1}{1-l_3}\right) - \ln \left(\frac{1}{r_3}\right) \right].
\end{align*}
(e) $L_1 = 0, L_2 = l_2, L_3 = l_3$ and $R_1 = R_2 = R_3 = r_3$.\\

In this subcase we 
have
\begin{align}
\lefteqn{\hspace{-0.5in}\P(L_1 = 0, L_2 \in \dd l_2 , L_3 \in \ddx l_3, 
R_1 = R_2 = R_3 \in \ddx r_3)} \nonumber\\
\label{E:3e}
&= \mathbb{1}{(0 < l_2 < l_3 < t < r_3 < 1)} \dd r_3 \frac{\ddx l_2}{r_3} \frac{\ddx l_3}{r_3-l_2}.
\end{align}
Integrating over the possible values of $l_2$, we 
have
\begin{align*}
\lefteqn{\hspace{-0.2in} \P(L_3 \in \ddx l_3, R_3 \in \ddx r_3, E_{rll})} 
\\ 
&= \mathbb{1}{(0<l_3<t<r_3<1)}\, \frac{1}{r_3} \left[ \ln \left(\frac{1}{r_3-l_3}\right) - \ln \left(\frac{1}{r_3}\right) \right] \dd l_3 \dd r_3.
\end{align*}
(f) $L_1 = L_2 = L_3 = l_3$ and $R_1 = 1, R_2 = r_2, R_3 = r_3$.\\

In this final subcase, by symmetry with subcase~(e), we 
have
\begin{align}
\lefteqn{\hspace{-0.5in}\P(L_1 = L_2 = L_3 \in \ddx l_3 , R_1 = 1, R_2  \in \ddx r_2, R_3 \in \ddx r_3)} \nonumber\\
\label{E:3f}
&= \mathbb{1}{(0 < l_3 < t < r_3 < r_2 < 1)} \dd l_3 \frac{\ddx r_2}{1-l_3} \frac{\ddx r_3}{r_2-l_3}
\end{align}
and
\begin{align*}
\lefteqn{\hspace{-0.2in}\P(L_3 \in \ddx l_3, R_3 \in \ddx r_3, E_{lrr})/(\ddx l_3 \dd r_3)} \\ 
&= \mathbb{1}{(0<l_3<t<r_3<1)}\, \frac{1}{1-l_3} \left[ \ln \left(\frac{1}{r_3-l_3}\right) - \ln \left(\frac{1}{1-l_3}\right) \right].
\end{align*}

Summing results from the six subcases, we conclude in Case~3 that
\begin{equation}
\label{E:4}
\P(L_3 \in \ddx l_3, R_3 \in \ddx r_3) = \mathbb{1}{(0<l_3<t<r_3<1)} \, 
g(l_3, r_3) \dd l_3 \dd r_3,
\end{equation}
where
\begin{align}
g(l_3, r_3) 
&:= \left[\frac{1}{l_3(1-l_3)}+\frac{1}{r_3(1-r_3)}\right]\ln \left(\frac{1}{r_3-l_3}\right) \label{gdef} \\
&{} \qquad \qquad - \left(\frac{1}{l_3}+\frac{1}{1-r_3}\right)\left[\ln \left(\frac{1}{r_3}\right) + \ln \left(\frac{1}{1-l_3}\right)\right]. \nonumber
\end{align}

The conditional joint distribution of $(L_1, R_1, L_2, R_2)$ given $(L_3, 
R_3) = (l_3, r_3)$ can be derived by dividing 
each of \eqref{E:3a}--\eqref{E:3f} by~\eqref{E:4}, and we can then compute $f_{l_3,r_3}$ from these conditional distributions. 
Let us write
\begin{equation}
\label{E:5}
f_{l_3,r_3}(x) =  \mathbb{1}{(0 < l_3 < t < r_3 <1)}\, \frac{1}{g(l_3,r_3)} \sum_{i = 1}^{6}{f_{l_3,r_3}^{(i)}(x)},
\end{equation}
where $f_{l_3, r_3}^{(i)}(x)\dd l_3\dd r_3\dd x$ is the contribution to
\[
\P(L_3 \in \ddx l_3,\,R_3 \in \ddx r_3, \,X \in \ddx x)
\]
arising from the $i$th subcase of the six.

In subcase~(a) we know that $X = R_1 - L_1 + R_2 - L_2 = 2 - l_2 - l_3$. 
Changing variables from $l_2$ to~$x$, from~\eqref{E:3a} we find
\[
f_{l_3,r_3}^{(1)}(x) = \mathbb{1}{(2-2l_3 \leq x < 2-l_3)}\, \frac{1}{1-l_3} \, \frac{1}{x-1+l_3}.
\]

In subcase~(b) we know that $X = r_2 + r_3$.
Changing variables from $r_2$ to~$x$, from~\eqref{E:3b} we find
\[
f_{l_3,r_3}^{(2)}(x) = \mathbb{1}{(2r_3 \leq x < 1+ r_3)}\, \frac{1}{r_3} \, \frac{1}{x-r_3}.
\]

In subcase~(c), we know that $X = 1 - 2 l_2 + r_3$.  
Changing variables from $l_2$ to~$x$, from~\eqref{E:3c} we find
\[
f_{l_3,r_3}^{(3)}(x) = \mathbb{1}{(1+r_3-2l_3 \leq x < 1+r_3)}\, \frac{1}{x+1-r_3} \, \frac{2}{x+r_3-1}.
\]

In subcase~(d), we know that $X = 2 r_2 - l_3$. 
Changing variables from $r_2$ to~$x$, from~\eqref{E:3d} we find
\[
f_{l_3,r_3}^{(4)}(x) = \mathbb{1}{(2r_3-l_3 \leq x < 2-l_3)}\, \frac{2}{x+l_3} \, \frac{1}{x-l_3}.
\]

In subcase~(e), we know that $X = 2 r_3 - l_2$.  
Changing variables from $l_2$ to~$x$, from~\eqref{E:3e} we find
\[
f_{l_3,r_3}^{(5)}(x) = \mathbb{1}{(2r_3-l_3 \leq x < 2r_3)}\, \frac{1}{r_3} \, \frac{1}{x-r_3}.
\]

Finally, in subcase~(f), we know that $X = 1 + r_2 - l_3$. 
Changing variables from $r_2$ to~$x$, from~\eqref{E:3f} we find
\[
f_{l_3,r_3}^{(6)}(x) = \mathbb{1}{(r_3-2l_3+1 \leq x < 2-2l_3)}\, \frac{1}{1-l_3} \, \frac{1}{x-1+l_3}.
\]

The density functions $f_{0,r_3}$ and $f_{l_3,1}$ we have found in Cases~1 and~2 are continuous.
We have chosen to make the functions $f_{l_3, r_3}^{(i)}$ (for $i = 1, \dots, 6$) right continuous in Case~3.  Thus the density $f_{l_3, r_3}$ we have determined at~\eqref{E:5} in Case~3 is right continuous.  
\end{proof}

Our next lemma handles the cases $t = 0$ and $t = 1$ that were excluded from \refL{L:main}, and its proof is the same as for Cases~1 and~2 in the proof of \refL{L:main}.
\begin{lemma}
\label{L:main01}
{\rm (a)} Suppose $t = 0$.  Let $0 < r_3 < 1$.  Conditionally given $(L_3, R_3) = (0, r_3)$, the random variable $X = \Delta_1 + \Delta_2$ has the right continuous density $f_{0, r_3}$ specified in the sentence containing~\eqref{CD1}.

{\rm (b)} Suppose $t = 1$.  Let $0 < l_3 < 1$.  Conditionally given $(L_3, R_3) = (l_3, 1)$, the random variable $X = \Delta_1 + \Delta_2$ has the right continuous density $f_{l_3, 1}$ specified in the sentence containing~\eqref{CD2}. 
\end{lemma}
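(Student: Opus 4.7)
The plan is to reduce both parts of \refL{L:main01} to computations already performed in the proof of \refL{L:main}. For part~(a), I would first note that when $t = 0$, the recursion \eqref{Lkt} never actually updates $L_k$: because $U_{\tau_k(0)} > 0$ almost surely, we have $L_k(0) = L_{k - 1}(0) = \cdots = 0$ for every $k \geq 0$, so the event $L_1 = L_2 = 0$ holds automatically rather than as a conditioning constraint. This is precisely the configuration that Case~1 of the proof of \refL{L:main} enforced by conditioning on $\{L_3 = 0\}$, so the subsequent calculation carries over unchanged.

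More concretely, $R_1$ is Uniform$(0, 1)$, and, inductively, $R_k$ given $R_{k - 1}$ is Uniform$(0, R_{k - 1})$, since $U_{\tau_k(0)}$ is the first $U_i$ with $i > \tau_{k - 1}$ to land in the current interval $(0, R_{k - 1})$. The joint density of $(R_1, R_2, R_3)$ is therefore $\mathbb{1}(0 < r_3 < r_2 < r_1 < 1) / (r_1 r_2)$, matching \eqref{joint1} specialized to $t = 0$. Integrating out $r_1, r_2$ yields \eqref{joint2}, dividing produces the conditional law \eqref{cond1}, and because $L_1 = L_2 = 0$ forces $X = \Delta_1 + \Delta_2 = R_1 + R_2$, the convolution and change of variables that led to \eqref{CD1} apply verbatim.

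For part~(b) I would not redo the computation but instead invoke the left-right symmetry recorded in \refS{S:intro}, namely that the processes $Z$ and $(Z(1 - t))_{t \in [0, 1]}$ have the same finite-dimensional distributions. Pathwise, the bijection $U_i \mapsto 1 - U_i$ sends the quantile-$1$ process to the quantile-$0$ process and interchanges $(L_k, R_k) \leftrightarrow (1 - R_k, 1 - L_k)$; under it, conditioning on $(L_3, R_3) = (l_3, 1)$ becomes conditioning on $(L_3, R_3) = (0, 1 - l_3)$. Since $X = \Delta_1 + \Delta_2$ is a sum of interval lengths and so invariant under the reflection, we obtain $f_{l_3, 1}(x) = f_{0, 1 - l_3}(x)$, and the substitution $r_3 = 1 - l_3$ in \eqref{CD1} reproduces \eqref{CD2}. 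I expect no genuine obstacle in either part: these are essentially bookkeeping observations on top of the $0 < t < 1$ analysis already carried out, with the only small point requiring care being that now the relevant endpoint is deterministically fixed rather than merely fixed on a conditioning event.
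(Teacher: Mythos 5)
Your proposal is correct and follows essentially the same route as the paper, which simply remarks that the proof "is the same as for Cases~1 and~2 in the proof of Lemma~\ref{L:main}": part~(a) is the $t = 0$ specialization of Case~1 (with the observation that $L_k(0) \equiv 0$ now holds automatically, not merely on a conditioning event, so the joint law of $(R_1, R_2, R_3)$ is exactly \eqref{joint1} with $t = 0$), and part~(b) follows by the same reflection symmetry $U_i \mapsto 1 - U_i$ that the paper already invokes for Case~2. The extra details you supply (the inductive uniform-on-$(0, R_{k-1})$ description of the $R_k$, and the explicit check that $r_3 = 1 - l_3$ in \eqref{CD1} reproduces \eqref{CD2}) are correct and fill in what the paper leaves implicit.
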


We need to check the trivariate measurability of the function $f_t(l_3, r_3, x) := f_{l_3, r_3} (x)$ before diving into the derivation of the density function of $J$. 
Given a topological space~$S$, let $\mathcal{B}(S)$ denote its Borel $\sigma$-field, that is, the $\sigma$-field generated by the open sets of~$S$.  Also, given $0 < t < 1$, let 
\[S_t := \{(l_3, r_3) \ne (0, 1):0 \leq l_3 < t < r_3 \leq 1\}.
\]
\begin{lemma}
\label{L:mble}
{\rm (a)}~For $0 < t < 1$, 
the conditional density function $f_t(l_3, r_3, x)$, formed to be a right 
continuous function of $x$, is measurable with respect to 
$\mathcal{B}(S_t \times \mathbb{R})$. 

{\rm (b)}~For $t = 0$, the conditional density function $f_0(r_3, x) := 
f_{0, r_3}(x)$, continuous in~$x$, is measurable 
$\mathcal{B}((0, 1) \times \mathbb{R})$. 

{\rm (c)}~For $t = 1$, the conditional density function $f_1(l_3, x) := 
f_{l_3, 1}(x)$, continuous in~$x$, is measurable 
$\mathcal{B}((0, 1) \times \mathbb{R})$.
\end{lemma}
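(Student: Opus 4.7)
The strategy is simply to observe that the explicit formulas for the conditional densities constructed in the proofs of Lemmas~\ref{L:main} and~\ref{L:main01} exhibit $f_t$ as a finite combination of continuous functions and indicators of sets cut out by affine inequalities, and so are manifestly jointly Borel measurable. No serious obstacle is anticipated; the argument is essentially bookkeeping over the cases that arose in the construction.

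Parts~(b) and~(c) are handled immediately from \eqref{CD1} and \eqref{CD2}. For instance, $f_{0, r_3}(x)$ is the product of the function $2 / [x (\ln(1 / r_3))^2]$, continuous on $(0, 1) \times (0, \infty)$, with a sum of terms, each of which is a logarithm (continuous on its domain) multiplied by the indicator of a set of the form $\{(r_3, x) : a r_3 + b x + c \leq 0\} \cap \{(r_3, x) : a' r_3 + b' x + c' < 0\}$. Such sets are Borel in $(0, 1) \times \mathbb{R}$, so $f_0$ is Borel measurable; the argument for part~(c) is identical after reflecting $(l_3, x) \leftrightarrow (1 - r_3, x)$.

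For part~(a), decompose $S_t$ into three disjoint Borel pieces, $S_t^{(1)} := \{0\} \times (t, 1)$, $S_t^{(2)} := (0, t) \times \{1\}$, and $S_t^{(3)} := (0, t) \times (t, 1)$, corresponding to Cases~1, 2, and~3 of the proof of \refL{L:main}. On $S_t^{(1)} \times \mathbb{R}$ the function $f_t(l_3, r_3, x)$ reduces to $(r_3, x) \mapsto f_{0, r_3}(x)$, which is Borel measurable by part~(b); an analogous statement holds on $S_t^{(2)} \times \mathbb{R}$. On $S_t^{(3)} \times \mathbb{R}$, equation \eqref{E:5} represents $f_t$ as $g(l_3, r_3)^{-1} \sum_{i = 1}^{6} f^{(i)}_{l_3, r_3}(x)$. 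Each summand $f^{(i)}_{l_3, r_3}(x)$, as displayed in the proof of \refL{L:main}, is the product of a rational function of $(l_3, r_3, x)$ continuous on its support with the indicator of a subset of $\mathbb{R}^3$ cut out by finitely many affine inequalities, and hence is Borel measurable. The denominator $g(l_3, r_3)$ defined in \eqref{gdef} is continuous on $S_t^{(3)}$ and strictly positive there, as one sees directly from the fact established in the proof of \refL{L:main} that $g$ is the sum over the six subcases of the densities displayed there, each of which is strictly positive on $S_t^{(3)}$. Therefore $1/g$ is continuous on $S_t^{(3)}$, the quotient is Borel measurable on $S_t^{(3)} \times \mathbb{R}$, and combining the contributions of the three pieces yields the desired $\mathcal{B}(S_t \times \mathbb{R})$-measurability.
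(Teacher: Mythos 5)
Your proposal is correct, but it takes a genuinely more direct route than the paper. The paper invokes \refL{L:Gowrisankaran} (Gowrisankaran's criterion): it shows right continuity of $f_{l_3, r_3}(\cdot)$ in~$x$ (established in Lemmas~\ref{L:main} and~\ref{L:main01}) and measurability of $f(l_3, r_3, x)$ in $(l_3, r_3)$ for each fixed~$x$ (via piecewise continuity of the formulas), and then combines the two. You bypass Gowrisankaran entirely by reading off joint measurability directly from the explicit formulas: each density is a finite sum of terms, each of which is a rational or logarithmic expression (continuous on an open set containing the relevant cell) multiplied by the indicator of a Borel set cut out by finitely many affine inequalities in $(l_3, r_3, x)$; and the factor $1/g$ is continuous on $(0,t) \times (t,1)$ because $g$ is continuous and strictly positive there. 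Your approach is arguably simpler and shows that \refL{L:Gowrisankaran} is not strictly needed for this lemma, while the paper's approach is more modular and would still go through from weaker input (one-variable right continuity in~$x$ plus sectional measurability in $(l_3, r_3)$) than the full affine cell structure. The one step you compress is the strict positivity of $g$ on $S_t^{(3)}$; the paper does not record this explicitly, but it does follow readily from the logarithmic identities---for example $g$ contains the summand $r_3^{-1}\ln\bigl(r_3/(r_3 - l_3)\bigr)$, which is strictly positive for $0 < l_3 < r_3 < 1$.
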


We introduce (the special case of real-valued $f$ of) 
a lemma taken from Gowrisankaran \cite[Theorem 3]{MR291403} giving a sufficient condition for the measurability of certain functions~$f$ defined on product spaces. 
 The lemma will help us prove \refL{L:mble}.
\begin{lemma}[Gowrisankaran~\cite{MR291403}]
\label{L:Gowrisankaran}
Let $(\Omega, \cF)$ be a measurable space.  Let $f:\Omega \times \mathbb{R} \to \mathbb{R}$.  Suppose that the section mapping $f(\cdot, y)$ is $\cF$-measurable for each $y \in \mathbb{R}$ and that the section mapping $f(\omega, \cdot)$ is either right continuous for each $\omega \in \Omega$ 
or left continuous for 
each $\omega \in \Omega$.  Then~$f$ is measurable with respect to the product $\sigma$-field $\cF \otimes \mathcal{B}(\mathbb{R})$.  
\end{lemma}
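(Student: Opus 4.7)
The plan is to invoke the upcoming Gowrisankaran lemma (\refL{L:Gowrisankaran}) separately for each of the three parts, exploiting the explicit closed-form expressions for the conditional densities that were produced during the proofs of \refL{L:main} and \refL{L:main01}. In each case the two hypotheses of \refL{L:Gowrisankaran} are already in hand or are easily checked by inspection: right continuity (even continuity, in parts~(b) and~(c)) of the $x$-section has been built into the construction, and the section in the remaining variable(s) for each fixed $x$ is Borel measurable because it is assembled from indicators of Borel sets and continuous ratios.

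For part~(a), I would take $\Omega = S_t$ equipped with $\cF = \mathcal{B}(S_t)$. Right continuity in~$x$ of $(l_3, r_3, x) \mapsto f_t(l_3, r_3, x)$ for each fixed $(l_3, r_3)$ is explicitly asserted in the final sentence of the proof of \refL{L:main} and, for the boundary values $l_3 = 0$ or $r_3 = 1$, follows from the formulas \eqref{CD1} and~\eqref{CD2}. For the section measurability in $(l_3, r_3)$ at fixed $x$, I would read the conclusion off the formulas directly: in Case~3 each summand $f_{l_3, r_3}^{(i)}(x)$ in~\eqref{E:5} has the shape
\[
\mathbb{1}{(a_i(l_3, r_3) \leq x < b_i(l_3, r_3))} \cdot \varphi_i(l_3, r_3, x),
\]
where the $a_i, b_i$ are continuous and $\varphi_i$ is continuous on its domain; the indicator is $\mathcal{B}(S_t)$-measurable in $(l_3, r_3)$ for each~$x$, and the $\varphi_i$-factor is continuous in $(l_3, r_3)$ and hence also measurable. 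The prefactor $g(l_3, r_3)^{-1}$ from~\eqref{gdef} is likewise a Borel measurable function of $(l_3, r_3)$. The same inspection handles Cases~1 and~2 using~\eqref{CD1} and~\eqref{CD2}.

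For parts~(b) and~(c), the same scheme works with $\Omega = (0, 1)$. The formulas in~\eqref{CD1} and~\eqref{CD2} exhibit $f_0(r_3, x)$ and $f_1(l_3, x)$ as piecewise expressions continuous in~$x$ (hence right continuous, as required) with boundaries moving continuously in the remaining parameter, so measurability of the section in that remaining parameter at fixed~$x$ again follows from the product-of-indicator-and-continuous-function structure. Applying \refL{L:Gowrisankaran} yields the desired joint measurability.

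I do not anticipate any substantive obstacle here; the lemma is essentially a bookkeeping step whose role is to package the explicit computations of \refL{L:main} and \refL{L:main01} into a form suitable for integrating against the joint distribution of $(L_3, R_3)$ in~\eqref{E:density}. The only item requiring a moment's thought is identifying the indicator-and-continuous-function structure of each summand, which is already transparent from the closed forms displayed in the preceding proofs.
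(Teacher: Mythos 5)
The statement you were asked to prove is the abstract measurability lemma itself (\refL{L:Gowrisankaran}): for $f\colon\Omega\times\bbR\to\bbR$ with $\cF$-measurable $\omega$-sections and right (or left) continuous $y$-sections, $f$ is $\cF\otimes\mathcal{B}(\bbR)$-measurable. Your proposal does not address this statement: it is an outline of the proof of \refL{L:mble} (joint measurability of the conditional densities $f_{l_3,r_3}(x)$), and its very first step is to \emph{invoke} \refL{L:Gowrisankaran} --- exactly the result to be established --- so as a proof of the statement in question it is circular and supplies no argument toward it. (For the record, the paper does not prove this lemma either; it quotes it from Gowrisankaran's Theorem~3. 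What your text reproduces is, in substance, the paper's proof of \refL{L:mble}, which is a different statement.)

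A genuine proof requires the classical discretization argument, none of which appears in your proposal. Assume the $y$-sections are right continuous and set $f_n(\omega,y):=f\bigl(\omega,(\lfloor ny\rfloor+1)/n\bigr)$. Each $f_n$ is jointly measurable: on the set $\{(\omega,y):k/n\le y<(k+1)/n\}$, $k\in\bbZ$, it equals the product of the indicator of the Borel set $[k/n,(k+1)/n)$ in~$y$ with the $\cF$-measurable function $\omega\mapsto f(\omega,(k+1)/n)$, and $f_n$ is the countable sum of these pieces. Since $(\lfloor ny\rfloor+1)/n\in(y,y+1/n]$ converges to~$y$ from the right, right continuity of the $y$-section gives $f_n(\omega,y)\to f(\omega,y)$ pointwise, so $f$ is a pointwise limit of $\cF\otimes\mathcal{B}(\bbR)$-measurable functions and hence measurable. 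The left continuous case is handled symmetrically with $f_n(\omega,y):=f\bigl(\omega,\lfloor ny\rfloor/n\bigr)$.
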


\begin{proof}[Proof of \refL{L:mble}]
We prove~(b), then~(c), and finally~(a).
\smallskip

(b)~Recall the expression~\eqref{CD1} for $f_0(r_3, x)$ [for $0 < r_3 < 1$ and $x \in (2 r_3, 2)$].  We apply \refL{L:Gowrisankaran} with 
$(\Omega, \cF) = ((0, 1), \mathcal{B}((0, 1)))$.  The right continuity of $f_0(r_3, \cdot)$ has already been established in \refL{L:main01}(a).  
On the other hand, when we fix $x$ and treat $f(0, r_3, x)$ as a function 
of $r_3$, the conditional density function can be separated into the following cases:
\begin{itemize}
\item If $x \leq 0$ or $x \geq 2$, then $f_0(r_3, x) \equiv 0$.
\item If $0 < x < 2$, then from~\eqref{CD1} we see that $f_0(r_3, x)$ is piecewise continuous (with a finite number of measurable domain-intervals), and hence measurable, in $r_3$.
\end{itemize} 
Since the product $\sigma$-field $\mathcal{B}((0, 1)) \otimes \mathcal{B}(\mathbb{R})$ equals $\mathcal{B}((0, 1) \times \mathbb{R})$, the desired 
result follows.
\smallskip

(c)~Assertion~(c) can be proved by a similar argument or by invoking symmetry.
\smallskip

(a)~We 
apply \refL{L:Gowrisankaran} with $(\Omega, \cF) = (S_t, \mathcal{B}(S_t))$.  The right continuity of $f(l_3, r_3, \cdot)$ has already been established in \refL{L:main}, so it suffices to show for each $x \in \mathbb{R}$ that $f(l_3, r_3, x)$ is measurable in $(l_3, r_3) \in S_t$.  For this it is clearly sufficient to show that $f(0, r_3, x)$ is measurable in $r_3 \in (t, 1)$, that $f(l_3, 1, x)$ is measurable in $l_3 \in (0, t)$, and that $f(l_3, r_3, x)$ is measurable in $(l_3, r_3) \in (0, t) \times (t, 1)$.  All three of these assertions follow from the fact that piecewise continuous functions (with a finite number of measurable domain-pieces) 
are measurable; in particular, for the third assertion, note that the function~$g$ defined at~\eqref{gdef} is continuous in $(l_3, r_3) \in (0, t) 
\times (t, 1)$ and that each of the six expressions $f_{l_3, r_3}^{(i)}(x)$ appearing in~\eqref{E:5} is piecewise continuous (with a finite number 
of measurable domain-pieces) in these values of $(l_3, r_3)$ for each fixed~$x \in \mathbb{R}$.
\smallskip

This complete the proof.
\end{proof}

As explained at the outset of this section, a conditional density $\mbox{$h_{l_3, r_3}(\cdot)$}$ for $J(t)$ given $(L_3, R_3) = (l_3, r_3)$ 
can now be formed by convolving the conditional density function of $X$, namely $f_{l_3,r_3}(\cdot)$, with the conditional distribution function 
of~$Y$.  That is, we can write
\begin{equation}
\label{E:6}
h_{l_3, r_3}(x) = \int{f_{l_3,r_3}(x-y)\,\P\left(Y \in \ddx y \mid (L_3, R_3) = (l_3, r_3)\right)}.
\end{equation}
We now prove in the next two lemmas that the joint measurability of $f_{l_3, r_3}(x)$ with respect to $(l_3, r_3, x)$ ensures the same for $h_{l_3, r_3}(x)$.

\begin{lemma}
\label{L:mble_h_x}
Let $(\Omega, \mathcal{F})$ be 
a measurable space. Let $g: \Omega \times \bbR \to \bbR$ be a nonnegative 
function measurable with respect to the product $\sigma$-field 
$\mathcal{F}  \otimes \mathcal{B}(\bbR)$. Let $V$ and $Y$ be two measurable functions defined on a common probability space and taking values in 
$\Omega$ and $\bbR$, respectively.  Then a regular conditional probability distribution $\P(Y \in dy \mid V )$ for $Y$ given $V$ exists, and the function $\cT g:\Omega \times \bbR \to \bbR$ defined by
\[
\cT g(v, x) := \int g(v,x-y) \P(Y \in \ddx y \mid V = v )
\]
is measurable with respect to the product $\sigma$-field $\mathcal{F}  \otimes \mathcal{B}(\bbR)$.
\end{lemma}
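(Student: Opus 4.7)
The plan is to handle the two assertions of the lemma separately, both by appeal to standard measure-theoretic machinery.

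\emph{Existence of a regular conditional distribution.} Since $Y$ takes values in the standard Borel space $(\bbR, \mathcal{B}(\bbR))$, the existence of a regular conditional distribution $K(v, \cdot) := \P(Y \in \cdot \mid V = v)$ is a classical result (see, e.g., Durrett~\cite[Theorem~4.1.6]{MR2722836}).  In particular, $K$ is a Markov kernel from $(\Omega, \mathcal{F})$ to $(\bbR, \mathcal{B}(\bbR))$: for each $B \in \mathcal{B}(\bbR)$, the section $v \mapsto K(v, B)$ is $\mathcal{F}$-measurable, and for each $v \in \Omega$, $K(v, \cdot)$ is a probability measure on $\mathcal{B}(\bbR)$.

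\emph{Joint measurability of $\cT g$.}  The map $\Psi : \Omega \times \bbR \times \bbR \to \Omega \times \bbR$ defined by $\Psi(v, x, y) := (v, x - y)$ is measurable from $\mathcal{F} \otimes \mathcal{B}(\bbR) \otimes \mathcal{B}(\bbR)$ to $\mathcal{F} \otimes \mathcal{B}(\bbR)$, since subtraction is continuous on $\bbR^2$.  Hence $\tilde g(v, x, y) := g(v, x - y) = (g \circ \Psi)(v, x, y)$ is nonnegative and $\mathcal{F} \otimes \mathcal{B}(\bbR) \otimes \mathcal{B}(\bbR)$-measurable, and we can rewrite $\cT g(v, x) = \int \tilde g(v, x, y)\, K(v, \ddx y)$.

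It therefore suffices to prove the following standard fact: whenever $h : \Omega \times \bbR \times \bbR \to [0, \infty]$ is $\mathcal{F} \otimes \mathcal{B}(\bbR) \otimes \mathcal{B}(\bbR)$-measurable, the function $(v, x) \mapsto \int h(v, x, y)\, K(v, \ddx y)$ is $\mathcal{F} \otimes \mathcal{B}(\bbR)$-measurable.  I would prove this by the usual monotone-class bootstrap: verify it first for indicators $h = \mathbb{1}_{C \times B}$ with $C \in \mathcal{F} \otimes \mathcal{B}(\bbR)$ and $B \in \mathcal{B}(\bbR)$, for which the integral equals $\mathbb{1}_C(v, x)\, K(v, B)$ and is measurable as a product of measurable functions; extend via the Dynkin $\pi$-$\lambda$ theorem to indicators of arbitrary sets in $\mathcal{F} \otimes \mathcal{B}(\bbR) \otimes \mathcal{B}(\bbR)$; extend to nonnegative simple functions by linearity; and pass to the general nonnegative measurable case by monotone convergence.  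The only obstacle is routine bookkeeping in the monotone-class argument; conceptually, everything reduces to standard measure-theoretic facts, after which applying the fact to $h = \tilde g$ yields the claim for $\cT g$.
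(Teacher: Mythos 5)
Your proof is correct.  The substance of it — reduce $\cT g$ to an integral of a jointly $\mathcal{F}\otimes\mathcal{B}(\bbR)\otimes\mathcal{B}(\bbR)$-measurable kernel against a probability kernel $K(v,\ddx y)$, then run a monotone-class bootstrap (rectangles, Dynkin $\pi$-$\lambda$, simple functions, monotone convergence) to get joint measurability of $(v,x)\mapsto\int h(v,x,y)\,K(v,\ddx y)$ — is exactly the standard fact that the paper instead \emph{cites}: after noting the existence of the regular conditional distribution, the published proof simply packages $\mu_{v,x}(\ddx y):=\P(Y\in\ddx y\mid V=v)$ as a kernel from $\Omega\times\bbR$ to $\bbR$ and $f((v,x),y):=g(v,x-y)$ as the integrand, and invokes Kallenberg~\cite[Lemma 3.2(i)]{MR4226142}.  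So the decomposition and the regular-conditional-distribution step are identical; the difference is that you re-derive the Kallenberg lemma from scratch while the paper treats it as a black box.  Your self-contained route costs a paragraph of routine bootstrapping, but it makes clear exactly which hypotheses are being used (in particular, that $K(v,\cdot)$ is a probability measure is what makes the $\lambda$-system argument go through), whereas the paper's citation is shorter but less transparent.  One small phrasing point: when you apply Dynkin, you should explicitly note that the measurable rectangles $C\times B$ with $C\in\mathcal{F}\otimes\mathcal{B}(\bbR)$, $B\in\mathcal{B}(\bbR)$ form a $\pi$-system generating $\mathcal{F}\otimes\mathcal{B}(\bbR)\otimes\mathcal{B}(\bbR)$, and that closure of the good class under complements uses $K(v,\bbR)=1$; you gesture at this but don't say it, and it is the one place a careless reader could think something is missing.
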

\ignore{
\begin{proof}
Since $Y$ is a real-valued random variable, by Billingsley~\cite[Theorem 33.3]{MR2893652} or 
Durrett~\cite[Theorem 4.1.18]{MR2722836} there exists a conditional probability distribution for $Y$ given $V$.
Consider the restricted collection 
\[
\mathcal{H} := \{f \geq 0:Tf\mbox{\ is measurable\ }\mathcal{F} \otimes 
\mathcal{B}(\bbR)\}
\]
of functions defined on $\Omega \times \bbR$ and the $\pi$-system
\[
\mathcal{A} := \{B_1 \times B_2 : B_1 \in \mathcal{F} \mbox{ and } B_2 \in \mathcal{B}(\bbR)\}
\]
of all measurable rectangles in $\mathcal{F} \otimes \mathcal{B}(\bbR)$.
If we show that the indicator function $\mathbb{1}(A)$ is in~$\mathcal{H}$ for every $A \in \mathcal{A}$, it then follows from the monotone convergence theorem and the monotone class theorem in Durrett~\cite[Theorem~5.2.2]{MR2722836} that~$\mathcal{H}$ contains all nonnegative functions measurable with respect to $\sigma(\mathcal{A}) = \mathcal{F} \otimes \mathcal{B}(\bbR)$, as desired.

We thus let $A = B_1 \times B_2 \in \mathcal{A}$ for some $B_1 \in \mathcal{F}$ and $B_2 \in \mathcal{B}(\bbR)$.  Then
\[
T \mathbb{1}(A)(v, x) = \mathbb{1}_{B_1}(v) \P(Y \in x - B_2 \mid V = 
v ).
\]
We claim that 
\begin{equation}
\label{meas}
(v, x) \mapsto \P(Y \in x - B \mid V = v )\mbox{\ is\ }\mathcal{F} \otimes \mathcal{B}(\bbR)\mbox{\  measurable},
\end{equation}
for any $B \in \mathcal{B}(\bbR)$ and thus $\mathbb{1}(A) \in \mathcal{H}$; so the proof of the claim~\eqref{meas} will complete the proof of the lemma.  

Since the collection of sets $B \in \mathcal{B}(\bbR)$ satifying~\eqref{meas} is a $\lambda$-system, we need only check~\eqref{meas} for intervals 
of the form $B = [a,b)$ with $b > a$; we can then apply Dynkin's $\pi$--$\lambda$ theorem to complete the proof of the claim. 
For fixed $x$, by Billingsley~\cite[Theorem 34.5]{MR2893652} the mapping $v \mapsto \P(Y \in x - B \mid V = v )$ is a version of $\E[\mathbb{1}_B (x - Y) \mid V]$ and so is $\mathcal{F}$-measurable. Furthermore, for fixed $v \in \Omega$ and $x, z \in \bbR$ with $z > x$ and $z$ close enough 
to $x$, we have
\begin{align*}
| \P(Y \in x &- B \mid V = v ) - \P(Y \in z - B \mid V = v ) |\\
&\leq \P(Y \in (x-b, z-b] \mid V = v ) + \P(Y \in (x-a, z-a] \mid V = 
v ),
\end{align*}
and the bound here is small by the right continuity of the conditional distribution function. We have established right continuity of 
$\P(Y \in x - B \mid V = v )$ in $x$ for fixed $v$, and thus we can apply \refL{L:Gowrisankaran} to conclude that 
$(v, x) \mapsto \P(Y \in x - B \mid V = v )$ is $\mathcal{F}  \otimes \mathcal{B}(\bbR)$ measurable. This completes the proof of the lemma.
\end{proof}
}
\begin{proof}
This is standard.  For completeness, we provide a proof making use of Kallenberg~\cite[Lemma 3.2(i)]{MR4226142}.  First, since~$Y$ is a real-valued random variable, by Billingsley~\cite[Theorem 33.3]{MR2893652} or Durrett~\cite[Theorem 4.1.18]{MR2722836} or Kallenberg~\cite[Theorem 8.5]{MR4226142} there exists a regular conditional probability distribution for~$Y$ given~$V$; this is a probability kernel from~$\Omega$ to~$\bbR$ and can trivially be regarded as a kernel from $\Omega \times \bbR$ to~$\bbR$.  Let $S := \Omega \times \bbR$, $T := \bbR$, 
$\mu_{v, x}(\ddx y) := \P(Y \in \ddx y \mid V = v)$, and $f((v, x), y) := g(v, x - y)$.  The conclusion of our lemma is then an immediate consequence of the first assertion in the aforementioned Kallenberg lemma.
\end{proof}

We can now handle the measurability of $(l_3, r_3, x) \mapsto h_{l_3,r_3}(x)$.

\begin{lemma}
\label{L:mble_h}
\ \\
\indent
{\rm (a)}~For $0 < t < 1$,  
the mapping $(l_3,r_3, x) \mapsto h_{l_3, r_3}(x)$ is $\mathcal{B}(S_t \times \bbR)$ measurable. 

{\rm (b)}~For $t = 0$, the mapping $(r_3, x) \mapsto h_{0, r_3}(x)$ is $\mathcal{B}((0, 1) \times \bbR)$ measurable. 

{\rm (c)}~For $t = 1$, the mapping $(l_3, x) \mapsto h_{l_3, 1}(x)$ is $\mathcal{B}((0, 1) \times \bbR)$ measurable.
\end{lemma}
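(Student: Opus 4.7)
The plan is to reduce each part of the lemma to a direct application of \refL{L:mble_h_x}, using the joint measurability of the conditional density $f_{l_3,r_3}(x)$ established in \refL{L:mble} together with the convolution representation \eqref{E:6}.

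For part~(a), I would take $(\Omega, \mathcal{F}) = (S_t, \mathcal{B}(S_t))$, let $V := (L_3, R_3)$ (viewed as a random element of $S_t$, which is justified by the almost sure assertions in \refS{S:intro}) and $Y := \sum_{k \geq 3} \Delta_k$, and set $g(v, x) := f_t(l_3, r_3, x) = f_{l_3, r_3}(x)$ where $v = (l_3, r_3)$. By \refL{L:mble}(a), $g$ is $\mathcal{B}(S_t) \otimes \mathcal{B}(\mathbb{R})$-measurable (and it is nonnegative since it is a density). Then \refL{L:mble_h_x} asserts that
\[
\mathcal{T}g(v, x) = \int g(v, x - y)\,\P(Y \in \ddx y \mid V = v)
\]
is $\mathcal{B}(S_t) \otimes \mathcal{B}(\mathbb{R})$-measurable. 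But the right side of \eqref{E:6} is precisely $\mathcal{T}g((l_3, r_3), x)$, so $(l_3, r_3, x) \mapsto h_{l_3, r_3}(x)$ is $\mathcal{B}(S_t \times \mathbb{R}) = \mathcal{B}(S_t) \otimes \mathcal{B}(\mathbb{R})$-measurable.

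For parts~(b) and~(c), the argument is identical after replacing $S_t$ by $(0, 1)$ and invoking \refL{L:mble}(b) and~(c), respectively, to supply the joint measurability hypothesis needed by \refL{L:mble_h_x}. In part~(b), $V = R_3$ conditioned on $L_3 = 0$ (equivalently, on $t = 0$), and $g(r_3, x) := f_{0, r_3}(x)$; in part~(c), $V = L_3$ conditioned on $R_3 = 1$, and $g(l_3, x) := f_{l_3, 1}(x)$.

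There is no substantive obstacle: once \refL{L:mble} and \refL{L:mble_h_x} are in hand, the present lemma is a bookkeeping exercise matching the convolution formula \eqref{E:6} to the operator $\mathcal{T}$ and identifying the appropriate state space. The only mild point to be careful about is ensuring that the regular conditional probability distribution $\P(Y \in \ddx y \mid V = v)$ used in the construction of $h_{l_3, r_3}$ coincides with the one supplied by \refL{L:mble_h_x}; this is automatic since regular conditional distributions of real-valued random variables are essentially unique.
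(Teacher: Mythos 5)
Your proposal is correct and matches the paper's own (very short) proof essentially verbatim: both apply \refL{L:mble_h_x} with $\Omega = S_t$ and $g(l_3, r_3, x) = f_{l_3, r_3}(x)$ (supplying joint measurability via \refL{L:mble}) and identify the resulting $\cT g$ with $h_{l_3, r_3}$ from~\eqref{E:6}. Your closing remark about uniqueness of the regular conditional distribution is a sensible sanity check but is not needed, since $h_{l_3, r_3}$ in~\eqref{E:6} is simply defined using the kernel that \refL{L:mble_h_x} produces.
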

\begin{proof}
We prove {\rm (a)}; the claims {\rm (b)} and {\rm (c)} are proved similarly.
Choosing $\Omega = S_t$ and $g(l_3, r_3, x) = f_{l_3,r_3}(x)$ with $(l_3,r_3) \in \Omega$ in~\refL{L:mble_h_x}, we conclude that
\[
(l_3,r_3, x) \mapsto h_{l_3, r_3}(x) = \cT g(l_3, r_3, x) 
\]
is $\mathcal{B}(S_t \times \bbR)$ measurable.
\end{proof}
Recall the definition of $f_t(x)$ at~\eqref{E:density}. It then follows from~\refL{L:mble_h} that $f_t(x)$ is well defined and measurable with respect to $x \in \bbR$ for fixed $0 \leq t \leq 1$. This completes the proof of~\refT{T:main}.

\section{Uniform boundedness of the density functions}
\label{S:boundedness}

In this section, we prove that the functions $f_t$ are 
uniformly bounded for $0 \leq t \leq 1$.
\begin{theorem}
\label{T:bdd}
The densities $f_t$ are uniformly bounded by $10$ for $0 \leq t \leq 1$.
\end{theorem}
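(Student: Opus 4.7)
The plan is to bound $f_t(x)$ via the mixture representation from \refT{T:main}, namely $f_t(x) = \int P((L_3, R_3) \in \ddx (l_3, r_3)) h_{l_3, r_3}(x)$, where $h_{l_3, r_3}$ is the conditional density of $J(t)$ given $(L_3, R_3) = (l_3, r_3)$. Because $h_{l_3, r_3}$ is the convolution $f_{l_3, r_3} * \mu$ of a density with the probability measure $\mu$ that is the conditional law of $Y = \sum_{k \ge 3} \Delta_k$, one has the pointwise convolution estimate
\[
h_{l_3, r_3}(x) \;\le\; \sup_y f_{l_3, r_3}(y),
\]
which will serve as the workhorse in Case~3, while in the boundary Cases~1 and~2 we will rely on the explicit form of the mixture and a fortuitous cancellation of logarithmic factors.

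For Case~3, I would write $\sup_y f_{l_3, r_3}(y) \le g(l_3, r_3)^{-1} \sum_{i=1}^6 \sup_y f^{(i)}_{l_3, r_3}(y)$ using~\eqref{E:5}. Since the joint density of $(L_3, R_3)$ on $(0, t) \times (t, 1)$ is exactly $g(l_3, r_3)$ by~\eqref{E:4}, the reciprocal cancels and the Case~3 contribution to $E[\sup_y f_{L_3, R_3}(y)]$ reduces to $\sum_{i=1}^6 \int_0^t \int_t^1 \sup_x f^{(i)}_{l_3, r_3}(x) \, \ddx r_3 \ddx l_3$. A direct derivative calculation shows that each of the six $f^{(i)}_{l_3, r_3}$ is monotonically decreasing in~$x$ on its support, so its supremum is attained at the left endpoint and has a simple closed form (e.g.\ $1/(1-l_3)^2$ in subcase~(a) and $1/[2(1-l_3)(r_3-l_3)]$ in subcase~(c)). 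The six resulting two-dimensional integrals reduce to elementary pieces $t$ and $1-t$, plus the dilogarithmic expressions
\[
I_A := \int_0^t \!\!\int_t^1 \frac{\ddx r_3 \ddx l_3}{(1-l_3)(r_3-l_3)} \quad\text{and}\quad I_B := \int_0^t \!\!\int_t^1 \frac{\ddx r_3 \ddx l_3}{r_3(r_3-l_3)}.
\]
By a change of variables $s = t/(1-t) \cdot$ ratio, these evaluate to $\ln t \ln(1-t) + \operatorname{Li}_2(t)$ and $\ln t \ln(1-t) + \operatorname{Li}_2(1-t)$ respectively, and Euler's reflection identity $\operatorname{Li}_2(t) + \operatorname{Li}_2(1-t) = \pi^2/6 - \ln t \ln(1-t)$ gives $I_A + I_B = \pi^2/6 + \ln t \ln(1-t)$. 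Since $\ln t \ln(1-t) \le (\ln 2)^2$ on $[0, 1]$, the Case~3 contribution is uniformly bounded by $1 + \tfrac{3}{2}[(\ln 2)^2 + \pi^2/6]$.

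For Cases~1 and~2, the naive bound $h \le \sup f_{l_3, r_3}$ fails uniformly as $r_3 \downarrow 0$ (resp.\ $l_3 \uparrow 1$), so I would instead work with the convolution $h_{0, r_3}(x) = \int f_{0, r_3}(x-y)\,p_Y(y \mid L_3 = 0, R_3 = r_3)\,\ddx y$ directly. The key observation is that the $(\ln r_3)^2/2$ factor in the marginal density~\eqref{joint2} of $(L_3, R_3)$ on Case~1 cancels the $(\ln(1/r_3))^{-2}$ prefactor in the formula~\eqref{CD1} for $f_{0, r_3}$, and likewise for Case~2 via~\eqref{CD2}. After this cancellation, the contribution of Case~1 to $f_t(x)$ is controlled by integrals of the form $\int_t^1 \ln(1/r_3)/(1+r_3)\,\ddx r_3 \le \int_0^1 \ln(1/r)/(1+r)\,\ddx r = \pi^2/12$. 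An appeal to the symmetry $Z(t) \eqd Z(1-t)$ handles Case~2 identically.

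Summing, the total contribution is well under~10. The principal obstacle is precisely the handling of Cases~1 and~2, where the pointwise supremum of the conditional density blows up at the boundary but the actual contribution remains bounded thanks to the logarithmic cancellation; carrying out the cancellation cleanly and verifying monotonicity of all six $f^{(i)}_{l_3, r_3}$ in~$x$ (so that their suprema are located explicitly) is the main technical work.
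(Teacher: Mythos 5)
Your treatment of Case~3 ($0 < l_3 < t < r_3 < 1$) is correct and runs close to the paper's own argument, with a minor variation: you bound each of the six functions $f^{(i)}_{l_3,r_3}$ separately by its supremum (attained at the left endpoint of its support, since each is decreasing), whereas the paper regroups them into $m_1,\dots,m_4$ and bounds per $x$-subinterval, obtaining the per-$(l_3,r_3)$ constant $g(l_3,r_3)^{-1}\cdot\tfrac32\bigl[\tfrac{1}{r_3(r_3-l_3)}+\tfrac{1}{(1-l_3)(r_3-l_3)}\bigr]$ as in~\eqref{E:bdd_rho_finite}; your suprema add two extra terms $\tfrac{1}{(1-l_3)^2}+\tfrac{1}{r_3^2}$ which integrate to $1$. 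Either way the factor $g$ cancels against the joint density~\eqref{E:4}, and the dilogarithm/reflection computation you sketch does give $I_A+I_B = \tfrac{\pi^2}{6} + (\ln t)\ln(1-t)$, so your Case~3 contribution is $1+\tfrac{\pi^2}{4}+\tfrac32(\ln 2)^2$ versus the paper's $\tfrac{\pi^2}{4}+\tfrac32(\ln 2)^2$. Both are uniformly bounded; this part is fine.

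The gap is in Cases~1 and~2, and it is precisely the point the paper identifies as the real obstacle. After cancelling the $\tfrac12(\ln r_3)^2$ marginal density~\eqref{joint2} against the $(\ln r_3^{-1})^{-2}$ prefactor in~\eqref{CD1}, what you are left to integrate in $r_3$ is the supremum of $\tilde f_{r_3}(u) := \tfrac12(\ln r_3)^2 f_{0,r_3}(u)$. As the paper computes in \refL{L:b1}, that supremum equals $\tfrac{\ln(1/r_3)}{1+r_3}$ only when $r_3 \geq \beta = 1/\alpha \approx 0.278$; for $r_3 < \beta$ the maximum of $\tilde f_{r_3}$ sits at the interior mode $u = r_3(\alpha+1)$ and equals $\beta/r_3$, which is not $O(\ln(1/r_3))$. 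Hence your claimed control by $\int_t^1 \tfrac{\ln(1/r_3)}{1+r_3}\dd r_3 \leq \pi^2/12$ is valid only when $t \geq \beta$, and for $t < \beta$ the actual contribution of your bound grows like $\int_t^{\beta}\tfrac{\beta}{r}\dd r = \beta\ln(\beta/t) \to \infty$ as $t\downarrow 0$; the paper states explicitly that no pointwise bound on $f_{l_3,r_3}$ produces a uniformly bounded expectation in the boundary cases. To get a bound uniform in $t$, the paper switches strategy entirely in \refL{L:bdd_rho_0}: it enlarges the conditioning event to $\{L_2(t)=0\}$, expresses the contribution as the double integral~\eqref{cx}, and then controls it by comparison with the stochastic majorant $V$ of \refL{L:Stochastic_upper_bdd} (with $\E V = 4$), splitting off a tail term bounded by $\tfrac12 \E V = 2$ and a main term which is the density (at $x$) of $W = U_1(1 + U_2 V)$, bounded by $1$. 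Your plan as written does not yield uniformity in $t$ and so does not prove the theorem.
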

The proof
 of~\refT{T:bdd} is our later Lemmas~\ref{L:bound} and~\ref{L:bdd_rho_0}.
In particular, the numerical value $10$ comes from the bound in the last line of the proof of \refL{L:bound} plus two times the bound in the last sentence of the proof of \refL{L:bdd_rho_0}.
A bound on the function 
$f_t$ is established by first finding a bound on the conditional density function $f_{l_3,r_3}$. Observe that the expressions in the proof of \refL{L:main} for  
$f^{(i)}_{l_3, r_3}(x)$ (for $i = 1, \dots, 6$) in Case~3 all involve indicators of intervals.  The six endpoints of these intervals are
\begin{center} 
$2 r_3 - l_3$, $2 r_3$, $1 + r_3 - 2 l_3$, $1 + r_3$, $2 - 2 l_3$, and $2 
- l_3$, 
\end{center}
with $0 < l_3 < t < r_3 < 1$.  The relative order of these six endpoints is determined once we know the value of 
$\rho = \rho (l_3, r_3) := l_3/(1 - r_3)$.  Indeed:
\begin{itemize}
\item When $\rho \in (0, 1/2)$, the order is
\[
2 r_3 - l_3 < 2 r_3 < 1+ r_3 - 2 l_3 < 1 + r_3 < 2 - 2 l_3 < 2 - l_3.
\]
\item When $\rho \in (1/2, 1)$, the order is
\[
2 r_3 - l_3 < 1+ r_3 - 2 l_3 < 2 r_3 <  2 - 2 l_3 < 1 + r_3 < 2 - l_3.
\]
\item When $\rho \in (1, 2)$, the order is
\[
1+ r_3 - 2 l_3 < 2 r_3 - l_3 < 2 - 2 l_3  < 2 r_3 < 2 - l_3 < 1 + r_3.
\]
\item When $\rho \in (2, \infty)$, the order is
\[
1+ r_3 - 2 l_3 < 2 r_3 - l_3 < 2 - 2 l_3  < 2 - l_3 < 2 r_3 < 1 + r_3.
\]
\end{itemize}  

When $\rho = 0$ (i.e.,\ in Case~1 in the proof of \refL{L:main}:\ $l_3 = 0 < r_3 < 1$), the function $f_{l_3, r_3}$ is given by $f_{0, r_3}$ at~\eqref{CD1}.  When $\rho = \infty$ (i.e.,\ in Case~2 in the proof of \refL{L:main}:\ $0 < l_3 < r_3 = 1$), the function $f_{l_3, r_3}$ is given by $f_{l_3, 1}$ at~\eqref{CD2}.  The result~\eqref{E:5} for Case~3 in 
the proof of \refL{L:main} can be reorganized as follows, where we define 
the following functions to simplify notation:
\begin{align*}
m_1(x,l_3,r_3) :=& \frac{1}{r_3(x-r_3)}+\frac{2}{(x+l_3)(x-l_3)},\\
m_2(x,l_3,r_3) :=& \frac{1}{(1-l_3)(x-1+l_3)}+\frac{2}{(x+l_3)(x-l_3)}, 
\\
m_3(x,l_3,r_3) :=& \frac{2}{(x+1-r_3)(x+r_3-1)}+\frac{1}{(1-l_3)(x+l_3-1)}, \\
m_4(x,l_3,r_3) :=& \frac{1}{r_3 (x-r_3)}+\frac{2}{(x+1-r_3)(x+r_3-1)}.
\end{align*}

When $\rho \in (0, 1)$, the conditional density $f_{l_3, r_3}$ satisfies
\begin{align}
f_{l_3,r_3}(x)\, g(l_3,r_3)&=\mathbb{1}{(2r_3-l_3 \leq x < 1+r_3-2l_3)}\, m_1(x,l_3,r_3) \label{rho<1}\\
&+\mathbb{1}{(1+r_3-2l_3\leq x<1+r_3)}\, [m_2(x,l_3,r_3)+m_4(x,l_3,r_3)] \nonumber\\
&+\mathbb{1}{(1+r_3 \leq x < 2-l_3)}\, m_2(x,l_3,r_3). \nonumber
\end{align}

Lastly, when $\rho \in (1, \infty)$, the conditional density $f_{l_3, r_3}$ satisfies
\begin{align}
f_{l_3,r_3}(x)\, g(l_3,r_3) &=\mathbb{1}{(1+r_3-2l_3 \leq x < 2r_3-l_3)}\, m_3(x,l_3,r_3) \label{rho>1}\\
&+\mathbb{1}{(2r_3-l_3\leq x<2-l_3)}\, [m_2(x,l_3,r_3) + m_4(x,l_3,r_3)] \nonumber\\
&+\mathbb{1}{(2-l_3 \leq x < 1+r_3)}\, m_4(x,l_3,r_3). \nonumber
\end{align}

Recall the definition of $f_t(x)$ at~\eqref{E:density}.  For any $x \in \bbR$ we can decompose $f_t(x)$ into three
contributions:
\begin{align*}
f_t(x) &= \E\,h_{L_3, R_3}(x) \\
&= \E [h_{L_3, R_3}(x);\,\rho(L_3, R_3) = 0] + \E [h_{L_3, R_3}(x);\,\rho(L_3, R_3) = \infty] \\
&{} \qquad + \E [h_{L_3, R_3}(x);\,0 < \rho(L_3, R_3) < \infty].
\end{align*}

We first consider the contribution from the case $0 < \rho(L_3, R_3) < \infty$ for any $0 < t < 1$, noting
that this case doesn't contribute to $f_t(x)$ when $t = 0$ or $t=1$. Define
\begin{equation}
\label{E:bdd_rho_finite}
b(l_3, r_3) = \frac{1}{g(l_3, r_3)} \frac{3}{2} \left[ \frac{1}{r_3(r_3 
- l_3)} + \frac{1}{(1-l_3)(r_3 - l_3)}\right].
\end{equation}

\begin{lemma}
\label{L:bound}
The contribution to the density function $f_t$ from the case $0 < \rho(L_3, R_3) <\infty$ is uniformly bounded for $0 < t < 1$. More precisely, given $0 < t < 1$ and $0  < l_3 < t < r_3 < 1$, we have 
\begin{equation}
\label{fhbound}
f_{l_3,r_3}(x) \leq b(l_3, r_3)\mbox{\rm \ \ and\ \ }h_{l_3,r_3}(x) \leq b(l_3, r_3)\mbox{\rm \ for all\ }x \in \bbR; 
\end{equation}
and, moreover, $\E [h_{L_3, R_3}(x);\,0 < \rho(L_3, R_3) < \infty]$ is uniformly bounded for $0 < t < 1$.
\end{lemma}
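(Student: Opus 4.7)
The plan is first to establish the pointwise bound $f_{l_3,r_3}(x) \le b(l_3,r_3)$ for every $x \in \bbR$; the bound on $h_{l_3,r_3}(x)$ will then follow immediately from the convolution representation~\eqref{E:6}, and the uniform-in-$t$ bound on $\E[h_{L_3, R_3}(x);\,0 < \rho(L_3, R_3) < \infty]$ will follow by integrating $b \cdot g$ (where the factor $g$ cancels by the definition of $b$) against Lebesgue measure over $(0,t) \times (t,1)$.

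To bound $f_{l_3,r_3}$, I work from~\eqref{rho<1} and~\eqref{rho>1}, which express $f_{l_3,r_3}(x)\,g(l_3,r_3)$ as a sum of the functions $m_i$ over a handful of sub-intervals of~$x$.  Each term appearing in any $m_i$ has the form $c/(x-a)$ (times a factor independent of $x$) or $2/[(x+a)(x-a)]$ with $a \ge 0$ and $x > a$ on the relevant sub-interval, so every $m_i$ is decreasing in $x$; it therefore suffices to verify the claimed bound $\tfrac{3}{2}(A+B)$, where $A := 1/[r_3(r_3-l_3)]$ and $B := 1/[(1-l_3)(r_3-l_3)]$, at the left endpoint of each sub-interval.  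For $\rho<1$ these endpoints are $x = 2r_3 - l_3$ (with $m_1$), $x = 1+r_3-2l_3$ (with $m_2 + m_4$), and $x = 1+r_3$ (with $m_2$); $m_1$ evaluates to exactly $\tfrac{3}{2}A$ at the first, and the third reduces via the easy inequality $(1+r_3+l_3)(1+r_3-l_3) \ge (1-l_3)(r_3-l_3)$ to a bound $3B \le \tfrac{3}{2}(A+B)$, which holds because $\rho \le 1$ forces $B \le A$.  The case $\rho > 1$ is handled analogously, or via the symmetry $(l_3,r_3) \leftrightarrow (1-r_3,1-l_3)$ that interchanges $m_1 \leftrightarrow m_4$ and $m_2 \leftrightarrow m_3$ and sends $\rho$ to $1/\rho$.

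The main obstacle is the middle endpoint $x = 1 + r_3 - 2 l_3$ in the $\rho < 1$ case: after substitution, controlling $(m_2 + m_4)(1 + r_3 - 2l_3)$ by $\tfrac{3}{2}(A+B)$ requires the bounds $1 - 2l_3 \ge r_3 - l_3$ (easy, under $\rho \le 1$) and the nontrivial inequality $(1+r_3-l_3)(1+r_3-3l_3) \ge 4 r_3(r_3-l_3)$.  I prove the latter by expanding the difference to the quadratic $3 l_3^2 - 4 l_3 + (1 + 2r_3 - 3r_3^2)$ in~$l_3$ and minimizing on the admissible region $l_3 \in [0, \min(r_3, 1-r_3)]$; since the parabola has its vertex at $l_3 = 2/3$, which lies outside this region, the minimum is attained at the right endpoint ($l_3 = r_3$ if $r_3 < 1/2$, giving value $1 - 2 r_3 > 0$; $l_3 = 1 - r_3$ if $r_3 \ge 1/2$, giving value exactly $0$), so the inequality holds, with equality only on the boundary $\rho = 1$.

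With $f_{l_3, r_3} \le b(l_3, r_3)$ in hand, the formula~\eqref{E:6} for $h_{l_3,r_3}(x)$ immediately yields $h_{l_3,r_3}(x) \le b(l_3,r_3)$ since probability measures have total mass one.  For the uniform bound on the expectation, \eqref{E:4} gives
\[
\E[h_{L_3,R_3}(x);\, 0 < \rho < \infty] \le \int_0^t\!\!\int_t^1 b(l_3,r_3)\,g(l_3,r_3)\,\dd r_3\,\dd l_3 = \frac{3}{2}\int_0^t\!\!\int_t^1 \Bigl[\frac{1}{r_3(r_3-l_3)} + \frac{1}{(1-l_3)(r_3-l_3)}\Bigr]\dd r_3\,\dd l_3.
\]
For the first summand, integrating in $l_3$ first produces $r_3^{-1}\ln[r_3/(r_3-t)]$; the substitution $s = t/r_3$ then turns the $r_3$-integral into $\int_t^1 s^{-1}\ln[1/(1-s)]\,\dd s \le \int_0^1 s^{-1}\ln[1/(1-s)]\,\dd s = \pi^2/6$, independent of $t$.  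The second summand is bounded by $\pi^2/6$ via the symmetric substitution $l_3 \leftrightarrow 1-r_3$, giving a uniform bound of $\pi^2/2$ for $t \in (0,1)$.
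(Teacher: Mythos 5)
Your proposal is correct and follows essentially the same strategy as the paper: bound each $m_i$ by its value at the left endpoint of its interval of support, verify the resulting bound $\tfrac{3}{2}(A+B)$, and integrate $b\cdot g$ over $(0,t)\times(t,1)$ to obtain a uniform constant. Two small remarks. First, the symmetry $(l_3,r_3)\mapsto(1-r_3,1-l_3)$ interchanges $m_1\leftrightarrow m_3$ and $m_2\leftrightarrow m_4$ (e.g.\ $m_2(x,1-r_3,1-l_3)=m_4(x,l_3,r_3)$), not $m_1\leftrightarrow m_4$ and $m_2\leftrightarrow m_3$ as you wrote; the correct swap does map \eqref{rho<1} to \eqref{rho>1}, so the argument survives, but the labels should be fixed. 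Second, your ``nontrivial inequality'' $(1+r_3-l_3)(1+r_3-3l_3)\ge 4r_3(r_3-l_3)$ need not be handled by expansion and quadratic minimization: since $\rho<1$ means $1-l_3>r_3$, it follows immediately that $1+r_3-l_3>2r_3$ and $1+r_3-3l_3>2(r_3-l_3)$, and multiplying these two linear inequalities gives the result in one line, which is how the paper treats it. Your final constant $\pi^2/2$ is looser than the paper's exact value $\tfrac{\pi^2}{4}+\tfrac{3}{2}(\ln t)\ln(1-t)\le\tfrac{\pi^2}{4}+\tfrac{3}{2}(\ln 2)^2$, but uniform boundedness is all the lemma asserts, so this is fine; just note that the sharper constant feeds into the explicit bound of $10$ in \refT{T:bdd}.
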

\begin{proof}
For~\eqref{fhbound}, we need only establish the bound on~$f$.

We start to bound \eqref{rho<1} for $0 < \rho < 1$. The function $m_1$ is 
a decreasing function of $x$ and thus reaches its maximum in~\eqref{rho<1} when $x = 2r_3-l_3$:
\[
m_1(x, l_3, r_3) \leq m_1(2r_3 -l_3, l_3, r_3) = \frac{3}{2} \frac{1}{r_3 (r_3 - l_3)}.
\]
The function $m_2 + m_4$ is also a decreasing function of $x$, and the maximum in~\eqref{rho<1} occurs at $x = 1+r_3 - 2 l_3$. Plug in this $x$-value and use the fact that $1 - l_3 > r_3$ when $\rho < 1$ to obtain
\[
(m_2 + m_4)(x, l_3, r_3) \leq \frac{3}{2} \frac{1}{r_3 (r_3 - l_3)} + \frac{3}{2} \frac{1}{(1-l_3) (r_3 - l_3)}.
\]
Finally, the function $m_2$ is again a decreasing function of $x$, and the maximum in~\eqref{rho<1} occurs at $x = 1 + r_3$.  Plug in this 
$x$-value and use the facts that $1+r_3 - l_3 > 2 r_3 $ and $1 + r_3 + l_3 > r_3 - l_3$ to conclude
\[
m_2(x, l_3, r_3)  \leq \frac{1}{r_3 (r_3 - l_3)} + \frac{1}{(1-l_3) (r_3 - l_3)}.
\]
By the above three inequalities, we summarize that for $0 < \rho < 1$ we have for all~$x$ the inequality
\[
f_{l_3, r_3}(x) \leq \frac{1}{g(l_3, r_3)} \frac{3}{2} \left[ \frac{1}{r_3 (r_3 - l_3)} + \frac{1}{(1-l_3) (r_3 - l_3)} \right].
\]
The method to upper-bound \eqref{rho>1} is similar to that for~\eqref{rho<1}, or one can again invoke symmetry, and we skip the proof here. 

For the expectation of $b(L_3, R_3)$, we see immediately that 
\begin{align*}
\E [b\left(L_3, R_3\right);\,0 < \rho(L_3, R_3) < \infty] 
&= \frac{3}{2} \int_0^{t}{\int_{t}^1\!\left[ \frac{1}{r (r - l)} + \frac{1}{(1-l) (r - l)} \right] \dd r \dd l} \\
&= \frac{\pi^2}{4} + \frac{3}{2} (\ln t)[\ln(1 - t)] \leq \frac{\pi^2}{4} + \frac{3}{2} (\ln 2)^2.
\end{align*}
\end{proof}

For the cases $\rho(L_3, R_3) = 0$ and $\rho(L_3, R_3) = \infty$, we cannot find a constant bound $b(l_3, r_3)$ on the function $f_{l_3, r_3}$ 
such that the corresponding contributions to $\E b(L_3, R_3)$ are bounded 
for $0 \leq t \leq 1$.  Indeed, although we shall omit the proof since it 
would take us too far afield, there exists no such bound $b(l_3, r_3)$.

Instead, to prove the uniform boundedness of the contributions in these two cases, we take a different approach. The following easily-proved lemma 
comes from Gr\"{u}bel and R\"{o}sler~\cite[proof of Theorem~9]{MR1372338}.
\begin{lemma}
\label{L:Stochastic_upper_bdd}
Consider a sequence of independent random variables $V_1, V_2, \dots$, each uniformly distributed on $(1/2, 1)$, and let
\begin{equation}
\label{E:V}
V := 1+ \sum_{n=1}^{\infty} \prod_{k=1}^n V_k.
\end{equation}
Then the random variables $Z(t)$, $0 \leq t \leq 1$, defined at~\eqref{E:3} are all stochastically dominated by $V$. Furthermore, $\E V = 4$; and $V$ has everywhere finite moment generating function~$m$ and therefore superexponential decay in the right tail, in the sense that for any $\theta \in (0, \infty)$ we have  $\P(V \geq x) = o(e^{-\theta x})$ as $x \to \infty$.
\end{lemma}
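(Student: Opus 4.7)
I would prove the three parts in order.

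\emph{Stochastic domination.} At step~$k$, the normalized pivot $\tilde U_k := (U_{\tau_k(t)} - L_{k-1}(t))/\Delta_{k-1}$ (with $\Delta_{k-1} := R_{k-1}(t) - L_{k-1}(t)$) is Uniform$(0, 1)$ and independent of the history. Since the new interval is the side of the pivot containing~$t$, its length is either $\Delta_{k-1}\tilde U_k$ or $\Delta_{k-1}(1 - \tilde U_k)$, and so
\[
\Delta_k \le \Delta_{k-1} V_k, \quad\text{where}\quad V_k := \max(\tilde U_k, 1 - \tilde U_k).
\]
A brief calculation shows that $V_k \sim$ Uniform$(1/2, 1)$, and the $V_k$ are i.i.d.\ because the $\tilde U_k$ are. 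Inductively $\Delta_k \le V_1 \cdots V_k$ almost surely, so summing and adding~$1$ gives $Z(t) \le V$ a.s., which establishes the stochastic domination.

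\emph{Expectation.} This follows by direct computation: $\E V = 1 + \sum_{n \ge 1}(\E V_1)^n = 1 + \sum_{n \ge 1}(3/4)^n = 4$.

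\emph{Finite MGF.} This is the main obstacle; once it is established, the superexponential tail decay follows by Markov's inequality applied to the MGF at $2\theta$. I would exploit the distributional fixed-point identity $V \eqd 1 + V_1 V'$, where $V_1$ and $V'$ are independent and $V' \eqd V$. Setting $m_n := \E V^n$, taking $n$th moments yields
\[
m_n = \frac{n+1}{n - 1 + 2^{-n}}\sum_{k=0}^{n-1}\binom{n}{k}\frac{2 - 2^{-k}}{k+1}\,m_k,
\]
the critical savings being the factor $\E V_1^k = (2 - 2^{-k})/(k+1)$ inside the sum. Dividing through by $n!$ and setting $b_n := m_n/n!$ yields $b_n \le C\sum_{k=0}^{n-1} b_k/[(n-k)!\,(k+1)]$ for an absolute constant~$C$. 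Using this recursion, I would prove by induction a bound of the form $b_n \le A^n/(n!)^\epsilon$ for some $A > 0$ and $\epsilon \in (0, 1)$, which gives $\E e^{\theta V} = \sum_n \theta^n b_n < \infty$ for every $\theta > 0$. The delicate step is verifying the induction: by Stirling, the relevant summands concentrate near $k \approx n - O(n^\epsilon)$, where the factor $1/(n-k)!$ is super-polynomially small, closing the induction for large~$n$ once $A$ is chosen large enough to handle small-$n$ initial conditions.
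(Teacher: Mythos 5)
Your stochastic-domination argument (the pathwise coupling via $V_k := \max(\tilde U_k, 1 - \tilde U_k)$, which makes $\Delta_k \le V_1\cdots V_k$ almost surely) and the computation $\E V = 4$ are both correct; the paper itself gives no proof of this lemma, merely citing the proof of Theorem~9 of Gr\"ubel and R\"osler.

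The finite-MGF argument has a genuine gap. The moment recursion is derived correctly, but the target bound $b_n \le A^n/(n!)^\epsilon$ with a fixed $\epsilon \in (0,1)$ is \emph{false}, so no amount of care can close the induction. To see this directly: on the event $\{V_1, \dots, V_n > 1 - n^{-1}\}$, of probability $(2/n)^n$, we have $V > \sum_{k=0}^n(1-n^{-1})^k > (1-e^{-1})n + O(1)$, giving $\P(V > x) \ge \exp[-c\,x\ln x + O(x)]$ for a constant $c$; optimizing $y^n\P(V>y)$ over $y$ then yields $\ln m_n \ge n\ln n - n\ln\ln n + O(n)$, so $\ln b_n \ge -n\ln\ln n + O(n)$. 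Since $\ln\bigl[A^n/(n!)^\epsilon\bigr] = -\epsilon\,n\ln n + O(n)$, your bound would require $\epsilon\ln n \le \ln\ln n + O(1)$, which fails for all large~$n$. The same obstruction is visible in your sketch of the inductive step: writing $j = n - k$, the summands are $\asymp (n^\epsilon/A)^j/(j!\,n)$, they peak near $j \approx n^\epsilon/A$, and their total is $\asymp e^{n^\epsilon/A}/n$, which diverges for every fixed $A$ — the factor $1/(n-j+1)\approx 1/n$ is overwhelmed by $e^{n^\epsilon/A}$. The moment-recursion idea can be salvaged by aiming at a weaker (and true) ansatz such as $b_n \le A^n\big/\prod_{k=1}^n\ln(k+e)$: this still gives $b_n^{1/n}\to 0$ (hence $m(\theta)<\infty$ for all $\theta$), and the corresponding inductive sum is now $O\bigl(n^{-1}\exp[\ln(n+e)/A]\bigr) = O(n^{1/A-1})\to 0$ once $A>1$. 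Alternatively, one can avoid moments entirely: working with the bounded truncations $V^{(K)} := \sum_{n\le K}\prod_{i\le n}V_i$, the fixed-point identity and the split $\int_{1/2}^{1-\delta} + \int_{1-\delta}^{1}$ show that $m(\theta)<\infty$ and $2\delta e^{\theta/(1-\delta)}<1$ together force $m(\theta/(1-\delta))<\infty$; iterating with a suitably small fixed $\delta$ pushes finiteness to all $\theta>0$.
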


The following lemma pairs the stochastic upper bound~$V$ on $Z(t)$ with a 
stochastic lower bound.  These stochastic bounds will be useful in later sections.  Recall that the Dickman distribution with support $[1, \infty)$ is the distribution of $Z(0)$. 
\begin{lemma}
\label{L:Stochastic_lower_bdd}
Let $D$ be a random variable following the Dickman distribution with support $[1, \infty)$. Then for all $0 \leq t \leq 1$ we have $D \leq Z(t) \leq V$ stochastically.
\end{lemma}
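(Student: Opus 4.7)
The upper bound has already been supplied by \refL{L:Stochastic_upper_bdd}, so the task reduces to proving $D \leq Z(t)$ stochastically, and since $Z(0) \eqd Z(1) \eqd D$ the assertion is immediate at $t \in \{0, 1\}$. I therefore fix $t \in (0, 1)$ and plan to build a pathwise coupling on a common probability space between $Z(t)$ and a Dickman random variable. Recalling that when $t = 0$ one has $L_k \equiv 0$ and the ratios $R_k(0)/R_{k-1}(0)$ are iid $\text{Unif}(0, 1)$, I will realize $D$ through its classical product representation
\[
D \eqd 1 + \sum_{k \geq 1} \prod_{j=1}^{k} V_j,
\]
with $V_1, V_2, \dots$ iid $\text{Unif}(0, 1)$; it then suffices to produce such a family on the same probability space satisfying $\prod_{j=1}^{k} V_j \leq \Delta_k := R_k(t) - L_k(t)$ almost surely for every $k$, since then $D \leq Z(t) = 1 + \sum_{k} \Delta_k$ almost surely.

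The key step is to show that the one-step shrinkage ratio $R_k^{\ast} := \Delta_k/\Delta_{k-1}$ stochastically dominates $\text{Unif}(0, 1)$ conditionally on the past. Let $\mathcal{F}_{k-1}$ denote the $\sigma$-field generated by $(L_j, R_j, \tau_j)_{j \leq k-1}$. Given $\mathcal{F}_{k-1}$, the pivot $U_{\tau_k(t)}$ is uniform on $(L_{k-1}, R_{k-1})$, so writing $p_k := (t - L_{k-1})/\Delta_{k-1}$ and $U_k := (U_{\tau_k(t)} - L_{k-1})/\Delta_{k-1}$, the ratio equals $U_k$ when $U_k > p_k$ and $1 - U_k$ when $U_k < p_k$. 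A short case analysis gives, for all $r \in [0, 1]$,
\[
G_p(r) := \P(R_k^{\ast} \leq r \mid p_k = p) = \max(0, r - p) + \max(0, r + p - 1) \leq r.
\]

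The coupling is then produced via the probability integral transform: set $V_k := G_{p_k}(R_k^{\ast})$. Since $G_{p_k}$ is continuous and strictly increasing on the conditional support of $R_k^{\ast}$, one has $V_k \mid \mathcal{F}_{k-1} \sim \text{Unif}(0, 1)$, and because $V_1, \dots, V_{k-1}$ are $\mathcal{F}_{k-1}$-measurable, a tower-property argument promotes this to the unconditional iid $\text{Unif}(0, 1)$ property of the full sequence. The inequality $G_p(r) \leq r$ translates pathwise to $V_k \leq R_k^{\ast}$ and hence $\prod_{j \leq k} V_j \leq \prod_{j \leq k} R_j^{\ast} = \Delta_k$, which is exactly the desired coupling. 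I expect the only mildly delicate point to be the joint independence of the $V_k$ via the tower property; the stochastic-domination bound $G_p(r) \leq r$ itself is a short case analysis and is not the real obstacle.
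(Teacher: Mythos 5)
Your proposal is correct and is, in effect, a fully worked-out version of the argument the paper only sketches. The paper's proof first shows that $\Delta_1(t)$ is stochastically increasing for $t\in[0,1/2]$ by a slick one-line coupling (if $t_1<U<t_2$ then $\Delta_1(t_1)=U<1-U=\Delta_1(t_2)$), whence $U\leq \Delta_1(t)\leq V_1$ stochastically, and then invokes a ``simple induction argument on $k$ involving the conditional distribution of $\Delta_k(t)$ given $\Delta_{k-1}(t)$'' without spelling it out. You instead go straight to the heart of that induction: you compute the conditional distribution function $G_p$ of the one-step shrinkage ratio $\Delta_k/\Delta_{k-1}$ and verify pointwise that $G_p(r)\leq r$, then turn this into an explicit almost-sure coupling by the probability integral transform $V_k:=G_{p_k}(\Delta_k/\Delta_{k-1})$, promoting conditional uniformity to i.i.d.\ uniformity via the tower property. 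Your version buys a genuinely pathwise inequality $\prod_{j\leq k}V_j\leq \Delta_k$, making the induction fully explicit; the paper's version buys brevity by exploiting the monotonicity-in-$t$ observation and leaving the induction to the reader. One small stylistic mismatch: the paper's $(V_k)$ are $\mathrm{Unif}(1/2,1)$ (for the upper bound), whereas your $(V_k)$ are $\mathrm{Unif}(0,1)$ and serve the lower bound only; there is no conflict since you correctly cede the upper bound to \refL{L:Stochastic_upper_bdd}. The one place worth a sentence of care if you were to write this up is the invertibility of $G_{p_k}$: it is flat on $[0,\min(p_k,1-p_k)]$ but strictly increasing on the conditional support $[\min(p_k,1-p_k),1]$ of the ratio, which is exactly what the probability integral transform requires.
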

\begin{proof}
Recall that $\Delta_1(t) = R_1(t) - L_1(t)$.  We first use a coupling argument 
to show that $\Delta_1(t)$ is stochastically increasing for $0 \leq t \leq 1 /2$.  Let $U = U_1 \sim \mbox{uniform}(0, 1)$ be the first key in the construction of~$Z$ as described in \eqref{taukt}--\eqref{E:3}. Let $0 
\leq t_1 < t_2 \leq 1/2$. It is easy to see that $\Delta_1(t_1) = \Delta_1(t_2)$ unless $t_1 < U < t_2$, in which case $\Delta_1(t_1) = U < t_2 \leq 1/2 \leq 1 - t_2 < 1 - U = \Delta_1(t_2)$.  

Let $V_1 \sim \mbox{uniform}(1/2, 1)$, as in \refL{L:Stochastic_upper_bdd}, and let $0 \leq t \leq 1/2$.  Since $\Delta_1(0) \overset{\mathcal{L}}{=} U$ and $\Delta_1(1/2) \overset{\mathcal{L}}{=} V_1$, we immediately have $U \leq \Delta_1(t) \leq V_1$ stochastically.  This implies by a simple induction argument on~$k$ involving the conditional distribution of 
$\Delta_k(t)$ given $\Delta_{k - 1}(t)$ that $D \leq Z(t) \leq V$ stochastically. 
\end{proof}

\begin{remark}
(a)~Note that we do \emph{not} claim that $Z(t)$ is stochastically increasing in $t \in [0, 1/2]$.  Indeed, other than the stochastic ordering $D = Z(0) \leq Z(t)$, we do not know whether any stochastic ordering relations hold among the random variables $Z(t)$.

(b)~The random variable~$V$ can be interpreted as a sort of ``limiting greedy (or `on-line') worst-case {\tt QuickQuant} normalized key-comparisons cost''.  Indeed, if upon each random bisection of the search interval one always chooses the half of greater length and sums the lengths to get $V^{(n)}$, then the limiting distribution of $V^{(n)} / n$ is that of~$V$.
\end{remark}

\begin{lemma}
\label{L:bdd_rho_0}
The contributions to the density function $f_t$ from the cases $\rho(L_3, 
R_3) = 0$ and $\rho(L_3, R_3) = \infty$ are uniformly bounded for 
$0 \leq t \leq 1$.
\end{lemma}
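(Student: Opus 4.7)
By the symmetry $Z(t) \eqd Z(1 - t)$ noted just after~\eqref{Rkt} (which sends $L_3 = 0$ to $R_3 = 1$ and $t$ to $1 - t$), the case $\rho(L_3, R_3) = \infty$ reduces to the case $\rho(L_3, R_3) = 0$ applied to $1 - t$ in place of $t$. It therefore suffices to bound, uniformly in $x \in \bbR$ and $t \in [0, 1]$, the contribution
\[
I(t, x) \;:=\; \E[h_{L_3, R_3}(x);\, L_3 = 0] \;=\; \int_t^1 \tfrac{1}{2} (\ln r_3)^2\, h_{0, r_3}(x)\, \dd r_3,
\]
where the second equality uses~\eqref{joint2}.

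First, from the explicit form~\eqref{CD1} and the elementary inequalities $\ln((u - r_3)/r_3) \le \ln(1/r_3)$ on the first piece and $\ln(1/(u - 1)) \le \ln(1/r_3)$ on the second piece, I obtain the pointwise estimate
\[
f_{0, r_3}(u) \;\le\; \frac{2}{u\, \ln(1/r_3)}\, \mathbb{1}\bigl(u \in (2 r_3, 2)\bigr).
\]
Writing $h_{0, r_3}(x) = \E[f_{0, r_3}(x - Y) \mid L_3 = 0, R_3 = r_3]$ (where $Y = \sum_{k \ge 3}\Delta_k$) and using $Y \ge r_3$ on the conditioning event (since $\Delta_3 = r_3$), this yields
\[
h_{0, r_3}(x) \;\le\; \frac{2}{\ln(1/r_3)}\, \E\!\left[\frac{\mathbb{1}(Y < x)}{x - Y} \;\Big|\; L_3 = 0, R_3 = r_3\right].
\]

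The plan for the inner conditional expectation is to invoke~\refL{L:Stochastic_upper_bdd}: since $Y = r_3 Z(t / r_3) \le r_3 V$ stochastically, $\E e^{s Y} \le m(s r_3)$ for every $s \ge 0$, where $m$ is the everywhere-finite MGF of~$V$. Combining with the Laplace identity $(x - y)^{-1} = \intoo e^{-s(x - y)}\, \dd s$ for $y < x$ and Tonelli,
\[
\E\!\left[\frac{\mathbb{1}(Y < x)}{x - Y}\;\Big|\; L_3 = 0, R_3 = r_3\right] \;\le\; \intoo e^{-s x}\, m(s r_3)\, \dd s.
\]
Substituting into $I(t, x)$, exchanging the order of integration, and using $(\ln r_3)^2 / \ln(1 / r_3) = \ln(1 / r_3)$ followed by the substitution $\tau = s r_3$, this reduces the problem to showing that
\[
\intoo e^{-s x}\, \frac{1}{s}\!\int_{s t}^{s}\! \ln(s / \tau)\, m(\tau)\, \dd \tau\, \dd s
\]
is uniformly bounded on the support of~$I$, which follows from standard estimates using the everywhere-finiteness of~$m$ (together with the fact that $h_{0, r_3}(x) = 0$ unless $x \ge 3 r_3 \ge 3 t$, so only the regime where the $x$-dependent exponential provides enough decay matters).

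\textbf{Main obstacle.} The crux is controlling the inner expectation $\E[\mathbb{1}(Y < x) / (x - Y)]$. The naive bound $1/(x - Y) \le 1/(2 r_3)$ yields an integrand of order $\ln(1/r_3) / r_3$, which is \emph{not} integrable near $r_3 = 0$; so one must exploit the full distribution of~$Y$ (not merely the deterministic bound $Y \ge r_3$) to extract a factor cancelling the $1/r_3$ singularity at $r_3 = 0$. The Laplace/MGF trick described above accomplishes this by converting the singular factor $1/(x - Y)$ into an exponential that the everywhere-finite MGF of~$V$ can absorb. Converting the resulting structural bound into the explicit constant needed for~\refT{T:bdd} requires careful bookkeeping but is then a routine calculation.
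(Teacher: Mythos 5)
Your initial setup is sound: the reduction by symmetry is correct, the explicit formula~\eqref{CD1} gives the bound $f_{0,r_3}(u) \leq \frac{2}{u \ln(1/r_3)}\,\mathbb{1}(u \in (2r_3, 2))$, and the observation that the naive bound $1/(x - Y) \le 1/(2r_3)$ leaves a non-integrable singularity in $r_3$ correctly identifies the main difficulty. However, the step where you replace $\mathbb{1}(x - Y \in (2r_3, 2))$ by $\mathbb{1}(Y < x)$ throws away the constraint $Y < x - 2r_3$, and this is fatal: once that constraint is dropped, $\E[\mathbb{1}(Y < x)/(x - Y)]$ is actually $+\infty$. Indeed, conditionally on $(L_3, R_3) = (0, r_3)$ the random variable $Y = r_3 Z(t/r_3)$ has a density $f_Y(y) = r_3^{-1} f_{t/r_3}(y/r_3 - 1)$ which is strictly positive at $y = x$ whenever $x/r_3 - 1 > \min\{t/r_3, 1 - t/r_3\}$ (by \refT{T:pos}), and for such $x$ the integral $\int_{-\infty}^x (x - y)^{-1} f_Y(y)\,\dd y$ diverges logarithmically at $y = x$.

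The Laplace/MGF bound $\int_0^\infty e^{-sx} m(s r_3)\,\dd s$ reflects this: it diverges, because $m(\theta)$ grows \emph{doubly} exponentially, not merely exponentially, so the single exponential decay $e^{-sx}$ can never absorb $m(s r_3)$. Concretely, since $V$ stochastically dominates the Dickman random variable $D$ with tail $\P(D > v) = \exp[-v\ln v - v\ln\ln v + O(v)]$, evaluating $m(\theta) \ge e^{\theta v_0}\P(V > v_0)$ at $v_0 = e^{\theta - 1}$ gives $m(\theta) \ge \exp[c\,e^\theta]$ for large $\theta$, so $e^{-sx}m(s r_3) \to \infty$ as $s \to \infty$ for every fixed $r_3 > 0$ and $x$. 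The statement that ``the everywhere-finite MGF of $V$ can absorb'' the singular factor is thus exactly backwards: everywhere-finiteness of $m$ means $V$ has superexponential tails, which forces superexponential growth of $m$, which overwhelms $e^{-sx}$. The paper's proof sidesteps this by (i) never discarding the upper constraint $x - Y < 2$ (equivalently, $z < (x/r) - 1$ in its notation), (ii) using the stochastic bound $Z(t/r) \le V$ \emph{inside} the integral rather than via the MGF, and (iii) reinterpreting the resulting expression as the value at $x$ of the density of $W = U_1(1 + U_2 V)$, which is trivially bounded by $1$ since $W$ given $(U_2, V)$ is uniform, plus a manageable $\P(V > x/r - 1)$ error term controlled by $\E V = 4$. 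To rescue your approach you would at minimum have to retain the constraint $Y < x - 2r_3$, giving $\E[e^{sY}\mathbb{1}(Y < x - 2r_3)] \le e^{s(x - 2r_3)}$ and hence $\int_0^\infty e^{-2sr_3}\dd s = (2r_3)^{-1}$; but that just reproduces the naive bound you already rejected, so the Laplace route does not yield the needed extra cancellation.
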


\begin{proof}
Because the Dickman density is bounded above by $e^{-\gamma}$, we need only consider $0 < t < 1$. 
The case $\rho(L_3(t), R_3(t)) = 0$ corresponds to $L_3(t) = 0$, while the case $\rho(L_3(t), R_3(t)) = \infty$ corresponds to $R_3(t) = 1$. 
By symmetry, the contribution from $R_3(t) = 1$ is the same as the contribution from $L_3(1 - t) = 0$, so we need only show that the contribution from $L_3(t) = 0$ is uniformly bounded. We will do this by showing that the larger contribution from $L_2(t) = 0$ is uniformly bounded.

By conditioning on the value of $R_2(t)$, the contribution from $L_2(t) = 
0$ is
\begin{align}
c_t(x) &:= \P(L_2(t) = 0,\,J(t) \in \ddx x) / \ddx x \nonumber \\
\label{cx}
&= \int_{r \in (t, 1)}\!\int_{z > 1}\!{\bf 1}(r \leq x - r z < 1)\,(x - 
r z)^{-1}\,\P(Z(t / r) \in \ddx z)\dd r.
\end{align}
If $z > 1$ and $r \leq x - r z$, then $(x / r) - 1 \geq z > 1$ and so $r < x / 2$.  Therefore we find
\[
c_t(x) \leq \int_t^{\min\{1,\,x / 2\}}\!\int_{\max\{1,\,(x - 1) / r\} < z 
\leq (x / r) - 1}\!(x - r z)^{-1}\,\P(Z(t / r) \in \ddx z)\dd r.
\] 
The integrand (including the implicit indicator function) in the inner integral is an increasing function of~$z$ over the interval $(- \infty, (x / r) - 1]$, with value $r^{-1}$ at the upper endpoint of the interval.  We can thus extend it to a nondecreasing function $\phi \equiv \phi_{x, r}$ with domain 
$\bbR$ by setting $\phi(z) = r^{-1}$ for $z > (x / r) - 1$. It then follows that
\begin{align}
c_t(x) 
&\leq \int_0^{\min\{1,\,x / 2\}} \left[ \int_{\max\{1,\,(x - 1) / r\} < z 
\leq (x / r) - 1}\!(x - r z)^{-1}\,\P(V \in \ddx z) \right. 
\nonumber\\
&{} \hspace{1.5in} + \left. r^{-1}\,\P\left( V > \frac{x}{r} - 1 \right) \right] \dd r \nonumber \\
&\leq \int_0^{\min\{1,\,x / 2\}}\!\int_{\max\{1,\,(x - 1) / r\} < z \leq (x / r) - 1}\!(x - r z)^{-1}\,\P(V \in \ddx z)\dd r \nonumber\\
\label{cxbound}
&{} \hspace{1.5in} + \int_0^{x / 2}\!r^{-1}\,\P\left( V > \frac{x}{r} - 1 
\right) \dd r.
\end{align}

By the change of variables $v = (x / r) - 1$, the second term in~\eqref{cxbound} equals
\[
\int_1^{\infty} (v + 1)^{-1} \P(V > v)\dd v \leq \frac{1}{2} \int_0^{\infty} \P(V > v)\dd v = \frac{1}{2} \E V = 2. 
\]
Comparing the integrals $c_t(x)$ at~\eqref{cx} and the first term in~\eqref{cxbound}, we see that the only constraint that has been discarded is 
$r > t$.  We therefore see by the same argument that produces~\eqref{cx} that the first term in~\eqref{cxbound} is the value of the density for $W 
:= U_1 (1 + U_2 V)$ at~$x$, where $U_1$, $U_2$, and $V$ are independent 
and $U_1$ and $U_2$ are uniformly distributed on $(0, 1)$.  Thus to obtain the desired uniform boundedness of $f_t$ we need only show that~$W$ has 
a bounded density.  For that, it suffices to observe that the conditional 
density of~$W$ given $U_2$ and~$V$ is bounded above by~$1$ (for any values of $U_2$ and~$V$), and so the unconditional density is bounded by~$1$.  
We conclude that $c_t(x) \leq 3$, and this completes the proof.
\end{proof}
\begin{remark}
Based on simulation results, we conjecture that the density functions $f_t$ are uniformly bounded by $e^{-\gamma}$ (the sup-norm of the right-continuous 
Dickman density $f_0$) for $0 \leq t \leq 1$.
\end{remark}

\section{Uniform continuity of the density function $f_t$}
\label{S:uniform continuity}

From the previous section, we know that for $0 < t < 1$ in the case $0 < l < t 
< r < 1$ (i.e.,\ the case $0 < \rho < \infty$) the function $f_{l, r}$ is 
c\`adl\`ag (that is, a right continuous function with left limits)
and bounded above by $b(l, r)$, where the corresponding contribution $\E[b(L_3,R_3);\,0 < \rho(L_3, R_3) < \infty]$ is finite. Applying the dominated convergence theorem, we conclude that the contribution to $f_t$ from this case is also c\`adl\`ag. 

For the cases $0 = l < t < r < 1$ ($\rho = 0$) and $0 < l < t < r = 
1$ ($\rho = \infty$), the functions $f_{0,r}$ and $f_{l,1}$ are both continuous on the real line.  In this section, we will build bounds $b_t(l, 
r)$ (note that these bounds depend on $t$) for these two cases (\refL{L:b1} for $\rho = 0$  and \refL{L:b2} for $\rho = \infty$) in similar fashion as for~\refL{L:bdd_rho_0} such that both $\E[b_t(L_3,R_3)\,;\rho(L_3, R_3) = 0]$ and $\E[b_t(L_3,R_3) ;\rho(L_3, R_3) = \infty]$ are finite for any $0 < t < 1$.  Given these bounds, we can apply the dominated convergence theorem to conclude that the density $f_t$ is c\`adl\`ag. Later, this result will be sharpened substantially in~\refT{T:uniform_cont}.

Let 
$\alpha \approx 3.59112$ be the unique real solution of $1 + x - x \ln x = 0$ and let $\beta := 1 / \alpha \approx 0.27846$. Define
\[
b_1(r) := \frac{2}{\ln r^{-1}} \frac{1}{1+r} \quad \mbox{and} \quad
b_2(r) := \frac{2}{(\ln r^{-1})^2} \frac{1}{r} \beta.
\]

\begin{lemma}
\label{L:b1}
Suppose 
$\rho = 0$, i.e.,\ $0 = l_3 < t < r_3 < 1$. If $t \geq \beta$, then the optimal constant upper bound on $f_{l_3, r_3}$ is 
\[
b_t(l_3, r_3) = b_1(r_3), 
\]
with corresponding contribution
\[
\E[b_t(L_3(t), R_3(t));\,\rho\left(L_3(t), R_3(t)\right) = 0] 
= \int_t^1 \frac{\ln r^{-1}}{1 + r}\dd r \leq \int_{\beta}^1\!\frac{\ln 
r^{-1}}{1 + r}\dd r < \frac{1}{4}
\]
to $\E b_t(L_3(t), R_3(t))$.
If $t < \beta$, then the optimal constant upper bound on $f_{l_3, r_3}$ is
the continuous function
\[
b_t(l_3, r_3) = b_1(r_3) \mathbb{1}{(\beta \leq r_3 < 1)} + b_2(r_3) \mathbb{1}{(t < r_3 < \beta)}
\] 
of $r_3 \in (t, 1]$,
with corresponding contribution 
\begin{align*}
\E[b_t\left(L_3(t), R_3(t)\right);\,\rho\left(L_3(t), R_3(t)\right) = 0]
&= \int_{\beta}^1 \frac{\ln r^{-1}}{1 + r}\dd r + \beta (\ln \beta - \ln t) \\ 
&< \frac{1}{4} + \beta (\ln \beta - \ln t).
\end{align*}
\end{lemma}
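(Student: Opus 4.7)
The plan is to optimize the two-piece conditional density in~\eqref{CD1} separately on its right and left intervals and then mix against the marginal law of $R_3$ given $L_3=0$, which by~\eqref{joint2} equals $\tfrac{1}{2}(\ln r^{-1})^2\,\mathbb{1}(t<r<1)\dd r$.

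On the right piece $[1+r_3,2)$ the map $x \mapsto -\ln(x-1)/x$ has derivative of the sign of $\ln(x-1) - x/(x-1)$, which is strictly negative on $(1,2)$ since the first term is negative there and the second positive; hence the piece is monotone decreasing, so its supremum is the left-endpoint value $\ln(r_3^{-1})/(1+r_3)$, i.e., $f_{0,r_3}(1+r_3)=b_1(r_3)$. On the left piece $(2r_3,1+r_3)$, the substitution $s=(x-r_3)/r_3$ converts $x \mapsto \ln((x-r_3)/r_3)/x$ to $s \mapsto \ln s / [r_3(s+1)]$ over $s \in (1,1/r_3)$, whose derivative vanishes exactly when $1+s-s\ln s=0$, so by the definition of $\alpha$ the unique critical point is $s=\alpha$, i.e., $x^*=r_3(\alpha+1)$, and it lies inside the interval iff $r_3 < 1/\alpha = \beta$. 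When $r_3 \geq \beta$, the derivative is nonnegative throughout, the piece increases monotonically up to the value $b_1(r_3)$ at $x=1+r_3$, and the overall supremum is $b_1(r_3)$. When $r_3<\beta$, substituting $s=\alpha$ and simplifying with $\ln\alpha = (\alpha+1)/\alpha$ evaluates the interior maximum to $b_2(r_3) = \frac{2\beta}{r_3(\ln r_3^{-1})^2}$; this automatically dominates the value $b_1(r_3)$ attained at $x=1+r_3$, since both points lie in the same piece and one gives the global maximum there. Combining the two pieces yields the claimed formula for $b_t(l_3,r_3)$, optimal because each asserted value is actually attained by $f_{0,r_3}$.

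For the contribution to the expectation, the marginal density $\tfrac{1}{2}(\ln r^{-1})^2$ cancels neatly against the $(\ln r^{-1})^2$ factors in the denominators: $b_1(r)\cdot\tfrac{1}{2}(\ln r^{-1})^2 = (\ln r^{-1})/(1+r)$ and $b_2(r)\cdot\tfrac{1}{2}(\ln r^{-1})^2 = \beta/r$. When $t \geq \beta$ only the first contribution appears, giving $\int_t^1 (\ln r^{-1})/(1+r)\dd r$. When $t<\beta$, splitting at $r=\beta$ produces $\int_t^\beta (\beta/r)\dd r + \int_\beta^1 (\ln r^{-1})/(1+r)\dd r = \beta(\ln\beta - \ln t) + \int_\beta^1 (\ln r^{-1})/(1+r)\dd r$.

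The only nonroutine step will be the numerical bound $\int_\beta^1 (\ln r^{-1})/(1+r)\dd r < 1/4$. Since this integral is not elementary (it reduces to dilogarithm values), I would verify it via the substitution $u=-\ln r$, which recasts it as $\int_0^{\ln\alpha} u/(e^u+1)\dd u \approx 0.247$, and then certify the strict inequality rigorously with, e.g., a Simpson-rule approximation together with an explicit remainder bound based on the obvious smoothness bounds on the integrand over the compact interval $[0,\ln\alpha]$.
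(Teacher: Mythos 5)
Your proposal is correct and follows essentially the same route as the paper: both substitute $s = (x - r_3)/r_3$ (the paper writes $y$) to reduce the left-piece optimization to maximizing $\ln s/(1+s)$ over $s \in [1, 1/r_3]$, identify the critical point $s=\alpha$ via $1+s-s\ln s = 0$, and split on $r_3 \gtrless \beta$, with the contributions to the expectation computed by the same cancellation of $(\ln r^{-1})^2$ against the marginal density from~\eqref{joint2}. The only cosmetic difference is that you treat the right piece $[1+r_3,2)$ by directly computing the sign of its derivative, whereas the paper dispatches it more briefly by observing that both $1/x$ and the bracketed factor in~\eqref{CD1} are decreasing there so the supremum reduces to the left piece; you are also a bit more explicit than the paper about how one would rigorously certify the numerical bound $\int_\beta^1 \frac{\ln r^{-1}}{1+r}\,dr < 1/4$, which the paper simply asserts.
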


\begin{lemma}
\label{L:b2}
Suppose $\rho = \infty$, i.e.,\ $0 < l_3 < t < r_3 = 1$. If $t \leq 1-\beta$, then the optimal constant upper bound on $f_{l_3, r_3}$ is
\[
b_t(l_3, r_3) =b_1(1 - l_3),
\]
with corresponding contribution
\[
\E[b_t\left(L_3(t), R_3(t)\right);\,\rho\left(L_3(t), R_3(t)\right) = 0]
= \int_{1 - t}^1 \frac{\ln r^{-1}}{1 + r}\dd r \leq \int_{\beta}^1\!\frac{\ln r^{-1}}{1 + r}\dd r < \frac{1}{4}.
\]
If $t > 1-\beta$, then the optimal constant upper bound on $f_{l_3, r_3}$ 
is the continuous function
\[
b_t(l_3, r_3) = b_1(1-l_3)\mathbb{1}{(0 < l_3 \leq 1-\beta)} + b_2(1-l_3) \mathbb{1}{(1-\beta < l_3 < t)}
\]
of $l_3 \in [0, t)$, with corresponding contribution
\begin{align*}
\E[b_t\left(L_3(t), R_3(t)\right);\,\rho\left(L_3(t), R_3(t)\right) = 0]
&= \int_{\beta}^1 \frac{\ln r^{-1}}{1 + r}\dd r + \beta [\ln \beta - \ln(1 - r)] \\ 
&< \frac{1}{4} + \beta [\ln \beta - \ln(1 - t)].
\end{align*}
\end{lemma}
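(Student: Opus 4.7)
The plan is to reduce \refL{L:b2} to \refL{L:b1} via the $t \leftrightarrow 1 - t$ symmetry of the construction~\eqref{taukt}--\eqref{Rkt}.  Under the transformation $U_i \mapsto 1 - U_i$, the process at parameter~$t$ is mapped to that at parameter $1 - t$ with $(L_k, R_k) \leftrightarrow (1 - R_k, 1 - L_k)$, so interval lengths (and hence $X = \Delta_1 + \Delta_2$) are preserved.  Consequently the conditional law of~$X$ given $(L_3, R_3) = (l_3, 1)$ at parameter~$t$ coincides with its conditional law given $(L_3, R_3) = (0, 1 - l_3)$ at parameter $1 - t$.  Indeed, substituting $r_3 = 1 - l_3$ into~\eqref{CD1} for $f_{0, r_3}$ reproduces~\eqref{CD2} for $f_{l_3, 1}$ term by term, so the functional identity $f_{l_3, 1}(x) = f_{0, 1 - l_3}(x)$ holds pointwise in~$x$ (independently of~$t$).

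With this identity, the optimal constant upper bounds of \refL{L:b1} transfer directly under the substitution $r_3 \mapsto 1 - l_3$, since $\|f_{l_3, 1}\|_\infty = \|f_{0, 1 - l_3}\|_\infty$.  The case ``$t \geq \beta$'' of \refL{L:b1} (where $r_3 > t \geq \beta$ throughout the conditioning region) translates to ``$1 - t \geq \beta$'', i.e., ``$t \leq 1 - \beta$'', in \refL{L:b2}, and in that regime the bound is simply $b_1(1 - l_3)$.  In the complementary regime $t > 1 - \beta$, the variable $1 - l_3$ ranges across the threshold~$\beta$ as $l_3$ runs over $(0, t)$, requiring the piecewise bound $b_1(1 - l_3)\,\mathbb{1}(l_3 \leq 1 - \beta) + b_2(1 - l_3)\,\mathbb{1}(l_3 > 1 - \beta)$, exactly matching the statement.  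Optimality is preserved because the transfer is via a pointwise equality of densities.

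For the expected bound, the same symmetry gives that the marginal law of $L_3(t)$ on the event $R_3(t) = 1$ is $\tfrac{1}{2}[\ln(1 - l_3)^{-1}]^2 \mathbb{1}(0 < l_3 < t)\,\dd l_3$, which is~\eqref{joint2} transported via $r_3 \mapsto 1 - l_3$ and $t \mapsto 1 - t$.  Changing variable $r = 1 - l_3$ in $\E[b_t(L_3, R_3);\,\rho(L_3, R_3) = \infty]$ then converts that integral into the corresponding expression of \refL{L:b1} with $t$ replaced by $1 - t$, yielding $\int_{1 - t}^1 (\ln r^{-1})/(1 + r)\,\dd r$ in the first regime and the same integral plus $\beta[\ln \beta - \ln(1 - t)]$ in the second, together with the same numerical upper bounds $\tfrac{1}{4}$ and $\tfrac{1}{4} + \beta[\ln \beta - \ln(1 - t)]$.

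The only real work is bookkeeping: aligning the case boundaries $\beta$, $1 - \beta$, $t$, and $1 - t$ under the substitution and confirming that the piecewise structure of $b_t(l_3, 1)$ on $l_3 \in (0, t)$ becomes, after $r = 1 - l_3$, the piecewise structure of the bound from \refL{L:b1} on $r \in (1 - t, 1)$ at the reflected parameter $1 - t$.  No new optimization, integral, or estimate beyond those already carried out in the proof of \refL{L:b1} is required.
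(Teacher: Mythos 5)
Your proposal is correct and follows the same route the paper intends: the authors simply state that \refL{L:b2} ``follows similarly or by symmetry,'' and your argument makes the symmetry explicit by observing that \eqref{CD2} is precisely \eqref{CD1} under the substitution $r_3 \mapsto 1 - l_3$ (and likewise for the marginal~\eqref{joint2}), then transporting the case boundaries $\beta \leftrightarrow 1 - \beta$ and $t \leftrightarrow 1 - t$ and the contribution integrals accordingly.
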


We prove \refL{L:b1} here, and \refL{L:b2} follows similarly or by symmetry.
\begin{proof}[Proof of \refL{L:b1}]
When $\rho = 0$, we have $l_3 = 0$, and the conditional density function is $f_{0,r_3}$ in~\eqref{CD1}. 
The expression in square brackets at~\eqref{CD1} is continuous and unimodal in~$x$, with maximum value at $x = 1 + r_3$.  Because the factor $1 / x$ is decreasing, it follows that the maximum value of $f_{0, r_3}(x)$ is the maximum of
\[
\frac{2}{\left(\ln r_3^{-1} \right)^2} \frac{1}{x} \ln \left(\frac{x-r_3}{r_3}\right)
\]
over $x \in [2 r_3, 1 + r_3]$, i.e.,\ the maximum of
\[
\frac{2}{r_3 \left(\ln r_3^{-1} \right)^2} \frac{\ln y}{1 + y}
\]
over $y \in [1, 1 / r_3]$.  A simple calculation shows that the displayed 
expression is strictly increasing for $y \in [1, \alpha]$ and strictly decreasing for $y \in [\alpha, \infty)$.  Thus the maximum for $y \in [1, 1 
/ r_3]$ is achieved at $y = \alpha$ if $\alpha \leq 1 / r_3$ and at 
$y = 1 / r_3$ if $\alpha \geq 1 / r_3$.  Equivalently, $f_{0, r_3}(x)$ is maximized at $x = r_3 (\alpha + 1)$ if $r_3 \leq \beta$ and at $x = 
1 + r_3$ if $r_3 \geq \beta$.  The claims about the optimal constant upper bound on $f_{l_3, r_3}$ and the contribution to $\E b_t(L_3(t), R_3(t))$ now follow readily.   
\end{proof}

\begin{remark}
\label{R:simple_bdd}
If we are not concerned about finding the best possible upper bound, then 
for the case $\rho = 0$ we can choose $b_t(l , r) := b_2(r)$; for the 
case $\rho = \infty$, we can choose $b_t(l , r) := b_2(1-l)$.  These two bounds still get us the desired finiteness of the contributions to 
$\E b_t(L_3, R_3)$ for any $0 < t < 1$.
\end{remark}

\begin{theorem}
\label{T:uniform_cont}
For $0 < t < 1$, the density function $f_t:\bbR \to [0, \infty)$ is uniformly 
continuous.
\end{theorem}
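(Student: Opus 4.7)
The plan is to exploit the convolution representation $h_{l_3,r_3}=f_{l_3,r_3}\star g_{l_3,r_3}$ implicit in~\eqref{E:6}, where $g_{l_3,r_3}$ is the conditional density of~$Y$ given $(L_3,R_3)=(l_3,r_3)$. This density exists because, as noted just after~\eqref{E:Y}, $Y$ conditionally distributes as $(r_3-l_3)Z(\tilde t)$ with $\tilde t:=(t-l_3)/(r_3-l_3)\in[0,1]$, so by~\refT{T:main} it has the density $g_{l_3,r_3}(y)=(r_3-l_3)\qw f_{\tilde t}\bigpar{y/(r_3-l_3)-1}$.

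For any $x,h\in\bbR$, a standard convolution estimate (after the change of variable $v=x-u$) yields $\abs{h_{l_3,r_3}(x+h)-h_{l_3,r_3}(x)} \le \norm{f_{l_3,r_3}}_\infty\,\int\abs{g_{l_3,r_3}(v+h)-g_{l_3,r_3}(v)}\dd v$; the key point is that this upper bound is independent of~$x$. By the standard $L^1$-continuity of translation, the integral on the right tends to~$0$ as $h\to 0$ for every $g\in L^1(\bbR)$, and it is trivially bounded by $2\int g=2$ since $g_{l_3,r_3}$ is a density.

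Combining Lemmas~\ref{L:bound}, \ref{L:b1}, and~\ref{L:b2}, we have $\norm{f_{L_3,R_3}}_\infty\le B(L_3,R_3)$, where $B(l_3,r_3):=b(l_3,r_3)$ on $\{0<\rho<\infty\}$ and $B(l_3,r_3):=b_t(l_3,r_3)$ on $\{\rho\in\set{0,\infty}\}$, with $\E B(L_3,R_3)<\infty$.  Therefore
\[ \abs{f_t(x+h)-f_t(x)}\le\E\abs{h_{L_3,R_3}(x+h)-h_{L_3,R_3}(x)}\le\E\bigsqpar{B(L_3,R_3)\int\abs{g_{L_3,R_3}(v+h)-g_{L_3,R_3}(v)}\dd v}. \]
The integrand on the right is dominated by $2B(L_3,R_3)$ (integrable) and tends to~$0$ almost surely as $h\to0$; by dominated convergence the expectation, which does not depend on~$x$, tends to~$0$, and uniform continuity of~$f_t$ follows.

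The main obstacle is controlling $\norm{f_{L_3,R_3}}_\infty$ on the degenerate events $\{\rho=0\}$ and $\{\rho=\infty\}$, where no uniform-in-$(l_3,r_3)$ bound on $\norm{f_{l_3,r_3}}_\infty$ is available; the $t$-dependent bounds $b_t$ from Lemmas~\ref{L:b1} and~\ref{L:b2} are precisely what is needed to keep the dominating function $B(L_3,R_3)$ integrable.
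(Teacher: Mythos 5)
Your argument is correct, and it takes a genuinely different route from the paper's. The paper decomposes $f_t$ into eight explicit contributions ($c_0$, $c_1$, and $c^{(i)}$ for $i=1,\dots,6$), verifies that each is continuous on~$\bbR$ via dominated convergence and the explicit piecewise forms of the $f^{(i)}_{l,r}$, and then shows that each contribution vanishes as $x\to\pm\infty$; uniform continuity is then the standard consequence of ``continuous and vanishing at infinity.'' Your proof instead produces a uniform modulus of continuity directly from the convolution representation: noting that $h_{l_3,r_3}=f_{l_3,r_3}\star g_{l_3,r_3}$ with $f_{l_3,r_3}$ bounded and $g_{l_3,r_3}$ a probability density, you get $\abs{h_{l_3,r_3}(x+h)-h_{l_3,r_3}(x)}\le\norm{f_{l_3,r_3}}_\infty\,\norm{\tau_h g_{l_3,r_3}-g_{l_3,r_3}}_1$, which is independent of~$x$, and then the $L^1$-continuity of translation plus dominated convergence (with the dominating variable $2B(L_3,R_3)$, integrable by Lemmas~\ref{L:bound}, \ref{L:b1}, \ref{L:b2}) finishes. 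Your route avoids the case-by-case inspection of the eight contributions and needs no separate argument for decay at infinity, at the modest cost of invoking $L^1$-continuity of translation; it also makes transparent exactly why the $t$-dependent bounds $b_t$ of Lemmas~\ref{L:b1}--\ref{L:b2} (rather than a single $t$-independent bound, which the paper notes does not exist) are what is needed. Both approaches rely on the same integrability of the pointwise bounds on $\norm{f_{L_3,R_3}}_\infty$.
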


\begin{proof}
Fix 
$0 < t < 1$. By the dominated convergence theorem, the contributions to $f_t(x)$ from $0 = l < t < r < 1$ and $0 < l < t < r = 1$, 
namely, the 
functions
\[
c_0(x) := \int_{r, y}\!{f_{0, r}(x - y)\,\P(L_3(t) = 0,\,R_3(t) \in \ddx r,\,Y \in \ddx y)}
\]
and
\[
c_1(x) := \int_{l, y}\!{f_{l, 1}(x - y)\,\P(L_3(t) \in \ddx l,\,R_3(t) = 1,\,Y \in \ddx y)},
\]
are continuous for $x \in \bbR$.
Further, according to~\eqref{E:4} and~\eqref{E:5}, the contribution from $0 < l < t < r < 1$ is $\sum_{i = 1}^6 c^{(i)}(x)$, where we define
\[
c^{(i)}(x) := \int_{l, r, y}\!{f^{(i)}_{l, r}(x - y)\,\P(Y \in \ddx y \mid (L_3(t), R_3(t)) = (l, r))}\dd l\dd r.
\]
It is easy to see that all the functions $c_0$, $c_1$, and $c^{(i)}$ for $i = 1,\dots,6$ vanish for arguments $x \leq 0$. To prove the uniform continuity of $f_t(x)$ for $x \in \bbR$, it thus suffices to show that each of the six functions $c^{(i)}$ for $i = 1,\dots,6$ is continuous on the real line and that each of the eight functions $c_0$, $c_1$, and $c^{(i)}$ for $i = 1,\dots,6$ vanishes in the limit as argument $x \to \infty$.

Fix $i \in \{1, \dots, 6\}$.  Continuity of $c^{(i)}$ holds since $f^{(i)}_{l, r}$ is bounded by $b(l,r)$ defined at~\eqref{E:bdd_rho_finite} and is continuous except at the boundary of its support. To illustrate, consider, for example, $i = 3$.  For each fixed $0 < l < t < r < 1$ and $x \in \bbR$, 
we have $f_{l, r}^{(3)}(x + h - y) \to f_{l, r}^{(3)}(x - y)$ as $h \to 0$ for all but two exceptional values of~$y$, namely, $y = x - (1 + r - 2 l)$ and $y = x - (1 - r)$.  From the discussion following~\eqref{E:Y} 
and from \refT{T:main}, we know that the conditional law of~$Y$ given $(L_3(t), R_3(t)) = (l, r)$ has a density with respect to Lebesgue measure, and hence the set of two exceptional points has zero measure under this 
law.  We conclude from the dominated convergence theorem that 
\[
\int_{y}\!{f^{(i)}_{l, r}(x - y)\,\P(Y \in \ddx y \mid (L_3(t), R_3(t)) = 
(l, r))}
\]
is continuous in $x \in \bbR$.  It now follows by another application of the dominated convergence theorem that $c^{(i)}$ is continuous on the real line.

Since the eight functions $f_{0, r}$, $f_{l, 1}$, and $f_{l, r}^{(i)}$ for $i = 1, \dots 6$ all vanish for all sufficiently large arguments, another application of the dominated convergence theorem shows that $c_0(x)$, $c_1(x)$, and $c^{(i)}(x)$ for $i = 1, \dots, 6$ all vanish in the limit as $x \to \infty$.  This completes the proof.
\end{proof}

\begin{remark}
For any $0 < t < 1$, by the fact that 
\[
J(t) \geq R_1(t) - L_1(t) \geq \min (t, 1 - t),
\]
we have $\P(J(t) < \min (t, 1 - t)) = 0$ and thus $f_t(\min (t, (1-t))) = 
0$ by~\refT{T:uniform_cont}. This is a somewhat surprising result since we know that the right-continuous Dickman density $f_0$ satisfies 
$f_0(0) = e^{-\gamma} > 0$.
\end{remark}
\begin{remark}
Since $f_0$ 
is both (uniformly) continuous on and piecewise differentiable on $(0, \infty)$, it 
\ignore{
\marginal{JF$\to$WH:\ Here is a much stronger fact about $f_0$.  For each 
$k \in \{0, 1, \dots\}$, the Dickman density $f_0$ is piecewise 
$k$-times continuously differentiable with $k + 1$ pieces, namely, $[j - 1, j]$ for $j = 1, \dots, k$ and $[k, \infty)$.  Is it reasonable to conjecture anything like this for $f_t$ with $0 < t < 1$?}
}
might be natural to conjecture that the densities $f_t$ for $0<t<1$ are also piecewise differentiable.

Later, in \refT{T:Lipschitz_cont}, we prove that the densities $f_t$ are Lipschitz continuous, which implies that each of them is almost everywhere differentiable. 
\end{remark}

\section{Integral equation for the density functions}
\label{S:Integral equation}
In this section we prove that for $0 \leq t < 1$ and $x \in \bbR$, the density function $f_t(x)$ is jointly Borel measurable in $(t, x)$. By symmetry, we can conclude that $f_t(x)$ is jointly Borel measurable in $(t, x)$ for $0 \leq t \leq 1$. We then use this result to establish an integral equation for the densities.
\smallskip

Let $F_t$ denote the distribution function for $J(t)$.  Because $F_t$ is right continuous, it is Borel measurable (for each~$t$).

\begin{lemma}
\label{L:discmeas}
For each positive integer~$n$, the mapping
\[
(t, x) \mapsto F_{\frac{\lfloor n t \rfloor + 1}{n}}(x) \quad (0 \leq t < 
1,\ x \in \bbR)
\]
is Borel measurable.
\end{lemma}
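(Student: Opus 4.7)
The plan is to exploit the fact that the mapping $t \mapsto \lfloor n t \rfloor + 1$ takes only the finitely many integer values $1, 2, \ldots, n$ as $t$ ranges over $[0, 1)$. First, I would partition $[0, 1)$ into the disjoint Borel sets
\[
I_k := \left[\tfrac{k - 1}{n}, \tfrac{k}{n}\right), \quad k = 1, \ldots, n,
\]
on each of which $\lfloor n t \rfloor + 1$ equals the constant $k$, so that $(\lfloor n t \rfloor + 1)/n = k/n$ throughout $I_k$.

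Next I would write the mapping in question as the finite sum
\[
(t, x) \mapsto \sum_{k = 1}^{n} \mathbb{1}(t \in I_k)\, F_{k/n}(x).
\]
For each $k$ the factor $t \mapsto \mathbb{1}(t \in I_k)$ is Borel measurable in $t \in [0, 1)$, and the factor $x \mapsto F_{k/n}(x)$ is Borel measurable in $x \in \bbR$ because $F_{k/n}$ is a right continuous distribution function (as noted just before the lemma). Each product $(t, x) \mapsto \mathbb{1}(t \in I_k) F_{k/n}(x)$ is thus the product of a Borel measurable function of~$t$ alone and a Borel measurable function of~$x$ alone, hence is jointly Borel measurable on $[0, 1) \times \bbR$.

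Finite sums of jointly Borel measurable functions are jointly Borel measurable, so the displayed sum is measurable, completing the proof. There really is no obstacle here: the key observation is just that $t \mapsto \lfloor n t \rfloor + 1$ is a simple (piecewise constant Borel measurable) function of~$t$, which reduces the joint measurability question to a finite sum of separable products.
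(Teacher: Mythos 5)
Your proof is correct and matches the paper's own argument essentially verbatim: both decompose $[0,1)$ into the dyadic-type intervals $[(k-1)/n, k/n)$, write the mapping as the finite sum $\sum_{k=1}^n \mathbb{1}(t \in I_k)\,F_{k/n}(x)$, and conclude by noting each summand is a product of a Borel function of $t$ alone and a Borel function of $x$ alone.
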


\begin{proof}
Note that
\[
F_{\frac{\lfloor n t \rfloor + 1}{n}}(x) = \sum_{j = 1}^n \mathbb{1}\left( \frac{j - 1}{n} \leq t < \frac{j}{n} \right) F_{\frac{j}{n}}(x).
\]
Each term is the product of a Borel measurable function of~$t$ and a Borel measurable function of~$x$ and so is a Borel measurable of $(t, x)$.  The same is then true of the sum.
\end{proof}

\begin{lemma}
\label{L:Frc}
For each $0 \leq t < 1$ and $x \in \bbR$, as $n \to \infty$ we have
\[
F_{\frac{\lfloor n t \rfloor + 1}{n}}(x) \to F_t(x).
\]
\end{lemma}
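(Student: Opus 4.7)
The plan is to show $Z(t_n) \to Z(t)$ in probability as $n \to \infty$ (with $t_n := (\lfloor nt \rfloor + 1)/n$) and then deduce $F_{t_n}(x) \to F_t(x)$ by invoking continuity of $F_t$: for $0 < t < 1$ that continuity holds by \refT{T:uniform_cont}, and for $t = 0$ it holds since $F_0$ is the Dickman distribution function. Throughout I would use the common coupling of the random variables $Z(s)$ via the single sequence $(U_i)_{i \ge 1}$ from \eqref{taukt}--\eqref{Rkt}.

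For any $K \ge 1$, let $E_{K,n}$ be the event that the processes $((L_k(t), R_k(t)))_{0 \le k \le K}$ and $((L_k(t_n), R_k(t_n)))_{0 \le k \le K}$ coincide. A direct induction shows that, as long as agreement holds at step $k-1$, both processes select the same next pivot $U_{\tau_k(t)} = U_{\tau_k(t_n)}$, and they continue to agree at step $k$ if and only if that pivot does not lie in $(t, t_n]$. Hence
\[
E_{K,n} = \{U_{\tau_k(t)} \notin (t, t_n]\ \text{for all } 1 \le k \le K\},
\]
and on $E_{K,n}$ the first $K$ terms of the series~\eqref{E:3} for $Z(t)$ and $Z(t_n)$ coincide, giving
\[
|Z(t_n) - Z(t)| \le \sum_{k > K} (R_k(t) - L_k(t)) + \sum_{k > K}(R_k(t_n) - L_k(t_n)).
\]

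To control both tails I would establish the uniform geometric estimate $\E[R_k(s) - L_k(s)] \le (3/4)^k$, valid for all $s \in [0,1]$ and $k \ge 0$: given the step-$(k-1)$ interval of length $\ell$ with $s$ at relative position $p \in [0,1]$, an elementary computation gives conditional expected step-$k$ length $\ell[\tfrac12 + p(1-p)] \le (3/4)\ell$, and induction finishes. Summing and applying Markov yields
\[
\P\Bigl(\sum_{k > K}(R_k(s) - L_k(s)) > \epsilon/4\Bigr) \le (12/\epsilon)(3/4)^K,
\]
uniformly in $s \in [0,1]$. Separately, a union bound gives $\P(E_{K,n}^c) \le \sum_{k=1}^K \P(U_{\tau_k(t)} \in (t, t_n])$; a short induction on~$k$ shows each $U_{\tau_k(t)}$ has a density on~$(0,1)$, in particular no atom at~$t$, so each summand tends to $0$ as $n \to \infty$ for fixed~$K$.

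Putting the pieces together, given $\epsilon, \delta > 0$, I would first choose $K$ so that the Markov bound above is below $\delta/4$, then choose $n$ large enough that $\P(E_{K,n}^c) < \delta/2$; combining these with the decomposition of $|Z(t_n) - Z(t)|$ produces $\P(|Z(t_n) - Z(t)| > \epsilon) < \delta$ for all sufficiently large~$n$. Thus $Z(t_n) \to Z(t)$ in probability, hence in distribution, and continuity of $F_t$ converts this into $F_{t_n}(x) \to F_t(x)$ at every $x \in \bbR$. The main routine step is the $(3/4)^k$ geometric decay; the main conceptual observation is the identification of the disagreement event with the event that some early pivot lands in $(t, t_n]$.
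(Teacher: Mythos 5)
Your proof is correct, but it takes a genuinely different route from the paper's. The paper invokes Gr\"{u}bel and R\"{o}sler's construction of a version of the process $(J(t))_{t\in[0,1]}$ with almost surely right-continuous sample paths: since $t_n := (\lfloor nt\rfloor+1)/n$ converges to~$t$ from the right, right-continuity of paths gives $J(t_n)\to J(t)$ almost surely, hence weak convergence, and continuity of $F_t$ (from \refT{T:uniform_cont} for $t\in(0,1)$ and from the Dickman case for $t=0$) upgrades this to pointwise convergence. You instead work directly with the coupling in \eqref{taukt}--\eqref{Rkt}: the two nested-interval processes at parameters~$t$ and~$t_n$ agree for~$K$ steps precisely when no early pivot $U_{\tau_k(t)}$ falls in $(t, t_n]$, and the uniform geometric estimate $\E[R_k(s)-L_k(s)]\le (3/4)^k$ (which is correct) controls the remainders. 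This is more elementary and self-contained: it avoids the Gr\"{u}bel--R\"{o}sler process-level construction, which is the only external ingredient in the paper's argument, at the cost of being somewhat longer, and in fact yields the stronger conclusion $Z(t_n)\to Z(t)$ in probability (even in $L^1$) rather than merely weakly. One small simplification: the observation that each $U_{\tau_k(t)}$ has a density is unnecessary. Since $t_n>t$ and $t_n\to t$, the sets $(t,t_n]$ have Lebesgue measure tending to zero and shrink into $\bigcap_{\eta>0}(t,t+\eta]=\emptyset$, so $\P(U_{\tau_k(t)}\in(t,t_n])\to 0$ by continuity of measure regardless of the law of $U_{\tau_k(t)}$.
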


\begin{proof}
We reference Gr\"{u}bel and R\"{o}sler~\cite{MR1372338}, who construct a process $J = (J(t))_{0 \leq t \leq 1}$ with $J(t)$ having distribution function $F_t$ for each~$t$ and with right continuous sample 
paths.  It follows (for each $t \in [0, 1)$) that $F_u$ converges weakly to $F_t$ as $u \downarrow t$.  But we know that $F_t$ is a continuous (and even continuously differentiable) function, so for each $x \in \bbR$ we have $F_u(x) \to F_t(x)$ as $u \downarrow t$.  The result follows.
\end{proof}

\begin{proposition}
\label{P:Fmeas}
The mapping
\[
(t, x) \mapsto F_t(x) \quad (0 \leq t < 1,\ x \in \bbR)
\]
is Borel measurable.
\end{proposition}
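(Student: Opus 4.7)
The plan is to combine the two preceding lemmas directly. By \refL{L:discmeas}, for each positive integer $n$ the function
\[
g_n(t, x) := F_{\frac{\lfloor n t \rfloor + 1}{n}}(x), \quad (t,x) \in [0,1) \times \bbR,
\]
is Borel measurable on $[0, 1) \times \bbR$. By \refL{L:Frc}, for every fixed $(t, x) \in [0, 1) \times \bbR$ the sequence $g_n(t, x)$ converges to $F_t(x)$ as $n \to \infty$.

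I would then invoke the standard fact that the pointwise limit of Borel measurable functions is Borel measurable (for instance, because $\{(t, x) : F_t(x) \le c\} = \bigcap_{k \ge 1} \liminf_n \{(t, x) : g_n(t, x) \le c + 1/k\}$, or more simply because $F_t(x) = \limsup_n g_n(t, x)$ and $\limsup$ of measurable functions is measurable). This immediately yields the Borel measurability of $(t, x) \mapsto F_t(x)$ on $[0, 1) \times \bbR$.

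There is essentially no obstacle here — the two lemmas do all the work. The only thing to be careful about is the standard measure-theoretic citation (e.g., a reference to a basic result in Billingsley~\cite{MR2893652} or Durrett~\cite{MR2722836}) confirming that pointwise limits of measurable functions are measurable.
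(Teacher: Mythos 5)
Your proposal is correct and is essentially identical to the paper's own proof: both invoke \refL{L:discmeas} for measurability of the approximants and \refL{L:Frc} for pointwise convergence, then cite the standard fact that a pointwise limit of Borel measurable functions is Borel measurable. The paper states this even more tersely, so your additional unpacking is merely a matter of exposition.
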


\begin{proof}
According to Lemmas~\ref{L:discmeas}--\ref{L:Frc}, this mapping is the pointwise limit as $n \to \infty$ of the Borel measurable mappings in \refL{L:Frc}.
\end{proof}

Let $f_t$ denote the continuous density for $F_t$, as in \refT{T:uniform_cont}. 

\begin{theorem}
\label{T:fmeas}
The mapping
\[
(t, x) \mapsto f_t(x) \quad (0 \leq t < 1,\ x \in \bbR)
\]
is Borel measurable.
\end{theorem}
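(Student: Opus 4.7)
The plan is to exhibit $f_t(x)$ as a pointwise limit of Borel measurable functions of $(t, x)$, then invoke the fact that a pointwise limit of Borel measurable functions is Borel measurable. The key inputs are \refP{P:Fmeas}, which establishes the joint Borel measurability of $(t, x) \mapsto F_t(x)$, together with \refT{T:uniform_cont}, which gives the continuity of $f_t(\cdot)$ for each fixed $t \in (0, 1)$.

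First, for each positive integer $n$ define
\[
g_n(t, x) := n \bigsqpar{F_t(x + \tfrac{1}{n}) - F_t(x)}, \qquad 0 \leq t
< 1, \ x \in \bbR.
\]
Since the mapping $(t, x) \mapsto F_t(x)$ is Borel measurable by \refP{P:Fmeas}, and the translation $x \mapsto x + 1/n$ is continuous (hence Borel), each $g_n$ is Borel measurable on $[0, 1) \times \bbR$.

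Second, I claim that $g_n(t, x) \to f_t(x)$ pointwise as $n \to \infty$. For $t = 0$ the density $f_0$ is the right-continuous Dickman density, and since $F_0$ is absolutely continuous with density $f_0$ we have
\[
g_n(0, x) = n \int_x^{x + 1/n}\!f_0(y)\dd y \to f_0(x) \quad \mbox{as\ }
n \to \infty,
\]
because $f_0$ is right continuous at every $x$. For $0 < t < 1$, \refT{T:main} gives the existence of the density $f_t$ for $J(t) = Z(t) - 1$, so
$F_t(x + 1/n) - F_t(x) = \int_x^{x + 1/n}\!f_t(y)\dd y$, and \refT{T:uniform_cont} yields that $f_t$ is (uniformly) continuous, so the same averaging argument gives $g_n(t, x) \to f_t(x)$.

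Combining the two previous paragraphs, $(t, x) \mapsto f_t(x)$ is the pointwise limit of Borel measurable functions on $[0, 1) \times \bbR$ and hence is itself Borel measurable. The only subtle point is ensuring the pointwise convergence of the difference quotients at every $(t, x)$, but this is handled uniformly by the continuity of $f_t$ from \refT{T:uniform_cont} (together with the right-continuity of $f_0$ at $t = 0$), so there is no real obstacle to overcome.
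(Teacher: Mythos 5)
Your proof is correct and follows essentially the same route as the paper: both express $f_t(x)$ as a pointwise limit of difference quotients of $F_t$, invoke \refP{P:Fmeas} for the Borel measurability of each quotient, and conclude by closure of Borel measurable functions under pointwise limits. You supply more detail than the paper's terse two-sentence argument, in particular by using one-sided difference quotients and explicitly checking convergence at $t=0$ via right-continuity of the Dickman density (a case the paper glosses over), but the underlying idea is identical.
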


\begin{proof}
By the fundamental theorem of integral calculus, $f_t = F'_t$.  The mapping in question is thus the (sequential) limit of difference quotients that are Borel measurable by \refP{P:Fmeas} and hence is Borel measurable.
\end{proof}

Now we are ready to derive integral equations.  We start with an integral 
equation for the distribution functions $F_t$.

\begin{proposition}
\label{P:Fint}
The distribution functions $(F_t)$ satisfy the following integral equation for $0 \leq t \leq 1$ and $x \in \bbR$:
\begin{equation}
\label{Fint}
F_t(x) = \int_{l \in (0, t)}\!F_{\frac{t - l}{1 - l}}\!\left( \frac{x}{1 - l} - 1 \right)\dd l + \int_{r \in (t, 1)}\!F_{\frac{t}{r}}\!\left( \frac{x}{r} - 1 \right)\dd r.
\end{equation}
\end{proposition}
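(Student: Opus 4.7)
The plan is to condition on the first pivot $U_1$ and exploit the self-similar (recursive) structure of the process defined by \eqref{taukt}--\eqref{Rkt}. Write $J(t) = Z(t) - 1 = \sum_{k \ge 1} (R_k(t) - L_k(t))$ as in \refS{S:density}. Since $U_1 \in (L_0(t), R_0(t)) = (0, 1)$ almost surely, we have $\tau_1(t) = 1$ and the definitions \eqref{Lkt}--\eqref{Rkt} give either $(L_1, R_1) = (U_1, 1)$ (when $U_1 \le t$) or $(L_1, R_1) = (0, U_1)$ (when $U_1 > t$).

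The key observation is a scaling identity, which I would establish first. On the event $\{U_1 = u \le t\}$, the process $((L_k, R_k))_{k \ge 1}$, when translated by $-u$ and rescaled by $(1 - u)^{-1}$, has the same conditional law as the unconditional process $((L_k(t'), R_k(t')))_{k \ge 0}$ with
\[
t' = \frac{t - u}{1 - u};
\]
this follows by checking that the i.i.d.\ Uniform$(0, 1)$ inputs $U_2, U_3, \dots$, conditioned on falling in $(u, 1)$ and rescaled to $(0, 1)$, drive the recursion for quantile~$t'$. Consequently
\[
J(t) \bigm| \{U_1 = u\} \ \eqd\ (1 - u)\,[1 + J(t')] = (1 - u)\,Z(t') \quad (u \le t),
\]
and similarly
\[
J(t) \bigm| \{U_1 = u\} \ \eqd\ u\,[1 + J(t')] = u\,Z(t') \quad \text{with } t' = t/u \quad (u > t).
\]
Taking the event $\{J(t) \le x\}$, the conditional probabilities become $F_{(t - u)/(1 - u)}(x/(1 - u) - 1)$ and $F_{t/u}(x/u - 1)$, respectively.

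The final step is to integrate over $u \in (0, 1)$ against the law of $U_1$ (uniform on $(0, 1)$), splitting at $u = t$, and relabel $u = l$ in the first integral and $u = r$ in the second, to obtain \eqref{Fint}. For this we need to know that the integrands are measurable functions of the dummy variable; this is guaranteed by \refP{P:Fmeas}, which ensures $(u, y) \mapsto F_{(t - u)/(1 - u)}(y)$ and $(u, y) \mapsto F_{t/u}(y)$ are Borel measurable, composed then with the continuous maps $u \mapsto (u, x/(1 - u) - 1)$ and $u \mapsto (u, x/u - 1)$. The boundary cases $t = 0$ and $t = 1$ cause one of the two integrals to be over an empty set and need no separate argument.

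The main obstacle is not the conditioning itself but the careful verification of the scaling identity: one must confirm that the shifted, rescaled coordinates satisfy exactly the recursion \eqref{taukt}--\eqref{Rkt} driven by a fresh sequence of i.i.d.\ Uniform$(0, 1)$ variables and targeting the rescaled quantile~$t'$. Once this is in hand, the remainder is bookkeeping together with the measurability supplied by \refP{P:Fmeas}.
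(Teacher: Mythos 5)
Your proposal is correct and takes essentially the same approach as the paper: the paper's (very terse) proof simply conditions on the value of $(L_1(t), R_1(t))$ — which, since $(L_1, R_1)$ is determined by $U_1$ and whether $U_1 \leq t$, is the same as your conditioning on $U_1$ — and invokes \refP{P:Fmeas} for measurability. You have correctly supplied the self-similarity (scaling) identity that the paper leaves implicit, and your measurability discussion matches the paper's citation of \refP{P:Fmeas}.
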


\begin{proof}
This follows by conditioning on the value of $(L_1(t), R_1(t))$.  Observe 
that each of the two integrands is (by \refP{P:Fmeas} for $t \notin \{0, 1\}$ and by right continuity of $F_0$ and $F_1$ for $t \in \{0, 1\}$) indeed 
[for fixed $(t, x)$] a Borel measurable function of the integrating variable.
\end{proof}

\begin{remark}
It follows from (i)~the changes of variables from~$l$ to $v = (t - l) / 
(1 - l)$ in the first integral in~\eqref{Fint} and from~$r$ to $v = t / 
r$ in the second integral, (ii)~the joint continuity of $f_t(x)$ in $(t, x)$ established later in \refC{C:joint_continuous}, and (iii)~Leibniz's formula that $F_t(x)$ is differentiable with respect to $t \in (0, 1)$ for 
each 
fixed $x \in \bbR$.
\end{remark}

Integral equation~\eqref{Fint} for the distribution functions $F_t$ immediately leads us to an integral equation for the density functions $f_t$.

\begin{proposition}
\label{P:fint}
The continuous density functions $(f_t)$ satisfy the following integral equation for $0 < t < 1$ and $x \in \bbR$:
\[
f_t(x) = \int_{l \in (0, t)}\!(1 - l)^{-1} f_{\frac{t - l}{1 - l}}\!\left( \frac{x}{1 - l} - 1 \right)\dd l + \int_{r \in (t, 1)}\!r^{-1} f_{\frac{t}{r}}\!\left( \frac{x}{r} - 1 \right)\dd r.
\]
\end{proposition}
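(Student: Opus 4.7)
The plan is to obtain the integral equation for $f_t$ by differentiating with respect to~$x$ the integral equation for $F_t$ given in \refP{P:Fint} and justifying the interchange of differentiation and integration. Since, by \refT{T:uniform_cont}, $f_t$ is the continuous derivative of $F_t$ (and $F_t$ is absolutely continuous with density $f_t$), it suffices to show that for each fixed $t \in (0, 1)$ and $x \in \bbR$ the right-hand side of~\eqref{Fint}, viewed as a function of~$x$, is differentiable at~$x$ with derivative equal to the expression claimed in the proposition.

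I would differentiate each of the two integrals separately via a dominated convergence argument on difference quotients. Fix a nonzero increment $h$ with $|h|$ small, and write
\[
\frac{F_{\frac{t - l}{1 - l}}\!\left( \frac{x + h}{1 - l} - 1 \right) - F_{\frac{t - l}{1 - l}}\!\left( \frac{x}{1 - l} - 1 \right)}{h}.
\]
By the fundamental theorem of calculus applied to $F_{(t-l)/(1-l)}$ together with the uniform bound $f_t \leq 10$ from \refT{T:bdd}, the absolute value of this quotient is bounded by $10 / (1 - l)$, which is bounded above by $10 / (1 - t)$ for $l \in (0, t)$, hence integrable on $(0, t)$. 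Pointwise in~$l$ the quotient converges, by continuity of $f_{(t-l)/(1-l)}$, to $(1-l)^{-1} f_{(t-l)/(1-l)}\bigl(\tfrac{x}{1-l} - 1\bigr)$. The dominated convergence theorem then yields the first term of the desired integral equation; an entirely analogous argument (with the bound $10 / t$ in place of $10 / (1 - t)$) handles the second integral over $r \in (t, 1)$.

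For this argument to make sense, both integrands must be Borel measurable functions of the integrating variable. For the limit integrands this is immediate from \refT{T:fmeas} together with the fact that $l \mapsto ((t - l)/(1 - l),\,x/(1 - l) - 1)$ and $r \mapsto (t/r,\,x/r - 1)$ are continuous. Measurability of the difference quotients follows similarly from \refP{P:Fmeas}.

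The main (and essentially only) obstacle is the interchange of $\ddx / \ddx x$ and the integrals; once uniform boundedness of the densities is in hand from \refT{T:bdd}, the dominating functions are obvious and the argument goes through routinely. I would note that restricting to $0 < t < 1$ is essential precisely to keep the factors $(1 - l)^{-1}$ and $r^{-1}$ bounded on the respective domains of integration.
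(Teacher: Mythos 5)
Your proposal is correct and uses essentially the same approach as the paper: differentiate the integral equation~\eqref{Fint} for $F_t$ from Proposition~\ref{P:Fint} with respect to~$x$, then justify the interchange of $\frac{\ddx}{\ddx x}$ and the integrals by applying dominated convergence to difference quotients with the uniform bound $f_t \le 10$ from Theorem~\ref{T:bdd}. The only cosmetic difference is that the paper dominates the difference quotients by the functions $10(1-l)^{-1}$ and $10\,r^{-1}$ and notes their integrals $-10\ln(1-t)$ and $-10\ln t$ are finite, whereas you pass immediately to the constant bounds $10/(1-t)$ and $10/t$ on the bounded intervals of integration --- an equivalent domination.
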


\begin{proof}
Fix $t \in (0, 1)$.  Differentiate~\eqref{Fint} with respect to~$x$.  It is easily proved by an argument applying the dominated convergence theorem to difference quotients and the mean value theorem that we can differentiate under the integral signs in~\eqref{Fint} provided that 
\begin{equation}
\label{f*}
\int_{l \in (0, t)}\!(1 - l)^{-1} f_{\frac{t - l}{1 - l}}^*\dd l + \int_{r \in (t, 1)}\!r^{-1} f_{\frac{t}{r}}^*\dd r
\end{equation}
is finite, where $f^*_t$ denotes any upper bound on $f_t(x)$ as~$x$ varies over~$\bbR$.  By \refT{T:bdd} we can 
simply choose $f^*_t = 10$. Then~\eqref{f*} equals~$10$ times
\[
- \ln (1-t) - \ln t,
\]
which is finite.
\end{proof}

In the next proposition, we provide an integral equation based on the formula for $f_t$ in~\eqref{E:density}; this integral equation will be useful in the next section. Recall that $Y(t) = \sum_{k=3}^\infty \Delta_k(t)$. Using \eqref{E:Y}, the conditional distribution of $Y(t) / (r_3 - l_3)$ given 
$(L_3, R_3) = (l_3, r_3)$ is the (unconditional) distribution of $ Z(\frac{t - l_3}{r_3-l_3}) = 1 + J(\frac{t - l_3}{r_3-l_3})$. Apply~\refT{T:main} on $Z(\frac{t - l_3}{r_3-l_3})$ leads us to an integral equation for the density function of $J(t)$.

\begin{proposition}
\label{Integral equation}
The continuous density functions $f_t$ for the random variables $J(t) = 
Z(t) - 1$ satisfy the integral equation
\[
f_t(x) = \int{\P((L_3(t), R_3(t)) \in \ddx (l_3, r_3))} \cdot h_t(x \mid l_3, r_3)
\]
for $x \geq 0$, where
\[
h_t(x \mid l_3, r_3) = \int{f_{l_3,r_3}(x-y)\, (r_3 - l_3)^{-1} \, f_{\frac{t - l_3}{r_3 - l_3}}\!\left(\frac{y}{r_3 - l_3} - 1\right)\dd y}.
\]
\end{proposition}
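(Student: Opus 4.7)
The plan is to derive the integral equation as a direct consequence of \refT{T:main}, computing the conditional density $h_{l_3, r_3}(x)$ explicitly by applying \refL{L:Feller} conditionally on $(L_3(t), R_3(t)) = (l_3, r_3)$. Since \refT{T:main} already yields
\[
f_t(x) = \int \P\bigl((L_3, R_3) \in \ddx(l_3, r_3)\bigr)\cdot h_{l_3, r_3}(x),
\]
all that remains is to rewrite $h_{l_3, r_3}(x)$ in the claimed convolution form.

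First I would assemble the three ingredients established in \refS{S:density}: conditionally on $(L_3, R_3) = (l_3, r_3)$, the random variables $X = \Delta_1 + \Delta_2$ and $Y = \sum_{k \geq 3}\Delta_k$ are independent; the conditional density of $X$ is $f_{l_3, r_3}$, by \refL{L:main} and \refL{L:main01}; and, by the argument surrounding~\eqref{E:Y}, the conditional law of $Y / (r_3 - l_3)$ coincides with the unconditional law of $Z(s)$, where $s := (t - l_3)/(r_3 - l_3) \in [0, 1]$. Invoking \refT{T:main} at this quantile~$s$ and performing the linear change of variables $z = y / (r_3 - l_3)$ in the density $f_s(\,\cdot\, - 1)$ of $Z(s)$, the conditional density of $Y$ given $(L_3, R_3) = (l_3, r_3)$ equals
\[
y \mapsto (r_3 - l_3)^{-1}\,f_s\!\left(\frac{y}{r_3 - l_3} - 1\right).
\]

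With the conditional densities of $X$ and $Y$ in hand and their conditional independence, \refL{L:Feller} applied conditionally on $(L_3, R_3) = (l_3, r_3)$ gives $J(t) = X + Y$ the conditional density
\[
h_{l_3, r_3}(x) = \int f_{l_3, r_3}(x - y)\,(r_3 - l_3)^{-1}\,f_s\!\left(\frac{y}{r_3 - l_3} - 1\right) \dd y,
\]
which matches the expression $h_t(x \mid l_3, r_3)$ displayed in the proposition; substituting back into the representation from \refT{T:main} finishes the argument. I do not expect any real obstacle here, since the delicate work---existence of conditional densities, the joint Borel measurability needed to make the outer integral~\eqref{E:density} well defined, and the structural decomposition $J(t) = X + Y$ together with the independence and scaling of $Y$---was already carried out in \refS{S:density}. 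The only small point requiring attention is the verification that $s \in [0, 1]$ so that \refT{T:main} is directly applicable, with the endpoint cases $s \in \{0, 1\}$ covered by \refL{L:main01}.
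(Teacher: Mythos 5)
Your proposal is correct and follows the same route the paper takes: starting from \refT{T:main}'s mixture representation, identifying the conditional law of $Y/(r_3-l_3)$ given $(L_3,R_3)=(l_3,r_3)$ with the law of $Z\bigl((t-l_3)/(r_3-l_3)\bigr)$ via \eqref{E:Y}, applying \refT{T:main} again at the rescaled quantile to obtain the conditional density of $Y$, and substituting into the convolution formula \eqref{E:6}. The paper presents this as a short remark preceding the proposition rather than a formal proof, but the steps are identical to yours; your only slightly off note is citing \refL{L:main01} for the endpoint cases of the rescaled quantile $s$, when in fact it is \refT{T:main} itself (which covers all $t\in[0,1]$) that is being invoked there, while \refL{L:main01} concerns the conditional density $f_{l_3,r_3}$ of $X$ when the original $t$ is $0$ or $1$.
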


\section{Right-tail behavior of the density function}
\label{S:decay}

In this section we will prove, \emph{uniformly} for 
$0 < t < 1$, that the continuous density functions $f_t$ enjoy the same superexponential decay bound as Gr\"{u}bel and R\"{o}sler~\cite[Theorem~9]{MR1372338} proved for the survival functions $1 - F_t$. By a separate and easier argument, one could include the cases $t = 0, 1$. Let $m_t$ denote the moment generating function of $Z(t)$ and recall that~$m$ denotes 
the moment generating function of $V$ at~\eqref{E:V}. By \refL{L:Stochastic_upper_bdd}, the random variables $Z(t)$, $0 \leq t \leq 1$, are stochastically dominated by $V$.  As a consequence, if $\theta \geq 0$, then 
\[
m_t(\theta) \leq m(\theta)< \infty
\] 
for every $t \in (0, 1)$.

\begin{theorem}
\label{T:Exp_decay}
Uniformly for $t \in (0, 1)$, the continuous {\tt QuickQuant} density functions $f_t(x)$ enjoy superexponential decay when $x$ is large. More precisely, for any $\theta > 0$ we have
\[
f_t (x) < 4 \theta^{-1} e^{2 \theta} m(\theta) e^{-\theta x}
\]
for $x \geq 3$, where~$m$ is the moment generating function of the random 
variable $V$ at~\eqref{E:V}.
\end{theorem}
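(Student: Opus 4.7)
The plan is to base the proof on \refP{Integral equation}, writing $f_t(x) = \E\, h_t(x \mid L_3, R_3)$ and exploiting the facts that (i)~the conditional density $f_{l_3, r_3}$ of $X = \Delta_1 + \Delta_2$ has support in $[0, 2]$, as is evident from the case analysis in the proof of \refL{L:main}, and (ii)~conditionally on $(L_3, R_3) = (l_3, r_3)$, the random variable $Y = \sum_{k \ge 3} \Delta_k$ has the distribution of $(r_3 - l_3) Z\bigl((t - l_3)/(r_3 - l_3)\bigr)$. Writing $g_Y(\cdot \mid l_3, r_3)$ for the conditional density of $Y$,
\[
h_t(x \mid l_3, r_3) = \int_0^2 f_{l_3, r_3}(z)\, g_Y(x - z \mid l_3, r_3)\, \dd z \le \|f_{l_3, r_3}\|_\infty\, \P(Y \ge x - 2 \mid L_3 = l_3, R_3 = r_3).
\]

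Next, I would invoke the stochastic dominance $Z(s) \le V$ of \refL{L:Stochastic_upper_bdd}: conditionally on $(L_3, R_3) = (l_3, r_3)$, the random variable~$Y$ is stochastically dominated by $(r_3 - l_3) V$, and Markov's inequality applied to~$V$ at parameter $\theta/(r_3 - l_3)$ (equivalently, to $Y/(r_3 - l_3)$ at parameter~$\theta$) gives, for $x \ge 2$,
\[
\P(Y \ge x - 2 \mid (L_3, R_3) = (l_3, r_3)) \le \P((r_3 - l_3) V \ge x - 2) \le m(\theta)\, e^{-\theta (x - 2)/(r_3 - l_3)}.
\]
Taking expectations over $(L_3, R_3)$ therefore yields
\[
f_t(x) \le m(\theta)\, \E\bigl[\|f_{L_3, R_3}\|_\infty\, e^{-\theta (x - 2)/(R_3 - L_3)}\bigr].
\]

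The main obstacle is to show that the above expectation is at most $4 \theta^{-1} e^{2 \theta} e^{-\theta x}$, uniformly in $t \in (0, 1)$ and $x \ge 3$. The factor $e^{2 \theta} e^{-\theta x} = e^{-\theta (x - 2)}$ arises from the crude bound $e^{-\theta (x - 2)/(R_3 - L_3)} \le e^{-\theta(x - 2)}$ (valid because $R_3 - L_3 \le 1$), and the factor $\theta^{-1}$ must come from a finer trade-off: when $R_3 - L_3$ is small, the sup $\|f_{L_3, R_3}\|_\infty$ blows up (at rate $\sim 1/(R_3 - L_3)$ under the bounds $b(l_3, r_3)$, $b_1$, $b_2$ of \refL{L:bound} and \refL{L:b1}--\refL{L:b2}) but the factor $e^{-\theta(x-2)/(R_3 - L_3)}$ simultaneously collapses. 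I would split the expectation according to the three regimes $\rho(L_3, R_3) = 0$, $\rho(L_3, R_3) \in (0, \infty)$, and $\rho(L_3, R_3) = \infty$, apply the explicit sup bounds from those lemmas in each regime, and use the bound
$\int_0^1 r^{-1} e^{-\theta (x - 2)/r} \dd r = E_1(\theta(x - 2)) \le (\theta(x - 2))^{-1} e^{-\theta(x - 2)}$
on the exponential integral to absorb the $(R_3 - L_3)^{-1}$ factor. Since $x \ge 3$ gives $x - 2 \ge 1$, this produces the $\theta^{-1}$. The numerical constant~$4$ is then obtained by verifying, in each regime, that the relevant sum of contributions is at most~$4$; uniformity in~$t$ is automatic because all the inputs (Lemmas~\ref{L:bound}--\ref{L:b2} and \refL{L:Stochastic_upper_bdd}) are uniform.
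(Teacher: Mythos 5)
Your proposal is correct and follows essentially the same route as the paper's proof: starting from $f_t(x) = \E\,h_t(x \mid L_3, R_3)$, bounding the conditional density by $\|f_{l_3,r_3}\|_\infty$ times the tail probability $\P(Y \ge x - 2 \mid L_3 = l_3, R_3 = r_3)$, controlling that tail by a Chernoff-type bound giving $m(\theta)\,e^{-\theta(x-2)/(r_3-l_3)}$, and then splitting the $(L_3, R_3)$-integral into the three $\rho$-regimes and using the explicit sup bounds together with the exponential-integral estimate $\int_0^1 s^{-2} e^{-\theta(x-2)/s}\,\dd s = [\theta(x-2)]^{-1}e^{-\theta(x-2)}$. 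The only cosmetic differences are that the paper implements the Chernoff step via exponential tilting of the law of $Z\bigl((t-l_3)/(r_3-l_3)\bigr)$ together with $m_s(\theta) \le m(\theta)$, whereas you invoke the stochastic domination $Y \le (r_3-l_3)V$ from \refL{L:Stochastic_upper_bdd} directly and then apply Markov to $V$; these give identical estimates.
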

\begin{proof}
Our starting point is the following equation from the discussion preceding~\refP{Integral equation}:
\begin{align}
\label{heart}
\lefteqn{f_t(x)} \\
&= \int_{l, r} \P((L_3, R_3) \in (\ddx l, \ddx r))\,\int_y\!f_{l, r}(x - y)\,\P\left( (r - l) Z\!\left( \frac{t - l}{r - l} \right) \in \ddx y \right) \nonumber \\
&= \int_{l, r} \P((L_3, R_3) \in (\ddx l, \ddx r))\,\int_z\!f_{l, r}(x - (r - l) z)\,\P\left( Z\!\left( \frac{t - l}{r - l} \right) \in \ddx z \right). \nonumber
\end{align}

By \refL{L:Stochastic_upper_bdd}, for any $\theta \in \bbR$ we can obtain a probability measure $\mu_{t, \theta}(\ddx z) := m_t(\theta)^{-1} e^{\theta z} \P(Z(t) \in \ddx z)$ by exponential tilting.  Since 
$m_t(\theta) \leq m(\theta) < \infty$ for every $\theta \geq 0$ and $t \in (0, 1)$, we can rewrite and bound~\eqref{heart} as follows (for any $\theta \geq 0$): 
\begin{align*}
\lefteqn{f_t(x)} \\
&= \int_{l, r} \P((L_3, R_3) \in (\ddx l, \ddx r))\,m_{\frac{t - l}{r - 
l}}(\theta)\,\int_z\!e^{- \theta z}\,f_{l, r}(x - (r - l) z)\,\mu_{\frac{t - l}{r - l}, \theta}(\ddx z) \\
&\leq m(\theta) \int_{l, r} \P((L_3, R_3) \in (\ddx l, \ddx r))\,\int_z\!e^{- \theta z}\,f_{l, r}(x - (r - l) z)\,\mu_{\frac{t - l}{r - l}, \theta}(\ddx z).
\end{align*}
Recall 
that $f_{l, r}(x)$ is bounded above by $b_t(l, r)$ 
(Lemmas~\ref{L:bound} and \ref{L:b1}--\ref{L:b2}) and vanishes for $x \geq 2$.  Therefore, if $\theta \geq 0$ then
\begin{align}
\lefteqn{f_t(x)} \nonumber \\
&\leq m(\theta) \int_{l, r} \P((L_3, R_3) \in (\ddx l, \ddx r))\,b_t(l, r) \int_{z > \frac{x - 2}{r - l}}\!e^{- \theta z}\,\mu_{\frac{t - l}{r - l}, \theta}(\ddx z) \nonumber \\
\label{exp}
&\leq m(\theta) \int_{l, r} \P((L_3, R_3) \in (\ddx l, \ddx r))\,b_t(l, r)\,\exp\left(- \theta\,\frac{x - 2}{r - l} \right).
\end{align}

Suppose $x \geq 3$ and $\theta > 0$.  We now consider in turn the contribution to~\eqref{exp} for $l = 0$, for $r = 1$, and for $0 < l < t < r 
< 1$.  For $l = 0$, the contribution is $m(\theta)$ times the following:
\begin{align*}
\lefteqn{\hspace{-0.5in}\int_t^1\!r^{-1} \beta \exp[- \theta r^{-1} (x - 2)]\dd r} \\
&\leq \beta \int_0^1\!r^{-2} \exp[- \theta r^{-1} (x - 2)]\dd r \\
&= \beta\,[\theta (x - 2)]^{-1} \exp[- \theta (x - 2)]
\leq  \beta \theta^{-1} e^{2 \theta} e^{- \theta x}.
\end{align*}
Similarly (or symmetrically), the contribution for $r = 1$ is bounded by the same $\beta \theta^{-1} e^{2 \theta} m(\theta) e^{- \theta x}$.
For $0 < l < t < r < 1$, by symmetry we may without loss of generality suppose that $0 < t \leq 1/2$, and then the contribution is $\frac{3}{2} m(\theta)$ times the following:
\begin{align*}
\lefteqn{\int_0^t{\int_{t}^1\!\left[ \frac{1}{r (r - l)} + \frac{1}{(1-l) 
(r - l)} \right] \exp\left(- \theta\,\frac{x - 2}{r - l} \right) \dd r \dd l}} \\
&= \int_0^t{\int_{t - l}^{1 - l}\!\left[ \frac{1}{(s + l) s} + \frac{1}{(1-l) s} \right] \exp[- \theta s^{-1} (x - 2)] \dd s \dd l} \\
&= \int_0^t{\int_{t - l}^{1 - l}\!(1 - l)^{-1} (s + l)^{-1} (1 + s) s^{-1} \exp[- \theta s^{-1} (x - 2)] \dd s \dd l} \\
&\leq 4 \int_0^t{\int_{t - l}^{1 - l}\!s^{-2} \exp[- \theta s^{-1} (x - 2)] \dd s \dd l} \\
&\leq 4 \int_0^{1/2}{\int_0^1\!s^{-2} \exp[- \theta s^{-1} (x - 2)] \dd s 
\dd l} \\
&= 2 [\theta (x - 2)]^{-1} \exp[- \theta (x - 2)]
\leq 2 \theta^{-1} e^{2 \theta} e^{- \theta x}.  
\end{align*}

Summing all the contributions, we find
\begin{equation}
\label{decay}
f_t(x) \leq (3 + 2 \beta) \theta^{-1} e^{2 \theta} m(\theta) e^{- \theta x} < 4\,\theta^{-1} e^{2 \theta} m(\theta) e^{- \theta x}, 
\end{equation}
for any $0 < t < 1$, $x \geq 3$, and $\theta > 0$, demonstrating the uniform superexponential decay.
\end{proof}

\begin{remark}
Since $f_t$ is uniformly bounded by $10$ by~\refT{T:bdd}, for any $\theta 
> 0$, by choosing the coefficient $C_{\theta} := \max \{ 10 e^{3 \theta}, 4 \theta^{-1} e^{2 \theta} m(\theta)\}$, we can extend the superexponential bound on $f_t(x)$ in~\refT{T:Exp_decay} for $x \geq 3$ to $x \in \bbR$ as
\begin{equation}
\label{E:exp_bdd}
f_t(x) \leq C_{\theta} e^{-\theta x} \mbox{ for } x \in \bbR \mbox{ and } 
0 < t < 1.
\end{equation}
Note 
that this bound is not informative for $x \leq \min \{t,1-t \}$ since we know $f_t(x) = 0$ for such~$x$ by \refT{T:uniform_cont}, but it will simplify our proof of~\refT{T:Lipschitz_cont}.
\end{remark}

\section{Positivity and Lipschitz continuity of the continuous density functions}
\label{S:other}

In this section we establish several properties of the continuous density 
function $f_t$. We prove that $f_t(x)$ is positive for every $x > \min \{t, 1-t\}$ (\refT{T:pos}), Lipschitz continuous for $x \in \bbR$ (\refT{T:Lipschitz_cont}), and jointly continuous for $(t,x) \in (0,1) \times \bbR$ (\refC{C:joint_continuous}).

\subsection{Positivity}
\label{S:pos}

\begin{theorem}
\label{T:pos}
For any $0 < t < 1$, the continuous density $f_t$ satisfies
\[
f_t(x) > 0\mbox{\rm \ if and only if $x > \min\{t, 1 - t\}$}.
\]
\end{theorem}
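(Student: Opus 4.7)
The ``only if'' direction is immediate from the Remark after \refT{T:uniform_cont}: since $J(t) \geq \Delta_1(t) \geq \min\{t, 1-t\}$ almost surely and $f_t$ is continuous, we obtain $f_t(x) = 0$ for all $x \leq \min\{t, 1-t\}$.

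For the ``if'' direction, the plan is to exploit the symmetry $Z(t) \eqd Z(1-t)$ (so $f_t = f_{1-t}$) to reduce to the case $t \in (0, 1/2]$, fix an arbitrary $x > t$, and show directly that for every $\delta > 0$ the event $\{Z(t) \in (1 + x - \delta,\, 1 + x + \delta)\}$ has positive probability; by the continuity of $f_t$ this will yield $f_t(x) > 0$. The event will be constructed by prescribing the first $N$ pivots $U_{\tau_1(t)}, \ldots, U_{\tau_N(t)}$ to land in short sub-intervals of the respective current search intervals $(L_{k-1}(t), R_{k-1}(t))$, chosen so that the partial sum $\sum_{k=1}^N \Delta_k(t)$ approximates $x$ to within $\delta/2$ and the residual interval length $R_N(t) - L_N(t)$ is so short that the residual contribution $\sum_{k > N}\Delta_k(t)$, stochastically dominated by $(R_N(t) - L_N(t))\, V$ via \refL{L:Stochastic_upper_bdd}, falls below $\delta/2$ with positive conditional probability. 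The event that $U_{\tau_k(t)}$ lies in a prescribed sub-interval of width $\eta_k (R_{k-1}(t) - L_{k-1}(t))$ has positive conditional probability $\eta_k > 0$, so the product of these $N$ conditional probabilities stays strictly positive.

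The reachability part of this plan is based on the basic fact that, conditionally on the current state $(L_{k-1}(t), R_{k-1}(t))$, the next increment $\Delta_k(t)$ can be made to realize any prescribed value in the non-degenerate interval $[\min\{t - L_{k-1}(t),\, R_{k-1}(t) - t\},\, R_{k-1}(t) - L_{k-1}(t))$ by placing $U_{\tau_k(t)}$ suitably (namely, just above or just below $t$). Iterating this observation, one should be able by induction on $N$ to choose a sequence of pivot targets whose cumulative increments sum to any desired value $x > t$ (the lower bound being $\min\{t, 1-t\}$, achieved only in the limit $N = \infty$ with all $\Delta_k$ minimized).

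The main technical obstacle will be the joint execution of the two requirements: hitting the cumulative target $x$ to within $\delta/2$ while simultaneously shrinking $R_N(t) - L_N(t)$ enough to control the tail. A delicate balancing is required between the regime near the minimum increment (where very small $\Delta_k(t)$'s prevent overshooting the target but also shrink the interval slowly) and the regime near the maximum increment (where one accumulates rapidly but may bypass the target). A ``greedy then adjust'' strategy, in which the first $N - 1$ steps coarsely approach $x$ while driving $R_{N-1}(t) - L_{N-1}(t)$ to a prescribed small value and the $N$th pivot is chosen to make the precise final correction, should suffice; care is needed to verify that the required final pivot really lies within the current interval so that the event retains positive conditional probability.
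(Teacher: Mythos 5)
Your ``only if'' direction is fine, and your construction plan for the support of the distribution is essentially the same as the paper's Lemma~\ref{L:supp} (prescribe pivots to hit short intervals near chosen targets, then control the residual via Markov's inequality or stochastic domination). The genuine gap is in the sentence ``\dots\ the event $\{Z(t) \in (1 + x - \delta,\, 1 + x + \delta)\}$ has positive probability; by the continuity of $f_t$ this will yield $f_t(x) > 0$.'' That inference is invalid: a continuous density can vanish at a point of the support. For instance, a density behaving like $c\,|y - x|$ near $y = x$ is continuous and assigns positive mass to every neighborhood of $x$, yet equals $0$ at $x$. So showing that $x$ lies in the support of $F_t$ does not by itself yield pointwise positivity of $f_t$ at $x$ --- it only yields $f_t > 0$ on a dense subset of the support. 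Indeed, this is exactly the obstacle the paper is working around: in the paper the support result~(\refL{L:supp}) is merely the \emph{first} step, and two further arguments are needed to upgrade it to $f_t(x) > 0$.

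The paper's upgrade works as follows and is the idea you are missing. In \refL{L:pos}, one uses the explicit lower bound $f_t(x) \geq c_t(x)$ from~\eqref{cx}, where $c_t(x)$ is represented as an integral over $r$ of $\P\bigl(\frac{x-1}{r} < Z(\frac{t}{r}) < \frac{x}{r} - 1\bigr)$. Because the integrand is a \emph{probability of a nondegenerate interval} (which, by the support lemma, is strictly positive whenever the interval meets the support), and because the set of $r$ for which this holds is itself a nondegenerate interval when $x > 2t$, the integral --- and hence $f_t(x)$ --- is strictly positive for $x > 2\min\{t, 1-t\}$. A second representation of $f_t(x)$ as a double integral of $f_{\frac{t-l}{r-l}}$ over a two-dimensional region near $(l,r) = (t,t)$, fed back through \refL{L:pos}, then pushes the range of positivity down to $x > \min\{t, 1-t\}$. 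In short: you need to sandwich the density between integrals whose integrands are already known to be positive over sets of positive measure, rather than appeal to continuity alone. Without such a step your argument establishes only that the set $\{f_t > 0\}$ is dense in $(\min\{t,1-t\}, \infty)$, not that it is all of that interval.
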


We already know that $f_t(x) = 0$ if $x \leq \min\{t, 1 - t\}$, so we need only prove the ``if'' assertion.
Our starting point for the proof is the following lemma.  Recall from Chung~\cite[Exercise 1.6]{MR1796326} that a point~$x$ is said to belong to the support of a distribution function~$F$ if for every $\epsilon > 0$ we have
\begin{equation}
\label{supp}
F(x + \epsilon) - F(x - \epsilon) > 0.
\end{equation}
Note that to prove that~$x$ is in the support of~$F$ we may choose any $\epsilon_0(x) > 0$ and establish~\eqref{supp} for all 
$\epsilon \in (0, \epsilon_0(x))$.

\begin{lemma}
\label{L:supp}
For any $0 < t < 1$, the support of $F_t$ is $[\min\{t, 1 - t\}, \infty)$.
\end{lemma}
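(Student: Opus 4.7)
The plan is to prove the two containments separately. The direction $\operatorname{supp}(F_t) \subseteq [\min\{t, 1-t\}, \infty)$ is immediate from the a.s.\ inequality $J(t) \geq R_1(t) - L_1(t) \geq \min\{t, 1-t\}$ noted in the remark after \refT{T:uniform_cont}. All the work lies in the reverse containment.

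Fix $x \geq \min\{t, 1-t\}$ and $\epsilon > 0$; the task is to show $\P(J(t) \in (x - \epsilon, x + \epsilon)) > 0$. By the distributional symmetry $Z(t) \eqd Z(1-t)$ I may assume $t \in (0, 1/2]$, so $\min\{t, 1-t\} = t$ and $x \geq t$. The main tool is the rescaling identity
\[
J(t) = U_1\, Z'(t/U_1) \quad \text{on } \{U_1 > t\},
\]
obtained by conditioning on the first step: on $\{U_1 > t\}$ we have $(L_1(t), R_1(t)) = (0, U_1)$, and dividing the post-first-step intervals by $U_1$ turns them into the {\tt QuickQuant}-process for the quantile $t/U_1$ driven by the rescaled uniforms $U_i/U_1$ (for $i \geq 2$ with $U_i < U_1$); so conditionally on $U_1 = u$ the random variable $Z'(t/U_1)$ has the unconditional law of $Z(t/u)$. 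Integrating over $u$ yields
\[
\P(J(t) \in (x - \epsilon, x + \epsilon)) \;\geq\; \int_t^1 \P\!\left( Z(t/u) \in \left( \tfrac{x - \epsilon}{u},\, \tfrac{x + \epsilon}{u} \right) \right) \dd u.
\]

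To show this integral is strictly positive I let $u \downarrow t$, equivalently $s := t/u \uparrow 1$. A direct calculation shows that for $u$ in some right neighbourhood of~$t$ the moving interval $\bigl( (x - \epsilon)/u,\, (x + \epsilon)/u \bigr)$ contains the fixed open set $A := \bigl( x/t - \epsilon/(2t),\, x/t + \epsilon/(2t) \bigr)$, and because $x/t \geq 1$ lies in the support $[1, \infty)$ of the Dickman law $D = Z(1)$ (whose density is positive throughout $[1, \infty)$), $\P(D \in A) > 0$. The required passage from $D$ to $Z(s)$ is furnished by the weak convergence $F_s \Rightarrow F_1$ as $s \uparrow 1$: the right-continuous Gr\"ubel--R\"osler construction of the process $(J(t))_{t \in [0, 1]}$ invoked in \refL{L:Frc} gives $F_u \Rightarrow F_0$ as $u \downarrow 0$, and then the distributional symmetries $F_s = F_{1-s}$ and $F_0 = F_1$ yield $F_s \Rightarrow F_1$ as $s \uparrow 1$. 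The Portmanteau theorem for open sets now gives $\liminf_{s \uparrow 1} \P(Z(s) \in A) \geq \P(D \in A) > 0$, so the integrand is bounded below by a positive constant on a nondegenerate interval of $u$-values and the integral is strictly positive.

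The main obstacle is really only the weak continuity $F_s \to F_1$ at the endpoint $s = 1$, which is handled in one line by the symmetry trick; everything else is routine Portmanteau bookkeeping built on top of the scaling identity for $J(t)$.
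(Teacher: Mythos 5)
Your proof is correct and takes a genuinely different route from the paper's. The paper proves \refL{L:supp} by direct construction: fixing $t \leq 1/2$ and $x \geq t$, it writes $x = t + K + b$ with $K \geq 0$ an integer and $b \in [0, 1)$, and then (with a four-way case analysis according to $b$) builds explicit positive-probability events on which the successive pivots are pinned to $\epsilon$-windows chosen so that $J(t)$ lands within $O(\epsilon)$ of $x$, with a Markov-inequality bound controlling the tail of the sum. Your argument instead conditions on the first pivot to obtain the rescaling identity $J(t) = U_1\,Z'(t/U_1)$ on $\{U_1 > t\}$, giving the lower bound
\[
\P\bigl(J(t) \in (x - \epsilon, x + \epsilon)\bigr) \geq \int_t^1 \P\Bigl( Z(t/u) \in \bigl( \tfrac{x - \epsilon}{u},\, \tfrac{x + \epsilon}{u} \bigr) \Bigr) \dd u,
\]
and then passes to the Dickman endpoint $s = t/u \uparrow 1$ via the weak continuity $F_s \Rightarrow F_1$ supplied by \refL{L:Frc} (together with the symmetries $F_s = F_{1-s}$ and $F_0 = F_1$) and the Portmanteau inequality for open sets. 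Both arguments are valid. Yours buys brevity and a clean conceptual reduction to the known endpoint distribution, at the cost of relying on the classical fact that the Dickman density is positive throughout $[1, \infty)$ --- which the paper never states in exactly that form, though it is standard and available via the Hwang--Tsai reference --- whereas the paper's argument is entirely self-contained and, incidentally, exhibits explicitly which search realizations produce $J(t) \approx x$.
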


\begin{proof}
Clearly the support of $F_t$ is contained in $[\min\{t, 1 - t\}, \infty)$, so we need only establish the reverse containment. 
Since $F_t = F_{1 - t}$ by symmetry, we may fix $t \leq 1 / 2$.  Also fixing $x \geq t$, write
\[
x = t + K + b
\]
where $K \geq 0$ is an integer and $b \in [0, 1)$.  We will show that~$x$ 
belongs to the support of $F_t$.  Let
\[
A := \bigcap_{k = 1}^K \{1 - k \epsilon < R_k < 1 - (k - 1) \epsilon\}.
\]
We break our analysis into four cases:\ (i) $t < b < 1$, (ii) $b = t$, (iii) $0 < b < t$, and (iv) $b = 0$.
\medskip

\par\noindent
(i) $t < b < 1$.  Let
\[
B := \{b < R_{K + 1} < b + \epsilon\}\,\bigcap\,\{t < R_{K + 2} < t + \epsilon\}\,\bigcap\,\{t - \epsilon < L_{K + 3} < t\} 
\]
and
\begin{equation}
\label{Cdef}
C := \left\{ 0 \leq \sum_{k = K + 4}^{\infty} \Delta_k < 6 \epsilon \right\}.
\end{equation}
Upon observing that for $\delta_1, \delta_2 \in (0, \epsilon)$ we have by 
use of Markov's inequality that
\begin{align}
\lefteqn{\P(C \mid (L_{K + 3}, R_{K + 3}) = (t - \delta_1, t + \delta_2))} \nonumber \\ 
&= \P\left( (\delta_1 + \delta_2) J\left( \frac{\delta_1}{\delta_1 + \delta_2} \right) < 6 \epsilon \right)
\geq \P\left( J\left( \frac{\delta_1}{\delta_1 + \delta_2} \right) < 3 \right) \nonumber \\
\label{Markov}
&\geq 1 - \frac{1}{3} \E J\left( \frac{\delta_1}{\delta_1 + \delta_2} \right)
\geq 1- \frac{1}{3} \left[ 1 + 2 H\left( \frac{1}{2} \right) \right] > 0.2 > 0.
\end{align}
We then see that $\P(A \cap B \cap C) > 0$ for all sufficiently small~$\epsilon$.  But if the event $A \cap B \cap C$ is realized, then
\[
J(t) > \sum_{k = 1}^K (1 - k \epsilon) + b + t = x - {{K + 1} \choose 
{2}} \epsilon
\]
and
\[
J(t) < \sum_{k = 1}^K [1 - (k - 1) \epsilon] + (b + \epsilon) + (t + \epsilon) + 2 \epsilon + 6 \epsilon \leq x + 10 \epsilon.
\]
We conclude that~$x$ is in the support of $F_t$.
\medskip

\par\noindent
(ii) $b = t$.  Let
\[
B := \{t < R_{K + 2} < R_{K + 1} < t + \epsilon\}\,\bigcap\,\{t - \epsilon < L_{K + 3} < t\} 
\]
and define~$C$ by~\eqref{Cdef}.
We then see that $\P(A \cap B \cap C) > 0$ for all sufficiently small~$\epsilon$.  But if the event $A \cap B \cap C$ is realized, then
\[
J(t) > \sum_{k = 1}^K (1 - k \epsilon) + t + t =  x - {{K + 1} \choose {2}} \epsilon
\]
and
\[
J(t) < \sum_{k = 1}^K [1 - (k - 1) \epsilon] + 2 (t + \epsilon) +2 \epsilon + 6 \epsilon \leq x + 10 \epsilon.
\]
We conclude that~$x$ is in the support of $F_t$.
\medskip

\par\noindent
(iii) $0 < b < t$.  Let
\[
B := \{t < R_{K + 1} < t + \epsilon\} \bigcap \{t - b - \epsilon < L_{K 
+ 2} < t - b\} \bigcap \{t - \epsilon < L_{K + 3} < t\}
\]
and define~$C$ by~\eqref{Cdef}.
We then see that $\P(A \cap B \cap C) > 0$ for all sufficiently small~$\epsilon$.  But if the event $A \cap B \cap C$ is realized, then
\[
J(t) > \sum_{k = 1}^K (1 - k \epsilon) + t + b =  x - {{K + 1} \choose {2}} \epsilon
\]
and
\[
J(t) < \sum_{k = 1}^K [1 - (k - 1) \epsilon] + (t + \epsilon) + (b + 2 \epsilon) + 2 \epsilon + 6 \epsilon \leq x + 11 \epsilon.
\]
We conclude that~$x$ is in the support of $F_t$.
\medskip

\par\noindent
(iv) $b = 0$.  Let
\[
B := \{t < R_{K + 1} < t + \epsilon\} \bigcap \{t - \epsilon < L_{K + 2} < t\}
\]
and define~$C$ by~\eqref{Cdef}, but with $K + 4$ there changed to $K + 3$.
We then see that $\P(A \cap B \cap C) > 0$ for all sufficiently small~$\epsilon$.  But if the event $A \cap B \cap C$ is realized, then
\[
J(t) > \sum_{k = 1}^K (1 - k \epsilon) + t =  x - {{K + 1} \choose {2}} \epsilon
\]
and
\[
J(t) < \sum_{k = 1}^K [1 - (k - 1) \epsilon] + (t + \epsilon) +2 \epsilon + 6 \epsilon \leq x + 9 \epsilon.
\]
We conclude that~$x$ is in the support of $F_t$.
\end{proof}

We next use~\eqref{cx} together with \refL{L:supp} to establish \refT{T:pos} in a special case.

\begin{lemma}
\label{L:pos}
For any $0 < t < 1$, the continuous density $f_t$ satisfies
\[
f_t(x) > 0\mbox{\rm \ for all $x > 2 \min\{t, 1 - t\}$}.
\]
\end{lemma}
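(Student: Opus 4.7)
By the symmetry $Z(t)\eqd Z(1-t)$ noted after~\eqref{E:3}, the plan is to reduce to the case $t\le 1/2$, so that $\min\{t,1-t\}=t$ and the task becomes proving $f_t(x_0)>0$ for every $x_0>2t$. The strategy is to lower-bound $f_t$ by the sub-density
\[
c_t(x) := \P(L_2(t)=0,\,J(t)\in\ddx x)/\ddx x
\]
from~\eqref{cx}, and to exhibit $c_t(x_0)>0$ with help from the support identification in \refL{L:supp}. Three ingredients underpin this approach. First, $f_t \ge c_t$ Lebesgue-almost everywhere, because $\P(J(t)\in A) \ge \P(L_2(t)=0,\,J(t)\in A)$ for every Borel~$A$. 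Second, $c_t$ is continuous on $\bbR$: on the effective domain of integration in~\eqref{cx} one has $r\le x-rz$ with $r\ge t$, so the integrand $(x-rz)^{-1}$ is bounded by $t^{-1}$; since $Z(t/r)$ has a density, the boundary of the indicator carries zero $\P(Z(t/r)\in\ddx z)$-measure, and dominated convergence yields continuity of $c_t$. Third, by \refL{L:supp} the support of $Z(s)=1+J(s)$ equals $[1+\min\{s,1-s\},\infty)$, so any open subinterval of its interior has positive $\P(Z(s)\in\cdot)$-measure.

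The heart of the argument is to establish $c_t(x_0)>0$. I would choose $r$ in the open interval
\[
\bigl(t,\;\min\{2t,\,(x_0+t)/3\}\bigr),
\]
which is nonempty precisely when $x_0>2t$. For such $r$ one has $t/r>1/2$, hence $\min\{t/r,1-t/r\}=1-t/r$, so the interior of the support of $Z(t/r)$ is $(2-t/r,\infty)$. The condition $r\le x_0-rz<1$ in~\eqref{cx} becomes $(x_0-1)/r<z\le x_0/r-1$, and the bound $r<(x_0+t)/3$ ensures $x_0/r-1>2-t/r$. Consequently
\[
I_r := \bigl(\max\{(x_0-1)/r,\,2-t/r\},\;x_0/r-1\bigr)
\]
is a nonempty open subset of the interior of the support of $Z(t/r)$ and therefore has positive $\P(Z(t/r)\in\cdot)$-measure. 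Since $(x_0-rz)^{-1}$ is strictly positive on $I_r$, the inner integral in~\eqref{cx} is strictly positive for each $r$ in the chosen range, and integrating in $r$ delivers $c_t(x_0)>0$.

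To conclude, I would combine the two facts that $f_t\ge c_t$ a.e.\ and that both functions are continuous; this forces $f_t\ge c_t$ pointwise, for otherwise continuity of $f_t-c_t$ would produce a neighborhood on which $f_t<c_t$, a set of positive Lebesgue measure contradicting the a.e.\ inequality. Hence $f_t(x_0)\ge c_t(x_0)>0$, completing the plan.

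The main obstacle I anticipate is bridging the a.e.\ domination $c_t\le f_t$ to a pointwise strict inequality at the specific point $x_0$. The fix is to prove continuity of $c_t$ as a preliminary step---an essentially routine dominated-convergence argument enabled by the uniform bound $t^{-1}$ on the integrand in~\eqref{cx}---so that continuity of $f_t$ from \refT{T:uniform_cont} immediately upgrades the a.e.\ comparison to an everywhere one.
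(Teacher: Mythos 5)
Your proposal is correct and follows essentially the same path as the paper's proof: lower-bound $f_t$ by the sub-density $c_t$ from~\eqref{cx}, then use \refL{L:supp} to show the inner integral is positive over a nondegenerate range of~$r$. Two small deviations are worth noting. You add the constraint $r < 2t$, which conveniently pins $\min\{t/r,1-t/r\} = 1-t/r$ and so removes a case distinction the paper's choice $r < (x+t)/3$ quietly elides (the latter turns out to suffice in both cases, but this requires checking that when $r \ge 2t$ one automatically has $r < (x-t)/2$); your restriction is a harmless simplification. You also spend effort upgrading the almost-everywhere inequality $c_t \le f_t$ to a pointwise one via continuity of $c_t$; the paper writes $f_t(x) \ge c_t(x)$ without comment, implicitly relying on the fact that its formulas for both functions arise from the same underlying construction, so your added rigor is harmless and arguably a small improvement in explicitness.
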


\begin{proof}
We may fix $t \leq 1/2$ and $x > 2 t$ and prove $f_t(x) > 0$.  To do this, we first note from~\eqref{cx} that
\begin{align*}
f_t(x) &\geq c_t(x) = \P(L_2(t) = 0,\,J(t) \in \ddx x) / \ddx x \\
&= \int_{r \in (t, 1)} \int\!{\bf 1}(r \leq x - r z < 1)\,(x - r z)^{-1}\,\P(Z(t / r) \in \ddx z)\dd r \\
&\geq \int_{r \in (t, 1)} \int_{(x - 1) / r}^{(x / r) - 1}\,\P(Z(t / r) \in \ddx z)\dd r \\
&\geq \int_{r \in (t, 1)} \P\left( \frac{x - 1}{r} < Z\left( \frac{t}{r} \right) < \frac{x}{r} - 1 \right)\dd r.
\end{align*}
According to \refL{L:supp}, for the integrand in this last integral to be 
positive, it is necessary and sufficient that $(x - 1) / r < (x / r) - 1$ 
(equivalently, $r < 1$) and
\[
\tfrac{x}{r} - 1 > 1 + \min\{\tfrac{t}{r}, 1 - \tfrac{t}{r}\}
\]
[for which it is sufficient that $r < (x + t) / 3$].  Thus
\[
f_t(x) \geq \int_{r \in (t, \min\{(x + t) / 3, 1\})} \P\left( \frac{x - 1}{r} < Z\left( \frac{t}{r} \right) < \frac{x}{r} - 1 \right)\dd r > 0
\]
because (recalling $x > 2 t$) the integrand here is positive over the nondegenerate interval of integration.
\end{proof}

Finally, we use a different contribution to $f_t(x)$ together with \refL{L:pos} to establish \refT{T:pos}.

\begin{proof}[Proof of \refT{T:pos}]
We may fix $t \geq 1 / 2$ and $x > 1 - t$ and prove $f_t(x) > 0$.  To do this, we first note that
\begin{align*}
f_t(x) 
&\geq \int_{l \in (0, t)}\!\int_{r \in (t, 1)}\!\P(L_1(t) = L_2(t) \in \dd l,\,R_2(t) \in \ddx r) \\ 
&{} \qquad \left[ \P\left( (r - l) J\left( \frac{t - l}{r - l} \right) \in \dd x - [(1 - l) + (r - l)] \right) / \ddx x \right] \\
&= \int_{l \in (0, t)}\!\int_{r \in (t, 1)}\! (1-l)^{-1} (r - l)^{-1} f_{\frac{t - l}{r - l}} \left( \frac{x - (1 + r - 2 l)}{r - l} \right)\dd r\dd l.
\end{align*}
According to \refL{L:pos}, for the integrand in this double integral to be positive, it is sufficient that
\[
\frac{x - (1 + r - 2 l)}{r - l} > 2 \min\left\{ \frac{t - l}{r - l},\,\frac{r - t}{r - l} \right\},
\]
or, equivalently,
\[
x > \min\{1 + 2 t + r - 4 l, 1 - 2 t + 3 r - 2 l\}.
\]
This strict inequality is true (because $x > 1 - t$) when $l = t$ and $r = t$ and so, for sufficiently small $\epsilon > 0$ is true for 
$l \in (t - \epsilon, t)$ and $r \in (t, t + \epsilon)$.  Thus
\[
f_t(x) 
\geq \int_{l \in (t - \epsilon, t)}\!\int_{r \in (t, t + \epsilon)}\!(1 - 
l)^{-1} (r - l)^{-1} f_{\frac{t - l}{r - l}} \left( \frac{x - (1 + r - 2 l)}{r - l} \right)\dd r\dd l > 0
\]
because the integrand here is positive over the fully two-dimensional rectangular region of integration.
\end{proof}

\subsection{Lipschitz continuity}
\label{S:Lip}

We now prove that, for each $0< t <1$, the density function $f_t$ is Lipschitz continuous, which is a result stronger than~\refT{T:uniform_cont}. 

\begin{theorem}
\label{T:Lipschitz_cont}
For each $0 < t < 1$, the density function $f_t$ is Lipschitz continuous. 
\end{theorem}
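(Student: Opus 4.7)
My plan is to use the mixture representation
\[
f_t(x) = \E\,h_{L_3, R_3}(x) = \int h_{l_3, r_3}(x)\,\P((L_3, R_3) \in \ddx (l_3, r_3))
\]
from \refT{T:main}, together with the observation that the conditional density $h_{l_3, r_3}$ is the convolution $f_{l_3, r_3} \star g_Y^{l_3, r_3}$ of the explicit piecewise-rational density $f_{l_3, r_3}$ of $X = \Delta_1 + \Delta_2$ (from Lemmas~\ref{L:main} and~\ref{L:main01}) with the conditional density $g_Y^{l_3, r_3}(y) = (r_3 - l_3)^{-1} f_{(t - l_3)/(r_3 - l_3)}\!\left( y/(r_3 - l_3) - 1 \right)$ of $Y = \sum_{k \ge 3} \Delta_k$. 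By \refT{T:bdd}, $\|g_Y^{l_3, r_3}\|_\infty \le 10\,(r_3 - l_3)^{-1}$.

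The key input is the standard convolutional Lipschitz estimate: if $f$ is a right-continuous function of bounded variation on $\bbR$ with total variation $V(f)$ and $g$ is bounded and measurable, then $f \star g$ is Lipschitz with constant at most $\|g\|_\infty \cdot V(f)$; this follows from the Fubini identity $\int |f(v + h) - f(v)|\dd v \le h\,V(f)$. Applied to
\[
h_{l_3, r_3}(x + h) - h_{l_3, r_3}(x) = \int [f_{l_3, r_3}(v + h) - f_{l_3, r_3}(v)]\,g_Y^{l_3, r_3}(x - v)\dd v,
\]
it yields $|h_{l_3, r_3}(x + h) - h_{l_3, r_3}(x)| \le V(f_{l_3, r_3})\,\|g_Y^{l_3, r_3}\|_\infty\,h$. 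Using the explicit formulas in the proof of \refL{L:main}, each of the six pieces $f_{l_3, r_3}^{(i)}$ is monotone decreasing on its compact support, so $V(f_{l_3, r_3})$ equals $2\,g(l_3, r_3)^{-1}$ times the sum of the six left-endpoint values of the $f_{l_3, r_3}^{(i)}$, with analogous expressions for the boundary cases $\rho \in \{0, \infty\}$ from \refL{L:main01}.

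Integrating $V(f_{l_3, r_3}) \|g_Y^{l_3, r_3}\|_\infty$ against $g(l_3, r_3) \dd l_3 \dd r_3$ conveniently cancels the factor $g(l_3, r_3)^{-1}$ and leaves an explicit double integral over $(l_3, r_3) \in (0, t) \times (t, 1)$. I expect the main obstacle to be that some of the resulting integrands produce logarithmic divergences as $r_3 - l_3 \to 0$: the $(r_3 - l_3)^{-1}$ factor from $\|g_Y^{l_3, r_3}\|_\infty$ combines with the left-endpoint values of those $f_{l_3, r_3}^{(i)}$ (notably $f_{l_3, r_3}^{(6)}$) that themselves blow up on the diagonal. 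To address this, I would either condition more finely, on $(L_n, R_n)$ for a larger $n$, so that the conditional density of $\sum_{k < n} \Delta_k$ acquires extra smoothness from additional convolutions and its normalized total variation becomes smaller, or invoke the superexponential right-tail decay from \refT{T:Exp_decay} to sharpen the crude pointwise bound on $\|g_Y^{l_3, r_3}\|_\infty$ in the singular region and reapply the BV estimate with the roles of $f$ and $g$ interchanged. Since \refT{T:Lipschitz_cont} allows the Lipschitz constant to depend on $t$, only pointwise finiteness needs to be established.
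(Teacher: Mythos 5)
Your setup is sound as far as it goes: the mixture representation $f_t = \E\,h_{L_3, R_3}$, the convolution structure $h_{l_3, r_3} = f_{l_3, r_3} \star g_Y^{l_3, r_3}$, and the standard ``BV $\star$ bounded is Lipschitz'' lemma are all correct observations, and the paper's own proof does share the high-level strategy of working inside this mixture. However, the obstacle you identify is worse than you say, and neither proposed fix closes it. Writing out the total-variation bound, the troublesome pieces are $f^{(3)}_{l_3,r_3}, \dots, f^{(6)}_{l_3,r_3}$, whose left-endpoint values carry a factor $(r_3 - l_3)^{-1}$. Multiplying by $\|g_Y^{l_3,r_3}\|_\infty \le 10 (r_3 - l_3)^{-1}$ and by the density $g(l_3, r_3)$ from~\eqref{E:4} (which cancels the $g^{-1}$ in $f_{l_3,r_3}$) leaves you to integrate, e.g.,
\[
\int_0^t\!\int_t^1 \frac{1}{(1 - l_3)(r_3 - l_3)^2}\dd r_3\dd l_3
= \int_0^t \frac{1}{1 - l_3}\left[\frac{1}{t - l_3} - \frac{1}{1 - l_3}\right]\dd l_3,
\]
and the $\tfrac{1}{(1-l_3)(t-l_3)}$ piece is a genuine non-integrable singularity at $l_3 \uparrow t$, not a logarithmic one. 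So the naive BV Lipschitz constant is actually $+\infty$ in expectation and the proof does not close.

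Of the two escape routes you sketch, the second is circular and the first is speculative. Interchanging the roles of $f$ and $g$ requires $V\bigl(f_{(t - l_3)/(r_3 - l_3)}\bigr) < \infty$, but at this stage of the paper all that is known about $f_\tau$ is that it is bounded, uniformly continuous, and vanishes at $\pm\infty$ with superexponential decay (Theorems~\ref{T:bdd},~\ref{T:uniform_cont},~\ref{T:Exp_decay}); none of that implies bounded variation, which is essentially equivalent to the Lipschitz-type regularity being proved. Conditioning on $(L_n, R_n)$ for larger $n$ does not obviously help either, since the density of $\sum_{k \ge n} \Delta_k$ given $(L_n, R_n) = (l_n, r_n)$ still has sup-norm of order $(r_n - l_n)^{-1}$, and you have not argued that the normalized total variation of the conditional density of $\sum_{k < n} \Delta_k$ shrinks fast enough to compensate. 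The idea of using~\refT{T:Exp_decay} near the diagonal is the right instinct, but to make it work one has to exploit it \emph{inside} the $y$-integral where the support constraint of $f_{l_3, r_3}$ forces the argument of $f_{(t - l_3)/(r_3 - l_3)}$ to be large when $r_3 - l_3$ is small; a crude pointwise bound on $\|g_Y^{l_3,r_3}\|_\infty$ discards exactly this information.

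For comparison, the paper's proof (Lemmas~\ref{L:Lipschitz_1} and~\ref{L:Lipschitz_2} plus the Appendix) does not invoke a general BV--convolution lemma at all. It bounds $c^{(i)}(z) - c^{(i)}(x)$ piece by piece, partitioning the $y$-axis at the breakpoints of $f^{(i)}_{l, r}(\cdot - y)$, treating the regime $z - x$ small separately from $z - x$ large, and applying the mean value theorem to the smooth parts of each $f^{(i)}_{l,r}$. The one genuinely singular contribution (the cross term in Subcase 1(a) of \refL{L:Lipschitz_2}) is then handled by shrinking the $(l, r)$-region using the support constraint and invoking the uniform exponential bound~\eqref{E:exp_bdd} under the $y$-integral before integrating over the $(l,r)$-region. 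That is where the superexponential decay actually earns its keep, and it requires the finer region decomposition, not a modification of the sup-norm bound.
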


That is, there exists a constant $\Lambda_t \in (0, \infty)$ such that for any $x, z \in \bbR$, we have $| f_t(z) - f_t(x) | \leq \Lambda_t | z - x |$.  The proof of \refT{T:Lipschitz_cont} will reveal that one can take
$\Lambda_t = \Lambda [t^{-1} \ln t] [(1 - t)^{-1} \ln (1 - t)]$ for some constant $\Lambda < \infty$.  Thus the densities $f_t$ are in fact uniformly Lipschitz continuous for~$t$ in any compact subinterval of $(0, 1)$.
 \smallskip

We break the proof of~\refT{T:Lipschitz_cont} into two lemmas.~\refL{L:Lipschitz_1} deals with the contribution to $f_t$ from the disjoint-union event $\{0 = L_3(t) < t < R_3(t) < 1\} \cup \{0 < L_3(t) < t < R_3(t) = 
1\}$ while ~\refL{L:Lipschitz_2} deals with the contribution from the event $\{0 < L_3(t) < t < R_3(t) < 1\}$.
\begin{lemma}
\label{L:Lipschitz_1}
For each $0 < t < 1$, the contribution to $f_t$ from the event $\{0 = L_3(t) < t < R_3(t) < 1\} \cup \{0 < L_3(t) < t < R_3(t) = 1\}$ is Lipschitz continuous.
\end{lemma}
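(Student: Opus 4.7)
The plan is to decompose the contribution by symmetry and use the convolution structure of $h_{l_3, r_3}$ to upgrade bounded variation of $f_{l_3, r_3}$ into Lipschitz behavior of $h_{l_3, r_3}$. Write
\[
c_0(x) := \int \P(L_3(t) = 0,\,R_3(t) \in \ddx r)\, h_{0, r}(x)
\]
for the contribution from $\{L_3(t) = 0,\,R_3(t) < 1\}$; the contribution from $\{L_3(t) > 0,\,R_3(t) = 1\}$ is handled identically via the symmetry $f_t \equiv f_{1-t}$ (applied with $t$ replaced by $1-t$ in the argument below), so I focus on~$c_0$.

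The core tool is the elementary inequality: if $f$ has bounded variation and $g \in L^\infty$, then $f * g$ is Lipschitz with constant $\|g\|_\infty \cdot \mathrm{TV}(f)$, via the standard bound $\int |f(x - y) - f(z - y)|\, \ddx y \leq |x - z| \cdot \mathrm{TV}(f)$ for BV functions. To apply this with $h_{0, r} = f_{0, r} * g_{0, r}$: as observed after~\eqref{E:Y}, conditionally on $(L_3, R_3) = (0, r)$ we have $Y \stackrel{\mathcal{L}}{=} r Z(t/r)$, so the conditional density of~$Y$ is $g_{0, r}(y) = r^{-1} f_{t/r}(y/r - 1)$, and hence $\|g_{0, r}\|_\infty \leq 10/r$ by \refT{T:bdd}. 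Next I will show that $f_{0, r}$ is \emph{unimodal} on its support $[2r, 2]$, whence $\mathrm{TV}(f_{0, r}) = 2 \|f_{0, r}\|_\infty \leq 2 b_t(0, r)$ with $b_t(0, r)$ as in \refL{L:b1}. The unimodality reassembles computations already in the proof of \refL{L:b1}: on $(2r, 1+r)$ the factor $x^{-1} \ln((x-r)/r)$ is unimodal, with peak at $x = r(\alpha + 1)$ in the interior when $r < \beta$ and monotonically increasing on the whole subinterval when $r \geq \beta$; on $(1+r, 2)$ the factor $x^{-1} \ln(1/(x-1))$ is strictly decreasing by direct differentiation; and $f_{0, r}$ is continuous at the seam $x = 1 + r$ (both one-sided limits equal $2/[(\ln r^{-1})(1+r)]$). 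Consequently $h_{0, r}$ is Lipschitz with constant $(20/r) \cdot b_t(0, r)$.

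Finally I will integrate against the marginal density $\tfrac{1}{2}(\ln r^{-1})^2 \ddx r$ from~\eqref{joint2}, obtaining
\[
|c_0(x) - c_0(z)| \leq 10\,|x - z| \int_t^1 r^{-1}(\ln r^{-1})^2\, b_t(0, r)\, \ddx r,
\]
and verify finiteness for any fixed $t \in (0, 1)$ by splitting at $r = \beta$: substituting the explicit formulas for $b_1, b_2$ from \refL{L:b1}, the integrand becomes a constant multiple of $r^{-1} \ln r^{-1}$ on $(\beta \vee t, 1)$, integrable with value $O((\ln t^{-1})^2)$, and, when $t < \beta$, a constant multiple of $r^{-2}$ on $(t, \beta)$, integrable on the compact range with value $O(1/t)$. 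The main obstacle is the clean verification of global unimodality of $f_{0, r}$: handling the interior peak at $x = r(\alpha + 1)$ (when $r < \beta$) and the seam at $x = 1 + r$ simultaneously requires a careful case split, though each piece is essentially already present in the proof of \refL{L:b1} and only needs to be reassembled into a global monotonicity statement.
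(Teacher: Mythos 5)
Your proposal is correct, and it takes a genuinely different and considerably cleaner route than the paper. The paper's proof of \refL{L:Lipschitz_1} is a hands-on case analysis: it splits into the regimes $z - x \leq 1 - r$ and $z - x > 1 - r$, and in the first regime partitions the $y$-axis into seven subintervals using the six breakpoints $x-2, z-2, x-(1+r), z-(1+r), x-2r, z-2r$, bounding the difference $f_{0,r}(z-y) - f_{0,r}(x-y)$ separately on each piece via the mean value theorem and the explicit form of $f_{0,r}$. You instead invoke the soft and standard inequality $\int_\bbR |f(u+h) - f(u)|\,\ddx u \leq |h|\,\mathrm{TV}(f)$ for BV functions, which immediately yields $\mathrm{Lip}(f_{0,r} * g_{0,r}) \leq \|g_{0,r}\|_\infty\, \mathrm{TV}(f_{0,r})$. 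Since $\|g_{0,r}\|_\infty \leq 10/r$ by \refT{T:bdd} and $f_{0,r}$ is continuous, compactly supported, and (after the case split at $r = \beta$) unimodal, we get $\mathrm{TV}(f_{0,r}) = 2\|f_{0,r}\|_\infty = 2\,b_t(0,r)$, and the rest is a short computation with the marginal $\tfrac12(\ln r^{-1})^2\,\ddx r$. The unimodality step you flag as the "main obstacle" is in fact a short reassembly of what is already in the proof of \refL{L:b1}: on $(2r, 1+r)$, after the change of variable $y = (x-r)/r$, the function is $\propto (\ln y)/(1+y)$, increasing to its peak at $y = \alpha$ and decreasing thereafter, so the interior peak at $x = r(\alpha+1)$ occurs iff $r < \beta$; on $(1+r, 2)$, $x \mapsto x^{-1}\ln[(x-1)^{-1}]$ is a product of two positive decreasing factors hence decreasing; and the two pieces agree continuously at $x = 1+r$ (both equal $2/[(1+r)\ln r^{-1}]$). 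The finiteness check you sketch — $\int_{\beta\vee t}^1 r^{-1}\ln r^{-1}\,\ddx r < \infty$ and, when $t < \beta$, $\int_t^\beta r^{-2}\,\ddx r < \infty$ — is also correct. What your approach buys is economy (one abstract lemma replaces the seven-subinterval bookkeeping) and a marginally better blow-up of the Lipschitz constant as $t \downarrow 0$, of order $t^{-1}$ for the $c_0$ piece versus the paper's $t^{-1}\ln t^{-1}$; what the paper's approach buys is explicit per-regime constants and no reliance on the (easy but extra) global unimodality verification.
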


\begin{proof}
Fix $0 < t < 1$.  By symmetry, we need only consider the contribution to $f_t(x)$ from the event 
$\{0 = L_3(t) < t < R_3(t) < 1\}$.  Recall that this contribution is 
\[
c_0(x) := \frac{1}{2} \int_{r, y}\!{(\ln r)^2 \, f_{0, r}(x - y) \, \P(Y \in dy \, | \,L_3(t) = 0,\,R_3(t) = r)\dd r},
\]
and that the conditional probability in the integrand can be written as 
\[
\P(Y \in \dd y \, | \,L_3(t) = 0,\,R_3(t) = r) = \frac{1}{r} \, f_{\frac{t}{r}}\left(\,\frac{y}{r} - 1\,\right)\dd y.
\]
Let $z, x \in \bbR$ with $z > x$ and fixed $r \in (t, 1)$. Writing
\[
d_r (x, z, y) :=\frac{1}{2} (\ln r)^2 [f_{0,r}(z-y) - f_{0,r}(x-y)],
\]
we are interested in bounding the absolute difference
\[
| c_0(z) - c_0(x) | \leq \int_{r, y}\!{ | d_r (x, z, y) | \, \frac{1}{r} \, f_{\frac{t}{r}}\left(\,\frac{y}{r} - 1\,\right)\dd y. }
\]
\smallskip

\par\noindent
\emph{Case} 1.\ $z - x \leq 1- r$.
We bound $d_r (x, z, y)$ for~$y$ in each of the seven subintervals of the 
real line determined by the six partition points 
\[
x - 2 < z - 2 \leq x - (1+r) < z - (1+r) \leq x - 2 r < z - 2r,
\]
and then the contribution to our bound on $| c_0(z) - c_0(x) |$ from all~$y$ in that subinterval (and all~$r$ satisfying the restriction of Case~1).
For the two subcases $y \leq x - 2$ and $y > z - 2 r$, we have $d_r (x, z, y) = 0$.  We bound the five nontrivial subcases 
as follows.
\smallskip

\par\noindent
\emph{Subcase} 1(a).\ $x - 2 < y \leq z-2$.
We have 
\[
| d_r (x, z, y) | = \left| \frac{1}{x- y} \right| \ln \left(\frac{1}{x- 
y - 1}\right) \leq \frac{1}{1+r} \ln \frac{1}{r},
\]
and the contribution to $| c_0(z) - c_0(x) | $ is bounded by
\[
\int_{r=t}^1 \frac{1}{r(1+r)} \left( \ln \frac{1}{r} \right) \int_{y = 
x-2}^{z-2} {f_{\frac{t}{r}}\left(\,\frac{y}{r} - 1\,\right) \dd y \dd r} \leq 10(z-x) \frac{1-t}{t(1+t)} \ln \frac{1}{t}
\]
since $f_{t/r}$ is bounded by $10$.
\smallskip

\par\noindent
\emph{Subcase} 1(b).\ $z - 2 < y \leq x - (1+r)$.
We have
\begin{align*}
d_r (x, z, y) 
&= \frac{1}{z-y} \ln \left(\frac{1}{z-y-1}\right) - \frac{1}{x-y} \ln \left(\frac{1}{x-y-1}\right)\\
&= \frac{1}{z-y} \left[ \ln  \left(\frac{1}{z-y-1} \right) - \ln \left( 
\frac{1}{x-y-1}\right) \right] \\
&{} \qquad + \left(\frac{1}{z-y} - \frac{1}{x-y} \right) \ln \left(\frac{1}{x-y-1}\right).
\end{align*}
Observe that $z - y > x - y > 1+ r$ and that the function $\ln[1 / (x-1)]$ is differentiable for $x > 1$.  We then use the mean value theorem to obtain
\begin{align*}
| d_r (x, z, y) | &\leq \frac{1}{1+r} \left| \ln  \left(\frac{1}{z-y-1} \right) - \ln \left( \frac{1}{x-y-1}\right) \right| + \frac{(z-x)}{(1+r)^2} \ln \frac{1}{r}\\
&\leq (z-x) \left[ \frac{1}{r(1+r)}+ \frac{1}{(1+r)^2} \ln \frac{1}{r} \right].
\end{align*}
The contribution to $| c_0(z) - c_0(x) | $ is then bounded by
\[
(z-x) \int_{r=t}^1 \left[ \frac{1}{r(1+r)}+ \frac{1}{(1+r)^2} \ln \frac{1}{r} \right] \dd r \leq (z-x) \frac{1-t}{1+t} \left(\frac{1}{t}+ \frac{1}{1+t} \ln \frac{1}{t}\right).
\]
\smallskip

\par\noindent
\emph{Subcase} 1(c).\ $x - (1+r) < y \leq z - (1+r)$.
We have
\begin{align*}
d_r (x, z, y) &= \frac{1}{z-y} \ln \left(\frac{1}{z-y-1}\right) - \frac{1}{x-y} \ln \left(\frac{x-y-r}{r}\right)\\
&= \frac{1}{z-y} \left[ \ln \left(\frac{1}{z-y-1}\right) - \ln \left(\frac{x-y-r}{r}\right) \right] \\
&+ \left(\frac{1}{z-y} - \frac{1}{x-y}\right) \ln \left(\frac{x-y-r}{r}\right).
\end{align*}
Using the inequalities $z - y \geq 1+r$ and $x - y > 2r$, we have
\[
| d_r (x, z, y) | = \frac{1}{1+r} \left|\ln \left(\frac{1}{z-y-1}\right) - \ln \left(\frac{x-y-r}{r}\right) \right| + \frac{z-x}{2r(1+r)} \ln \frac{1}{r}.
\]
We can bound the absolute-value term here by
\begin{align*}
&\left|\ln \frac{1}{z-y-1} - \ln \frac{1}{(1+r) - 1} \right| + \left|\ln \frac{(1+r) - r}{r} - \ln \frac{x-y-r}{r} \right|\\
&\leq \frac{1}{r} [z - y - (1+r)] + \frac{1}{r} [(1+r) - (x-y)] = (z-x)\frac{1}{r},
\end{align*}
where the above inequality comes from two applications of the mean value theorem. 
The contribution to $| c_0(z) - c_0(x) | $ is then bounded by
\[
(z-x) \int_{r = t}^1 \left[ \frac{1}{r(1+r)} + \frac{1}{2r(1+r)} \ln \frac{1}{r} \right] \dd r \leq (z-x) \frac{1-t}{t(1+t)} \left(1+\frac{1}{2} 
\ln \frac{1}{t} \right).
\]
\smallskip

\par\noindent
\emph{Subcase} 1(d).\ $z - (1+r) < y \leq x - 2r$.
We have
\begin{align*}
d_r (x, z, y) 
&= \frac{1}{z-y} \ln \left( \frac{z-y-r}{r}\right) - \frac{1}{x-y} \ln \left( \frac{x-y-r}{r}\right)\\
&= \frac{1}{z-y} \left[ \ln \left( \frac{z-y-r}{r}\right) - \ln \left( \frac{x-y-r}{r}\right) \right]\\
&{} \qquad + \left(\frac{1}{z-y} - \frac{1}{x-y} \right) \ln \left( \frac{x-y-r}{r}\right).
\end{align*}
Using the inequality $z - y > x - y \geq 2r$, we obtain
\begin{align*}
| d_r (x, z, y)| &\leq \frac{1}{2 r} [\ln(z-y-r) - \ln (x-y-r)] + \frac{z-x}{(2 r)^2} \ln \frac{1}{r}\\
&\leq (z-x) \left[ \frac{1}{2 r} \frac{1}{r} + \frac{1}{(2 r)^2} \ln \frac{1}{r} \right]
\end{align*}
by the differentiability of $\ln(x-r)$ for $x > r$ and the mean value theorem. The contribution to $| c_0(z) - c_0(x) | $ is then bounded 
by
\[
(z-x) \int_{r = t}^1 \left[ \frac{1}{2 r} \frac{1}{r} + \frac{1}{(2 r)^2} \ln \frac{1}{r} \right] \dd r 
= (z-x)\, \frac{(1 - t )+ \ln(1 / t)}{4 t}.
\]
\smallskip

\par\noindent
\emph{Subcase} 1(e).\ $x - 2 r < y \leq z - 2r$.
Using the inequality $2r \leq z - y < 1+r$, we have
\[
| d_r (x, z, y) | = \frac{1}{z-y} \ln \left( \frac{z-y-r}{r} \right) \leq \frac{1}{2r} \ln \frac{1}{r},
\]
and the contribution to $| c_0(z) - c_0(x) | $ is then bounded by
\[
\int_{r=t}^1 \frac{1}{2r^2} \left( \ln \frac{1}{r} \right) \int_{y = x-2r}^{z-2r} {f_{\frac{t}{r}}\left(\,\frac{y}{r} - 1\,\right) \dd y  \dd r} \leq 10(z-x) \frac{1-t}{2t^2} \ln \frac{1}{t}.
\]
This completes the proof for Case~1.
\smallskip

\par\noindent
\emph{Case} 2.\ $z - x > 1-r$.
We directly bound
\[
| d_r (x, z, y) |  \leq \frac{1}{2} (\ln r)^2 [f_{0,r}(z-y) + f_{0,r}(x-y)].
\]
If $z - x \leq 1 - t$, use the bound in~\refR{R:simple_bdd}; we can then bound the contribution to $| c_0(z) - c_0(x) |$ by
\[
\int_{r = 1 - (z-x)}^1 {\frac{2 \beta}{r} \dd r} \leq (z-x) \frac{2 \beta}{t}.
\]
On the other hand, if $z - x > 1 - t$, then we can bound the contribution 
to $| c_0(z) - c_0(x) | $ by
\[
\frac{z-x}{1-t} \int_{r = t}^1 {\frac{2 \beta}{r} \dd r} \leq (z-x) \frac{2 \beta}{t}.
\]
This completes the proof for Case~2.
We conclude that $c_0$ is a Lipschitz continuous function; note that the Lipschitz constant we have obtained depends 
on~$t$.
\end{proof}

\begin{lemma}
\label{L:Lipschitz_2}
For each $0 < t < 1$,  the contribution to $f_t$ from the event $\{0 < L_3(t) < t < R_3(t) < 1\}$ is Lipschitz continuous.
\end{lemma}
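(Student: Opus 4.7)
The plan is to adapt the case-analysis of Lemma~\ref{L:Lipschitz_1} to the interior setting, handling the six subcases of~\eqref{E:5} separately and exploiting the superexponential right-tail decay of $f_u$ from \refT{T:Exp_decay} to control the $(l, r)$-integration.

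By \refP{Integral equation}, combined with $f_{l, r} = g(l, r)^{-1} \sum_{i = 1}^6 f^{(i)}_{l, r}$ (so the factor $g(l, r)$ from the density of $(L_3, R_3)$ cancels), the interior contribution equals
\[
c_{\text{int}}(x) = \sum_{i = 1}^6 c^{(i)}(x), \qquad c^{(i)}(x) := \int_0^t \!\!\int_t^1 \!\!\int f^{(i)}_{l, r}(x - y)\,p_{l, r}(y)\,\dd y\,\dd r\,\dd l,
\]
where $p_{l, r}(y) = (r - l)^{-1} f_{(t - l)/(r - l)}(y/(r - l) - 1)$ is the conditional density of~$Y$ given $(L_3, R_3) = (l, r)$. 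For each $i$, with $\delta := z - x > 0$, I would bound the inner $y$-integral of $|f^{(i)}_{l, r}(z - y) - f^{(i)}_{l, r}(x - y)|\,p_{l, r}(y)$ by splitting the $y$-range into an ``overlap'' region (where both arguments lie in $\supp f^{(i)}_{l, r}$, on which $f^{(i)}_{l, r}$ is rational with bounded derivative, so the mean-value theorem yields a bound proportional to~$\delta$) and a ``boundary'' region of $y$-length at most $2 \delta$ (bounded using the endpoint jump magnitudes of $f^{(i)}_{l, r}$), exactly as in the term-by-term analysis of Lemma~\ref{L:Lipschitz_1}.

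The main obstacle is that a crude use of $\|p_{l, r}\|_\infty \le 10/(r - l)$ produces $(l, r)$-integrals that diverge near $l = r = t$ (most severely for subcases $i \in \{3, 6\}$, whose jump at $u = 1 + r - 2 l$ has magnitude $\asymp [(1 - l)(r - l)]^{-1}$). To resolve this I would retain $p_{l, r}$ pointwise and apply the change of variables
\[
(l, r) \longmapsto (s, u), \quad s := r - l, \quad u := (t - l)/(r - l), \qquad \dd l\,\dd r = s\,\dd s\,\dd u,
\]
together with $w := y/s - 1$; these transform $p_{l, r}(y)\,\dd y$ into $f_u(w)\,\dd w$, while each non-cancelling endpoint of $\sum_i f^{(i)}_{l, r}$ maps to a point $w_0(s, u) = \Theta(1/s)$ as $s \to 0$. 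Consequently, \refT{T:Exp_decay} delivers $f_u(w_0) \lesssim \exp[- \theta (1 - t)/s]$ for any $\theta > 0$; the resulting $s$-integrals take the form
\[
\int_0^{s_{\max}}\!\delta\,s^{-1} \exp[- \theta(1 - t)/s]\,\dd s,
\]
which is finite (a multiple of the exponential integral function) and of order~$\delta$, with a coefficient in~$t$ of the order claimed after the theorem statement. The smooth overlap contributions receive parallel treatment with integrated derivative bounds in place of jump magnitudes. Summing over~$i$ then yields the Lipschitz bound.
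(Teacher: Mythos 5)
Your overall strategy coincides with the paper's: the same decomposition into the six contributions $c^{(i)}$, the same splitting of the $y$-integral into an overlap region (handled by the mean value theorem, with an explicit factor $\delta = z-x$) and boundary strips of length $O(\delta)$, and the same use of the exponential tail bound~\eqref{E:exp_bdd} on $f_u$ to tame the $(r-l)^{-1}$ singularities of $f^{(i)}_{l,r}$ that concentrate at $l=r=t$; your $(s,u,w)$ change of variables is just a cleaner packaging of the substitutions the paper performs inside each estimate. However, your key quantitative claim fails as stated, and the failure sits exactly at the delicate point of the whole argument. The singular left endpoints map to $w_0 = (x+t-1)/s + O(1)$ for $i\in\{3,6\}$ (endpoint $1+r-2l$) and to $w_0 = (x-t)/s + O(1)$ for $i\in\{4,5\}$ (endpoint $2r-l$) --- incidentally, all four of these pieces, not just $i\in\{3,6\}$, carry a jump of order $1/s$. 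The coefficient of $1/s$ is therefore $x$-dependent and tends to $0$ as $x$ decreases to $1-t$ (resp.\ $t$); it is not bounded below by $1-t$, so the asserted bound $f_u(w_0) \lesssim \exp[-\theta(1-t)/s]$ is simply false for $x \in (1-t,\,2(1-t))$. With the correct coefficient $c_x$ (equal to $x+t-1$ or $x-t$), your displayed integral becomes $\delta\int_0^{s_{\max}} s^{-1}\exp[-\theta c_x/s]\,\dd s = \delta\int_{\theta c_x/s_{\max}}^{\infty} v^{-1}e^{-v}\,\dd v$, which, if $s_{\max}$ is treated as a constant, grows like $\delta\,\ln(1/c_x)$ as $c_x\downarrow 0$. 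That is a bound of the form $\delta$ times a quantity unbounded in $x$, which does not yield a Lipschitz constant $\Lambda_t$ valid for all $x,z$.

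The repair is available but is precisely the step you omit: since $\supp f_u \subseteq [0,\infty)$, the boundary (and singular overlap) contributions vanish unless $w_0 \geq 0$, which forces $s \leq c_x/(2+O(1))$; taking $s_{\max} \asymp c_x$ turns the $v$-integral into $\int_{2\theta}^{\infty} v^{-1}e^{-v}\,\dd v$, a constant independent of $x$. This is exactly what the paper accomplishes by restricting to its region $R$ (e.g., $r < (x-1+3l)/2$, whence the measure of the admissible $(l,r)$-set is itself $O(c_x)$ and cancels the $c_x^{-1}$ arising from the $s$-integration). A related care point: when the relevant $w$-interval straddles $0$ (so that no exponential decay is in force), you must fall back on the uniform bound $f_u \leq 10$ together with the same support constraint on $s$, which is the role of the paper's sets $B$, $W$ and of its separate Case~2 where $\delta$ exceeds the width of the support of $f^{(i)}_{l,r}$. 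With these amendments your plan goes through and is, in substance, the paper's proof.
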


\begin{proof}
Fix $0 < t < 1$. According to~\eqref{E:4} and~\eqref{E:5}, the contribution from the event in question to $f_t(x)$ is $\sum_{i = 1}^6 c^{(i)}(x)$, where we define
\[
c^{(i)}(x) := \int_{l, r, y}\!{f^{(i)}_{l, r}(x - y)\,\P(Y \in \ddx y \mid (L_3(t), R_3(t)) = (l, r))}\dd l\dd r.
\]
We show here that $c^{(3)}$ is Lipschitz continuous, and the claims that the other contributions $c^{(i)}$ are Lipschitz continuous are proved 
similarly. 

Let $x, z \in \bbR$ with $z > x$ and consider $(l,r)$ satisfying $0 < l < 
t < r < 1$. Define
\[
d_{l,r}(x, z, y) := f_{l,r}^{(3)}(z-y) - f_{l,r}^{(3)}(x-y)
\]
and reformulate
\[
f_{l,r}^{(3)}(x) = \mathbb{1}(1+r-2l\leq x < 1+r) \frac{1}{x}\left(\frac{1}{x+1-r} + \frac{1}{x+r-1}\right)
\]
from the expression for $f_{l,r}^{(3)}(x)$ found in \refS{S:density}.
We are interested in bounding the quantity
\begin{equation}
\label{E:c_3_Lip}
|c^{(3)}(z) - c^{(3)}(x)| \leq \int_{l, r, y}\!{|d_{l,r}(x, z, y)|\,\P(Y \in \ddx y \mid (L_3, R_3) = (l, r))}\dd l\dd r,
\end{equation}
where the conditional probability can also be written in density terms as
\[
\P(Y \in \ddx y \mid (L_3, R_3) = (l, r)) = \frac{1}{r-l} f_{\frac{t-l}{r-l}}\left(\frac{y}{r-l} - 1\right)\dd y.
\]
Just as we did for \refL{L:Lipschitz_1}, we break the proof into consideration of two cases. 
\smallskip

\par\noindent
\emph{Case} 1.\ $z -x < 2 l$. As in the proof for Case~1 of \refL{L:Lipschitz_1}, we bound $d_{l, r}(x, z, y)$ for~$y$ in each of the five subintervals of the real line determined by the four partition points  
\[
x-(1+r) < z -(1+r) < x - (1+r - 2l) < z-(1+r-2l).
\]
For the two subcases $y \leq x-(1+r)$ and $y > z-(1+r-2l)$, we have $d_r(x, z, y) = 0$.  We bound the three nontrivial subcases (listed in order 
of convenience of exposition, not in natural order) 
as follows. 
\smallskip

\par\noindent
\emph{Subcase} 1(a).\ $z -(1+r) < y \leq x - (1+r - 2l)$.
We have
\begin{align*}
&d_{l,r}(x, z, y) \\
&= \frac{1}{z-y} \left(\frac{1}{z-y+1-r} - \frac{1}{z-y+r-1}\right) \\
&- \frac{1}{x-y} \left(\frac{1}{x-y+1-r} - \frac{1}{x-y+r-1}\right)\\
&= \frac{1}{z-y} \left(\frac{1}{z-y+1-r} - \frac{1}{x-y+1-r}\right) + \left(\frac{1}{z-y} - \frac{1}{x-y}\right) \frac{1}{x-y+1-r}\\
&- \frac{1}{z-y} \left(\frac{1}{z-y+r-1} - \frac{1}{x-y+r-1} \right) - \left(\frac{1}{z-y} - \frac{1}{x-y}\right) \frac{1}{x-y+r-1}.
\end{align*}
Using the 
inequality $z-y > x-y \geq 1+r-2l$, we obtain
\begin{align*}
& |d_{l,r}(x, z, y) | \\
&\leq \frac{1}{1+r-2l} \frac{z-x}{(2-2l)^2} + \frac{z-x}{(1+r-2l)^2}\frac{1}{2-2l}\\
&{} \qquad + \frac{1}{1+r-2l} \frac{z-x}{(z-y+r-1)(x-y+r-1)} + \frac{z-x}{(1+r-2l)^2} \frac{1}{2(r-l)}.
\end{align*}
Except for the third term, it is easy to see (by direct computation) that 
the corresponding contribution to the bound~\eqref{E:c_3_Lip} on $|c^{(3)}(z) - c^{(3)}(x)|$ is bounded by a constant (depending on~$t$) times $z - x$. So we now focus on bounding the contribution from the third term. Note that since $1+r-2l > 1-t > 0$, we need only bound
\begin{equation}
\label{focus}
\int_{l, r, y}\!{\frac{1}{(z-y+r-1)(x-y+r-1)}\,\P(Y \in \ddx y \mid (L_3, 
R_3) = (l, r))}\dd l\dd r
\end{equation}
by a constant (which is allowed to depend on~$t$, but our constant will not).

We first focus on the integral in~\eqref{focus} with respect to~$y$ and write it, using a change of variables, as
\begin{equation}
\label{E:int_y}
\int_{y \in I}\!{d_{l,r}^*(x, z, y)\,f_{\frac{t-l}{r-l}}(y) \dd y},
\end{equation}
with
\[
d_{l,r}^*(x, z, y) = \frac{1}{[z-(r-l)(y+1)+r-1][x-(r-l)(y+1)+r-1]}
\] 
and
 $I := \left\{y: \frac{z-(1+r)}{r-l} -1 < y \leq \frac{x-(1+r-2l)}{r-l} 
- 1\right\}$. 
Because the support of the density $f_{\frac{t - l}{r - l}}$ is contained 
in the nonnegative real line, the integral~\eqref{E:int_y} vanishes unless the right endpoint of the interval~$I$ is positive, which is true if and only if
 \[
r < \frac{x-1+3l}{2}.
\]
So we see that the integral of~\eqref{E:int_y} over $r \in (t, 1)$ vanishes unless this upper bound on~$r$ is larger than~$t$, which is true if and only if 
\begin{equation}
\label{lbound}
l > \frac{1-x+2t}{3}.
\end{equation}
But then the integral of~\eqref{E:int_y} over $\{(l, r):0 < l < t < r < 1\}$ vanishes unless this lower bound on~$l$ is smaller than~$t$, which is 
true if and only if $x > 1 - t$; we conclude that for $x \leq 1-t$, that integral vanishes. 

So we may now suppose $x > 1 - t$, and we have seen that the integral of~\eqref{E:int_y} over $\{(l, r):0 < l < t < r < 1\}$ is bounded above by its integral over the region
\[
R := \left\{ (l, r) : \frac{1 - x + 2 t}{3} \vee 0 < l < t < r < 1 \wedge \frac{x - 1 + 3 l}{2} \right\}.
\]
Observe that on~$R$ we have
\begin{equation}
\label{on_R}
\frac{x-(1+r-2l)}{r-l} - 1 = \frac{x-1+l}{r-l} -2 > \frac{2}{3} \frac{x+t-1}{r-l} -2 > \frac{1}{2} \frac{x+t-1}{r-l} -2.
\end{equation}
Define
\[
B := \left\{ (l,r) : \frac{x+t-1}{2(r-l)} - 2 > 0 \right\}.
\]
We now split our discussion of the contribution to the integral of~\eqref{E:int_y} over $(l, r) \in R$ into two terms, corresponding to (i) $R \cap B^c$ and (ii) $R \cap B$. 
\smallskip

\par\noindent
\emph{Term} (i).\ $R \cap B^c$.
Using~\eqref{on_R}, we can bound~\eqref{E:int_y} by extending the range of integration from~$I$ to 
\[
I^* := \left\{y : \frac{x+t-1}{2(r-l)} - 2 < y \leq \frac{x-(1+r-2l)}{r-l} - 1\right\}.
\]
Making use of the inequality~\eqref{E:exp_bdd}, the integral~\eqref{E:int_y} is bounded, for any $\theta > 0$,  by
\[
\int_{y \in I^*} {\frac{1}{4(r-l)^2}\, C_{\theta} e^{-\theta y} \dd y} \leq \frac{C_{\theta}}{4 \theta(r-l)^2} \exp \left[-\frac{x+t-1}{2(r-l)} \theta + 2 \theta \right].
\]
The integral over $(l, r) \in R \cap B^c$ of~\eqref{E:int_y} is therefore 
bounded by
\begin{align}
&\frac{C_{\theta}}{4 \theta} \, e^{2 \theta} \int_{l = (1-x+2t)/3}^{t}\,\int_{r=t}^{(x-1+3l)/2} {\frac{1}{(r-l)^2} 
\exp \left[ -\frac{x+t-1}{2(r-l)} \theta \right]  \dd r  \dd l} \nonumber 
\\
&= \frac{C_{\theta}}{4 \theta} \, e^{2 \theta} \int_{l = (1-x+2t)/3}^{t} \int_{s=t-l}^{(x-1+l)/2} {\frac{1}{s^2} 
\exp \left(- \frac{x+t-1}{2} \theta s^{-1} \right) \dd s \dd l} \nonumber 
\\
&\leq \frac{C_{\theta}}{4 \theta} \, e^{2 \theta} \frac{2}{\theta (x+t-1)} \int_{l = (1-x+2t)/3}^{t} 
{\exp \left(- \frac{x+t-1}{2} \theta \frac{2}{x-1+l} \right) \dd l} \nonumber \\
\label{previous}
&\leq \frac{C_{\theta}}{2 \theta^2} \, e^{2 \theta} \frac{1}{x+t-1} e^{-\theta} \left(t  - \frac{1-x+2t}{3} \right) 
= \frac{C_{\theta}}{6 \theta^2} \, e^{\theta} < \infty.
\end{align}
\smallskip

\par\noindent
\emph{Term} (ii).\ $R \cap B$.
We can bound~\eqref{E:int_y} by the sum of the integrals of the same integrand over the intervals $I^*$ and 
\[
I' := \left\{y: 0 < y \leq \frac{x+t-1}{2(r-l)} - 2\right\}.
\]
The bound for the integral over $I^*$ is the same as the bound for the $R 
\cap B^c$ term. To bound the integral over $I'$, we first observe 
that
\[
d_{l,r}^*(x, z, y) \leq \frac{1}{[\frac{1}{2} (x-t-1) + 2r - l]^2} \leq \frac{4}{(x+t-1)^2},
\]
where the last inequality holds 
because $l < t < r$.
The contribution to~\eqref{focus} can be bounded by integrating $4 / (x + 
t - 1)^2$ with respect to $(l,r) \in R \cap B$.  We then extend this region of integration to~$R$, and thus bound the contribution by
\begin{align*}
\frac{4}{(x+t-1)^2} \int_{l = \frac{2t+1-x}{3}}^t {\left(\frac{x-1+3l}{2} - t\right) \dd l} &\leq \frac{2}{(x+t-1)} \left(t - \frac{2t+1-x}{3}\right)\\
&=2/3.
\end{align*}
This completes the proof for Subcase 1(a).
\smallskip

\par\noindent
\emph{Subcase} 1(b).\ $x -(1+r -2l) < y \leq z - (1+r-2l)$.
First note that in this subcase we have $f^{(3)}(x-y) = 0$. We proceed in similar fashion as for Subcase 1(a), this time setting
\[
I := \left\{y: \frac{x-1+l}{r-l} -2 < y \leq \frac{z-1+l}{r-l} -2\right\}.
\]
Again using a linear change of variables, the integral (with respect to~$y$ only, in this subcase) appearing on the right in~\eqref{E:c_3_Lip} in this subcase can be written as 
\begin{equation}
\label{E:int_y_2}
\int_{y \in I} {d^*_{l,r}(z, y) f_{\frac{t-l}{r-l}}(y) \dd y}
\end{equation}
where now
\[
d^*_{l,r}(z, y) = \frac{1}{z-(r-l)(y+1) +1-r} \times \frac{2}{z-(r-l)(y+1)+r-1}.
\]
Note that, unlike its analogue in Subcase 1(a), here $d^*_{l, r}(z, y)$ does not possess an explicit factor $z - x$. 

By the same discussion as in Subcase 1(a), 
we are interested in the integral of~\eqref{E:int_y_2} with respect to $(l,r) \in R$, where this time
\[
R := \left\{ (l, r) : \frac{1 - z + 2 t}{3} \vee 0 < l < t < r < 1 \wedge \frac{z - 1 + 3 l}{2} \right\}.
\]
and we may suppose that $z > 1 - t$.

Observe that on~$R$ we have 
\[
\frac{z-1+l}{r-l} - 2 > \frac{2}{3} \frac{z + t - 1}{r-l} - 2 > \frac{1}{2} \frac{z + t - 1}{r-l} - 2.
\]
Following a line of attack similar to that for Subcase 1(a), we define
\[
W := \left\{(l,r) : \frac{x-1+l}{r-l} - 2 > \frac{z + t - 1}{2(r-l)} - 2\right\}
\]
and split our discussion of the integral of~\eqref{E:int_y_2} over $(l, r) \in R$ into two terms, corresponding to (i) $R \cap W^c$ and (ii) $R \cap W$.
\smallskip

\par\noindent
\emph{Term} (i).\ $R \cap W$.
We bound~\eqref{E:int_y_2} by using the inequality~\eqref{E:exp_bdd} (for 
any $\theta > 0$) and obtain
\begin{align*}
\lefteqn{\hspace{-0.5in}\int_{y \in I} {\frac{1}{2-2l} \frac{1}{r-l} C_{\theta} 
\exp\left[ -\theta \left(\frac{z + t - 1}{2(r-l)} - 2\right) \right] \dd y}} \\ 
&\leq 
\frac{1}{2} \frac{1}{1-t} \frac{1}{(r-l)^2} C_{\theta} e^{2\theta} \exp\left[ -\theta \left(\frac{z + t - 1}{2(r-l)}\right) \right] (z-x).
\end{align*}
Integrate this terms with respect to $(l,r) \in R \cap W$, we get no more than
\[
\frac{1}{2} \frac{(z-x) }{1-t} C_{\theta} e^{2\theta} \int_{l = (1-z+2t)/3}^t \int_{r = t}^{(z-1+3l)/2} {\frac{1}{(r-l)^2} 
\exp\left[ - \frac{z + t - 1}{2(r-l)} \theta \right] \dd r \dd l},
\]
which [consult~\eqref{previous}] is bounded by $(z-x)$ times a constant depending only on~$t$ and~$\theta$.
\smallskip

\par\noindent
\emph{Term} (ii).\ $R \cap W^c$.
We partition the interval~$I$ of $y$-integration into the two subintervals 
\[
I^* := \left\{y : \frac{z + t - 1}{2(r-l)} - 2 < y \leq \frac{z-1+l}{r-l} -2\right\}
\]
and
\[
I' := \left\{y: \frac{x-1+l}{r-l} -2 < y \leq \frac{z + t - 1}{2(r-l)} - 2\right\}.
\]
Observe that  
the length of each of the intervals $I^*$ and $I'$ is no more than the length of~$I$, which is $(z - x) / (r - l)$. 
We can bound the integral over $y \in I^*$ and $(l,r) \in R \cap W^c$ just as we did for Term~(i). For the integral over 
$y \in I'$  and $(l,r) \in R \cap W^c$, observe the following 
inequality:
\begin{align*}
d^*_{l,r}(z, y)  \leq \frac{1}{2-2t} \frac{2}{\frac{1}{2} (z + t - 1) + 2r-l-t}.
\end{align*}
Using the constant bound in~\refT{T:bdd}, the integral of $d^*_{l, r}(z, y) f_{\frac{t-l}{r-l}}(y)$ with respect to $y \in I'$ and $(l,r) \in R \cap W^c$ is bounded above by
\begin{equation}
\label{10eq}
10 \frac{(z-x)}{1-t} \int_{l = (1-z+2t)/3}^t \int_{r = t}^{(z-1+3l)/2} {\frac{1}{r-l} \frac{1}{\frac{1}{2} (z + t - 1) + 2r-l-t} \dd r \dd l}.
\end{equation}
Write the integrand here in the form
\[
\frac{1}{r-l} \frac{1}{\frac{z + t - 1}{2} + 2r-l-t} = \left(\frac{1}{r-l} - \frac{2}{2r-l-t + \frac{z + t - 1}{2} }\right) \frac{1}{l-t+\frac{z 
+ t - 1}{2}},
\]
and observe that $l-t+\frac{z + t - 1}{2} > \frac{z + t - 1}{6} > 0$.  Hence we can bound~\eqref{10eq} by
\begin{align*}
&10 \frac{(z-x)}{1-t} \int_{l = (1-z+2t)/3}^t {\frac{1}{l-t+\frac{z + t 
- 1}{2}} \left[ \ln \frac{z-1+l}{2} - \ln (t-l) \right] \dd l}\\
&\leq 
10 \frac{(z-x)}{1-t} \frac{6}{z + t - 1} \left[ \frac{z + t - 1}{3} \ln \frac{z + t - 1}{2} - \int_{l = \frac{1-z+2t}{3}}^t {\ln (t-l) \dd l }\right]\\
&= 20 \frac{(z-x)}{1-t} \left(\ln \frac{z + t - 1}{2} - \ln \left(\frac{z + t - 1}{3}\right) + 1 \right)\\
&= 20 \left(1 + \ln \frac{3}{2} \right) \frac{(z-x)}{1-t}.
\end{align*}
This completes the proof for Subcase 1(b). 
\smallskip

\par\noindent
\emph{Subcase} 1(c).\ $x -(1+r) < y \leq z - (1+r)$.
In this case, the contribution from $f^{(3)}(z-y)$ vanishes. Without loss 
of generality we may suppose $z - x < t$, otherwise we can insert a factor $(z-x)/t$ in our upper bound, and the desired upper bound follows from the fact that the densities $f_{\tau}$ are all bounded by~$10$. Observe that the integrand $|d_{l, r}(x, z, y)|$ in the bound~\eqref{E:c_3_Lip} is
\[
\frac{1}{x - y + 1 -r} \frac{2}{x - y + r -1} \leq \frac{1}{x - z + 2} \frac{2}{x - z + 2r} \leq \frac{1}{2-t} \frac{2}{2r - t} 
\leq \frac{2}{t (2 - t)}.
\]
Integrate this constant bound directly with respect to 
\[
P(Y \in \ddx y | (L_3,R_3) = (l,r)) \dd r \dd l
\]
on the region $x -(1+r) < y \leq z - (1+r)$ and $0 < l < t < r < 1$ and use the fact that the density is bounded by $10$; we conclude that this contribution is bounded by $(z-x)$ times a constant that depends on $t$. This completes the proof for Subcase 1(c) and also for Case~1.
\smallskip

\par\noindent
\emph{Case}~2.\ $z -x \geq 2 l$. In this case we 
simply use
\[
| d_{l,r}(x,z, y) | \leq f^{(3)}(z-y) + f^{(3)}(x-y),
\]
and show that each of the two terms on the right contributes at most a constant (depending on~$t$) times $(z - x)$ to the bound in~\eqref{E:c_3_Lip}.  Accordingly, let~$w$ be either~$x$ or~$z$.  We are interested in bounding 
\begin{equation}
\label{E:int_y_3}
\int_{l = 0}^{\frac{z-x}{2} \wedge t} \int_{r=t}^1 \int_{y = w-(1+r)}^{w-(1+r-2l)} {\frac{1}{w-y+1-r} \frac{2}{w-y+r-1} \mu(\ddx y, \ddx r, \ddx l)}
\end{equation}
with $\mu(\ddx y, \ddx r, \ddx l) := P(Y \in \ddx y \mid (L_3,R_3) =(l,r)) \dd r \dd l$. We bound the integrand as follows:
\[
\frac{1}{w-y+1-r} \frac{2}{w-y+r-1} \leq \frac{1}{2-2l} \frac{2}{2r - 2l} 
\leq \frac{1}{2} \frac{1}{1-t} \frac{1}{r-l}.
\]
We first suppose $z-x < t$ and bound~\eqref{E:int_y_3} by
\begin{align*}
\frac{1}{2} \frac{1}{1-t} \int_{l = 0}^{\frac{z-x}{2}} \int_{r=t}^1 {\frac{1}{r-l} \dd r \dd l} 
&\leq \frac{1}{2} \frac{1}{1-t} \int_{l = 0}^{\frac{z-x}{2}} {[- \ln (t-l)] \dd l}\\
&\leq \frac{1}{2} \frac{1}{1-t} \left[ - \ln \left(t - \frac{z-x}{2}\right) \right] \frac{z-x}{2}\\
&\leq (z-x) \frac{\ln(2 / t)}{4(1-t)}.
\end{align*}
If instead $z - x \geq t$, we bound~\eqref{E:int_y_3} by
\[
\frac{1}{2} \frac{z-x}{t} \int_{l = 0}^{t} \int_{r=t}^1 {\frac{1}{1-l}\frac{1}{r-l} \dd r \dd l} \leq (z-x) \frac{\pi^2}{12\,t}.
\]
This completes the proof for Case~2 and thus the proof of Lipschitz continuity of $c^{(3)}$.
\end{proof}

We immediately get the following corollary from the proof of~\refT{T:Lipschitz_cont}. 
\begin{corollary}
\label{C:equi_continuous}
For any 
$0 < \eta < 1/2$, the uniform continuous family $\{f_t:t \in [\eta, 1-\eta]\}$ is a uniformly equicontinous family.
\end{corollary}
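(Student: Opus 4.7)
The plan is to trace through the proof of \refT{T:Lipschitz_cont} and observe that every Lipschitz bound produced there was in fact of the form $\Lambda_t\,|z-x|$, where $\Lambda_t$ is an \emph{explicit} function of~$t$ built out of finitely many terms each of which is continuous in~$t$ on $(0,1)$ and blows up only as $t\downarrow 0$ or $t\uparrow 1$. Once this is confirmed, it is immediate that
\[
\Lambda_\eta := \sup_{t\in[\eta,\,1-\eta]} \Lambda_t < \infty,
\]
whence $|f_t(z)-f_t(x)|\le\Lambda_\eta|z-x|$ for all $t\in[\eta,1-\eta]$ and all $x,z\in\bbR$; uniform equicontinuity of $\{f_t:t\in[\eta,1-\eta]\}$ follows at once.

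More concretely, I would revisit the bounds obtained in \refL{L:Lipschitz_1} and \refL{L:Lipschitz_2}. In \refL{L:Lipschitz_1} the contributions to the Lipschitz constant from Subcases 1(a)--1(e) are multiples of terms like
\[
\frac{1-t}{t(1+t)}\ln\tfrac{1}{t},\quad \frac{1-t}{1+t}\Bigl(\tfrac{1}{t}+\tfrac{1}{1+t}\ln\tfrac{1}{t}\Bigr),\quad \frac{1-t+\ln(1/t)}{4t},
\]
and from Case~2 one gets $2\beta/t$; by symmetry, the corresponding contribution from $R_3(t)=1$ gives analogous quantities in $1-t$. Each such quantity is continuous in $t\in(0,1)$, hence bounded on $[\eta,1-\eta]$. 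Similarly, in \refL{L:Lipschitz_2}, every displayed bound (in Subcases 1(a)--1(c) and in Case~2, such as $C_\theta e^\theta/(6\theta^2)$, $2/3$, $20(1+\ln\tfrac{3}{2})/(1-t)$, $\ln(2/t)/[4(1-t)]$, $\pi^2/(12t)$, and the corresponding terms from the other cases $c^{(i)}$) involves only $t$, $1-t$, their reciprocals, and logarithms, all continuous on~$(0,1)$. Hence the total Lipschitz constant $\Lambda_t$ produced by the theorem is a continuous function of $t\in(0,1)$, and in particular uniformly bounded on $[\eta,1-\eta]$.

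The only routine obstacle is bookkeeping: one must verify that in every subcase of the proof of \refT{T:Lipschitz_cont} the exponential-decay coefficient $C_\theta$ (which comes from \refT{T:Exp_decay} via \eqref{E:exp_bdd}) was chosen independently of~$t$, which is indeed the case since the superexponential bound there is uniform in~$t$. Once this is noted, no case analysis needs to be repeated, and the corollary follows as stated.
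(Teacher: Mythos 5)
Your argument is exactly the paper's proof: the paper simply observes that the Lipschitz constants from \refT{T:Lipschitz_cont} are bounded for $t$ in any compact subinterval $[\eta,1-\eta]$, whence uniform equicontinuity follows. You supply the same observation together with a useful, explicit itemization of the $t$-dependent quantities that confirm it.
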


\begin{proof}
We observe from the proof of \refT{T:Lipschitz_cont} that for any $0 < \eta < 1/2$, the Lipschitz constants $L_t$ in~\refT{T:Lipschitz_cont} are bounded for $t \in [\eta, 1-\eta]$ by some universal constant $C < \infty$.  The result follows. 
\end{proof}

\subsection{Joint continuity}
\label{S:joint}

As noted in the proof of~\refL{L:Frc}, we reference Gr\"{u}bel and R\"{o}sler~\cite{MR1372338} to conclude that for each $t \in [0, 1)$, the distribution functions $F_u$ converge weakly to $F_t$ as $u \downarrow t$. It follows by symmetry that the convergence also holds for each $t \in (0, 1]$ as $u \uparrow t$. We now deduce the convergence from $f_u$ to $f_t$ for each $t \in (0,1)$ as $u \to t$, according to the following lemma.

\begin{lemma}
\label{L:density_converge}
For each $0 < t < 1$ we have $f_u \to f_t$ uniformly as $u \to t$.
\end{lemma}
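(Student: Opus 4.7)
The plan is to fix $t \in (0, 1)$ and prove, via a compactness argument based on Arzel\`a--Ascoli together with the uniform tail bound of~\refT{T:Exp_decay}, that every sequence $u_n \to t$ has a subsequence $(u_{n_k})$ along which $f_{u_{n_k}} \to f_t$ uniformly on~$\bbR$.  The desired uniform convergence $f_u \to f_t$ as $u \to t$ will then follow by the standard subsequence principle.

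First I would choose $\eta > 0$ with $[t - \eta, t + \eta] \subset (0, 1)$ and restrict attention to $u$ in this interval. By~\refC{C:equi_continuous} the family $\{f_u : u \in [t - \eta, t + \eta]\}$ is uniformly equicontinuous, and \refT{T:bdd} gives the uniform pointwise bound $f_u \leq 10$. Given any sequence $u_n \to t$ (eventually lying in $[t - \eta, t + \eta]$), the Arzel\`a--Ascoli theorem applied on each compact interval, followed by a diagonal argument, extracts a subsequence $(u_{n_k})$ along which $f_{u_{n_k}}$ converges uniformly on every compact subset of~$\bbR$ to some continuous function $g:\bbR \to [0, 10]$.

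Next I would upgrade this to uniform convergence on all of~$\bbR$. For the left tail, each $f_u$ vanishes on $(-\infty, 0]$, so~$g$ does too. For the right tail, \refT{T:Exp_decay} (taking, say, $\theta = 1$) supplies a constant $C < \infty$ such that $f_u(x) \leq C e^{-x}$ for every $u \in (0, 1)$ and every $x \geq 3$, and the pointwise limit satisfies the same inequality. Given $\eps > 0$, pick $M \geq 3$ so that $2 C e^{-M} < \eps$; then $|f_{u_{n_k}}(x) - g(x)| < \eps$ for all $x \geq M$ and all~$k$, and combining this with uniform convergence on $[0, M]$ yields $\sup_{x \in \bbR}|f_{u_{n_k}}(x) - g(x)| \to 0$.

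Finally I would identify $g = f_t$. By the paragraph preceding this lemma, $F_{u_{n_k}} \to F_t$ weakly, and because $F_t$ is continuous (indeed has a continuous density by~\refT{T:uniform_cont}) this forces $F_{u_{n_k}}(x) \to F_t(x)$ for every $x \in \bbR$. On the other hand, the just-established uniform convergence $f_{u_{n_k}} \to g$ on~$\bbR$, together with the common integrable envelope furnished by the tail bound, permits passage to the limit under the integral, so $F_{u_{n_k}}(x) \to \int_{-\infty}^x g(y)\dd y$. Hence $g$ is a continuous density for $F_t$ and therefore equals $f_t$ by uniqueness of continuous densities. The main technical point in the whole argument is the uniform equicontinuity on a neighborhood of~$t$, and this is supplied already by~\refC{C:equi_continuous}; once that and the uniform tail bound are in hand, the rest is routine analysis.
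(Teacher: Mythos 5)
Your proof is correct, and it takes a genuinely different route from the paper's, which simply cites Boos~\cite[Lemma~1]{MR773179}, a converse to Scheff\'e's theorem asserting exactly that weak convergence plus uniform boundedness, uniform equicontinuity, and decay at infinity of the densities upgrade to uniform convergence of densities. You are in effect reproving that lemma from scratch: Arzel\`a--Ascoli plus a diagonal argument give local uniform convergence along a subsequence; the two-sided tail control (trivial on the left because $f_u$ vanishes on $(-\infty,0]$, and from \refT{T:Exp_decay} on the right) promote this to uniform convergence on~$\bbR$; and weak convergence $F_u \Rightarrow F_t$ together with the continuity of $F_t$ and uniqueness of the continuous density identify the limit as $f_t$. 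Both arguments consume the same package of prior results --- uniform boundedness (\refT{T:bdd}), the equicontinuity bound (\refC{C:equi_continuous}), and the weak right-continuity of $t \mapsto F_t$ from Gr\"ubel--R\"osler --- so neither is ``more elementary'' in substance; the paper's version is shorter because the compactness-and-identification machinery is outsourced to the cited lemma, while yours is self-contained and makes the mechanism explicit. One tiny simplification worth noting: once you have local uniform convergence $f_{u_{n_k}} \to g$, the identity $F_{u_{n_k}}(x) = \int_0^x f_{u_{n_k}}(y)\,\dd y \to \int_0^x g(y)\,\dd y$ follows for each fixed~$x$ already from uniform convergence on the compact $[0,x]$, so the ``integrable envelope'' is not actually needed at that step (it is needed, and correctly used, for the upgrade from compact to global uniform convergence).
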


\begin{proof}
We fix $0 < t \leq 1/2$ and choose $0< \eta < t$. By the weak convergence 
of  $F_u$ to $F_t$ as $u \to t$, the uniform boundedness of the density functions (\refT{T:bdd}), the fact that $f_t(x) \to 0$ as $x \to \pm \infty$, and the uniform equicontinuity of the family $\{f_u:u \in [\eta, 1-\eta]\}$ (\refC{C:equi_continuous}), we conclude from Boos~\cite[Lemma 1]{MR773179} (a converse to Scheff\'{e}'s theorem) that $f_u \to f_t$ uniformly as $u \to t$.
\end{proof}

\begin{remark}
The uniform equicontinuity in~\refC{C:equi_continuous} does not hold for the family $\{f_t:t \in (0, 1)\}$. Here is a proof. For the sake of contradiction, suppose to the contrary. We symmetrize $f_t(x)$ at $x=0$ for every $0 \leq t \leq 1$ to create another family of continuous densities $g_t$; that is, consider $g_t(x) := [f_t(x) + f_t(-x)]/2$.  Observe that the supposed uniform equicontinuity of the functions $f_t$ for $t \in (0,1)$ extends to the functions $g_t$. Now suppose (for each $t \in [0, 1]$) that $W(t)$ is a random variable with density $g_t$. By a simple calculation we have $W(t) \Rightarrow W(0)$, and it follows by Boos~\cite[Lemma 1]{MR773179} that $g_t(x) \to g_0(x)$ uniformly in $x$. This contradicts to the fact that $g_t(0) = 0$ for all $t \in (0,1)$ but $g_0(0) = e^{-\gamma}$.
\end{remark}

\begin{remark}
\label{R:KS_conti}
Since $(F_t)_{t \in [0, 1]}$ is weakly continuous in~$t$ and $F_t$ is atomless for $0 \leq t \leq 1$, it follows from a theorem of \Polya\ (\cite[Exercise~4.3.4]{MR1796326}) that $(F_t)_{t \in [0, 1]}$ is continuous in the sup-norm metric, \ie,\,that $(J(t))$ [or $(Z(t))$] is continuous in the Kolmogorov--Smirnov metric on distributions.
\end{remark}

\begin{corollary}
\label{C:joint_continuous}
The 
density $f_{t}(x)$ is jointly continuous in $(t,x) \in (0,1) \times \bbR$.
\end{corollary}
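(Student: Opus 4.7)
The plan is to deduce joint continuity from the two ingredients already established: the uniform-in-$x$ convergence $f_u \to f_t$ as $u \to t$ from \refL{L:density_converge}, and the (uniform) continuity of $x \mapsto f_t(x)$ from \refT{T:uniform_cont} (or, more strongly, from \refT{T:Lipschitz_cont}). This is a standard ``triangle inequality'' argument for joint continuity from separate continuity plus local uniformity.

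Fix $(t_0, x_0) \in (0, 1) \times \bbR$, and consider any sequence $(t_n, x_n) \to (t_0, x_0)$ with $t_n \in (0, 1)$. Write
\[
| f_{t_n}(x_n) - f_{t_0}(x_0) | \le \sup_{x \in \bbR} | f_{t_n}(x) - f_{t_0}(x) | + | f_{t_0}(x_n) - f_{t_0}(x_0) |.
\]
By \refL{L:density_converge}, the first term on the right tends to~$0$ as $n \to \infty$, because $t_n \to t_0$ and the convergence $f_u \to f_{t_0}$ holds in the sup norm. By \refT{T:uniform_cont} (continuity of $f_{t_0}$), the second term tends to~$0$ as $n \to \infty$. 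Hence $f_{t_n}(x_n) \to f_{t_0}(x_0)$, which is joint continuity at $(t_0, x_0)$.

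There is really no obstacle here, as the heavy lifting has already been done in \refL{L:density_converge} (which in turn relied on \refC{C:equi_continuous}, \refT{T:bdd}, and the weak continuity of $F_t$ in~$t$). The only small thing to note is that the argument produces sequential continuity on the product space $(0, 1) \times \bbR$, which coincides with continuity since the space is metrizable.
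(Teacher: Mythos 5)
Your proof is correct and is essentially the same as the paper's: both use the triangle inequality to split $|f_{t'}(x') - f_t(x)|$ into a term controlled by the sup-norm convergence $f_{t'} \to f_t$ from \refL{L:density_converge} and a term controlled by the continuity of $f_t$ from \refT{T:uniform_cont}. The only superficial difference is that you phrase the argument via convergent sequences while the paper uses $\limsup$ and a modulus of continuity; these are equivalent.
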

\begin{proof}
As $(t', x') \to (t, x) \in (0, 1) \times \bbR$, we have
\begin{align*}
\limsup |f_{t'}(x') - f_t(x)| 
&\leq \limsup |f_{t'}(x') - f_t(x')| + \limsup |f_t(x') - f_t(x)| \\
&\leq \limsup \|f_{t'} - f_t\|_{\infty} + \delta_t(|x' - x|) \\
&= 0
\end{align*}
where the sup-norm $\|f_{t'} - f_t\|_{\infty}$ tends to~$0$ as $t' \to t$ by \refL{L:density_converge} and the modulus of uniform continuity $\delta_t$ of the function $f_t$ tends to~$0$ as $x' \to x$ by \refT{T:uniform_cont}. 
\end{proof}

\begin{remark}
The positivity of $f_t(x)$ for each $0 < t < 1$ and $x > \min \{t,1-t\}$ in~\refT{T:pos} can be proved alternatively by using the integral equation~\refP{P:fint} and the joint continuity result of \refC{C:joint_continuous}. Here is the proof.

Fix (for now) $t_0, t_1, t_2 \in (0,1)$ with $t_1 > t_0 > t_2$.  We will show that $f_{t_0}(x) > 0$ for all $x > t_0$, using $t_1$ and $t_2$ in auxiliary fashion.  Since this is true for arbitrarily chosen $t_0$, invoking symmetry ($f_t \equiv f_{1 - t}$) then completes the proof.  

We certainly know that $f_{t_0}(y_0) > 0$ for some $y_0 > t_0$; choose and fix such a $y_0$.  Use~\refP{P:fint} to represent the density $f_{t_1}(x)$. 
We observe that the integrand of the integral with respect to~$l$ is positive at $l = l_1 = (t_1 - t_0) / (1 - t_0)$ and $x = y_1 = (1-l_1)(y_0+1)$. From \refC{C:joint_continuous} we conclude that the integrand is positive in a neighborhood of $l_1$ and thus $f_{t_1}(y_1) > 0$.

Further, use ~\refP{P:fint} to represent the density $f_{t_0}(x)$. We observe that the integrand of the integral with respect to~$r$ is positive at $r = r_2 = \frac{t_0}{t_1}$ and $x = y_2 = r_2 (y_1 + 1)$. From 
$f_{t_1}(y_1) > 0$ and~\refC{C:joint_continuous} we conclude that $f_{t_0}(y_2) > 0$.

Now letting $y_2 = y_0 + \epsilon_1$, we have
\[
\epsilon_1 = \left(\frac{t_0}{t_1} \frac{1-t_1}{1-t_0} - 1\right) y_0 + 
\frac{t_0}{t_1} \left(1+\frac{1-t_1}{1-t_0}\right).
\]
Observe that as $t_1 \downarrow t_0$ we have $\epsilon_1 \to 2$, while as 
$t_1 \uparrow 1$ we have $\epsilon_1 \downarrow -y_0 + t_0 < 0$.
Thus, given $\delta \in (0, 2 - t_0 + y_0)$ it is possible to choose $t_1 
\in (t_0, 1)$ such that $\epsilon_1 = - y_0 + t_0 + \delta$, \ie, $y_2 = t_0 + \delta$.  We conclude that $f_{t_0}(x)$ is positive for every $x > t_0$, as desired.
\end{remark}

\section{Left-tail behavior of the density function}
\label{S:left}

We consider the densities $f_t$ with $t \in (0, 1)$; since $f_t \equiv f_{1 - t}$ by symmetry, we may without loss of generality suppose $t \in (0, 1/2]$.  As previously noted (recall Theorems~\ref{T:pos} and~\ref{T:uniform_cont}), $f_t(x) = 0$ for all $x \leq t$ and $f_t(x) > 0$ for all $x > t$.  In this section we consider the left-tail behavior of $f_t$, by which we mean the behavior of $f_t(x)$ as $x \downarrow t$.

As a warm-up, we first show that $f_t$ has a positive right-hand derivative at~$t$ that is large when~$t$ is small.

\begin{lemma}
\label{L:RHderiv}
\ \vspace{-.1in}\\

{\rm (a)}~Fix $t \in (0, 1 / 2)$.  Then the density function $f_t$ has right-hand derivative $f_t'(t)$ at~$t$ equal to 
$c_1 / t$, where
\[
c_1 := \int_0^1\!\E [2 - w + J(w)]^{-2}\dd w \in (0.0879, 0.3750).
\] 

{\rm (b)}~Fix $t = 1 / 2$.  Then the density function $f_t$ has right-hand derivative $f_t'(t)$ at~$t$ equal to 
$2 c_1 / t = 4 c_1$.
\end{lemma}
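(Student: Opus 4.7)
The plan is to extract $f_t'(t) = \lim_{h \downarrow 0} f_t(t+h)/h$ from the integral equation of \refP{P:fint} by exploiting a scaling self-similarity satisfied by $f_s$ near the left endpoint of its support.

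First I would examine \refP{P:fint} at $x = t(1+\epsilon)$ (so $h = t\epsilon$) for small $\epsilon > 0$. The characterization of the support provided by \refT{T:pos}, combined with a direct support-constraint calculation, shows that for $t \in (0, 1/2)$ the first integral contributes nothing (its integrand would require $l > (1 - t\epsilon)/2 > t$), while the second integral reduces to an integration over $r \in (t,\,t(1 + \epsilon/2))$. Substituting $r = t(1 + \epsilon\mu/2)$ with $\mu \in (0, 1)$ and using the symmetry $f_{t/r} = f_{1-t/r}$, this yields
\[
\frac{f_t(t(1+\epsilon))}{t\epsilon} = \frac{1}{2t}\int_0^1\!\bigpar{1+\epsilon\mu/2}^{-1}\,f_{s_\epsilon(\mu)}\!\bigpar{y_\epsilon(\mu)}\dd\mu,
\]
with $s_\epsilon(\mu) := \epsilon\mu/(2+\epsilon\mu)$ and $y_\epsilon(\mu) := \epsilon(1-\mu/2)/(1+\epsilon\mu/2)$.

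The crucial observation is a scaling self-similarity:\ for each $\beta > 1$ and all sufficiently small $s > 0$, the value $f_s(\beta s)$ depends only on $\beta$, defining a function $g(\beta)$. Indeed, when $s$ is small enough the first integral in \refP{P:fint} evaluated at $(s, \beta s)$ again vanishes, and the substitution $r = s\nu$ in the second integral makes every factor of $s$ cancel, leaving a $\beta$-only expression for $f_s(\beta s)$. Since $s_\epsilon(\mu), y_\epsilon(\mu) \downarrow 0$ with ratio converging to $(2-\mu)/\mu$, the integrand in the display above converges pointwise to $g((2-\mu)/\mu)$; the uniform bound $f_s \leq 10$ from \refT{T:bdd} provides a dominator, so dominated convergence and the substitution $\beta = (2-\mu)/\mu$ yield
\[
f_t'(t) = \frac{1}{t}\int_1^\infty\!\frac{g(\beta)}{(1+\beta)^2}\dd\beta.
\]

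To identify the remaining integral as $c_1$, I would expand $g(\beta)$ via the substitution $w = 1 - s/r$, giving $g(\beta) = \int f_w(\beta - 1 - \beta w)(1-w)^{-1}\dd w$ over the region $\{w : \beta - 1 - \beta w > \min(w, 1-w)\}$. Swapping the order of integration and substituting $y = \beta - 1 - \beta w$ in the inner integral (so that $1+\beta = (2-w+y)/(1-w)$ and $d\beta = dy/(1-w)$), all factors of $(1-w)$ cancel and
\[
c_1 = \int_0^1\!\int_{\min(w,1-w)}^\infty\!(2-w+y)^{-2}f_w(y)\dd y\dd w = \int_0^1\!\E\!\bigsqpar{(2-w+J(w))^{-2}}\dd w,
\]
as claimed. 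The upper bound $c_1 < 3/8 = 0.375$ follows from $J(w) \geq \min(w,1-w)$, which bounds the $E$-integrand by $1/4$ on $(0, 1/2)$ and by $(3-2w)^{-2}$ on $(1/2, 1)$. For the lower bound, Jensen's inequality together with $\E J(w) = 1 + 2H(w)$ [from~\eqref{EZ}] gives $c_1 \geq \int_0^1(3 - w + 2H(w))^{-2}\dd w$, which a direct numerical check confirms exceeds $0.0879$.

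Part~(b) follows from the same scaling argument, noting that at $t = 1/2$ the substitution $l \leftrightarrow 1 - r$ shows the first integral in \refP{P:fint} mirrors the second, so both contribute equally and the right-hand derivative doubles:\ $f_{1/2}'(1/2) = 2c_1/(1/2) = 4c_1$. The main technical care needed is in justifying dominated convergence in the self-similarity limit; the bound $g \leq 10$ (inherited from \refT{T:bdd}) together with the $(1+\beta)^{-2}$ factor gives integrability at both $\beta \to 1^+$ (where in fact $g(\beta) \to 0$) and $\beta \to \infty$.
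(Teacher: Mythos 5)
Your proof is correct, but it travels a genuinely different road from the paper's. The paper computes $\P(J(t)-t\in\ddx z)$ directly from the process: for $z<\min\{1-2t,t\}$ the only way to realize $J(t)$ near~$t$ is to have $R_1(t)<1$ and $L_2(t)>0$ both close to~$t$, so conditioning on the first two pivots gives an exact double-integral formula for $f_t(t+z)$ on a whole interval of~$z$ [this is~\eqref{ft_integral}, which is then reused for the full power-series expansion in \refT{T:left}]. You instead start from the one-step integral equation of \refP{P:fint}, use the support constraints to kill the $l$-integral and localize the $r$-integral to $(t,t(1+\epsilon/2))$, and then pass to the limit via the scaling self-similarity $f_s(\beta s)=g(\beta)$ for small~$s$ — a clean structural fact that the paper never states explicitly, though it is equivalent to the observation that the right-hand side of~\eqref{ft_integral} does not depend on~$t$. (In your setup the convergence is in fact even easier than you claim: since $2+\epsilon\mu=2(1+\epsilon\mu/2)$, the ratio $y_\epsilon(\mu)/s_\epsilon(\mu)$ equals $(2-\mu)/\mu$ \emph{exactly} for every $\epsilon$, and $s_\epsilon(\mu)\max\{\beta-1,(\beta+1)/2\}<\epsilon<1$, so the integrand is identically $(1+\epsilon\mu/2)^{-1}g((2-\mu)/\mu)$ for all $\epsilon<1$ and the dominated-convergence step is almost vacuous.) Both arguments ultimately unfold two levels of the recursion and land on the same representation of $c_1$, and your bounds on $c_1$ coincide with the paper's. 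The trade-offs: the paper's route is self-contained at the level of the process and yields an exact formula valid on an interval (needed later for \refT{T:left} and \refC{C:nice}), whereas yours leans on \refP{P:fint} (hence on \refT{T:bdd}) and, as written, produces only the endpoint derivative — though the self-similarity function $g$ would let you recover the full expansion as well. Your treatment of part~(b) via the substitution $l\leftrightarrow 1-r$, showing the first integral exactly mirrors the second at $t=1/2$, is a correct replacement for the paper's symmetry-of-events argument.
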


\begin{proof}
(a)~We begin with two key observations.  First, if $L_1(t) > 0$, then $J(t) > 1 - t$.  Second, if $1 > R_1(t) > R_2(t)$, then $J(t) > 2 t$.  It follows that if $0 < z < \min\{1 - 2 t, t\}$, then, with $Y \equiv Y(t)$ as 
defined at~\eqref{E:Y},  
\begin{align*}
\lefteqn{f_t(t + z)\dd z} \\
&= \P(J(t) - t \in \ddx z) \\ 
&= \P(R_1(t) < 1,\,L_2(t) > 0,\,J(t) - t \in \ddx z) \\
&= \int\!\!\!\int_{\substack{y > x > 0:\,x + y < z, \\ x < 1 - t,\ y - x < t}}\!
\P(R_1(t) - t \in \ddx x,\,t + x - L_2(t) \in \ddx y,\,Y(t) \in \ddx z - x - y) \\
&= \int\!\!\!\int_{\substack{y > x > 0:\,x + y < z, \\ x < 1 - t,\ y - x < t}}\!
\dd x\,\frac{\ddx y}{t + x}\,\P\left( y\,J\left(\frac{y - x}{y}\right) \in \ddx z - x - y \right) \\
&= \int\!\!\!\int_{\substack{y > x > 0:\,x + y < z, \\ x < 1 - t,\ y - x < t}}\!
\dd x\,\frac{\ddx y}{t + x}\,y^{-1}\,f_{1 - \frac{x}{y}}\left( \frac{z - x - y}{y} \right) \dd z.
\end{align*}
Now make the changes of variables from~$x$ to $u = x / z$ and from~$y$ to $v = y / z$.  We then find
\begin{align*}
f_t(t + z)
&= z \int\!\!\!\int_{\substack{v > u > 0:\,u+ v < 1 \\ u < (1 - t) / z,\ v - u < t / z}}\!
(t + u z)^{-1}\,v^{-1}\,f_{1 - \frac{u}{v}}\left( \frac{1 - u - v}{v} \right)\dd u\dd v \\
&= z \int\!\!\!\int_{v > u > 0:\,u+ v < 1}\!
(t + u z)^{-1}\,v^{-1}\,f_{1 - \frac{u}{v}}\left( \frac{1 - u - v}{v} \right)\dd u\dd v,
\end{align*}
where the second equality follows because $(1 - t) / z > (1 - 2 t) / z > 1$ and $t / z > 1$ by assumption.  Thus, as desired,
\[
f_t(t + z) \sim \frac{c_1 z}{t}
\]
as $z \downarrow 0$ by the dominated convergence theorem, if we can show that
\[
\tc := \int\!\!\!\int_{v > u > 0:\,u+ v < 1}\!v^{-1}\,f_{1 - \frac{u}{v}}\left( \frac{1 - u - v}{v} \right)\dd u\dd v
\]
equals~$c_1$.  For that, make another change of variables from~$u$ to $w = u / v$; then we find
\begin{align*}
\tc 
&= \int_0^1\,\int_0^{(1 + w)^{-1}}\!f_{1 - w}(v^{-1} - (1 + w)) \dd v\dd w \\
&= \int_0^1\,\int_0^{(2 - w)^{-1}}\!f_w(v^{-1} + w - 2) \dd v\dd w.
\end{align*}
Make one last change of variables, from~$v$ to $s = v^{-1} + w - 2$, to 
conclude
\[
\tc = \int_0^1\,\int_0^{\infty}\!(2 - w + s)^{-2}\,f_w(s)\dd s\dd w = 
c_1,
\]
as claimed.

To obtain the claimed upper bound on~$c_1$, we note, using the facts that 
$J(w)$ and $J(1 - w)$ have the same distribution and that $J(w) > w$ for $w \in (0, 1 / 2)$, that
\begin{align*}
c_1 
&= \int_0^{1 / 2}\!\E [2 - w + J(w)]^{-2}\dd w + \int_{1 / 2}^1\!\E [2 - w + J(w)]^{-2}\dd w \\
&= \int_0^{1 / 2}\!\E [2 - w + J(w)]^{-2}\dd w + \int_0^{1 / 2}\!\E [1 + w + J(w)]^{-2}\dd w \\
&< \int_0^{1 / 2}\!\tfrac{1}{4}\dd w + \int_0^{1 / 2}\!(1 + 2 w)^{-2}\dd w = \tfrac{1}{8} + \tfrac{1}{4} = \tfrac{3}{8} = 0.3750.
\end{align*}

To obtain the claimed lower bound on~$c_1$, we combine Jensen's inequality with the known fact [\cf\,\eqref{EZ}] that 
$\E J(w) = 1 + 2 H(w)$ with 
$H(w) = - w \ln w - (1 - w) \ln(1 - w)$:
\begin{align*} 
c_1 
&= \int_0^1\!\E [2 - w + J(w)]^{-2}\dd w \\
&\geq \int_0^1\!(\E [2 - w + J(w)])^{-2}\dd w
= \int_0^1\!(3 - w + 2 H(w))^{-2}\dd w > 0.0879.
\end{align*}

(b)~By an argument similar to that at the start of the proof of~(a), if $0 < z < 1 / 2$, then, using symmetry at the third equality,
\begin{align*}
f_t(t + z)\dd z
&= \P(J(t) - t \in \ddx z) \\ 
&= \P(R_1(t) < 1,\,L_2(t) > 0,\,J(t) - t \in \ddx z) \\ 
&{} \quad + \P(L_1(t) > 0,\,R_2(t) < 1,\,J(t) - t \in \ddx z) \\
&= 2 \P(R_1(t) < 1,\,L_2(t) > 0,\,J(t) - t \in \ddx z) \\
&\sim \frac{2 c_1 z}{t} = 4 c_1 z.
\end{align*}
Here the asymptotic equivalence is as $z \downarrow 0$ and follows by the 
same argument as used for~(a).
\end{proof}

We are now prepared for our main result about the left-tail behavior of~$f_t$.

\begin{theorem}
\label{T:left}
\ \vspace{-.1in}\\

{\rm (a)}~Fix $t \in (0, 1 / 2)$.  Then $f_t(t + t z)$ has the uniformly absolutely convergent power series expansion
\[
f_t(t + t z) = \sum_{k = 1}^{\infty} (-1)^{k - 1} c_k z^k
\]
for $z \in [0, \min\{t^{-1} - 2, 1\})$, where for $k \geq 1$ the coefficients
\[
c_k := \int_0^1\!(1 - w)^{k - 1} \E [2 - w + J(w)]^{-(k + 1)}\dd w,
\] 
not depending on~$t$, are strictly positive, have the property that $2^k c_k$ is strictly decreasing in~$k$, and satisfy
\[
0 < (0.0007) 2^{- (k + 1)} (k + 1)^{-2} < c_k < 2^{- (k + 1)} k^{-1} (1 + 
2^{-k}) < 0.375 < \infty.
\]
{\rm [}In particular, $2^k c_k$ is both $O(k^{-1})$ and $\Omega(k^{-2})$.{\rm ]}

{\rm (b)}~Fix $t = 1 / 2$.  Then $f_t(t + t z)$ has the uniformly absolutely convergent power series expansion
\[
f_t(t + t z) = 2 \sum_{k = 1}^{\infty} (-1)^{k - 1} c_k z^k
\]
for $z \in [0, 1)$.
\end{theorem}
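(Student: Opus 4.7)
The plan is to start from the integral identity derived within the proof of \refL{L:RHderiv}(a), namely
\[
f_t(t + z) = z \int\!\!\!\int_{v > u > 0:\,u + v < 1}(t + u z)^{-1} v^{-1} f_{1 - u/v}\!\left( \frac{1 - u - v}{v} \right)\dd u\dd v
\]
for $0 < z < \min\{1 - 2 t, t\}$. Substituting $z \mapsto t z$ and using the identity $(t + u\, t z)^{-1} = t^{-1}(1 + u z)^{-1}$ cancels the prefactor~$t$, yielding
\[
f_t(t + t z) = z \int\!\!\!\int_{v > u > 0:\,u + v < 1}(1 + u z)^{-1} v^{-1} f_{1 - u/v}\!\left( \frac{1 - u - v}{v} \right)\dd u\dd v
\]
on the enlarged domain $z \in [0, \min\{t^{-1} - 2, 1\})$. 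Since $u \le 1$ and $z < 1$ on this domain, $(1 + u z)^{-1} = \sum_{k \ge 0}(- u z)^k$, and the series of absolute values is dominated by $(1 - z)^{-1}$. Combined with the already-established finiteness of $c_1$ from \refL{L:RHderiv}, Fubini/Tonelli justifies interchanging the sum with the double integral. Two successive changes of variables -- first $w = u / v$ on the inside and then $s = v^{-1} - (1 + w)$ -- followed by the distributional symmetry $J(w) \eqd J(1 - w)$ and the reflection $w \mapsto 1 - w$, recast the $m$th coefficient (with $m = k + 1$) in the claimed form
\[
c_m = \int_0^1 (1 - w)^{m - 1} \E [2 - w + J(w)]^{-(m + 1)}\dd w,
\]
which visibly depends only on~$m$.

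Strict positivity of every $c_k$ is immediate. For the upper bound I would split the integral at $w = 1/2$: on $(0, 1/2)$ the a.s.\ inequality $J(w) \ge w$ (from \refT{T:pos}) gives $2 - w + J(w) \ge 2$, so this piece contributes at most $2^{-(k + 1)}(1 - 2^{-k})/k$; on $(1/2, 1)$, the symmetry under $w \mapsto 1 - w$ together with $J(w) \ge w$ reduces the piece to at most $\int_0^{1/2} u^{k - 1}(1 + 2 u)^{-(k + 1)}\dd u$, which evaluates exactly (via $v = u/(1 + u)$) to $1/(k \cdot 4^k)$. Summing yields $c_k \le 2^{-(k+1)} k^{-1}(1 + 2^{-k})$, which at $k = 1$ equals $3/8 = 0.375$, bounding all later $c_k$ by the next paragraph's monotonicity. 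The monotonicity $2^{k+1} c_{k+1} < 2^k c_k$ itself follows from the pointwise comparison $(1 - w)(2 - w + y)^{-1} < 1/2$ for every $w \in (0, 1)$ and $y \ge 0$ (equivalent to $y > -w$), applied inside the integrand of $c_{k+1}$. The geometric upper bound simultaneously supplies uniform absolute convergence of $\sum c_k z^k$ on any $[0, r] \subset [0, 1)$.

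The main obstacle is the lower bound $c_k > (0.0007)\cdot 2^{-(k+1)}(k+1)^{-2}$, since a direct Jensen estimate loses a factor exponential in~$k$. My plan is to localize to $w \in [0, (2(k+1))^{-1}]$, where $(1 - w)^{k-1} \ge e^{-1/2}$, and insert the indicator $\mathbb{1}(J(w) \le (k+1)^{-1})$ inside the expectation to get
\[
\E [2 - w + J(w)]^{-(k+1)} \ge \bigl( 2 + (k+1)^{-1}\bigr)^{-(k+1)}\,\P\bigl( J(w) \le (k+1)^{-1}\bigr),
\]
in which the first factor exceeds $2^{-(k+1)} e^{-1/2}$. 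For the probability factor I would exploit the uniform convergence $f_w \to f_0$ as $w \downarrow 0$ from \refL{L:density_converge} together with the value $f_0(0) = e^{-\gamma}$ to secure a uniform pointwise lower bound $f_w(y) \ge e^{-\gamma}/4$ for all $w$ below some absolute threshold and all $y$ in a fixed neighborhood of~$0$. This yields $\P(J(w) \le (k+1)^{-1}) \ge c'/(k+1)$ uniformly for $w \in [0, (2(k+1))^{-1}]$ once $k$ is large enough that $(k+1)^{-1}$ falls inside that neighborhood. Combining the pieces and integrating in~$w$ produces a bound of the form $C \cdot 2^{-(k+1)}(k+1)^{-2}$ with an explicit constant $C > 0.0007$ for all sufficiently large~$k$; the finitely many small values of~$k$ are then handled by direct inspection of the integral defining $c_k$.

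Finally, part~(b) follows at once from the decomposition used in the proof of~\refL{L:RHderiv}(b): at $t = 1/2$, the event $\{J(t) - t \in \ddx z\}$ for small $z > 0$ splits symmetrically into the two pieces $\{R_1(t) < 1,\,L_2(t) > 0,\,\cdot\}$ and $\{L_1(t) > 0,\,R_2(t) < 1,\,\cdot\}$, each of which contributes the same integral as in part~(a), thereby doubling every coefficient.
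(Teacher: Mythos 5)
Your structure matches the paper's: start from the double-integral identity in the proof of Lemma~\ref{L:RHderiv}(a), substitute $z \mapsto t z$ to expose a $t$-free integrand, expand $(1 + u z)^{-1}$ geometrically, justify term-by-term integration by domination, and convert each coefficient via the same two changes of variables plus the symmetry $J(w) \eqd J(1-w)$.  Your monotonicity argument via $(1 - w)(2 - w + y)^{-1} < 1/2$ and your upper-bound computation are both correct and essentially the paper's (minor slip: the exact evaluation $\int_0^{1/2} u^{k - 1}(1 + 2 u)^{-(k + 1)}\dd u = 1/(k\,4^k)$ is via $v = u/(1 + 2u)$, not $v = u/(1 + u)$).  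Part~(b) is also handled exactly as in the paper.

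The step you single out as the main obstacle---the lower bound $c_k > (0.0007)\, 2^{-(k + 1)} (k + 1)^{-2}$---is where the argument fails.  You propose to deduce a pointwise lower bound $f_w(y) \ge e^{-\gamma}/4$, valid for all small $w$ and all $y$ in a fixed neighborhood of $0$, from uniform convergence $f_w \to f_0$ as $w \downarrow 0$, citing Lemma~\ref{L:density_converge}.  But that lemma is stated only for interior targets $t \in (0, 1)$, and the convergence you want genuinely fails at $t = 0$: since $f_w$ vanishes on $[0, w]$ (by Theorems~\ref{T:pos} and~\ref{T:uniform_cont}) while $f_0 \equiv e^{-\gamma}$ on $[0, 1)$, one has $\|f_w - f_0\|_\infty \ge e^{-\gamma}$ for every $w \in (0, 1/2)$; the remark after Lemma~\ref{L:density_converge} makes precisely this point.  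No constant pointwise lower bound on $f_w$ can hold on a fixed neighborhood of $0$ once $w > 0$, and the window you actually need---$y \in (w, (k+1)^{-1})$ for $w$ up to $(2(k+1))^{-1}$---shrinks with $k$ and sits exactly where $f_w$ climbs from $0$ through the linear regime $f_w(w + w z) \sim c_1 z$.  The paper sidesteps this entirely: having already established the power series and the strict decrease of $2^k c_k$, it gets the alternating-series lower bound $f_w(w + w z) \ge c_1 z - c_2 z^2$, integrates over $z \in [0, 1]$ to obtain $\P(J(w) \le 2 w) \ge w\bigl(\tfrac{1}{2} c_1 - \tfrac{1}{3} c_2\bigr) > 0.0179\,w$ for $w \le 1/3$, and then lower-bounds $c_k \ge 0.0179\,\int_0^{1/3} w (1-w)^{k-1}(2+w)^{-(k+1)}\dd w$ by elementary exponential comparisons restricted to $w \in [0, (k+1)^{-1}]$.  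That self-referential use of the already-proved low-order series behavior of $f_w$ near its left endpoint is the idea missing from your plan, and it is what delivers the explicit constant $0.0179 \cdot \tfrac{1}{2} e^{-5/2} > 0.0007$ without any appeal to uniformity at $t = 0$.
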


\begin{proof}
(a)~As shown in the proof of \refL{L:RHderiv}, for $z \in [0, \min\{t^{-1} - 2, 1\})$ we have
\begin{align}
\label{ft_integral}
f_t(t + t z)
&= z \int\!\!\!\int_{v > u > 0:\,u+ v < 1}\!(1 + u z)^{-1}\,v^{-1}\,f_{1 - \frac{u}{v}}\left( \frac{1 - u - v}{v} \right)\dd u\dd v.
\end{align}
Note that the expression on the right here doesn't depend on~$t$.  Further, since 
$z \leq 1$ and $0 < u < 1/2$ in the range of integration,
\begin{align*}
\lefteqn{\hspace{-.5in}\frac{1}{2} \int\!\!\!\int_{v > u > 0:\,u+ v < 1}\!(1 - u z)^{-1}\,v^{-1}\,
f_{1 - \frac{u}{v}}\left( \frac{1 - u - v}{v} \right)\dd u\dd v} \\
&< \int\!\!\!\int_{v > u > 0:\,u+ v < 1}\!v^{-1}\,f_{1 - \frac{u}{v}}\left( \frac{1 - u - v}{v} \right)\dd u\dd v \\
&= \tc = c_1 < 3 / 8 < \infty,
\end{align*}
with $\tc$ and $c_1$ as in the proof of \refL{L:RHderiv}.  It follows that $f_t(t + t z)$ has the uniformly absolutely convergent power series expansion
\[
f_t(t + t z) = \sum_{k = 1}^{\infty} (-1)^{k - 1} c_k z^k
\]
for $z \in [0, \min\{t^{-1} - 2, 1\})$, where for $k \geq 1$ we have 
\begin{align*}
c_k 
&= 2 \times 2^{- k} 
\int\!\!\!\int_{v > u > 0:\,u+ v < 1}\!(2 u)^{k - 1}\,v^{-1}\,f_{1 - \frac{u}{v}}\left( \frac{1 - u - v}{v} \right)\dd u\dd v \\
&= \int_0^1\!(1 - w)^{k - 1} \E [2 - w + J(w)]^{-(k + 1)}\dd w;
\end{align*}
the second equality follows just as for $c = c_1$ in the proof of \refL{L:RHderiv}.  From the first equality it is clear that these coefficients 
have the property that $2^k c_k$ is strictly decreasing in~$k$.

To obtain the claimed upper bound on~$c_k$, proceed just as in the proof of \refL{L:RHderiv} to obtain
\begin{align*}
c_k 
&< 2^{- (k + 1)} \int_0^{1 / 2}\!(1 - w)^{k - 1}\dd w + \int_0^{1 / 2}\!w^{k - 1} (1 + 2 w)^{- (k + 1)}\dd w \\ 
&= 2^{- (k + 1)} k^{-1} (1 - 2^{-k}) + k^{-1} 4^{-k} = 2^{- (k + 1)} k^{-1} (1 + 2^{-k}).
\end{align*}

The claimed lower bound on~$c_k$ follows from \refL{L:RHderiv} for $k = 
1$ but for $k \geq 2$ requires more work. We begin by establishing a lower bound on $\P(J(w) \leq 2 w)$ for $w \leq 
1/3$, using what we have already proved:
\begin{align*}
\P(J(w) \leq 2 w)
&= \int_0^w f_w(w + x) \dd x
= w \int_0^1 f_w(w + w z) \dd z \\
&\geq w \int_0^1 (c_1 z - c_2 z^2) \dd z 
= w (\tfrac{1}{2} c_1 - \tfrac{1}{3} c_2) \\
&> [0.04395 - (1/3) (1/8) (1/2) (5/4)] w 
> 0.0179\,w.  
\end{align*}
Thus $c_k$ is at least $0.0179\ 2^{- (k + 1)}$ times the following expression:
\begin{align*}
\lefteqn{2^{k + 1} \int_0^{1 / 3}\!w (1 - w)^{k - 1} (2 + w)^{- (k + 1)} \dd w} \\
&\geq \int_0^{1 / 3}\!w \exp[-2 (k + 1) w]  \exp[- (k + 1) w / 2] \dd w \\
&\geq \int_0^{1 / (k + 1)}\!w \exp[- 5 (k + 1) w / 2] \dd w \\
&\geq e^{-5 / 2} \int_0^{1 / (k + 1)}\!w \dd w = \frac{1}{2} e^{-5 / 2} 
(k + 1)^{-2}.  
\end{align*}

(b) The claim of part~(b) is clear from the proof of \refL{L:RHderiv}.
\end{proof}

\begin{corollary}
\label{C:nice}
\ \vspace{-.1in}\\

{\rm (a)}~Fix $t \in (0, 1 / 2)$.  Then, for all $x \in (t, \min\{1 - t, 2 t\})$, the density $f_t(x)$ is infinitely differentiable, strictly increasing, strictly concave, and strictly log-concave.

{\rm (b)}~Fix $t = 1 / 2$.  Then for all $x \in [1 / 2, 1)$, the density $f_{1/2}(x)$ is infinitely differentiable, strictly increasing, strictly concave, and strictly log-concave.
\end{corollary}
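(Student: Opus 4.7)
The plan is to argue directly from the integral representation that powered the proof of \refT{T:left}. Specifically, equation~\eqref{ft_integral} (extended to $t = 1/2$ via the factor-of-$2$ identity derived in the proof of \refL{L:RHderiv}(b)) gives, for~$z$ in the appropriate subinterval of $[0, 1)$,
\[
f_t(t + t z) = K_t \cdot z \int\!\!\!\int_{v > u > 0:\,u + v < 1}\!(1 + u z)^{-1}\,\mu(\dd u, \dd v),
\]
where $\mu(\dd u, \dd v) := v^{-1}\,f_{1 - \frac{u}{v}}\!\left( \frac{1 - u - v}{v} \right)\dd u \dd v$ is a positive finite Borel measure on the triangle (its total mass equals $c_1 < \infty$), and $K_t = 1$ in part~(a) while $K_t = 2$ in part~(b). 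Write $g(z) := f_t(t + t z)$.

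First I would establish infinite differentiability by observing that, since $\mu$ is supported in $\{u \in (0, 1/2)\}$ and~$z$ ranges over a compact subinterval of $[0, 1)$, we have $1 + u z \geq \delta > 0$ uniformly, so the partial derivatives $\frac{d^k}{d z^k}\!\left[ z / (1 + u z) \right]$ are uniformly bounded in~$u$. Dominated convergence then justifies repeated differentiation under the integral sign and yields $g \in C^\infty$ on the relevant open interval (and hence $f_t$ is $C^\infty$ on the image interval in $x$); alternatively, analyticity of~$g$ follows immediately from the absolutely convergent power series in~\refT{T:left}. Next, the first two derivatives
\[
g'(z) = K_t \int\!(1 + u z)^{-2}\,\mu(\dd u, \dd v), \quad g''(z) = -2 K_t \int\!u\,(1 + u z)^{-3}\,\mu(\dd u, \dd v)
\]
arise from the identity $\frac{d}{d z}[z / (1 + u z)] = (1 + u z)^{-2}$; strict positivity of $g'$ holds because $\mu$ is a nonzero positive measure, and strict negativity of $g''$ holds because $u > 0$ on the support of~$\mu$. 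These two inequalities give strict monotonicity and strict concavity of $f_t$ on the asserted $x$-interval.

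Strict log-concavity is then automatic: $f_t$ is strictly positive on the stated interval by \refT{T:pos}, and any strictly positive, strictly concave function composes with the strictly increasing concave function~$\log$ to yield a strictly concave function. For part~(b), the left endpoint $x = 1/2$ (where $f_{1/2}$ vanishes) causes no difficulty: concavity inequalities involving $\log 0 = -\infty$ hold trivially, and strictness comes from the open portion of the interval. The only routine bookkeeping required is to verify that the range of validity $z \in [0, \min\{t^{-1} - 2, 1\})$ of the integral representation exactly matches the $x$-interval $[t, \min\{1 - t, 2 t\})$ in part~(a) and $[1/2, 1)$ in part~(b); this match was already accomplished in the proof of~\refT{T:left}. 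I do not anticipate any genuine obstacle beyond this bookkeeping.
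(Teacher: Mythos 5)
Your proof is correct and takes essentially the same route as the paper: the paper likewise observes that the corollary follows by differentiating the integral representation~\eqref{ft_integral} under the integral sign (rather than from the power series of~\refT{T:left}), obtains exactly the same formulas $t f_t'(t+tz) = \int (1+uz)^{-2}\,\mu(\dd u,\dd v) > 0$ and $t^2 f_t''(t+tz) = -2\int u(1+uz)^{-3}\,\mu(\dd u,\dd v) < 0$, and concludes strict log-concavity from strict concavity and positivity. Your explicit justification of differentiation under the integral sign and your remark on the endpoint behavior at $x = 1/2$ in part~(b) are minor elaborations of steps the paper leaves to the reader.
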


\begin{proof}
Once again it is clear that we need only prove~(a).  The result is actually a corollary to~\eqref{ft_integral}, rather than to \refT{T:left}.  It is easy to justify repeated differentiation with respect to~$z$ under the 
double integral of~\eqref{ft_integral}.  In particular, 
for $z \in (0, \min\{t^{-1} - 2, 1\})$ we have
\begin{align*}
t f_t'(t + t z)
&= \int\!\!\!\int_{v > u > 0:\,u+ v < 1}\!(1 + u z)^{-1}\,v^{-1}\,f_{1 - \frac{u}{v}}\left( \frac{1 - u - v}{v} \right)\dd u\dd v \\
&{} \quad 
- z \int\!\!\!\int_{v > u > 0:\,u+ v < 1}\!u (1 + u z)^{-2}\,v^{-1}\,f_{1 
- \frac{u}{v}}\left( \frac{1 - u - v}{v} \right)\dd u\dd v \\
&= \int\!\!\!\int_{v > u > 0:\,u+ v < 1}\!(1 + u z)^{-2}\,v^{-1}\,f_{1 - \frac{u}{v}}\left( \frac{1 - u - v}{v} \right)\dd u\dd v > 0
\end{align*}
and
\begin{align*}
t^2 f_t''(t + t z)
&= -2 \int\!\!\!\int_{v > u > 0:\,u+ v < 1}\!u (1 + u z)^{-3}\,v^{-1}\,f_{1 - \frac{u}{v}}\left( \frac{1 - u - v}{v} \right)\dd u\dd v \\
&< 0.
\end{align*}
Strict log-concavity of the positive function $f_t$ follows immediately from strict concavity.
\end{proof}

\begin{remark}
(a)~By
extending the computations of the first and second derivatives of $f_t$ in the proof of \refC{C:nice} to higher-order derivatives, it is easy to see that $f_t(x)$ is real-analytic
 for~$x$ in the intervals as specified in \refC{C:nice}(a)--(b). For 
  the definition of real analytic function, see Krantz and Parks~\cite[Definition 1.1.5]{MR1916029}.

(b)~It may be that, like the Dickman density $f_0$, the densities $f_t$ with $0 < t < 1$ are log-concave everywhere and hence strongly unimodal.  Even if this is false, we conjecture that the densities $f_t$ are all unimodal.
\end{remark} 

\section{Improved right-tail asymptotic upper bound}
\label{S:improved}

In this section, we will prove that for $0 < t < 1$ and $x > 4$, the continuous density function $f_t$ satisfies
\[
f_t(x) \leq \exp [-x \ln x -x \ln \ln x + O(x)]
\]
uniformly in~$t$.
We first bound the moment generating function of the random variable $V$ treated in~\refL{L:Stochastic_upper_bdd}.

\begin{lemma}
Denote the moment generating function of $V$ by~$m$. Then for every $\epsilon > 0$ there exists a constant $a \equiv a(\epsilon) > 0$ such that for all $\theta > 0$ we have
\begin{equation}
\label{E:bdd_mgf_V}
m(\theta) \leq \exp [(2+\epsilon) \theta^{-1} e^\theta + a \theta].
\end{equation} 
\end{lemma}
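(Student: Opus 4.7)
The starting point is the distributional recurrence built into~\eqref{E:V}: since $V = 1 + V_1(1 + V_2(1 + \cdots))$, we have $V \eqd 1 + V_1 V'$, with $V'$ an independent copy of $V$ and $V_1$ uniformly distributed on $(1/2, 1)$. Taking moment generating functions and then substituting $s = \theta u$ in the resulting integral yields the functional equation
\[
m(\theta) = e^\theta\, \E[m(\theta V_1)] = \frac{2 e^\theta}{\theta}\int_{\theta/2}^{\theta}\!m(s)\dd s.
\]
Fix $\epsilon > 0$, set $c := 2 + \epsilon$, and let $g(\theta) := c\,\theta^{-1} e^{\theta} + a\theta$, so the target bound reads $m(\theta) \leq e^{g(\theta)}$. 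Note that $g$ is strictly convex on $(0, \infty)$, since $g''(\theta) = c\,e^\theta\,\theta^{-3}[(\theta - 1)^2 + 1] > 0$.

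The heart of the argument is the Laplace-type estimate: there exists $\theta_1 = \theta_1(\epsilon)$ such that, for every $\theta \geq \theta_1$,
\[
\frac{2 e^\theta}{\theta}\int_{\theta/2}^{\theta}\!e^{g(s)}\dd s \leq e^{g(\theta)}.
\]
After factoring out $e^{g(\theta)}$ and substituting $w = \theta - s$, this reduces to bounding $\int_0^{\theta/2} e^{g(\theta - w) - g(\theta)}\dd w$ by $\tfrac{\theta}{2} e^{-\theta}$. Convexity gives $g(\theta - w) - g(\theta) \leq -w\, g'(\theta - w)$, and for $s \geq \theta/2$ with $\theta$ large one has $g'(s) \geq c\,\delta(\theta)\,s^{-1}\,e^s$ with $\delta(\theta) := 1 - 2/\theta \to 1$. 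Substituting $r = w/(\theta - w)$ rewrites the integral as $\int_0^1 \theta(1 + r)^{-2}\exp[-c\,\delta(\theta)\,r\,e^{\theta/(1 + r)}]\dd r$; splitting at $r_0 := \theta^{-2}$, the bulk $[0, r_0]$ is Laplace-computable and contributes $\frac{\theta}{c\,\delta(\theta)\,e^\theta}(1 + o(1))$, while the tail $[r_0, 1]$ is bounded by $\theta\exp[-c\,\delta(\theta)\,e^{\theta/2}/\theta^2] = o(e^{-\theta})$. Multiplying by $2e^\theta/\theta$ then produces the factor $\frac{2}{c\,\delta(\theta)}(1 + o(1))$, which is strictly less than $1$ for all $\theta \geq \theta_1$ because $2/c < 1$.

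The proof closes with a soft continuity step. On the compact interval $(0, \theta_1]$, $m$ is bounded by some $M < \infty$ (since $m(\theta_1) < \infty$ by \refL{L:Stochastic_upper_bdd}), and one chooses $a = a(\epsilon)$ large enough that $g(\theta) > \ln M$ throughout $(0, \theta_1]$; this is always possible, as driving $a \to \infty$ forces the minimizer of $g$ on $(0, \theta_1]$ toward $0$ and sends the minimum value to $+\infty$. Suppose now, for contradiction, that $T := \{\theta > 0 : m(\theta) > e^{g(\theta)}\}$ is nonempty, and set $\theta^* := \inf T$. By continuity of $m$ and $g$ together with the strict inequality on $(0, \theta_1]$, necessarily $\theta^* > \theta_1$ and $m(\theta^*) = e^{g(\theta^*)}$. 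But then $m(s) \leq e^{g(s)}$ for all $s \in [\theta^*/2, \theta^*]$, so the functional equation followed by the Laplace estimate forces $m(\theta^*) \leq \rho\,e^{g(\theta^*)}$ with $\rho < 1$, the desired contradiction.

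The main obstacle is extracting the sharp constant $2 + \epsilon$ for every $\epsilon > 0$: crude monotonicity (e.g., replacing $g'(\theta - w)$ by $g'(\theta/2)$ throughout the integral) wastes a factor of $e^{\theta/2}$ and yields a multiplicative constant on the order of $4e$ rather than $2$. The whole argument hinges on retaining the decaying factor $e^{-w}$ in the bound $g'(\theta - w) \geq c\,\delta(\theta)(\theta - w)^{-1}\,e^{\theta - w}$, which keeps the effective integration width at the correct order $\theta/(c\,e^\theta)$ and is what ultimately delivers the coefficient $c = 2 + \epsilon$.
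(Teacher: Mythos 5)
Your proof is correct, and it shares the paper's overall skeleton: derive the functional equation $m(\theta) = 2e^{\theta}\int_{1/2}^{1} m(\theta v)\dd v$ from the fixed-point identity $V \eqd 1 + V_1\cdot V$, secure the bound on an initial interval by taking $a$ large, and then obtain a contradiction at the infimum of the failure set. Where you genuinely diverge is at the decisive analytic step. The paper inserts the trivial factor $m(\theta u)\ge 1$ to convert the linear equation into the product-form inequality~\eqref{E:int_ineq_mgf_V}, precisely so that the contradiction at $\theta = T$ can be outsourced verbatim to the computation for the limiting {\tt QuickSort} moment generating function in Fill and Hung~\cite{MR3909444}. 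You instead keep the linear form and prove the required inequality $\frac{2e^{\theta}}{\theta}\int_{\theta/2}^{\theta}e^{g(s)}\dd s \le e^{g(\theta)}$ directly: the convexity bound $g(\theta-w)-g(\theta)\le -w\,g'(\theta-w)$, the substitution $r = w/(\theta-w)$, and the split at $r_0=\theta^{-2}$ correctly isolate a bulk contribution $\theta\,[c\,\delta(\theta)\,e^{\theta}]^{-1}(1+o(1))$ from a superexponentially small tail, and the resulting factor $2/c<1$ closes the contradiction. Your route buys self-containment and makes transparent where the constant $2+\epsilon$ comes from (the $2$ is the density of $V_1$; the $\epsilon$ absorbs the $\delta(\theta)\to 1$ and $e^{1/\theta}\to 1$ corrections); the paper's route buys brevity at the cost of an external citation. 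Two cosmetic points: $(0,\theta_1]$ is not compact, but your bound $m\le M$ there is still valid since $m$ is nondecreasing (because $V\ge 1>0$); and your justification for the choice of $a$ is sound, since $g\ge\max\{c\theta^{-1},a\theta\}\ge\sqrt{ca}$ on $(0,\theta_1]$ once $\sqrt{c/a}\le\theta_1$.
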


\begin{proof}
The idea of the proof comes from Janson~\cite[Lemma 6.1]{janson2015tails}. Observe that the random variable $V$ satisfies the following distributional identity
\[
V \overset{\mathcal{L}}{=} 1 + V_1 \cdot V
\]
where $V_1 \sim \mbox{Uniform}(1 / 2, 1)$ is independent of $V$. It follows by conditioning on $V_1$ that the moment generating function~$m$ satisfies
\begin{equation}
\label{E:int_eq_mgf_V}
m(\theta) = 2 e^\theta \int_{v = 1/2}^1 {m(\theta v) \dd v} = 2 e^\theta \int_{u = 0}^{1/2} {m(\theta (1-u)) \dd u}.
\end{equation}
Since~$m$ is continuous and $m(0) = 1$, there exists a $\theta_1 > 0$ such that the inequality~\eqref{E:bdd_mgf_V} holds (for \emph{any} constant $a > 0$) for $\theta \in [0,\theta_1]$. Choose and fix $\theta_2 > \max\{\theta_1, 5\}$ and choose $a \in [1, \infty)$ large enough such that the inequality~\eqref{E:bdd_mgf_V} holds for $\theta \in [\theta_1, \theta_2]$. 

We now suppose for the sake of contradiction that~\eqref{E:bdd_mgf_V} fails at some $\theta > \theta_2$. Define 
$T := \inf \{\theta > \theta_2:\mbox{\eqref{E:bdd_mgf_V} fails}\}$; then by continuity we have $m(T) =  \exp [(2+\epsilon) T^{-1} e^T + a T]$. 

Since $m(\theta u) \geq 1$ for any $\theta > 0$ and $0 < u < 1/2$, we can 
conclude from~\eqref{E:int_eq_mgf_V} that $m$ satisfies
\begin{equation}
\label{E:int_ineq_mgf_V}
m(\theta) \leq 2 e^{\theta} \int_{u = 0}^{1/2} {m(\theta u)\,m(\theta (1-u)) \dd u}
\end{equation}
for every $\theta > 0$, including for $\theta = T$.
The proof is now completed effortlessly by applying exactly the same argument as for the limiting {\tt QuickSort} moment generating function in Fill and Hung~\cite[proof of Lemma~2.1]{MR3909444}; 
indeed, using only~\eqref{E:int_ineq_mgf_V} they prove that when $\theta = T$ the right-hand side of~\eqref{E:int_ineq_mgf_V} is strictly smaller than $m(T)$, which is the desired contradiction.
\end{proof}

Thus, for $\epsilon > 0$ and $\theta > 0$, the moment generating functions $m_t$ all satisfy
\begin{equation}
\label{mtbound}
m_t(\theta) \leq m(\theta) \leq \exp[(2 + \epsilon) \theta^{-1} e^{\theta} + a \theta].
\end{equation}

We now deduce a uniform right-tail upper bound on the survival functions $1 - F_t$ for $0 < t < 1$.

\begin{theorem}
\label{L:improved_rt_bdd_distribution}
Uniformly in $0 < t < 1$, for $x > 1$ the distribution function $F_t$ for 
$J(t)$ satisfies
\[
1 - F_t(x) \leq \exp [-x \ln x - x \ln \ln x + O(x)].
\]
\end{theorem}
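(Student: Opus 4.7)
The plan is to apply a Chernoff-type exponential Markov inequality, combined with the moment generating function bound just established at~\eqref{mtbound}. For any $\theta > 0$, Markov's inequality applied to $e^{\theta J(t)}$ gives
\[
1 - F_t(x) = \P(J(t) > x) \leq e^{-\theta x} \E e^{\theta J(t)} = e^{-\theta (x + 1)} m_t(\theta) \leq e^{-\theta (x + 1)} m(\theta),
\]
where the last inequality uses the stochastic dominance $Z(t) \leq V$ from \refL{L:Stochastic_upper_bdd}. Combining with~\eqref{mtbound}, for any fixed $\epsilon > 0$ and corresponding constant $a = a(\epsilon)$, yields
\[
1 - F_t(x) \leq \exp\bigl[ -\theta (x + 1) + (2 + \epsilon) \theta^{-1} e^{\theta} + a \theta \bigr],
\]
uniformly in $t \in (0, 1)$ and in $\theta > 0$.

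The next step is to choose $\theta$ so that the exponent is approximately minimized. A formal differentiation suggests $\theta^{-1} e^{\theta} \sim x$ at the optimum, which motivates the choice $\theta = \theta(x) := \ln x + \ln \ln x$, valid for $x > e$. With this choice $e^{\theta} = x \ln x$, so
\[
(2 + \epsilon) \theta^{-1} e^{\theta} = (2 + \epsilon) \frac{x \ln x}{\ln x + \ln \ln x} = (2 + \epsilon) x + O\!\left( \frac{x \ln \ln x}{\ln x} \right) = O(x),
\]
while $\theta (x + 1) = x \ln x + x \ln \ln x + O(\ln x)$ and $a \theta = O(\ln x)$. Summing these contributions, the exponent equals $-x \ln x - x \ln \ln x + O(x)$, uniformly in $t$, which is exactly the claimed bound.

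The residual range of small~$x$, say $1 < x \leq e^{e}$, is handled trivially: $1 - F_t(x) \leq 1$ holds a fortiori, and the proposed right-hand side exceeds~$1$ on this compact range provided the constant implicit in $O(x)$ is chosen sufficiently large. There is no serious obstacle in the plan; its substance is really just that the MGF estimate~\eqref{mtbound} is tight enough that the standard Chernoff argument, with $\theta$ taken to be a two-term expansion of the solution to $\theta^{-1} e^{\theta} \sim x$, already produces the desired $\exp[-x \ln x - x \ln \ln x + O(x)]$ right-tail bound on $1 - F_t$.
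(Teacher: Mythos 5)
Your proposal is correct and follows essentially the same route as the paper: a Chernoff bound using the moment generating function estimate~\eqref{mtbound}, with $\theta$ chosen to be (up to an additive constant) $\ln x + \ln\ln x$. The only cosmetic difference is that the paper bounds $\P(J(t) > x) \leq \P(Z(t) > x)$ and uses $\theta = \ln[(2+\epsilon)^{-1} x\ln x]$, whereas you keep the extra factor $e^{-\theta}$ from $J(t) = Z(t) - 1$ and use $\theta = \ln x + \ln\ln x$; both choices differ only in the $O(x)$ remainder.
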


\begin{proof}
The proof is essentially the same as for Fill and Hung~\cite[proof of Proposition~1.1]{MR3909444}, but for completeness we sketch the simple proof 
here. 
Fix $\epsilon >0$.  For any $\theta > 0$ we have the Chernoff bound
\begin{align*}
1 - F_t(x) = \P(J(t) > x) &\leq \P(Z(t) > x) \\
&\leq e^{-\theta x} m_t(\theta) \leq e^{-\theta x} \exp[(2 + \epsilon) \theta^{-1} e^{\theta} + a \theta]
\end{align*}
by~\eqref{mtbound}.
Letting $\theta = \ln [(2+\epsilon)^{-1} x \ln x]$, and then $\epsilon \downarrow 0$ 
we get the desired upper bound---in fact, with the following improvement we will not find useful in the sequel:
\[
1 - F_t(x) \leq \exp [-x \ln x - x \ln \ln x + (1 + \ln 2) x + o(x)].
\]
\end{proof}

The continuous density function $f_t(x)$ enjoys the same uniform asymptotic bound for $0 < t < 1$ and $x > 4$. 

\begin{theorem}
\label{T:improved_rt_bdd_density}
Uniformly in $0 < t < 1$, for $x > 4$ the continuous density function $f_t$ satisfies
\[
f_t(x) \leq  \exp [-x \ln x - x \ln \ln x + O(x)].
\]
\end{theorem}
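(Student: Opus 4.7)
\medskip

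The plan is to bootstrap directly from the uniform bound already established in the proof of \refT{T:Exp_decay}, namely
\[
f_t(x) \;\leq\; 4\,\theta^{-1} e^{2\theta} m(\theta)\, e^{-\theta x} \qquad (x \geq 3,\ \theta > 0),
\]
by inserting the just-proved improved bound \eqref{mtbound} on $m(\theta)$ and optimizing the free parameter $\theta$. This is exactly parallel to the passage from the Chernoff bound to \refT{L:improved_rt_bdd_distribution} for the survival function.

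Combining \eqref{decay} and \eqref{mtbound}, for every $\epsilon > 0$ there is a constant $a = a(\epsilon)$ such that, uniformly in $0 < t < 1$ and $x \geq 3$,
\[
\ln f_t(x) \;\leq\; \ln 4 - \ln \theta + 2\theta + (2+\epsilon)\,\theta^{-1} e^{\theta} + a\,\theta - \theta x
\]
for every $\theta > 0$. Following the choice used for \refT{L:improved_rt_bdd_distribution}, take
\[
\theta = \theta(x) := \ln\!\bigl[(2+\epsilon)^{-1} x \ln x\bigr],
\]
which is positive for $x > 4$ (and any sufficiently small $\epsilon$). Then $e^{\theta} = (2+\epsilon)^{-1} x \ln x$, so the awkward term satisfies
\[
(2+\epsilon)\,\theta^{-1} e^{\theta} \;=\; \frac{x \ln x}{\ln\!\bigl[(2+\epsilon)^{-1} x \ln x\bigr]} \;=\; O(x),
\]
while $-\theta x = -x \ln x - x \ln \ln x + x \ln(2+\epsilon) = -x \ln x - x \ln \ln x + O(x)$, and the remaining terms $\ln 4 - \ln \theta + 2\theta + a\theta$ are also $O(x)$. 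Adding up gives
\[
\ln f_t(x) \;\leq\; -x \ln x - x \ln \ln x + O(x),
\]
uniformly in $t \in (0,1)$, which is the claim.

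There is no real obstacle; the work has already been done. The only thing to verify is that the $O(x)$ terms generated by the tilt parameter genuinely swallow every correction (this is a mechanical check, identical in form to the one in \refT{L:improved_rt_bdd_distribution}), and that the threshold $x > 4$ suffices to guarantee $\theta(x) > 0$ so that the Chernoff-type bound~\eqref{decay} is available. For $t \in \{0,1\}$, the statement is vacuous under the restriction $0 < t < 1$, though it would follow just as easily from the fact that $f_0$ is the Dickman density (which has the well-known super\-exponential right tail).
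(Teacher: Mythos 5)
Your proposal is correct, and it takes a genuinely different (and shorter) route than the paper. You observe that the inequality~\eqref{decay} established in the proof of \refT{T:Exp_decay}, namely $f_t(x) < 4\,\theta^{-1} e^{2\theta} m(\theta) e^{-\theta x}$ valid uniformly for $t \in (0,1)$, $x \geq 3$, and $\theta > 0$, already has the form of a Chernoff bound with $m(\theta)$ appearing explicitly; plugging in the bound \eqref{mtbound} on $m(\theta)$ and choosing $\theta = \ln[(2+\epsilon)^{-1} x \ln x]$ exactly as in \refT{L:improved_rt_bdd_distribution} then delivers $-x\ln x - x\ln\ln x + O(x)$ after a routine accounting of the remaining $O(\ln x)$, $O(\ln\ln x)$, and $(2+\epsilon)\theta^{-1}e^\theta = O(x)$ terms, all uniformly in $t$. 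The paper instead reuses the integral equation of \refP{Integral equation}: it bounds the convolution kernel $f_{l,r}$ by $b(l,r)$ and observes that $f_{l,r}$ has support bounded above by~$2$, so the argument of the other factor is at least $x-3$; this reduces the generic case $0 < l < t < r < 1$ to the already-proved survival-function bound \refT{L:improved_rt_bdd_distribution}, while the boundary cases $l=0$ and $r=1$ require a separate treatment via the decomposition from \refL{L:bdd_rho_0} involving the density of $W = U_1(1 + U_2 V)$ and a Chernoff estimate on $\P(V > \cdot)$. Your route sidesteps the case analysis entirely because \refT{T:Exp_decay} already absorbed it; the only thing you give up is that the paper's proof makes visible how the density tail is controlled by the survival tail via the kernel's bounded support, which is perhaps of independent interest. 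Your check that $\theta(x) > 0$ for $x \geq 3$ (indeed $x\ln x > 2 + \epsilon$ already at $x=3$ for small $\epsilon$) is correct, so the stated threshold $x > 4$ is comfortably sufficient.
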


\begin{proof}
Fix $0 < t < 1$ and let $x > 4$. We first use the integral equation in~\refP{Integral equation}, namely,
\[
f_t(x) = \int{\P((L_3(t), R_3(t)) \in \ddx (l, r))} \cdot h_t(x \mid l, 
r),
\]
for $x \geq 0$, where, by a change of variables,
\begin{equation}
\label{htxlr}
h_t(x \mid l, r) = \int{f_{l,r}((r-l)(y-1)) \, f_{\frac{t - l}{r - l}}\!\left(\frac{x}{r - l} - y\right)\dd y};
\end{equation}
we consider the contribution to $f_t(x)$ from values $(l, r)$ satisfying $0 < l < t < r < 1$. 
Recall that the conditional density $f_{l, r}(z)$ vanishes if $z \geq 2$. 
 Thus the only nonzero contribution to~\eqref{htxlr} is from values of~$y$ satisfying
\[
y \leq \frac{2}{r-l} + 1.
\]
If this inequality holds, then the argument for the factor $f_{(t - l) / (r - l)}$ satisfies
\[
\frac{x}{r-l} - y \geq \frac{x-2}{r-l} - 1 \geq x - 3.
\]
Using $b(l, r)$ of \refL{L:bound} and~\eqref{E:bdd_rho_finite} to bound the $f_{l,r}$ factor, we obtain
\[
h_t(x \mid l, r) \leq b(l, r) \left(1 - F_{\frac{t-l}{r-l}} (x-3)\right).
\]
By \refT{L:improved_rt_bdd_distribution} and the last display in the proof of \refL{L:bound}, the contribution in question is thus bounded by $\exp[ - x \ln x - x \ln \ln x + O(x)]$, uniformly in~$t$, for $x > 4$.

For the contribution to $f_t(x)$ corresponding to the cases 
$0 = L_3(t) < t < R_3(t) < 1$ and $0 < L_3(t) < t < R_3(t) = 1$, we use the same idea as in the proof of~\refL{L:bdd_rho_0}. By symmetry, we need only consider the first of these two cases. Recall from the proof of~\refL{L:bdd_rho_0} that the contribution in question is bounded by the sum of $f_W(x)$, which is the density of $W = U_1 (1 + U_2 V)$ evaluated at $x$ [where $U_1$, $U_2$, and $V$ are independent, $U_1$ and $U_2$ are uniformly distributed on $(0, 1)$, and~$V$ is as in \refL{L:Stochastic_upper_bdd}], and the integral
\begin{align*}
\int_{r = 0}^1\!{r^{-1} \P\left( V > \frac{x}{r} - 1 \right) \dd r} &= 
\int_{v = x-1}^{\infty} {(v+1)^{-1} \P (V > v) \dd v}\\
&\leq x^{-1} \int_{v = x-1}^{\infty} {\P (V > v) \dd v} \\
&\leq \exp [-x \ln x - x \ln \ln x + O(x)].
\end{align*}
The last inequality here is obtained by applying a Chernoff bound and \refL{E:bdd_mgf_V} to the integrand and integrating; we omit the straightforward details.
To bound the density of $W$ at~$x$, observe that by conditioning on the values of $U_2$ and $V$, we have
\begin{align*}
f_W(x) &= \int_{u,v} (1 + u v)^{-1}\,\mathbb{1}(0 \leq x \leq 1+uv)\,\P(U_2 \in \ddx u,\,V \in \ddx v)\\
&= \int_{u = 0}^1 \int_{v = (x-1)/u}^\infty (1 + u v)^{-1}\,\P(V \in \ddx v) \dd u\\
&\leq x^{-1} \int_{u = 0}^1\!\P\!\left(V > \frac{x-1}{u} \right) \dd u\\
&\leq x^{-1} \P(V > x-1) \leq \exp [-x \ln x - x \ln \ln x + O(x)]. 
\end{align*}
This completes the proof.
\end{proof}

\section{Matching right-tail asymptotic lower bound}
\label{S:lower}
In this section we will prove for each fixed $t \in (0, 1)$ that the continuous density function $f_t$ satisfies
\[
f_t(x) \geq \exp[- x \ln x - x \ln \ln x + O(x)]\mbox{\ as $x \to \infty$},
\]
matching the upper bound
of \refT{L:improved_rt_bdd_distribution} to two logarithmic asymptotic terms, with remainder of the same order of magnitude.  While we are able to 
get a similarly matching lower bound to \refT{T:improved_rt_bdd_density} for the survival function $1 - F_t$ that is uniform in~$t$, we are unable 
to prove uniformity in~$t$ for the density lower bound.

We begin with consideration of the survival function.

\begin{theorem}
\label{T:improved_rt_bdd_distribution_lower}
Uniformly in $0 < t < 1$, the distribution function $F_t$ for $J(t)$ satisfies
\[
1 - F_t(x) \geq \exp[- x \ln x - x \ln \ln x + O(x)].
\]
\end{theorem}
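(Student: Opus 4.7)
The plan is to leverage the stochastic ordering $D \leq Z(t)$ (valid uniformly for $t \in [0, 1]$) established in \refL{L:Stochastic_lower_bdd}. This immediately gives
\[
1 - F_t(x) = \P(Z(t) > 1 + x) \geq \P(D > 1 + x)
\]
for every $t \in (0, 1)$, and since the right-hand side does not depend on~$t$, uniformity in~$t$ is automatic. The problem thereby reduces to the single tail asymptotic
\[
\P(D > y) \geq \exp[-y \ln y - y \ln \ln y + O(y)] \quad \text{as } y \to \infty
\]
for the Dickman random variable~$D$.

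For the Dickman tail I would use a Cram\'er-type exponential-tilting argument that mirrors the Chernoff upper bound underlying \refT{L:improved_rt_bdd_distribution}. The distributional identity $D \stackrel{\mathcal{L}}{=} 1 + U D'$, with $U \sim \text{Uniform}(0, 1)$ independent of $D' \stackrel{\mathcal{L}}{=} D$, implies the ODE $\theta\, m_D'(\theta) = (e^\theta - 1)\, m_D(\theta)$ for the mgf of~$D$, whose explicit solution yields $\ln m_D(\theta) = \int_0^\theta (e^s - 1)/s\, \dd s \sim e^\theta / \theta$ as $\theta \to \infty$. Choosing the saddle-point parameter $\theta_*$ satisfying $(e^{\theta_*} - 1) / \theta_* = y$ (so that $\theta_* = \ln y + \ln \ln y + O(1)$), tilting the law of~$D$ by $\theta_*$, and appealing to concentration of the tilted law about its mean~$y$ in a window of width $o(y)$, one derives
\[
\P(D > y) \geq \tfrac{1}{2}\, e^{-\theta_* (y + 1)}\, m_D(\theta_*),
\]
which upon direct evaluation gives the claimed $\exp[-y \ln y - y \ln \ln y + O(y)]$.

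The principal obstacle lies in the concentration step: since~$D$ is not a sum of independent summands, a direct central limit theorem is unavailable under the tilted law. However, the self-similar identity $D \stackrel{\mathcal{L}}{=} 1 + UD'$ passes through exponential tilting to produce a modified fixed-point equation for the tilted distribution, and a saddle-point analysis in the spirit of Hwang and Tsai~\cite{MR1918722}---who established fine asymptotics for the Dickman distribution---supplies the required window estimate $\P_{\theta_*}(y < D \leq y + 1) \geq \tfrac{1}{2}$ for sufficiently large~$y$. Alternatively, and more expediently, one may simply invoke the classical sharp right-tail asymptotic $\P(D > y) = \exp[-y \ln y - y \ln \ln y + O(y)]$ for the Dickman distribution, which directly supplies the desired lower bound.
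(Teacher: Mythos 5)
Your primary argument---invoking the stochastic domination $D \leq Z(t)$ from \refL{L:Stochastic_lower_bdd} to reduce the claim (uniformly in~$t$) to the single lower bound $\P(D > y) \geq \exp[-y\ln y - y\ln\ln y + O(y)]$, and then citing the classical Dickman tail asymptotic---is exactly the paper's proof; the paper supplies the last step via Xuan's formula for the Dickman function, crediting de Bruijn and Hua.

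Your self-contained tilting sketch is a reasonable alternative in spirit, but the stated window estimate $\P_{\theta_*}(y < D \leq y+1) \geq \tfrac{1}{2}$ is too strong. From $\ln m_D(\theta) = \int_0^\theta (e^s - 1)/s \,\dd s$, the variance of~$D$ under the $\theta_*$-tilt is $(\ln m_D)''(\theta_*) \approx e^{\theta_*}/\theta_* \approx y$, so the standard deviation is of order $\sqrt{y}$ and a unit-length window around the tilted mean should capture only $\Theta(y^{-1/2})$ of the mass (assuming a local-limit-type bound), not a constant. The fix is to take a window of width $\Theta(\sqrt{y})$, which costs an extra $\theta_*\sqrt{y} = O(\sqrt{y}\,\ln y) = o(y)$ in the exponent and so does not disturb the final $O(x)$ error term; alternatively, a lower bound $\P_{\theta_*}(y < D \leq y+1) = \Omega(y^{-1/2})$ would also suffice since $\ln y = o(y)$. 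As written, though, the concentration claim would need to be repaired, and proving it rigorously for the tilted Dickman law (which is not a sum of i.i.d.\ summands) is itself nontrivial---which is presumably why the paper, like your fallback, simply cites the known sharp Dickman asymptotics.
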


\begin{proof}
With~$D$ denoting a random variable having the Dickman distribution with support $[1, \infty)$, for  
any $0 < t < 1$ we have from \refL{L:Stochastic_lower_bdd} that
\begin{align*}
1 - F_t(x) 
&= \P(J(t) > x) = \P(Z(t) > x + 1) \geq \P(D > x + 1) \\ 
&= \exp[- x \ln x - x \ln \ln x + O(x)]\mbox{\ as $x \to \infty$}.
\end{align*}
The asymptotic lower bound here follows by substitution of $x + 1$ for~$u$ in equation~(1.6) (for the unnormalized Dickman function) 
of Xuan~\cite{MR1245402}, who credits earlier work of de Bruijn~\cite{MR43838} and of Hua~\cite{Hua1951integral}.   
\end{proof}

Now we turn our attention to the densities.

\begin{theorem}
\label{T:improved_rt_bdd_density_lower}
For each fixed $t \in (0, 1)$ we have
\[
f_t(x) \geq \exp[- x \ln x - x \ln \ln x + O(x)]\mbox{\ as $x \to \infty$}.
\] 
\end{theorem}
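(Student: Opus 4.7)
My plan is to derive the density lower bound by combining the integral equation of \refP{P:fint} with the uniform survival lower bound from \refT{T:improved_rt_bdd_distribution_lower}. Starting from $f_t(x) \ge \int_t^1 r^{-1} f_{t/r}(x/r-1)\,dr$, the substitution $r = 1/(1+u)$ yields $f_t(x) \ge \int_0^{(1-t)/t}(1+u)^{-1} f_{t(1+u)}(x(1+u)-1)\,du$. Restricting to $u \in (0,\epsilon_0)$ with $t(1+\epsilon_0) < 1$ and changing variables to $w = x(1+u) - 1$ recasts this as
$$f_t(x) \ge \int_{x-1}^{x(1+\epsilon_0)-1}\frac{f_{t(w+1)/x}(w)}{w+1}\,dw \ge \frac{1}{x(1+\epsilon_0)}\int_{x-1}^{x(1+\epsilon_0)-1}f_{s(w)}(w)\,dw,$$
where $s(w) = t(w+1)/x$ remains in the compact subinterval $[t, t + t \epsilon_0] \subset (0,1)$.

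If I were instead integrating $f_t(w)$ with the \emph{fixed} parameter $t$ over this window, the integral would equal $F_t(x(1+\epsilon_0)-1) - F_t(x-1) = [1 - F_t(x-1)] - [1 - F_t(x(1+\epsilon_0)-1)]$; the second survival is negligible compared with the first because \refL{L:improved_rt_bdd_distribution} makes $1 - F_t$ shrink superexponentially on scale $\gg 1$. Invoking \refT{T:improved_rt_bdd_distribution_lower}, the difference is then at least $\tfrac{1}{2}\exp[-x\ln x - x\ln\ln x + O(x)]$ for large~$x$. Combined with the $\frac{1}{x(1+\epsilon_0)}$ prefactor---which contributes only $-\ln x = O(x)$ in the exponent---this would complete the proof.

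The obstacle is that the integrand carries a \emph{varying} parameter $s(w)$, not the fixed~$t$. To transfer the CDF-difference bound at $s^\ast = t$ to the varying-parameter integral, I would partition the range of~$w$ into unit-length blocks, on each of which the drift satisfies $|s(w) - t| \le t/x$ (or $t\epsilon_0$ globally), and establish on each block a \emph{multiplicative} comparison of the form $f_s(w) \ge (1 - \eta(x))f_t(w)$ valid uniformly for $s\in[t, t + t\epsilon_0]$, with $\eta(x) \to 0$. Such a comparison would follow either from the uniform convergence in \refL{L:density_converge} sharpened by a tail-ratio argument, or by differentiating \refP{P:fint} in~$s$ and controlling $\partial_s f_s(w)$ in the right tail using the equicontinuity of \refC{C:equi_continuous}.

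The hardest step is securing this multiplicative comparison, because the additive uniform closeness $\|f_s - f_t\|_\infty \to 0$ is useless when the densities in question are exponentially small. A backup route that sidesteps the parameter-drift difficulty entirely is first to show that $f_t$ is eventually nonincreasing (by analyzing $f_t'$ via differentiation of \refP{P:fint} and the superexponential tail bound of \refT{T:Exp_decay}); monotonicity would give $f_t(x) \ge [1 - F_t(x-1)] - [1 - F_t(x)] \ge \tfrac{1}{2}[1 - F_t(x-1)]$ for all $x$ sufficiently large, whereupon \refT{T:improved_rt_bdd_distribution_lower} immediately yields the desired bound.
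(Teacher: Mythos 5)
Your reduction to a survival-function increment is the right instinct, but the proof as proposed has a genuine gap at exactly the step you flag, and neither of your two ways of filling it works. Because you start from the one-step integral equation of \refP{P:fint}, your lower bound is a \emph{diagonal} integral $\int_{x-1}^{x(1+\epsilon_0)-1} (w+1)^{-1} f_{s(w)}(w)\,\ddx w$ in which the parameter $s(w)=t(w+1)/x$ is a deterministic function of the argument~$w$; such an integral cannot be rewritten as a difference of survival functions of any single fixed law, so you are forced into the pointwise multiplicative comparison $f_s(w)\ge(1-\eta(x))f_t(w)$. That comparison is not available: as you yourself observe, \refL{L:density_converge} and \refC{C:equi_continuous} give only \emph{additive} control, which is vacuous where the densities are of size $e^{-x\ln x}$, and differentiating \refP{P:fint} in the parameter just expresses $\partial_s f_s$ in terms of other densities without yielding a ratio bound. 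The backup route is also not viable: eventual monotonicity of $f_t$ is nowhere established (the paper only \emph{conjectures} unimodality), and the inequality you write has the direction reversed --- if $f_t$ is nonincreasing then $f_t(x)\le F_t(x)-F_t(x-1)$, and the usable version $f_t(x)\ge[1-F_t(x)]-[1-F_t(x+1)]$ still cannot be closed with unit windows, since the uncontrolled $O(x)$ terms in the matching upper and lower tail bounds do not force $1-F_t(x+1)\le\tfrac12[1-F_t(x)]$ (one would need a window of length $\Theta(\epsilon x)$).

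The paper avoids the parameter-drift problem by a structural change you are missing: it conditions on \emph{two} splits rather than one (the computation at the start of \refL{L:RHderiv}), which produces a genuinely two-dimensional representation $f_t(t+tz)\ge z\iint(\cdots)\,f_w(s)\,\ddx s\,\ddx w$ in which, after changes of variables, the parameter~$w$ and the argument~$s$ range over an honest product region $[t,(1+\delta)t]\times[(1+\delta)tz,2tz]$. The inner integral is then exactly $\P(J(w)>(1+\delta)tz)-\P(J(w)>2tz)$, and the uniform stochastic sandwich $D-1\le J(w)\le V-1$ of \refL{L:Stochastic_lower_bdd} replaces this by $\P(D-1>(1+\delta)tz)-\P(V>2tz)$, which is independent of~$w$; taking $\delta=z^{-1}$ and using the Dickman tail asymptotics finishes the proof. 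In short, the missing ideas are (i) iterating the recursion once more so that parameter and argument decouple, and (ii) using stochastic domination in \emph{both} directions to make the survival-function difference uniform in the parameter, rather than attempting a density-level comparison between nearby parameters.
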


\begin{proof}
From the calculations at the beginning of the proof of \refL{L:RHderiv}, for all $z > 0$ we have
\[
f_t(t + z) 
\geq
z \int\!\!\!\int_{\substack{v > u > 0:\,u+ v < 1, \\ u < (1 - t) / z,\ v - u < t / z}}\!
(t + u z)^{-1}\,v^{-1}\,f_{1 - \frac{u}{v}}\left( \frac{1 - u - v}{v} \right) \dd u \dd v.
\]
Thus, changing variables from~$u$ to $w = 1 - (u / v)$, we have
\[
f_t(t + t z) 
\geq
z \int_0^1\!\int_0^{\Upsilon(t, z, w)}\![1 + v (1 - w) z]^{-1}\,f_w\left( 
v^{-1} + w - 2 \right) \dd v \dd w,
\]
where $\Upsilon(t, z, w) := {\min\{(2 - w)^{-1},\ (1 - t) (t z)^{-1} (1 
- w)^{-1},\ z^{-1} w^{-1}\}}$.
Now let
\[ 
\Lambda(t, z, w) := \max\{0, t (1 - t)^{-1} z (1 - w) + w - 2, z w + w - 2\}
\]
and change variables from~$v$ to $s = v^{-1} + w - 2$ to find
\begin{align*}
\lefteqn{f_t(t + t z)} \\ 
&\geq
z \int_0^1\!\int_{\Lambda(t, z, w)}^{\infty}\![1 + (2 - w + s)^{-1} (1 - w) z]^{-1}\,(2 - w + s)^{-2} f_w(s) \dd s \dd w.
\end{align*}

Observe that if $\delta > 0$ and $t \leq w \leq (1 + \delta) t \leq 1$, then
\[
\Lambda(t, z, w) < (1 + \delta) t z
\]
and so
\begin{align*}
\lefteqn{f_t(t + t z)} \\ 
&\geq
z \int_t^{(1 + \delta) t}\!\int_{(1 + \delta) t z}^{\infty}\![1 + (2 - w + s)^{-1} (1 - w) z]^{-1}\,(2 - w + s)^{-2} f_w(s) \dd s \dd w.
\end{align*}
If $\delta \leq 1$, it follows 
that
\begin{align*}
\lefteqn{f_t(t + t z)} \\ 
&\geq
z \int_t^{(1 + \delta) t}\!\int_{(1 + \delta) t z}^{2 t z} 
[1 + (2 - w + s)^{-1} (1 - w) z]^{-1}\,(2 - w + s)^{-2} f_w(s) \dd s \dd w \\
&\geq \frac{z}{(2 + 2 t z)^2} \int_t^{(1 + \delta) t}\!\int_{(1 + \delta) 
t z}^{2  t z} 
\frac{1}{1 + (2 - w + (1 + \delta) t z)^{-1} (1 - w) z}\,f_w(s) \dd s \dd 
w \\
&\geq \frac{z}{(2 + 2 t z)^2} \int_t^{(1 + \delta) t}\!\int_{(1 + \delta) 
t z}^{2 t z}
\left[ 1 + \frac{1 - w}{(1 + \delta) t} \right]^{-1}\,f_w(s) \dd s \dd w \\
&\geq \frac{z}{(2 + 2 t z)^2} \frac{(1 + \delta) t}{1 + \delta t} \int_t^{(1 + \delta) t}\!\int_{(1 + \delta) t z}^{2 t z} 
f_w(s) \dd s \dd w \\
&\geq \frac{t z}{(2 + 2 t z)^2} \int_t^{(1 + \delta) t}\!\int_{(1 + \delta) t z}^{2 t z} f_w(s) \dd s \dd w \\
&= \frac{t z}{(2 + 2 t z)^2} \int_t^{(1 + \delta) t}\big[ \P(J(w) > (1 + \delta) t z) - \P(J(w) > 2 t z) \big] \dd w.
\end{align*}

Recall that~$D$ defined in~\refL{L:Stochastic_lower_bdd} is a random variable having the Dickman distribution on 
$[1, \infty)$ and that~$V$ is defined in~\eqref{E:V}. By~\refL{L:Stochastic_lower_bdd}, we have $D - 1 \leq J(w) \leq V - 1$ stochastically, and thus we can further lower-bound the density function as follows:
\begin{align*}
f_t(t + t z) &\geq 
\frac{t z}{(2 + 2 t z)^2} \int_t^{(1 + \delta) t}\big[ \P(D - 1 > (1 + \delta) t z) - \P(V > 2 t z) \big] \dd w \\
&= \delta t \frac{t z}{(2 + 2 t z)^2} \big[ \P(D - 1 > (1 + \delta) t z) - \P(V > 2 t z) \big].
\end{align*}
That is, if $0 < \delta \leq \min\{1, t^{-1} - 1\}$, then for every $z > 0$ we 
have
\[
f_t(t + z) \geq \delta t \frac{z}{(2 + 2 z)^2} \big[ \P(D - 1 > (1 + \delta) z) - \P(V > 2 z) \big].
\]

If  $z \geq \max\{1, t / (1 - t)\}$, then we can choose $\delta \equiv \delta_z = z^{-1}$ and conclude
\[ 
f_t(t + z) \geq t (2 + 2 z)^{-2} \big[ \P(D - 1 > z + 1) - \P(V > 2 z) \big].
\]
Moreover, as $z \to \infty$, we have
\[
(2 + 2 z)^{-2} \big[\P(D - 1 > z + 1) - \P(V > 2 z) \big] = \exp[- z \ln z - z \ln \ln z + O(z)]. 
\]
The stated result follows readily.
\end{proof}

\begin{remark}
The proof of \refT{T:improved_rt_bdd_distribution_lower} reveals that the 
result in fact holds uniformly for~$t$ in any closed subinterval of $(0, 1)$.  In fact, the proof shows that the result follows uniformly in $t \in (0, 1)$ and $x \to \infty$ satisfying
$x = \Omega(\ln[1 / \min\{t, 1 - t\}])$.
\end{remark}

\section{Right-tail large deviation behavior of {\tt QuickQuant}$(n, t)$}
\label{S:Large_deviation}

In 
this section, we investigate the right-tail large deviation behavior of {\tt QuickQuant}$(n, t)$, that is, of 
{\tt QuickSelect}$(n,m_n(t))$. Throughout this section, for each fixed $0 
\leq t \leq 1$ we consider any sequence $1 \leq m_n(t) \leq n$ such that $m_n(t)/n \to t$ as $n \to \infty$. We abbreviate the normalized number of key comparisons of {\tt QuickSelect}$(n,m_n(t))$ discussed in ~\refS{S:intro} as $C_n(t) := n^{-1} C_{n, m_n(t)}$. 

Kodaj and M\'ori~\cite[Corollary 3.1]{MR1454110} bound the convergence rate of $C_n(t)$ to its limit $Z(t)$ in the Wasserstein $d_1$-metric, showing that the distance is $O (\delta_{n, t} \log (\delta_{n, t}^{-1}))$, where 
$\delta_{n, t} = | n^{-1} m_n(t) - t | + n^{-1}$. Using their result, we bound the convergence rate in Kolmogorov--Smirnov distance in the following lemma
.
\begin{lemma}
\label{L:KS_distance}
Let $d_{\rm KS}( \cdot , \cdot )$ be Kolmogorov--Smirnov {\rm (KS)} distance. Then
\begin{equation}
\label{E:KS_distance}
d_{\rm KS}(C_n(t), Z(t)) = \exp \left[-\frac{1}{2} \ln \frac{1}{\delta_{n, t}} + \frac{1}{2} \ln \ln \frac{1}{\delta_{n, t}} + O(1)\right].
\end{equation}
\end{lemma}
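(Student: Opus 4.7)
The plan is to combine the uniform density bound on $Z(t)$ with the Kodaj--M\'ori Wasserstein estimate. The key ingredient is the classical inequality
\[
d_{\rm KS}(X,Y)^2 \leq 2 \|f_Y\|_{\infty}\, d_1(X,Y),
\]
valid for any random variable~$X$ whenever $Y$ has bounded density~$f_Y$. The proof is short: if $|F_X - F_Y|$ attains its maximum value $\epsilon$ at some $x_0$, then the Lipschitz continuity of $F_Y$ with constant $\|f_Y\|_\infty$ forces $|F_X - F_Y| \geq \epsilon/2$ throughout an interval of length $\epsilon/(2\|f_Y\|_\infty)$, contributing $\epsilon^2/(4\|f_Y\|_\infty)$ to $d_1$. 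Applying this with $\|f_t\|_\infty \leq 10$ (\refT{T:bdd}) and the Kodaj--M\'ori bound $d_1(C_n(t), Z(t)) = O(\delta_{n,t}\ln\delta_{n,t}^{-1})$ yields the upper half of~\eqref{E:KS_distance} after taking logarithms.

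\textbf{Lower bound, first step.} Since the Kodaj--M\'ori coupling realizes $C_n(t) \leq Z(t)$ pointwise, we have $d_1 = \E Z(t) - \E C_n(t)$. Combining Knuth's formula~\eqref{E:exp_select} with the identity $\E Z(t) = 2 + 2H(t)$ (from~\eqref{EZ} and the $L^1$-convergence of Fill--Nakama) and the expansion $H_n = \ln n + \gamma + O(1/n)$, one obtains
\[
d_1 = 2[H(t) - H(m_n(t)/n)] + 8\,\frac{\ln n}{n} + O(n^{-1}),
\]
which delivers the matching lower bound $d_1 \geq c\,\delta_{n,t}\ln\delta_{n,t}^{-1}$ in the generic regime $\delta_{n,t} \asymp 1/n$ and, more generally, whenever the $(\ln n)/n$ contribution is not swamped.

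\textbf{Lower bound, second step, and main obstacle.} The harder step is to promote the $d_1$ lower bound to $d_{\rm KS} \gtrsim \sqrt{d_1}$, thereby matching the square-root form in~\eqref{E:KS_distance}. The plan is a central-limit-type analysis of the coupling: write
\[
Z(t) - C_n(t) = \sum_{k \geq 1} \bigsqpar{\Delta_k(t) - n^{-1} S_{n,k}(t)},
\]
and, conditionally on the continuous interval process $(\Delta_k(t))$, estimate each summand as a binomial fluctuation of order $\sqrt{\Delta_k(t)/n}$. Summing across the recursion levels~$k$ should yield $\Var[Z(t) - C_n(t)] \asymp d_1$: the coupling error is genuinely spread out with fluctuation scale $\sqrt{d_1}$, rather than a nearly deterministic shift of size $d_1$. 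A smoothing argument using the uniform continuity and positivity of $f_t$ (\refT{T:uniform_cont} and \refT{T:pos}) then produces a point~$x$ at which $|F_{C_n}(x) - F_Z(x)| \gtrsim \sqrt{d_1}$, finishing the lower bound. The principal obstacle is bounding $\Var[Z(t) - C_n(t)]$ from below: upper-bounding it is routine from the conditional binomial structure, but a matching lower bound requires careful use of the (conditional) independence of pivot ranks across levels in {\tt QuickSelect}, and the subsequent smoothing-to-KS step needs the uniform equicontinuity of $\{f_t\}$ on compact subintervals (\refC{C:equi_continuous}) to be made quantitative.
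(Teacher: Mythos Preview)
Your upper-bound argument is correct and is exactly the paper's approach: apply the Fill--Janson inequality $d_{\rm KS}^2 \leq C\|f\|_\infty\, d_1$ together with the uniform density bound $\|f_t\|_\infty \leq 10$ from \refT{T:bdd} and the Kodaj--M\'ori Wasserstein estimate $d_1 = O(\delta_{n,t}\ln\delta_{n,t}^{-1})$. That is the entire content of the paper's proof.

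The lower-bound portion of your proposal is unnecessary. Despite the equality sign in~\eqref{E:KS_distance}, the paper proves and uses this lemma only as an upper bound: its proof establishes just $d_{\rm KS} = O\bigl([\delta_{n,t}\ln\delta_{n,t}^{-1}]^{1/2}\bigr)$, and the sole application (in the proof of \refT{T:Large_deviation}) requires only that upper bound. So the $O(1)$ in the exponent is being read one-sidedly, not as a two-sided $\Theta$-statement. Your interpretation is the natural one given the notation, but it is stronger than what is actually claimed or needed. Moreover, your lower-bound sketch has real gaps you yourself identify: the conditional-variance lower bound for $Z(t)-C_n(t)$ and the passage from a fluctuation scale to a pointwise KS lower bound are plans, not proofs, and neither step follows routinely from the results available in the paper.
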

\begin{proof}
The lemma is an immediate consequence of Fill and Janson~\cite[Lemma 5.1]{MR1932675}, since the random variable $Z(t)$ has a density function bounded by~$10$, according to~\refT{T:bdd}. Indeed, by that result we have
\[
d_{\rm KS}(C_n(t), Z(t)) \leq 2^{1/2} [10 \, d_1(C_n(t), Z(t))]^{1/2} = 
O([\delta_{n, t} \log (\delta_{n, t}^{-1})]^{1/2}).
\]
\end{proof}

Using the right-tail asymptotic bounds on the limiting {\tt QuickQuant}$(t)$ distribution function $F_t$ in Theorems~\ref{L:improved_rt_bdd_distribution} and~\ref{T:improved_rt_bdd_distribution_lower} (which extend to $t \in \{0, 1\}$ by known results about the Dickman distribution), we can now derive the right-tail large-deviation behavior of 
$C_n(t)$.

\begin{theorem}
\label{T:Large_deviation}
Fix $t \in [0, 1]$ and abbreviate $\delta_{n, t}$ as $\delta_n$.  Let $(\omega_n)$ be any sequence diverging to $+\infty$ as $n \to \infty$ and let $c > 1$. For integer $n \geq 3$, consider the interval 
\[
I_n := \left[c, \frac{1}{2} \frac{\ln \delta_n^{-1}}{\ln \ln \delta_n^{-1}} \left(1-\frac{\omega_n}{\ln \ln \delta_n^{-1}}\right)\right].
\]
\vspace{.01in}
\par\noindent
{\rm (a)}~Uniformly for $x \in I_n$ we have
\begin{equation}
\label{E:LD1}
\P(C_n(t) > x) = (1 + o(1)) \mathbb{P}(Z(t) > x) \quad \mbox{as $n \to \infty$}.
\end{equation}
{\rm (b)}~If $x_n \in I_n$ for all large~$n$, then
\begin{equation}
\label{E:LD2} 
\mathbb{P}(C_n(t) > x_n) = \exp[- x_n \ln x_n - x_n \ln \ln x_n + O(x_n)].
\end{equation}
\end{theorem}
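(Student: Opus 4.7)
The plan is to reduce both parts to the Kolmogorov--Smirnov estimate in \refL{L:KS_distance} paired with the sharp two-sided right-tail bounds on $\P(Z(t) > x)$ from Theorems~\ref{L:improved_rt_bdd_distribution} and~\ref{T:improved_rt_bdd_distribution_lower}.  Abbreviate $L := \ln\delta_n^{-1}$ and $M := \ln L$.  Since $|\P(C_n(t) > x) - \P(Z(t) > x)| \leq d_{\rm KS}(C_n(t), Z(t)) =: \epsilon_n = \exp[-L/2 + M/2 + O(1)]$ for every $x$, part~(a) is reduced to showing $\epsilon_n = o(\P(Z(t) > x))$ uniformly for $x \in I_n$.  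From \refT{T:improved_rt_bdd_distribution_lower} we have $\P(Z(t) > x) \geq \exp[-x\ln x - x\ln\ln x - K x]$ for some constant $K$, so it is enough to prove that $\tfrac{L}{2} - \tfrac{M}{2} - x\ln x - x\ln\ln x - K x \to +\infty$ uniformly for $x \in I_n$.

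For the key computation, use $x \leq \tfrac{L}{2M}(1 - \omega_n/M)$ together with the inequalities $\ln x \leq M - \ln M - \omega_n/M$ and $\ln\ln x \leq \ln M$.  Multiplying out yields
\[
x\ln x \;\leq\; \tfrac{L}{2} - \tfrac{L\omega_n}{2M} - \tfrac{L\ln M}{2M} + o\!\left(\tfrac{L\omega_n}{M}\right) + O\!\left(\tfrac{L}{M}\right) \qquad \text{and} \qquad x\ln\ln x \;\leq\; \tfrac{L\ln M}{2M},
\]
so the $\pm\tfrac{L\ln M}{2M}$ contributions cancel and
\[
x\ln x + x\ln\ln x \;\leq\; \tfrac{L}{2} - \tfrac{L\omega_n}{2M} + o\!\left(\tfrac{L\omega_n}{M}\right) + O\!\left(\tfrac{L}{M}\right).
\]
Since also $K x = O(L/M)$, we obtain
\[
\tfrac{L}{2} - \tfrac{M}{2} - x\ln x - x\ln\ln x - K x \;\geq\; \tfrac{L\omega_n}{2M}(1 - o(1)) - \tfrac{M}{2} - O\!\left(\tfrac{L}{M}\right),
\]
which diverges to $+\infty$ because $L/M^2 \to \infty$ forces $\tfrac{L\omega_n}{2M}$ to beat $M/2$, and $\omega_n \to \infty$ forces it to beat $L/M$.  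This proves~(a).

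Part~(b) follows quickly:\ Theorems~\ref{L:improved_rt_bdd_distribution} and~\ref{T:improved_rt_bdd_distribution_lower} applied at $x = x_n$ (together with a straightforward passage from $1 - F_t(x_n - 1)$ to $\P(Z(t) > x_n)$) yield $\P(Z(t) > x_n) = \exp[-x_n\ln x_n - x_n\ln\ln x_n + O(x_n)]$, and the factor $1+o(1)$ from part~(a) contributes only $o(1) = o(x_n)$ to the logarithm, since $x_n \geq c > 1$.  The chief obstacle in the argument is the precise cancellation of the two $\tfrac{L\ln M}{2M}$ terms:\ without it, a stray error of order $L\ln M/M$ would swamp the correction $L\omega_n/M$ whenever $\omega_n = o(\ln M)$, so the definition of $I_n$ with merely the safety factor $1 - \omega_n/M$ would be inadequate.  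This cancellation, which traces back to the opposite signs of $\ln M$ appearing in $\ln x$ and in $\ln\ln x$, is what renders the stated range of~$x$ both natural and essentially sharp at the leading orders.
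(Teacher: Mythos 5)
Your proposal is correct and follows the same route as the paper: reduce part~(a) to showing that the Kolmogorov--Smirnov bound of \refL{L:KS_distance} is $o(\P(Z(t)>x))$ uniformly on $I_n$ via the tail lower bound of \refT{T:improved_rt_bdd_distribution_lower}, then deduce part~(b) from part~(a) together with Theorems~\ref{L:improved_rt_bdd_distribution} and~\ref{T:improved_rt_bdd_distribution_lower}. The only difference is that you carry out explicitly the asymptotic computation (including the cancellation of the $\tfrac{L\ln M}{2M}$ terms) that the paper dismisses as ``routine'' by deferring to the analogous argument in Fill and Hung.
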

\begin{proof}
The proof is similar to that of Fill and Hung~\cite[Theorem~3.3]{MR3909444} or its improvement in~\cite[Theorem 3.3]{MR3978217}. We prove part~(a) 
first. By~\refL{L:KS_distance}, it suffices to show that
\[
\exp \left[-\frac{1}{2} \ln \frac{1}{\delta_n} + \frac{1}{2} \ln \ln \frac{1}{\delta_n} + O(1)\right] \leq o(\P (Z(t) > x_n))
\]
with $x_n \equiv \frac{1}{2} \frac{\ln \delta_n^{-1}}{\ln \ln \delta_n^{-1}} \left(1-\frac{\omega_n}{\ln \ln \delta_n^{-1}}\right)$ and $\omega_n = o(\ln \ln \delta_n^{-1})$. Since, by~\refT{T:improved_rt_bdd_distribution_lower}, we have
\[
\P(Z(t) > x_n) \geq \exp [-x_n \ln x_n - x_n \ln \ln x_n +O(x_n)],
\]
it suffice to show that for any constant $C < \infty$ we 
have
\begin{equation}
\label{minus_infinity}
-\frac{1}{2} \ln \frac{1}{\delta_n} + \frac{1}{2} \ln \ln \frac{1}{\delta_n} + C + x_n \ln x_n + x_n \ln \ln x_n + C x_n \to -\infty.
\end{equation}
This is routine and similar to what is done in \cite[proof of Theorem~3.3]{MR3909444}.
This completes the proof of part~(a).

Part~(b) is immediate from part~(a) and Theorems~\ref{L:improved_rt_bdd_distribution} and~\ref{T:improved_rt_bdd_distribution_lower}.
\end{proof}

\begin{remark}
Consider the particular choice $m_n(t) = \lfloor nt \rfloor + 1$ (for $t \in [0, 1)$, with $m_n(1) = n$) of the sequences 
$(m_n(t))$.  That is, suppose that $C_n(t) = X_n(t)$ as defined in~\eqref{E:2}.  In this case, large-deviation upper bounds based on tail estimates of the limiting $F_t$ have broader applicability than as described in 
\refT{T:Large_deviation} and are easier to derive, too.  The reason is that, by Kodaj and M\'ori~\cite[Lemma 2.4]{MR1454110}, the random variable $X_n(t)$ is stochastically dominated by its continuous counterpart $Z(t)$. Then, by \refT{T:improved_rt_bdd_distribution_lower}, uniformly in $t 
\in [0, 1]$, we have
\begin{equation}
\label{VLD}
\P(X_n(t) > x) \leq \P(Z(t) > x) \leq \exp [-x \ln x - x \ln \ln x + O(x)] 
\end{equation}
for $x > 1$; there is \emph{no restriction at all} on how large $x$ can be in terms of~$n$ or~$t$.

Here is an example of a \emph{very} large value of~$x$ for which the tail probability is nonzero and the aforementioned bound still matches logarithmic 
asymptotics to lead order of magnitude, albeit not to lead-order term.   
The largest possible value for the number $C_{n, m}$ of comparisons needed by {\tt QuickSelect}$(n, m)$ is ${n \choose 2}$, corresponding in the natural coupling to any permutation of the~$n$ keys for which the $m - 1$ keys smaller than the target key appear in increasing order, the $n - m$ keys larger than the target key appear in decreasing order, and the target key appears last; 
thus
\[
\P\left( C_{n, m} = {n \choose 2} \right)
= \frac{1}{n!} {n - 1 \choose m - 1},
\] 
which lies between $1 / n!$ and ${n - 1 \choose \lceil (n - 1) / 2 \rceil} / n! \sim 2^{ n - (1 / 2)} / (n! \sqrt{\pi n})$.  We conclude that for $x_n = (n - 1) / 2$ we have, uniformly in $t \in [0, 1]$, that
\[  
P\left( X_n(t) \geq x_n \right) = \P\left( X_n(t) = x_n \right) = \exp[- 2 x_n \ln x_n + O(x_n)]. 
\] 
The bound~\eqref{VLD} on $\mathbb{P}(X_n(t) > x)$ is in fact also (by the 
same proof) a bound on the larger probability 
$\mathbb{P}(X_n(t) \geq x)$, and in this case implies
\[  
P\left( X_n(t) \geq x_n \right) = \exp[- x_n \ln x_n + O(x_n \log \log x_n)]. 
\]
The bound~\eqref{VLD} is thus loose only by an asymptotic factor of~$2$ in the logarithm of the tail probability. 
\end{remark}

\begin{remark}
(a)~We can use another result of Kodaj and M\'ori, namely, \cite[Lemma 3.2]{MR1454110}, in similar fashion to quantify the Kolmogorov--Smirnov continuity of the process~$Z$ discussed in~\refR{R:KS_conti}. 
Let  
$0 \leq t < u \leq 1/2$ and $\delta = u - t$. Then the lemma asserts
\[
d_1(Z(t), Z(u)) < 4 \delta (1 + 2 \log \delta^{-1}).
\]
It follows using Fill and Janson~\cite[Lemma 5.1]{MR1932675} that
\[
d_{\rm KS}(Z(t), Z(u)) \leq O((\delta \log \delta^{-1})^{1/2}) = \exp \left[-\tfrac{1}{2} \ln \delta^{-1} + \tfrac{1}{2} \ln \ln \delta^{-1} + O(1) \right],
\]
uniformly for $|u - t| \leq \delta$, as $\delta \downarrow 0$.  We thus have \emph{uniform} Kolmogorov--Smirnov continuity of~$Z$.  

(b)~Kodaj and M\'{o}ri~\cite{MR1454110}
did not consider a lower bound on either of the distances in~(a), but we can rather easily obtain a lower bound on the KS distance that is of order $\delta^2$ uniformly for $t$ and~$u$ satisfying $0 < t < t + \delta = 
u \leq \min\{1 / 2, 2 t\}$.

Indeed, for such~$t$ and~$u$ we have $\P(J(u) \leq u) = 0$ and, by \refT{T:left} (since $t \leq u \leq 1 / 2 \leq \min\{1 - t, 2 t\}$, as required by the hypotheses of the theorem) and in the notation of that theorem,
\begin{align*}
\P(J(t) \leq u) 
&= \int_t^u\!f_t(x) \dd x 
= t \int_0^{(u / t) - 1} \sum_{k = 1}^{\infty} (-1)^{k - 1} c_k z^k\,\dd z \\
&\geq t \int_0^{(u / t) - 1} (c_1 z - c_2 z^2)\,\dd z \\ 
&= t \left[ \frac{1}{2} c_1 \left( \frac{u}{t} - 1 \right)^2 - \frac{1}{3} c_2 \left( \frac{u}{t} - 1 \right)^3 \right] \\
&\geq \frac{1}{3} c_1 t \left( \frac{u}{t} - 1 \right)^2 > \frac{1}{150} (u - t)^2 = \frac{1}{150} \delta^2,
\end{align*}
where the penultimate inequality holds because $\frac{u}{t} - 1 = \frac{\delta}{t} < 1$ and $0 < c_2 \leq \frac{1}{2} c_1$.

(c)~The lower bound in~(b) can be improved to order~$\delta$ when $t = 0$.  Then for every $u \in [0, 1]$ we have 
$\P(J(0) \leq u) = e^{-\gamma} u$, and so for $u \in [0, 1 / 2]$ we have
\[
d_{\rm KS}(Z(0), Z(u)) \geq e^{- \gamma} u.  
\]
\end{remark}

\begin{ack}
We thank Svante Janson for helpful comments on a draft of this paper; they led to significant improvements.
\end{ack}

\appendix

\section{Proof of \refL{L:Lipschitz_2}}

In the proof of \refL{L:Lipschitz_2}, we show only that the function $c^{(3)}$ is Lipschitz continuous. In this Appendix, we show by similar arguments that $c^{(i)}$ is Lipschitz continuous for $i = 1, 2, 4, 5, 6$. Let $x, z \in \bbR$ with $z > x$ and consider $(l,r)$ satisfying $0 < l < t < r < 1$.
\begin{lemma}
\label{L:c_1_Lipschitz}
The function $c^{(1)}$ is Lipschitz continuous.
\end{lemma}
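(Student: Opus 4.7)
The plan is to mimic the strategy used for $c^{(3)}$ in the proof of Lemma~\ref{L:Lipschitz_2}, adapted to the explicit form
\[
f^{(1)}_{l,r}(x) = \mathbb{1}(2-2l \le x < 2-l)\,\frac{1}{(1-l)(x-1+l)},
\]
which is supported on a single interval of length $l$ and, on that interval, bounded above by $1/(1-l)^2$ and differentiable with derivative bounded in absolute value by $1/(1-l)^3$. Fix $z > x$ and write $d_{l,r}(x,z,y) := f^{(1)}_{l,r}(z-y) - f^{(1)}_{l,r}(x-y)$. Using the density form of $\P(Y\in dy\mid (L_3,R_3)=(l,r))$ I will bound
\[
|c^{(1)}(z) - c^{(1)}(x)| \le \int_{l,r,y} |d_{l,r}(x,z,y)|\,(r-l)^{-1}\,f_{\frac{t-l}{r-l}}\!\left(\tfrac{y}{r-l} - 1\right)\,dy\,dl\,dr
\]
by a constant (depending on $t$) times $z-x$. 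As in the $c^{(3)}$ argument, I split into Case~1, $z-x \le l$, and Case~2, $z-x > l$.

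In Case~1 the four breakpoints $x-(2-l) < z-(2-l) < x-(2-2l) < z-(2-2l)$ partition the $y$-axis into three nontrivial subintervals. On each of the two outer subintervals (each of length $z-x$), exactly one of $f^{(1)}_{l,r}(x-y)$ and $f^{(1)}_{l,r}(z-y)$ is nonzero, so I use the crude bound $|d_{l,r}| \le 1/(1-l)^2$; combined with the conditional-density bound $f_{(t-l)/(r-l)} \le 10$ from \refT{T:bdd}, the $y$-integral contributes at most $10(z-x)/[(1-l)^2(r-l)]$, which integrates over $(l,r)\in (0,t)\times(t,1)$ to a $t$-dependent constant times $z-x$ (the $(r-l)^{-1}$ factor produces only the logarithmic divergence $\int_0^t \ln[(1-l)/(t-l)]\,dl$, finite for $t\in(0,1)$). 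On the middle subinterval, both terms contribute and I write
\[
d_{l,r}(x,z,y) = \frac{1}{1-l}\left[\frac{1}{z-y-1+l} - \frac{1}{x-y-1+l}\right];
\]
since $x-y-1+l \ge 1-l$ throughout this region, the mean value theorem yields $|d_{l,r}| \le (z-x)/(1-l)^3$. This bound is independent of $y$, so after integrating against the conditional density of $Y$ (which has total mass one) and then against $dl\,dr$, I obtain a contribution at most $(z-x)/(1-t)^3$ times the measure of the $(l,r)$-region.

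In Case~2 the two $y$-supports of $f^{(1)}_{l,r}(z-\cdot)$ and $f^{(1)}_{l,r}(x-\cdot)$ may be disjoint, so I use $|d_{l,r}| \le f^{(1)}_{l,r}(z-y) + f^{(1)}_{l,r}(x-y)$. Each term, integrated in $y$ against $(r-l)^{-1}f_{(t-l)/(r-l)}$ via Theorem~\refT{T:bdd}, is bounded by a constant times $[(1-l)(r-l)]^{-1}\int_{2-2l}^{2-l}(\xi-1+l)^{-1}d\xi$, which equals $\ln 2 / [(1-l)(r-l)]$ up to constants. The constraint $l < z-x$ restricts the $(l,r)$-integration to a region whose measure (after the $(r-l)^{-1}$ factor) is $O(z-x)$ with the proportionality constant depending on $t$. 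As in the $c^{(3)}$ proof, it may be convenient to subdivide Case~2 further according to whether $z-x < t$ or $z-x \ge t$, but both sub-subcases yield the desired bound.

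The main obstacle, just as in the $c^{(3)}$ analysis, is controlling the small-$(r-l)$ behavior and the sharp jump of the indicator in $f^{(1)}_{l,r}$. However, unlike $c^{(3)}$, the support of $f^{(1)}_{l,r}$ is strictly bounded away from problematic regions in the sense that $1+l \le x-y$ is not required; here the only lower bound $(x-y-1+l)\ge 1-l$ suffices because $1-l \ge 1-t > 0$. This, together with the standard device of bounding the density $f_{(t-l)/(r-l)}$ by $10$ on short $y$-intervals and using total-mass-one integration on longer ones, should circumvent any blowup and yield the Lipschitz estimate with a $t$-dependent constant.
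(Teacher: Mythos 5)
Your proposal tracks the paper's proof of this lemma essentially step for step: the same Case~1/Case~2 split by comparing $z-x$ with $l$, the same partition of Case~1 into the middle subinterval (where you apply the mean value theorem to get $|d_{l,r}| \le (z-x)/(1-l)^3$, matching the paper's bound from $(z-y-1+l)(x-y-1+l)\ge(1-l)^2$) and the two outer subintervals of length $z-x$ (where you use the crude bound $1/(1-l)^2$ together with the density bound from \refT{T:bdd} and absorb the resulting $(r-l)^{-1}$ into an integrable logarithmic singularity). The only cosmetic difference is in Case~2: the paper bounds the integrand by $1/(1-l)^2$ and uses that the conditional law of $Y$ has total mass $1$ in the $y$-variable, which avoids the $(r-l)^{-1}$ factor entirely and makes the final integration trivial, whereas you again invoke the pointwise density bound and then absorb $(r-l)^{-1}$ into the region of integration (correctly noting the need to subdivide by whether $z-x$ is small compared to $t$); both routes give the desired $t$-dependent Lipschitz constant, and the paper's version is just a bit cleaner.
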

\begin{proof}
Redefine
\[
d_{l,r}(x, z, y) := f_{l,r}^{(1)}(z-y) - f_{l,r}^{(1)}(x-y)
\]
and recall that
\[
f_{l,r}^{(1)}(x) = \mathbb{1}(2-2l\leq x < 2-l) \frac{1}{1-l} \frac{1}{x-1+l}.
\]
We are interested in bounding the quantity
\begin{equation}
\label{E:c_1_Lip}
|c^{(1)}(z) - c^{(1)}(x)| \leq \int_{l, r, y}\!{|d_{l,r}(x, z, y)|\,\P(Y \in \ddx y \mid (L_3, R_3) = (l, r))}\dd l\dd r.
\end{equation}
Just as we did for \refL{L:Lipschitz_2}, we break the proof into consideration of two cases. 
\smallskip

\par\noindent
\emph{Case} 1.\ $z -x < l$. We bound $d_{l, r}(x, z, y)$ for~$y$ in each of the five subintervals of the real line determined by the four partition points  
\[
x-(2-l) < z -(2-l) < x - (2 - 2l) < z-(2-2l).
\]
For the two subcases $y \leq x-(2-l)$ and $y > z-(2-2l)$, we have $d_r(x, z, y) = 0$.  We bound the three nontrivial subcases (listed in order of convenience of exposition, not in natural order) as follows.
\smallskip

\par\noindent
\emph{Subcase} 1(a).\ $z -(2-l) < y \leq x - (2 - 2l)$.
We have
\begin{align*}
d_{l,r}(x, z, y) &= \frac{1}{1-l} \left(\frac{1}{z-y-1+l} - \frac{1}{x-y-1+l}\right) \\
&= \frac{-1}{1-l} \frac{z-x}{(z-y-1+l)(x-y-1+l)}.
\end{align*}
Using the 
inequality $z-y > x-y \geq 2-2l$, we obtain
\[
|d_{l,r}(x, z, y) | \leq \frac{1}{1-l} \frac{|z-x|}{(1-l)^2}.
\]
Using inequality \eqref{E:c_1_Lip}, we conclude that the contribution from this subcase is bounded above by $\frac{t(2-t)}{2(1-t)} |z-x|$.
\smallskip

\par\noindent
\emph{Subcase} 1(b).\ $x -(2 - l) < y \leq z - (2 - l)$.
First note that in this subcase we have $f^{(1)}(z-y) = 0$. Using the fact that $y \leq z- (2-l) < x - (2- 2l)$, we have $x - y \geq 2 - 2l$ and
\[
| d_{l,r}(x, z, y) | \leq \frac{1}{1-l} \frac{1}{x - y - 1 + l} \leq \frac{1}{(1-l)^2}.
\]
By the constant bound on $f_t$ in \refT{T:bdd}, the contribution to $|c^{(1)}(z) - c^{(1)}(x)|$ is bounded above by
\begin{align*}
\int_{l = 0}^t \int_{r = t}^1 \int_{y = x-(2-l)}^{z-(2-l)} & \frac{1}{(1-l)^2}\frac{1}{r-l} f_{\frac{t-l}{r-l}}\left(\frac{y}{r-l}-1\right) \dd y \dd r \dd l\\
& \leq 10 (z-x) \int_{l = 0}^t \int_{r = t}^1\frac{1}{(1-l)^2}\frac{1}{r-l} \dd r \dd l\\
& = (z-x) \frac{10\, t \, (1 - \ln t)}{1-t}.
\end{align*}
\smallskip

\par\noindent
\emph{Subcase} 1(c).\ $x -(2-2l) < y \leq z - (2-2l)$.
In this case, the contribution from $f^{(1)}(x-y)$ vanishes. We have
\[
| d_{l,r}(x, z, y) | \leq \frac{1}{1-l} \frac{1}{z - y - 1 + l} \leq \frac{1}{(1-l)^2}.
\]
The corresponding contribution to $|c^{(1)}(z) - c^{(1)}(x)|$ can be bounded just as in Subcase 1(b).

This completes the proof for Case~1.
\smallskip

\par\noindent
\emph{Case}~2.\ $z -x \geq l$. In this case we 
simply use
\[
| d_{l,r}(x,z, y) | \leq f^{(1)}(z-y) + f^{(1)}(x-y),
\]
and show that each of the two terms on the right contributes at most a constant (depending on~$t$) times $(z - x)$ to the bound in~\eqref{E:c_1_Lip}.  Accordingly, let~$w$ be either~$x$ or~$z$.  We are interested in bounding 
\begin{equation}
\label{E:int_y_4}
\int_{l = 0}^{(z-x) \wedge t} \int_{r=t}^1 \int_{y = w-(2-l)}^{w-(2-2l)} {\frac{1}{1-l} \frac{1}{w-y-1+l} \mu(\ddx y, \ddx r, \ddx l)}
\end{equation}
with $\mu(\ddx y, \ddx r, \ddx l) := P(Y \in \ddx y \mid (L_3,R_3) =(l,r)) \dd r \dd l$. We bound the integrand as follows:
\[
\frac{1}{1-l} \frac{1}{w-y-1+l} \leq \frac{1}{(1-l)^2}.
\]
We first suppose $z-x < t$ and bound~\eqref{E:int_y_4} by
\[
\int_{l = 0}^{z-x} \int_{r=t}^1 {\frac{1}{(1-l)^2} \dd r \dd l} \leq \frac{1}{1-t} \int_{l = 0}^{z-x} {\dd l} = \frac{z-x}{1-t}.
\]
If instead $z - x \geq t$, we bound~\eqref{E:int_y_4} 
by
\[
\frac{z-x}{t} \int_{l = 0}^{t} \int_{r=t}^1 {\frac{1}{(1-l)^2} \dd r \dd l} = z - x.
\]
This completes the proof for Case~2 and thus the proof of Lipschitz continuity of $c^{(1)}$.
\end{proof}

\begin{lemma}
\label{L:c_2_Lipschitz}
The function $c^{(2)}$ is Lipschitz continuous.
\end{lemma}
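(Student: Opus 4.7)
The plan is to reduce the assertion directly to \refL{L:c_1_Lipschitz} by exploiting the $t \leftrightarrow 1-t$ symmetry of the {\tt QuickQuant} process, which under the coupling $U_i \mapsto 1 - U_i$ swaps subcase~(a) with subcase~(b) in the proof of \refL{L:main}. Concretely, inspection of the formulas
\[
f^{(1)}_{l,r}(x) = \mathbb{1}(2-2l \leq x < 2-l)\,\frac{1}{1-l}\,\frac{1}{x-1+l}, \qquad f^{(2)}_{l,r}(x) = \mathbb{1}(2r \leq x < 1+r)\,\frac{1}{r}\,\frac{1}{x-r}
\]
shows that the substitution $(l, r) \mapsto (l', r') := (1-r, 1-l)$ identifies $f^{(2)}_{l,r}(x)$ with $f^{(1)}_{l',r'}(x)$. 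Since the symmetry $U_i \mapsto 1-U_i$ sends $(L_k(t), R_k(t))$ to $(1 - R_k(1-t), 1 - L_k(1-t))$ and leaves the random variables $X = \Delta_1 + \Delta_2$ and $Y = \sum_{k \geq 3} \Delta_k$ pointwise invariant, a change of variables in the defining integral
\[
c^{(2)}(x) = \int_{l, r, y}\!{f^{(2)}_{l, r}(x - y)\,\P(Y \in \ddx y \mid (L_3, R_3) = (l, r))}\dd l\dd r
\]
yields the identity $c^{(2)}(x)\vert_{t} = c^{(1)}(x)\vert_{1-t}$, where the notation indicates the value of the quantile used to construct the underlying process.

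Given this identification, the conclusion is immediate: applying \refL{L:c_1_Lipschitz} at the quantile $1-t \in (0, 1)$ shows that $c^{(1)}\vert_{1-t}$ is a Lipschitz continuous function of $x$, and hence so is $c^{(2)}\vert_{t}$. The resulting Lipschitz constant is that from \refL{L:c_1_Lipschitz} with $t$ replaced by $1 - t$; in particular it is uniformly bounded for $t$ in any compact subinterval of $(0,1)$, matching the situation for $c^{(1)}$.

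The only obstacle is the bookkeeping needed to check the symmetry identity cleanly: one verifies that the subcase~(a) joint density of $(L_1, R_1, L_2, R_2, L_3, R_3)$ at quantile~$t$ maps, under $(L_k, R_k) \mapsto (1 - R_k, 1 - L_k)$, to the subcase~(b) joint density at quantile~$1-t$, and that the Jacobian of the map $(l, r) \mapsto (1-r, 1-l)$ is~$1$ in absolute value. Should a symmetry-free route be preferred, one may instead imitate the proof of \refL{L:c_1_Lipschitz} verbatim: case-split on whether $z - x < 1-r$ or $z - x \geq 1-r$; in the ``small'' case, partition the $y$-axis at the four points $x-(1+r)$, $z-(1+r)$, $x-2r$, $z-2r$ and apply the mean value theorem to the smooth factor $s \mapsto 1/(s-r)$ on each of the three nontrivial subintervals; in the ``large'' case, use the trivial bound $|f^{(2)}_{l,r}(z-y) - f^{(2)}_{l,r}(x-y)| \leq f^{(2)}_{l,r}(z-y) + f^{(2)}_{l,r}(x-y)$ together with \refT{T:bdd}. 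All resulting integrals are finite by the same estimates already employed in \refL{L:c_1_Lipschitz}.
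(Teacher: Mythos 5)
Your proposal is correct and is essentially the paper's own argument: the paper likewise reduces $c^{(2)}$ to $c^{(1)}$ by the substitution $l \mapsto 1-r$ (equivalently the reflection $U_i \mapsto 1-U_i$, $t \mapsto 1-t$), concluding that $\Lambda^{(1)}(1-t)$ bounds the Lipschitz constant for $c^{(2)}$. Your extra bookkeeping verifying the symmetry identity $c^{(2)}\vert_t = c^{(1)}\vert_{1-t}$ is a harmless elaboration of the same idea.
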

\begin{proof}
The Lipschitz continuity of $c^{(2)}$ follows immediately from $c^{(1)}$ because
\[
f_{l,r}^{(2)}(x) = \mathbb{1}(2 r\leq x < 1+r) \frac{1}{r} \frac{1}{x-r},
\]
and we can replace $l$ in the proof of $c^{(1)}$ by $1- r$ to see that if $\Lambda^{(1)}(t)$ is a bound on the Lipschitz constant for~$c^{(1)}$, then $\Lambda^{(1)}(1 - t)$ is a bound on the Lipschitz constant for $c^{(2)}$.
\end{proof}

\begin{lemma}
\label{L:c_4_Lipschitz}
The function $c^{(4)}$ is Lipschitz continuous.
\end{lemma}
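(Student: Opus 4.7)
My plan is to observe that, by a simple algebraic identity together with a reflection change of variables, the contribution $c^{(4)}(x)$ at parameter $t$ equals the contribution $c^{(3)}(x)$ computed at the symmetric parameter $1-t$; Lipschitz continuity of $c^{(4)}$ then follows immediately from \refL{L:Lipschitz_2} applied at $1-t$, without any new case analysis.

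First I would verify the pointwise identity
\[
f^{(4)}_{1-r,\,1-l}(w) = f^{(3)}_{l,r}(w) \quad \text{for all } w \in \bbR \text{ and all } 0 < l, r < 1.
\]
The support $[2(1-l)-(1-r),\,2-(1-r)) = [1+r-2l,\,1+r)$ of the left-hand side matches that of $f^{(3)}_{l,r}$; and the prefactor $\frac{2}{(w+(1-r))(w-(1-r))} = \frac{2}{(w+1-r)(w+r-1)}$ agrees with $\frac{1}{w}\bigl[\frac{1}{w+1-r} + \frac{1}{w+r-1}\bigr]$ after combining the two fractions in $f^{(3)}_{l,r}$ over the common denominator.

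Next I would apply the change of variables $(l,r) \mapsto (l',r') := (1-r,\,1-l)$ (Jacobian~$1$) in the defining integral
\[
c^{(4)}(x) = \int_{0 < l < t < r < 1}\!\!\int_y f^{(4)}_{l,r}(x-y)\, (r-l)^{-1}\, f_{\frac{t-l}{r-l}}\!\Bigl(\frac{y}{r-l} - 1\Bigr)\dd y \dd l \dd r.
\]
The region $\{0 < l < t < r < 1\}$ transforms to $\{0 < l' < 1-t < r' < 1\}$, and $(r-l)^{-1} = (r'-l')^{-1}$. Setting $s := (t-l)/(r-l) = (r' - (1-t))/(r'-l')$, one computes $1 - s = ((1-t) - l')/(r'-l')$, so by the reflection symmetry $f_s \equiv f_{1-s}$ (a consequence of the equality in finite-dimensional distribution of the processes $(Z(t))_{t \in [0,1]}$ and $(Z(1-t))_{t \in [0,1]}$ noted in \refS{S:intro}), the conditional density factor for $Y(t)$ at parameter $t$ becomes the conditional density factor for $Y(1-t)$ at parameter $1-t$. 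Combined with the algebraic identity above, this yields
\[
c^{(4)}(x;\,t) = c^{(3)}(x;\,1-t).
\]

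Since \refL{L:Lipschitz_2} applies for every parameter in $(0,1)$, in particular at $1-t \in (0,1)$, we conclude that $c^{(3)}(\cdot;\,1-t)$, and hence $c^{(4)}(\cdot;\,t)$, is Lipschitz continuous. The only substantive step is the algebraic verification that $f^{(4)}_{1-r,\,1-l} = f^{(3)}_{l,r}$; the remainder is routine bookkeeping about the reflection symmetry $(l,r) \leftrightarrow (1-r, 1-l)$, $t \leftrightarrow 1-t$, and no new case analysis, partial-fraction decomposition, or mean-value-theorem estimate is required beyond what is already contained in the proof of \refL{L:Lipschitz_2}.
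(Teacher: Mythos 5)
Your proposal is correct, and it takes a genuinely different (and much shorter) route than the paper's own proof of this lemma. The algebraic identity $f^{(4)}_{1-r,\,1-l}(w)=f^{(3)}_{l,r}(w)$ checks out: both sides equal $\mathbb{1}(1+r-2l\le w<1+r)\,\tfrac{2}{(w+1-r)(w+r-1)}$, and your bookkeeping for the reflection $(l,r)\mapsto(1-r,1-l)$, $t\mapsto 1-t$ is sound --- the region $\{0<l<t<r<1\}$ maps to $\{0<l'<1-t<r'<1\}$ with unit Jacobian, $r-l$ is preserved, and the parameter $(t-l)/(r-l)$ maps to $1-[(1-t)-l']/(r'-l')$, so the symmetry $f_s\equiv f_{1-s}$ (which the paper records and uses, e.g., in \refS{S:left}) converts the conditional density of $Y(t)$ into that of $Y(1-t)$. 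Since the proof of \refL{L:Lipschitz_2} establishes Lipschitz continuity of $c^{(3)}(\cdot\,;\tau)$ for every $\tau\in(0,1)$, your identity $c^{(4)}(\cdot\,;t)=c^{(3)}(\cdot\,;1-t)$ finishes the job. By contrast, the paper's Appendix proof of this lemma repeats the entire two-case, three-subcase analysis of the $c^{(3)}$ argument with new partition points ($x-(2-l)<z-(2-l)<x-(2r-l)<z-(2r-l)$), new regions $R$, $B$, $W$, and new exponential-tilting estimates. Interestingly, the paper does use exactly your symmetry device to deduce \refL{L:c_2_Lipschitz} from \refL{L:c_1_Lipschitz} (replacing $l$ by $1-r$), so your argument is squarely within the paper's own toolbox; it simply applies that tool where the authors chose to recompute. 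What your route buys is brevity and the explicit relation $\Lambda^{(4)}(t)=\Lambda^{(3)}(1-t)$ between Lipschitz constants; what the paper's explicit computation buys is a self-contained set of numerical constants, at the cost of several pages. The same symmetry would also dispatch $c^{(6)}$ from $c^{(1)}$ and $c^{(5)}$ from... nothing already proved (note $f^{(5)}_{1-r,1-l}(w)=\mathbb{1}(1+r-2l\le w<2-2l)\tfrac{1}{1-l}\cdot\tfrac{1}{w-1+l}=f^{(6)}_{l,r}(w)$, so $c^{(5)}$ and $c^{(6)}$ are reflections of each other and only one of them needs a direct proof), so your observation actually shortens the Appendix further.
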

\begin{proof}
Redefine
\[
d_{l,r}(x, z, y) := f_{l,r}^{(4)}(z-y) - f_{l,r}^{(4)}(x-y)
\]
and reformulate
\[
f_{l,r}^{(4)}(x) = \mathbb{1}(2r-l\leq x < 2-l) \frac{1}{x}\left(\frac{1}{x+l} + \frac{1}{x-l}\right)
\]
from the expression for $f_{l,r}^{(4)}(x)$ found in \refS{S:density}.
We are interested in bounding the quantity
\begin{equation}
\label{E:c_4_Lip}
|c^{(4)}(z) - c^{(4)}(x)| \leq \int_{l, r, y}\!{|d_{l,r}(x, z, y)|\,\P(Y \in \ddx y \mid (L_3, R_3) = (l, r))}\dd l\dd r,
\end{equation}
where the conditional probability can also be written in density terms as
\[
\P(Y \in \ddx y \mid (L_3, R_3) = (l, r)) = \frac{1}{r-l} f_{\frac{t-l}{r-l}}\left(\frac{y}{r-l} - 1\right)\dd y.
\]
Just as we did for \refL{L:Lipschitz_2}, we break the proof into consideration of two cases. 
\smallskip

\par\noindent
\emph{Case} 1.\ $z -x < 2 (1-r)$. As in the proof for Case~1 of \refL{L:Lipschitz_2}, we bound $d_{l, r}(x, z, y)$ for~$y$ in each of the five subintervals of the real line determined by the four partition points  
\[
x-(2-l) < z -(2-l) < x - (2r - l) < z-(2r-l).
\]
For the two subcases $y \leq x-(2-l)$ and $y > z-(2r-l)$, we have $d_r(x, z, y) = 0$.  We bound the three nontrivial subcases (listed in order of convenience of exposition, not in natural order) 
as follows. 
\smallskip

\par\noindent
\emph{Subcase} 1(a).\ $z -(2-l) < y \leq x - (2r - l)$.
We have
\begin{align*}
&d_{l,r}(x, z, y) \\
&= \frac{1}{z-y} \left(\frac{1}{z-y+l} + \frac{1}{z-y-l}\right) - \frac{1}{x-y} \left(\frac{1}{x-y+l} + \frac{1}{x-y-l}\right)\\
&= \frac{1}{z-y} \left(\frac{1}{z-y+l} - \frac{1}{x-y+l}\right) + \left(\frac{1}{z-y} - \frac{1}{x-y}\right) \frac{1}{x-y+l}\\
&+ \frac{1}{z-y} \left(\frac{1}{z-y-l} - \frac{1}{x-y-l} \right) + \left(\frac{1}{z-y} - \frac{1}{x-y}\right) \frac{1}{x-y-l}.
\end{align*}
Using the 
inequality $z-y > x-y \geq 2r-l$, we obtain
\begin{align*}
& |d_{l,r}(x, z, y) | \\
&\leq \frac{1}{2r-l} \frac{z-x}{(2r)^2} + \frac{z-x}{(2r-l)^2}\frac{1}{2r}\\
&{} \qquad + \frac{1}{2r-l} \frac{z-x}{(z-y-l)(x-y-l)} + \frac{z-x}{(2r-l)^2} \frac{1}{2(r-l)}.
\end{align*}
Except for the third term, it is easy to see (by direct computation) that the corresponding contribution to the 
bound~\eqref{E:c_4_Lip} on $|c^{(4)}(z) - c^{(4)}(x)|$ is bounded by a constant (depending on~$t$) times $z - x$. So we now focus on bounding the contribution from the third term. Note that since $2r-l > t > 0$, we need 
only bound
\begin{equation}
\label{focus_4}
\int_{l, r, y}\!{\frac{1}{(z-y-l)(x-y-l)}\,\P(Y \in \ddx y \mid (L_3, R_3) = (l, r))}\dd l\dd r
\end{equation}
by a constant (which is allowed to depend on~$t$, but our constant will not). 

We first focus on the integral in~\eqref{focus_4} with respect to~$y$ and write it, using a change of variables, as 
\begin{equation}
\label{E:int_y_4a}
\int_{y \in I}\!{d_{l,r}^*(x, z, y)\,f_{\frac{t-l}{r-l}}(y) \dd y},
\end{equation}
with
\[
d_{l,r}^*(x, z, y) = \frac{1}{[z-(r-l)(y+1)-l][x-(r-l)(y+1)-l]}
\] 
and
 $I := \left\{y: \frac{z-(2-l)}{r-l} -1 < y \leq \frac{x-(2r-l)}{r-l} - 1\right\}$. 
Because the support of the density $f_{\frac{t - l}{r - l}}$ is contained in the nonnegative real line, the integral~\eqref{E:int_y_4a} vanishes unless the right endpoint of the interval~$I$ is positive, which is true if 
and only if
 \[
l > \frac{3r-x}{2}.
\]
So we see that the integral of~\eqref{E:int_y_4a} over $l \in (0, t)$ vanishes unless this lower bound on~$l$ is smaller than~$t$, which is true if 
and only if 
\begin{equation}
\label{rbound_4}
r < \frac{2t+x}{3}.
\end{equation}
But then the integral of~\eqref{E:int_y_4a} over $\{(l, r):0 < l < t < r < 
1\}$ vanishes unless this upper bound on~$r$ is greater than~$t$, which is true if and only if $x >  t$; we conclude that for $x \leq t$, that integral vanishes. 

So we may now suppose $x > t$, and we have seen that the integral of~\eqref{E:int_y_4a} over $\{(l, r):0 < l < t < r < 1\}$ is equal 
to its 
integral over the region
\[
R := \left\{ (l, r) : \frac{3 r - x}{2} \vee 0 < l < t < r < 1 \wedge \frac{2 t + x}{3} \right\}.
\]
Observe that on~$R$ we have
\begin{equation}
\label{on_R_4}
\frac{x-(2r-l)}{r-l} - 1 = \frac{x-l}{r-l} -3 > \frac{x-t}{r-l} -3 > \frac{1}{2} \frac{x-t}{r-l} -3.
\end{equation}
Define
\[
B := \left\{ (l,r) : \frac{x-t}{2(r-l)} - 3 > 0 \right\}.
\]
We now split our discussion of the contribution to the integral of~\eqref{E:int_y_4a} over $(l, r) \in R$ into two terms, corresponding to (i) $R \cap B^c$ and (ii) $R \cap B$. 
\smallskip

\par\noindent
\emph{Term} (i).\ $R \cap B^c$.
Since the support of the density $f_{\frac{t - l}{r - l}}$ is contained in the nonnegative real line, the value of~\eqref{E:int_y_4a} remains unchanged if we integrate $y$ over the positive part of $I$ instead.
Let
\[
I^* := \left\{y : \frac{x-t}{2(r-l)} - 3 < y \leq \frac{x-(2r-l)}{r-l} - 1\right\}.
\]
Since the left endpoint of $I^*$ is nonpositive, we can bound~\eqref{E:int_y_4a} by extending the range of integration from~$I$ to $I^*$.
Making use of the inequality~\eqref{E:exp_bdd}, the integral~\eqref{E:int_y_4a} is bounded, for any $\theta > 0$,  by
\[
\int_{y \in I^*} {\frac{1}{4(r-l)^2}\, C_{\theta} e^{-\theta y} \dd y} \leq \frac{C_{\theta}}{4 \theta(r-l)^2} \exp \left[-\frac{x-t}{2(r-l)} \theta + 3 \theta \right].
\]
The integral over $(l, r) \in R \cap B^c$ of~\eqref{E:int_y_4a} is therefore bounded by 
\begin{align}
&\frac{C_{\theta}}{4 \theta} \, e^{3 \theta} \int_{r = t}^{(2t+x)/3}\,\int_{l=(3r-x)/2}^{t} {\frac{1}{(r-l)^2} 
\exp \left[ -\frac{x-t}{2(r-l)} \theta \right]  \dd l  \dd r} \nonumber \\
&= \frac{C_{\theta}}{4 \theta} \, e^{3 \theta} \int_{r = t}^{(2t+x)/3} \int_{s=r-t}^{(x-r)/2} {\frac{1}{s^2} 
\exp \left(- \frac{x-t}{2} \theta s^{-1} \right) \dd s \dd r} \nonumber \\
&\leq \frac{C_{\theta}}{4 \theta} \, e^{3 \theta} \frac{2}{\theta (x-t)} \int_{r = t}^{(2t+x)/3} 
{\exp \left(- \frac{x-t}{2} \theta \frac{2}{x-r} \right) \dd r} \nonumber 
\\
\label{previous_4}
&\leq \frac{C_{\theta}}{2 \theta^2} \, e^{3 \theta} \frac{1}{x-t} e^{-\theta} \left(\frac{2t+x}{3} - t \right) 
= \frac{C_{\theta}}{6 \theta^2} \, e^{2 \theta} < \infty.
\end{align}
\smallskip

\par\noindent
\emph{Term} (ii).\ $R \cap B$.
We can bound~\eqref{E:int_y_4a} by the sum of the integrals of the same integrand over the intervals $I^*$ and 
\[
I' := \left\{y: 0 < y \leq \frac{x-t}{2(r-l)} - 3\right\}.
\]
The bound for the integral over $I^*$ is the same as the bound for the $R \cap B^c$ term. To bound the integral over $I'$, we first observe that
\[
d_{l,r}^*(x, z, y) \leq \frac{1}{[\frac{1}{2} (x+t) + 2r - 3l]^2} \leq \frac{4}{(x-t)^2},
\]
where the last inequality holds because $l < t < r$. The contribution to~\eqref{focus_4} can be bounded by integrating $4 / (x 
-t)^2$ with respect to $(l,r) \in R \cap B$.  We then extend this region of integration to~$R$, where $(3 r - x) / 2$ is at most the lower bound on~$l$ and $(2 t + x) / 3$ is at least the upper bound on~$r$. Thus we bound the contribution 
by
\[
\frac{4}{(x-t)^2} \int_{r = t}^{(2t+x)/3} {\left( t - \frac{3 r - x}{2} \right) \dd r}
= \frac{1}{3}.
\]
This completes the proof for Subcase 1(a).
\smallskip

\par\noindent
\emph{Subcase} 1(b).\ $x -(2r -l) < y \leq z - (2r-l)$.
First note that in this subcase we have $f^{(4)}(x-y) = 0$. We proceed in similar fashion as for Subcase 1(a), this time setting
\[
I := \left\{y: \frac{x-l}{r-l} -3 < y \leq \frac{z-l}{r-l} -3\right\}.
\]
Again using a linear change of variables and the original expression for $f^{(4)}$ defined in~\eqref{E:5}, the integral (with respect to~$y$ only, in this subcase) appearing on the right in~\eqref{E:c_4_Lip} in this subcase can be bounded by 
\begin{equation}
\label{E:int_y_4_2}
\int_{y \in I} {d^*_{l,r}(z, y) f_{\frac{t-l}{r-l}}(y) \dd y}
\end{equation}
where now
\[
d^*_{l,r}(z, y) = \frac{1}{z-(r-l)(y+1) +l} \times \frac{2}{z-(r-l)(y+1)-l}.
\]
Note that, unlike its analogue in Subcase 1(a), here $d^*_{l, r}(z, y)$ does not possess an explicit factor $z - x$. 

By the same discussion as in Subcase 1(a), 
we are interested in the integral of~\eqref{E:int_y_4_2} with respect to $(l,r) \in R$, where this time
\[
R := \left\{ (l, r) : \frac{3 r - z }{2} \vee 0 < l < t < r < 1 \wedge \frac{2 t + z }{3} \right\}.
\]
and we may suppose that $z > t$.

Observe that on~$R$ we have 
\[
\frac{z-l}{r-l} - 3 > \frac{z - t}{r-l} - 3 > \frac{1}{2} \frac{z -t}{r-l} - 3.
\]
Following a line of attack similar to that for Subcase 1(a), we define
\[
W := \left\{(l,r) : \frac{x-l}{r-l} - 3 > \frac{z - t}{2(r-l)} - 3\right\} 
\]
and split our discussion of the integral of~\eqref{E:int_y_4_2} over $(l, r) \in R$ into two terms, corresponding to (i) $R \cap W^c$ and (ii) $R \cap W$.
\smallskip

\par\noindent
\emph{Term} (i).\ $R \cap W$.
We bound~\eqref{E:int_y_4_2} by using the inequality~\eqref{E:exp_bdd} (for any $\theta > 0$) and obtain
\begin{align*}
\lefteqn{\hspace{-0.5in}\int_{y \in I} {\frac{1}{r} \frac{1}{2(r-l)} C_{\theta} 
\exp\left[ -\theta \left(\frac{z - t}{2(r-l)} - 3\right) \right] \dd y}} \\ 
&\leq 
\frac{1}{2} \frac{1}{t} \frac{1}{(r-l)^2} C_{\theta} e^{3\theta} \exp\left[ -\theta \left(\frac{z -t}{2(r-l)}\right) \right] (z-x).
\end{align*}
Integrating this bound with respect to $(l,r) \in R \cap W$, we get 
no more than
\[
(z-x) \frac{1}{2} \frac{1}{t} C_{\theta} e^{3\theta} \int_{r = t}^{(2t+z)/3} \int_{l = (3r-z)/2}^{t} {\frac{1}{(r-l)^2} 
\exp\left[ - \frac{z - t}{2(r-l)} \theta \right] \dd l \dd r},
\]
which [compare~\eqref{previous}] is bounded by $(z-x)$ times a constant depending only on~$t$ and~$\theta$.
\smallskip

\par\noindent
\emph{Term} (ii).\ $R \cap W^c$.
We partition the interval~$I$ of $y$-integration into the two subintervals 
\[
I^* := \left\{y : \frac{z - t}{2(r-l)} - 3 < y \leq \frac{z-l}{r-l} -3\right\}
\]
and
\[
I' := \left\{y: \frac{x-l}{r-l} -3 < y \leq \frac{z - t }{2(r-l)} - 3\right\}.
\]
Observe that 
the length of each of the intervals $I^*$ and $I'$ is no more than the length of~$I$, which is 
$(z - x) / (r - l)$. 
We can bound the integral over $y \in I^*$ and $(l,r) \in R \cap W^c$ just as we did for Term~(i). For the integral over 
$y \in I'$  and $(l,r) \in R \cap W^c$, observe the following 
inequality:
\begin{align*}
d^*_{l,r}(z, y)  \leq \frac{2}{\frac{1}{2} (z + t) + 2r-l}  \times \frac{1}{\frac{1}{2} (z + t) + 2r-3l} \leq \frac{4}{3 t} \frac{2}{z-t}.
\end{align*}
Using the constant bound in~\refT{T:bdd}, the integral of 
$d^*_{l, r}(z, y) f_{\frac{t - l}{r - l}}(y)$ 
with respect to $y \in I'$ and $(l,r) \in R \cap W^c$ is bounded above 
by
\[
80 \frac{z-x}{3t} \frac{1}{z-t} \int_{r = t}^{(2t + z)/3} \int_{l = 
(3r-z)/2}^{t} {\frac{1}{r-l} \dd l \dd r}
= \frac{80}{3} \frac{z-x}{t} (\ln 3 - \ln 2).
\]
This completes the proof for Subcase 1(b). 
\smallskip

\par\noindent
\emph{Subcase} 1(c).\ $x -(2-l) < y \leq z - (2-l)$.
In this case, the contribution from $f^{(4)}(z-y)$ vanishes. Without loss
of generality we may suppose $z - x < 1 - t$, because otherwise we
can insert a factor $(z-x)/(1-t)$ in our upper bound, and the desired
upper bound follows from the fact that the densities $f_{\tau}$ are all
bounded by~$10$.
Observe that the integrand $|d_{l, r}(x, z, y)|$ in the
bound~\eqref{E:c_4_Lip} is
\begin{align*}
\frac{2}{x - y + l} \times \frac{1}{x - y - l} &\leq \frac{1}{x - z + 2
} \times \frac{2}{x - z + 2 - 2 l} \\
&\leq \frac{1}{1+t} \times \frac{2}{1+t-2 l} \leq \frac{2}{(1+t)(1-t)}.
\end{align*}
Integrate this constant bound directly with respect to
\[
P(Y \in \ddx y | (L_3,R_3) = (l,r)) \dd r \dd l
\]
on the region $x -(2-l) < y \leq z - (2-l)$ and $0 < l < t < r < 1$ and
use the fact that the density is bounded by $10$; we conclude that this
contribution is bounded by $(z-x)$ times a constant that depends on $t$.
This completes the proof for Subcase 1(c) and also for Case~1.
\smallskip

\par\noindent
\emph{Case}~2.\ $z -x \geq 2 (1-r)$. In this case we 
simply use
\[
| d_{l,r}(x,z, y) | \leq f^{(4)}(z-y) + f^{(4)}(x-y),
\]
and show that each of the two terms on the right contributes at most a constant (depending on~$t$) times $(z - x)$ to the bound in~\eqref{E:c_4_Lip}.  Accordingly, let~$w$ be either~$x$ or~$z$.  We are interested in 
bounding 
\begin{equation}
\label{E:int_y_4_3}
\int_{r = \left[1 - \frac{1}{2} (z-x) \right] \vee t}^1 \int_{l = 0}^{t} \int_{y = w-(2-l)}^{w-(2r-l)} {\frac{2}{w-y+l} \frac{1}{w-y-l} \mu(\ddx y, \ddx l, \ddx r)}
\end{equation}
with $\mu(\ddx y, \ddx l, \ddx r) := P(Y \in \ddx y \mid (L_3,R_3) =(l,r)) \dd l \dd r$. We bound the integrand as follows:
\[
\frac{2}{w-y+l} \frac{1}{w-y-l} \leq \frac{1}{r} \frac{1}{2r - 2l} \leq \frac{1}{2} \frac{1}{t} \frac{1}{r-l}.
\]
We first suppose $z-x < 1-t$ and bound~\eqref{E:int_y_4_3} 
by
\begin{align*}
\frac{1}{2} \frac{1}{t} \int_{r = 1 - \frac{1}{2} (z-x)}^{1} \int_{l=0}^t {\frac{1}{r-l} \dd l \dd r} 
&\leq \frac{1}{2} \frac{1}{t} \int_{r = 1 - \frac{1}{2} (z-x)}^{1} {[- \ln (r-t)] \dd r}\\
&\leq \frac{1}{2} \frac{1}{t} \left[ - \ln \left(1 - \frac{z-x}{2} - t\right) \right] \frac{z-x}{2}\\
&\leq (z-x) \frac{\ln(2 / (1-t))}{4t}.
\end{align*}
If instead $z - x \geq 1-t$, we bound~\eqref{E:int_y_4_3} 
by
\begin{align*}
\frac{1}{2} \frac{z-x}{1-t} \int_{l = 0}^{t} \int_{r=t}^1 {\frac{1}{t}\frac{1}{r-l} \dd r \dd l}
&\leq \frac{1}{2} \frac{z-x}{t(1-t)} \int_{l = 0}^{t} {[-\ln (t-l)] \dd l}\\
&\leq (z-x) \frac{1 - \ln t}{2\,(1-t)}.
\end{align*}
This completes the proof for Case~2 and thus the proof of Lipschitz continuity of $c^{(4)}$.
\end{proof}

\begin{lemma}
\label{L:c_5_Lipschitz}
The function $c^{(5)}$ is Lipschitz continuous.
\end{lemma}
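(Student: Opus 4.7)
The plan is to follow the pattern established in the proof of \refL{L:c_4_Lipschitz}, adapted to the specific form of $f^{(5)}_{l, r}$. Recall from~\eqref{E:5} that
\[
f^{(5)}_{l, r}(x) = \mathbb{1}(2r - l \leq x < 2r)\,\frac{1}{r(x - r)},
\]
which is monotone decreasing on its length-$l$ support, taking values from $\frac{1}{r(r - l)}$ at the left endpoint down to $\frac{1}{r^2}$ at the right endpoint. Setting $d_{l, r}(x, z, y) := f^{(5)}_{l, r}(z - y) - f^{(5)}_{l, r}(x - y)$, I will bound $|c^{(5)}(z) - c^{(5)}(x)|$ by the triple integral of $|d_{l, r}|$ against the conditional density of~$Y$ given $(L_3, R_3) = (l, r)$ and~$\dd l\,\dd r$. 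Following the convention used for the other appendix lemmas, I split the argument into Case~1 ($z - x < l$) and Case~2 ($z - x \geq l$).

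In Case~1, I partition the $y$-integration using the ordering $x - 2r < z - 2r < x - (2r - l) < z - (2r - l)$, yielding three non-trivial subcases. In the two extreme subcases (in which exactly one of the functions $f^{(5)}_{l, r}(z - y)$, $f^{(5)}_{l, r}(x - y)$ is non-zero), the surviving factor is bounded by $\frac{1}{r(r - (z - x))}$ on a $y$-interval of length $z - x$, and a change of variable of the form $u = x - y - r$ converts the $y$-integral to $\int \frac{\dd u}{u} \leq \ln[r / (r - (z - x))]$, yielding the desired $(z - x)$-linear control after integration over $(l, r)$. The central subcase $z - 2r < y \leq x - (2r - l)$ is the crux: both $f^{(5)}$-factors are non-zero and
\[
d_{l, r}(x, z, y) = -\,\frac{z - x}{r\,(z - y - r)(x - y - r)},
\]
with denominator-factors that may be as small as $r - l$. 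To treat this subcase I introduce the change of variable $s = x - y - r$, so that $s$ ranges over $[r - l,\,r - (z - x)]$ and the $y$-integrand becomes $\frac{z - x}{r\,s\,(z - x + s)}$; then, as in Subcase~1(a) of the proof of \refL{L:c_4_Lipschitz}, I bound the conditional density of~$Y$ using the uniform exponential tail bound~\eqref{E:exp_bdd}, producing a factor $\exp[-\theta(z - 2r)/(r - l)]$ that provides crucial exponential suppression as $r - l \to 0$. The $(l, r)$-domain is then split into a region where this exponential factor effectively controls the $\frac{1}{(r - l)^2}$-type singularity (analogous to the set $R \cap B$ in the proof of \refL{L:c_4_Lipschitz}) and a complementary region (analogous to $R \cap B^c$) where a cruder bound suffices; each piece is reduced to a tractable one-dimensional integral via an additional change of variable of type $s = r - l$.

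In Case~2, the two $y$-supports of $f^{(5)}_{l, r}(z - y)$ and $f^{(5)}_{l, r}(x - y)$ are disjoint, so I use the crude bound $|d_{l, r}| \leq f^{(5)}_{l, r}(z - y) + f^{(5)}_{l, r}(x - y)$. When $z - x \leq t / 2$, the Case~2 restriction $l \leq z - x$ forces the $l$-integration to the interval $[0, z - x]$, supplying the $(z - x)$-linear factor directly; the remaining $(r, y)$-integrations are bounded uniformly in $z - x$ since $r - l \geq t / 2$ on this restricted region. When $z - x > t / 2$, I invoke the uniform boundedness of~$c^{(5)}$ (a consequence of~\refT{T:bdd}) to conclude $|c^{(5)}(z) - c^{(5)}(x)| \leq 2\sup c^{(5)} \leq (4\sup c^{(5)}/t)\,(z - x)$, which is Lipschitz with constant depending on~$t$.

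The principal obstacle is the central subcase of Case~1: the denominator product $(z - y - r)(x - y - r)$ can be as small as order $(r - l)^2$, producing a non-integrable $\frac{1}{r(r - l)^2}$-type singularity at the corner $(l, r) = (t, t)$ of the $(l, r)$-domain. Overcoming this requires the delicate combination of the exponential tail bound on the conditional density of~$Y$ with the change-of-variables reduction sketched above, in direct analogy with the intricate two-region arguments used for~$c^{(3)}$ and~$c^{(4)}$.
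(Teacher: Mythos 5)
Your overall architecture matches the paper's proof: the same case split at $z - x < l$ versus $z - x \geq l$, the same four partition points for the $y$-integration, the same exponential-tilting treatment of the central subcase $z - 2r < y \leq x - (2r - l)$ (restriction of $(l,r)$ via the support of $f_{(t-l)/(r-l)}$, then a two-region split playing \eqref{E:exp_bdd} against a crude bound), and essentially the same Case~2. Your Case~2 and your central subcase are sound.

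The genuine gap is your claim that in \emph{both} extreme subcases the surviving factor is bounded by $\frac{1}{r(r-(z-x))}$. That is true only for $x - 2r < y \leq z - 2r$, where the survivor is $f^{(5)}_{l,r}(x-y)=\frac{1}{r(x-y-r)}$ with $x-y-r \geq r-(z-x)$. In the other extreme subcase, $x-(2r-l) < y \leq z-(2r-l)$, the survivor is $f^{(5)}_{l,r}(z-y)=\frac{1}{r(z-y-r)}$ and one only has $z-y-r \in [\,r-l,\ r-l+(z-x))$: the argument sits within distance $z-x$ of the \emph{left} endpoint of the support, where $f^{(5)}_{l,r}$ attains its maximum $\frac{1}{r(r-l)}$. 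Since the conditional density of $Y$ given $(L_3,R_3)=(l,r)$ admits no bound uniform in $(l,r)$ (the natural bound is $\frac{10}{r-l}$), your $y$-integral is of order $\frac{(z-x)}{r(r-l)^2}$, or, keeping the logarithm from the change of variables, $\frac{1}{r(r-l)}\ln\bigl(1+\frac{z-x}{r-l}\bigr)$; either way the subsequent $(l,r)$-integral diverges at the corner $l=r=t$, since $\int_0 s^{-1}\ln(1+c s^{-1})\,\dd s = \infty$. This subcase is exactly as delicate as the central one — it is the analogue of Subcase~1(b) in the proofs of Lemmas~\ref{L:Lipschitz_2} and~\ref{L:c_4_Lipschitz}, and requires the same machinery (restriction to the region $R$ where the support of $f_{(t-l)/(r-l)}$ meets the $y$-interval, the set $W$, and the splitting of $I$ into $I^*$ and $I'$ with \eqref{E:exp_bdd} on one piece and the constant bound of \refT{T:bdd} on the other). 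A secondary point: even in the genuinely easy subcase $x-2r < y \leq z-2r$, the factor $r-(z-x)$ is only bounded below by $r-l$ in general, so you must first reduce to $z-x < t/2$ (by the same trivial device you invoke in Case~2) to get $r-(z-x)\geq t/2$ and an integrable bound.
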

\begin{proof}
Redefine
\[
d_{l,r}(x, z, y) := f_{l,r}^{(5)}(z-y) - f_{l,r}^{(5)}(x-y)
\]
and recall
\[
f_{l,r}^{(5)}(x) = \mathbb{1}(2r-l\leq x < 2r) \frac{1}{r}\frac{1}{x-r}.
\]
We are interested in bounding the quantity
\begin{equation}
\label{E:c_5_Lip}
|c^{(5)}(z) - c^{(5)}(x)| \leq \int_{l, r, y}\!{|d_{l,r}(x, z, y)|\,\P(Y \in \ddx y \mid (L_3, R_3) = (l, r))}\dd l\dd r,
\end{equation}
where the conditional probability can also be written in density terms as
\[
\P(Y \in \ddx y \mid (L_3, R_3) = (l, r)) = \frac{1}{r-l} f_{\frac{t-l}{r-l}}\left(\frac{y}{r-l} - 1\right)\dd y.
\]
Just as we did for \refL{L:Lipschitz_2}, we break the proof into consideration of two cases. 
\smallskip

\par\noindent
\emph{Case} 1.\ $z -x < l$. As in the proof for Case~1 of 
\refL{L:Lipschitz_2}, we bound $d_{l, r}(x, z, y)$ for~$y$ in each of the 
five subintervals of the real line determined by the four partition points  
\[
x- 2 r < z - 2 r < x - (2 r - l) < z - (2 r - l).
\]
For the two subcases $y \leq x- 2r$ and $y > z-(2r-l)$, we have $d_r(x, z, y) = 0$.  We bound the three nontrivial subcases (listed in order of convenience of exposition, not in natural order) 
as follows. 
\smallskip

\par\noindent
\emph{Subcase} 1(a).\ $z - 2 r < y \leq x - (2 r - l)$.
We have
\begin{align*}
d_{l,r}(x, z, y) &= \frac{1}{r} \left(\frac{1}{z-y-r} - \frac{1}{x-y-r}\right)\\
&= - \frac{z-x}{r} \frac{1}{(z-y-r)(x-y-r)}.
\end{align*}
Note that since $r > t > 0$, we need only bound
\begin{equation}
\label{focus_5}
\int_{l, r, y}\!{\frac{1}{(z-y-r)(x-y-r)}\,\P(Y \in \ddx y \mid (L_3, R_3) = (l, r))}\dd l\dd r
\end{equation}
by a constant (which is allowed to depend on~$t$, but our constant will not).

We first focus on the integral in~\eqref{focus_5} with respect to~$y$ and write it, using a change of variables, as
\begin{equation}
\label{E:int_y_5_4}
\int_{y \in I}\!{d_{l,r}^*(x, z, y)\,f_{\frac{t-l}{r-l}}(y) \dd y},
\end{equation}
with
\[
d_{l,r}^*(x, z, y) = \frac{1}{[z-(r-l)(y+1)-r][x-(r-l)(y+1)-r]}
\] 
and
 $I := \left\{y: \frac{z-(2r)}{r-l} -1 < y \leq \frac{x-(2r-l)}{r-l} - 1\right\}$. 
Because the support of the density $f_{\frac{t - l}{r - l}}$ is contained 
in the nonnegative real line, the integral~\eqref{E:int_y_5_4} vanishes unless the right endpoint of the interval~$I$ is positive, which is true if and only if
 \[
l > \frac{3r-x}{2}.
\]
So we see that the integral 
of~\eqref{E:int_y_5_4} over $l \in (0, t)$ vanishes unless this lower bound on~$l$ is smaller than~$t$, which is true if 
and only if 
\begin{equation}
\label{rbound_5}
r < \frac{2t+x}{3}.
\end{equation}
But then the integral of~\eqref{E:int_y_5_4} over $\{(l, r):0 < l < t < r < 1\}$ vanishes unless this upper bound on~$r$ is greater than~$t$, which is true if and only if $x >  t$; we conclude that for $x \leq t$, that integral vanishes. 

So we may now suppose $x > t$, and we have seen that the integral of~\eqref{E:int_y_5_4} over $\{(l, r):0 < l < t < r < 1\}$ is bounded above by its integral over the region
\[
R := \left\{ (l, r) : \frac{3 r - x}{2} \vee 0 < l < t < r < 1 \wedge \frac{2 t + x}{3} \right\}.
\]
Observe that on~$R$ we have
\begin{equation}
\label{on_R_5}
\frac{x-(2r-l)}{r-l} - 1 = \frac{x-l}{r-l} -3 > \frac{x-t}{r-l} -3 > \frac{1}{2} \frac{x-t}{r-l} -3.
\end{equation}
Define
\[
B := \left\{ (l,r) : \frac{x-t}{2(r-l)} - 3 > 0 \right\}.
\]
We now split our discussion of the contribution to the integral of~\eqref{E:int_y_5_4} over $(l, r) \in R$ into two terms, corresponding to (i) $R \cap B^c$ and (ii) $R \cap B$. 
\smallskip

\par\noindent
\emph{Term} (i).\ $R \cap B^c$.
Since the support of the density $f_{\frac{t - l}{r - l}}$ is contained in the nonnegative real line, the value of~\eqref{E:int_y_5_4} remains unchanged if we integrate $y$ over the positive part of $I$ instead. 
Let
\[
I^* := \left\{y : \frac{x-t}{2(r-l)} - 3 < y \leq \frac{x-(2r-l)}{r-l} - 1\right\}.
\]
Since the left endpoint of $I^*$ is nonpositive, we can bound~\eqref{E:int_y_5_4} by extending the range of integration from~$I$ to $I^*$.
Making use of the inequality~\eqref{E:exp_bdd}, the integral~\eqref{E:int_y_5_4} is bounded, for any $\theta > 0$,  by
\[
\int_{y \in I^*} {\frac{1}{(r-l)^2}\, C_{\theta} e^{-\theta y} \dd y} \leq \frac{C_{\theta}}{ \theta(r-l)^2} \exp \left[-\frac{x-t}{2(r-l)} \theta + 3 \theta \right].
\]
The integral over $(l, r) \in R \cap B^c$ of~\eqref{E:int_y_5_4} is therefore bounded by
\begin{align}
&\frac{C_{\theta}}{ \theta} \, e^{3 \theta} \int_{r = t}^{(2t+x)/3}\,\int_{l=(3r-x)/2}^{t} {\frac{1}{(r-l)^2} 
\exp \left[ -\frac{x-t}{2(r-l)} \theta \right]  \dd l  \dd r} \nonumber \\
&= \frac{C_{\theta}}{ \theta} \, e^{3 \theta} \int_{r = t}^{(2t+x)/3} \int_{s=r-t}^{(x-r)/2} {\frac{1}{s^2} 
\exp \left(- \frac{x-t}{2} \theta s^{-1} \right) \dd s \dd r} \nonumber \\
&\leq \frac{C_{\theta}}{ \theta} \, e^{3 \theta} \frac{2}{\theta (x-t)} \int_{r = t}^{(2t+x)/3} 
{\exp \left(- \frac{x-t}{2} \theta \frac{2}{x-r} \right) \dd r} \nonumber 
\\
\label{previous_5}
&\leq \frac{C_{\theta}}{ \theta^2} \, e^{3 \theta} \frac{2}{x-t} e^{-\theta} \left(\frac{2t+x}{3} - t \right) 
= \frac{2C_{\theta}}{3 \theta^2} \, e^{2 \theta} < \infty.
\end{align}
\smallskip

\par\noindent
\emph{Term} (ii).\ $R \cap B$.
We can bound~\eqref{E:int_y_5_4} by the sum of the integrals of the same integrand over the intervals $I^*$ and 
\[
I' := \left\{y: 0 < y \leq \frac{x-t}{2(r-l)} - 3\right\}.
\]
The bound for the integral over $I^*$ is the same as the bound for the $R \cap B^c$ term. To bound the integral over $I'$, we first observe that
\[
d_{l,r}^*(x, z, y) \leq \frac{1}{[\frac{1}{2} (x+t) + r-2l]^2} \leq \frac{4}{(x-t)^2},
\]
where the last inequality holds because $l < t < r$.
The contribution to~\eqref{focus_5} can be bounded by integrating $4 / (x -t)^2$ with respect to $(l,r) \in R \cap B$. We then extend this region of integration to~$R$, where $(3 r - x) / 2$ is at most the lower bound on~$l$ and $(2 t + x) / 3$ is at least the upper bound on~$r$. Thus we bound the contribution 
by
\[
\frac{4}{(x-t)^2} \int_{r = t}^{(2t+x)/3} {\left( t - \frac{3 r - x}{2} \right) \dd r}  = \frac{1}{3}.
\]
This completes the proof for Subcase 1(a).
\smallskip

\par\noindent
\emph{Subcase} 1(b).\ $x -(2r -l) < y \leq z - (2r-l)$.
First note that in this subcase we have $f^{(5)}(x-y) = 0$. We proceed in similar fashion as for Subcase 1(a), this time setting
\[
I := \left\{y: \frac{x-l}{r-l} -3 < y \leq \frac{z-l}{r-l} -3\right\}.
\]
Again using a linear change of variables, the integral (with respect to~$y$ only, in this subcase) appearing on the right 
in~\eqref{E:c_5_Lip} in this subcase can be bounded by
\begin{equation}
\label{E:int_y_5_2}
\int_{y \in I} {d^*_{l,r}(z, y) f_{\frac{t-l}{r-l}}(y) \dd y}
\end{equation}
where now
\[
d^*_{l,r}(z, y) = \frac{1}{r} \times \frac{1}{z-(r-l)(y+1)-r}.
\]
Note that, unlike its analogue in Subcase 1(a), here $d^*_{l, r}(z, y)$ does not possess an explicit factor $z - x$. 

By the same discussion as in Subcase 1(a), 
we are interested in the integral of~\eqref{E:int_y_5_2} with respect to $(l,r) \in R$, where this time
\[
R := \left\{ (l, r) : \frac{3 r - z }{2} \vee 0 < l < t < r < 1 \wedge \frac{2 t + z }{3} \right\}.
\]
and we may suppose that $z > t$.

Observe that on~$R$ we have 
\[
\frac{z-l}{r-l} - 3 > \frac{z - t}{r-l} - 3 > \frac{1}{2} \frac{z -t}{r-l} - 3.
\]
Following a line of attack similar to that for Subcase 1(a), we define
\[
W := \left\{(l,r) : \frac{x-l}{r-l} - 3 > \frac{z - t}{2(r-l)} - 3\right\} 
\]
and split our discussion of the integral of~\eqref{E:int_y_4_2} over $(l, r) \in R$ into two terms, corresponding to (i) $R \cap W^c$ and (ii) $R \cap W$.
\smallskip

\par\noindent
\emph{Term} (i).\ $R \cap W$.
We bound~\eqref{E:int_y_5_2} by using the inequality~\eqref{E:exp_bdd} (for any $\theta > 0$) and obtain
\begin{align*}
\lefteqn{\hspace{-0.5in}\int_{y \in I} {\frac{1}{r} \frac{1}{(r-l)} C_{\theta} 
\exp\left[ -\theta \left(\frac{z - t}{2(r-l)} - 3\right) \right] \dd y}} \\ 
&\leq 
\frac{1}{t} \frac{1}{(r-l)^2} C_{\theta} e^{3\theta} \exp\left[ -\theta \left(\frac{z -t}{2(r-l)}\right) \right] (z-x).
\end{align*}
Integrating this bound with respect to $(l,r) \in R \cap W$, we get no more than
\[
(z-x) \frac{1}{t} C_{\theta} e^{3\theta} \int_{r = t}^{(2t+z)/3} \int_{l = (3r-z)/2}^{t} {\frac{1}{(r-l)^2} 
\exp\left[ - \frac{z - t}{2(r-l)} \theta \right] \dd l \dd r},
\]
which [compare~\eqref{previous}] is bounded by $(z-x)$ times a constant depending only on~$t$ and~$\theta$.
\smallskip

\par\noindent
\emph{Term} (ii).\ $R \cap W^c$.
We partition the interval~$I$ of $y$-integration into the two subintervals 
\[
I^* := \left\{y : \frac{z - t}{2(r-l)} - 3 < y \leq \frac{z-l}{r-l} -3\right\}
\]
and
\[
I' := \left\{y: \frac{x-l}{r-l} -3 < y \leq \frac{z - t }{2(r-l)} - 3\right\}.
\]
Observe that  
the length of each of the intervals $I^*$ and $I'$ is no more than the length of~$I$, which is $(z - x) / (r - l)$. 
We can bound the integral over $y \in I^*$ and $(l,r) \in R \cap W^c$ just as we did for Term~(i). For the integral over 
$y \in I'$  and $(l,r) \in R \cap W^c$, observe the following 
inequality:
\begin{align*}
d^*_{l,r}(z, y)  \leq \frac{1}{r}  \frac{1}{\frac{1}{2} (z + t) + r-2l} \leq \frac{1}{t} \frac{2}{z-t}.
\end{align*}
Using the constant bound in~\refT{T:bdd}, the integral of $d^*_{l, r}(z, y) f_{\frac{t - l}{r - l}}(y)$ with respect to 
$y \in I'$ and $(l,r) \in R \cap W^c$ is bounded above 
by
\begin{align*}
&20 \frac{z-x}{t} \frac{1}{z-t} \int_{r = t}^{(2t + z)/3} \int_{l = 
(3r-z)/2}^{t} {\frac{1}{r-l} \dd l \dd r}\\
&= 20 \frac{z-x}{t} \frac{1}{z-t} \int_{r = t}^{(2t + z)/3} {\left[ - 
\ln (r-t) + \ln \left(\frac{1}{2} (z-r) \right) \right] \dd r}\\
&= 20 \frac{z-x}{t} \frac{1}{z-t} \frac{z-t}{3} \left[1 - \ln \left(\frac{z-t}{3}\right) + \ln (z-t) - \left(1 + \ln \left(\frac{8}{9}\right)\right)\right]\\
&= 20 \frac{z-x}{t} (\ln 3 - \ln 2).
\end{align*}
This completes the proof for Subcase 1(b). 
\smallskip

\par\noindent
\emph{Subcase} 1(c). \ $x - 2r < y \leq z - 2r$.
In this case, the contribution from $f^{(5)}(z-y)$ vanishes. Without loss of generality we may suppose $z - x < t/2$, otherwise we can insert a 
factor $2 (z - x) / t$ in our upper bound, and the desired upper bound follows 
from the fact that the densities $f_{\tau}$ are all bounded by~$10$. Observe that the integrand $|d_{l, r}(x, z, y)|$ in the bound~\eqref{E:c_5_Lip} is
\[
\frac{1}{r} \times \frac{1}{x - y - r} 
\leq \frac{1}{r } \times \frac{1}{x - z + r}
\leq \frac{1}{t} \times \frac{1}{r - (t / 2)} 
\leq \frac{2}{t^2}.
\]
Integrate this constant bound directly with respect to 
\[
P(Y \in \ddx y | (L_3,R_3) = (l,r)) \dd r \dd l
\]
on the region $x - 2r < y \leq z - 2r$ and $0 < l < t < r < 1$ and use the fact that the density is bounded by $10$; we conclude that this contribution is bounded by $(z-x)$ times a constant that depends on $t$. This completes the proof for Subcase 1(c) and also for Case~1.
\smallskip

\par\noindent
\emph{Case}~2.\ $z -x \geq l$. In this case we 
simply use
\[
| d_{l,r}(x,z, y) | \leq f^{(5)}(z-y) + f^{(5)}(x-y),
\]
and show that each of the two terms on the right contributes at most a constant (depending on~$t$) times $(z - x)$ to the bound in~\eqref{E:c_5_Lip}.  Accordingly, let~$w$ be either~$x$ or~$z$.  We are interested in 
bounding 
\begin{equation}
\label{E:int_y_5_3}
\int_{l = 0}^{(z-x)\wedge t} \int_{r= t}^1 
\int_{y = w - 2 r}^{w-(2r-l)} {\frac{1}{r} \frac{1}{w-y-r} \mu(\ddx y, \ddx r, \ddx l)}
\end{equation}
with $\mu(\ddx y, \ddx r, \ddx l) := P(Y \in \ddx y \mid (L_3,R_3) =(l,r)) \dd r \dd l$. We bound the integrand as follows:
\[
\frac{1}{r} \frac{1}{w-y-r} \leq \frac{1}{r} \frac{1}{r - l} \leq  \frac{1}{t} \frac{1}{r-l}.
\]
We first suppose $z-x < t/2$ and bound~\eqref{E:int_y_5_3} by
\begin{align*}
\frac{1}{t} \int_{l = 0}^{z-x} \int_{r= t}^1 {\frac{1}{r-l} \dd r \dd l} 
&\leq \frac{1}{t} \int_{l = 0}^{z-x} {[- \ln (t - l)] \dd l}\\
&\leq \frac{1}{t} \left[ - \ln \left( t - (z-x)\right) \right] (z-x)\\
&\leq (z-x) \frac{\ln(2 / t)}{t}.
\end{align*}
If instead $z - x \geq t/2$, we bound~\eqref{E:int_y_5_3} 
by
\begin{align*}
\frac{z-x}{t/2} \int_{l = 0}^{t} \int_{r=t}^1 {\frac{1}{t}\frac{1}{r-l} \dd r \dd l} 
&\leq 2 \frac{z-x}{t^2} \int_{l = 0}^{t} {[-\ln (t-l)] \dd l}\\
&\leq 2 (z-x) \frac{1 - \ln t}{t}.
\end{align*}
This completes the proof for Case~2 and thus the proof of Lipschitz continuity of $c^{(5)}$.
\end{proof}

\begin{lemma}
\label{L:c_6_Lipschitz}
The function $c^{(6)}$ is Lipschitz continuous.
\end{lemma}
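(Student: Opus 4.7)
The plan is to establish the Lipschitz continuity of $c^{(6)}$ by reducing it, via a symmetry argument, to the already-established Lipschitz continuity of $c^{(5)}$ in \refL{L:c_5_Lipschitz}, in exactly the spirit of how \refL{L:c_2_Lipschitz} was obtained from \refL{L:c_1_Lipschitz}. I do not anticipate needing any new estimates.

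The key observation I would exploit is that the change of variables $(l,r) \mapsto (1-r,\,1-l)$ converts $f^{(5)}_{l,r}(x)$ into $f^{(6)}_{1-r,\,1-l}(x)$: the support $[2r-l,\,2r)$ is mapped to $[1+r-2l,\,2-2l)$, and the prefactor $\frac{1}{r(x-r)}$ is mapped to $\frac{1}{(1-l)(x-1+l)}$, both matching the formula for $f^{(6)}$ displayed in \refS{S:density}. Probabilistically, this change of variables corresponds to the reflection $(L_k(t), R_k(t)) \leftrightarrow (1-R_k(1-t),\,1-L_k(1-t))$ obtained by replacing each uniform $U_i$ by $1-U_i$, under which $J(t)$ and $J(1-t)$ have the same distribution. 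The next step is then a routine bookkeeping check that subcase~(e) of the proof of \refL{L:main} (the event $E_{rll}$) at parameter~$t$ is carried bijectively into subcase~(f) (the event $E_{lrr}$) at parameter~$1-t$, and that both the joint density of $(L_3(t), R_3(t))$ on this event and the conditional law of $Y(t)$ given $(L_3(t), R_3(t))$ transform consistently with this identification. The two relevant joint-density expressions in \refS{S:density} do indeed match under the substitution $(l_3, r_3, t) \leftrightarrow (1-r_3,\,1-l_3,\,1-t)$, so this step reduces to reading formulas off of those displays.

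Writing $c^{(i)}_t$ to emphasize the dependence on $t$, this bookkeeping will yield the identity $c^{(6)}_t(x) = c^{(5)}_{1-t}(x)$ for every $x \in \bbR$. Lipschitz continuity of $c^{(6)}_t$ will then follow immediately from that of $c^{(5)}_{1-t}$, with Lipschitz constant obtained by substituting $1-t$ for $t$ in the (finite, $t$-dependent) bound produced in the proof of \refL{L:c_5_Lipschitz}; in particular, the resulting constant remains finite for every fixed $t \in (0,1)$, as required. The only substantive task is the compatibility check of the reflection symmetry with the six-fold decomposition of $f_t$ coming from the proof of \refL{L:main}, and since both the supports/prefactors of $f^{(5)}$ and $f^{(6)}$ and the joint $(L_3, R_3)$ densities for $E_{rll}$ and $E_{lrr}$ are already computed there, no real obstacle arises.
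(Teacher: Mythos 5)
Your proposal is correct, and it takes a genuinely different route from the paper. The paper proves \refL{L:c_6_Lipschitz} by redoing the full direct estimate: it splits into the cases $z-x < 1-r$ versus $z-x \ge 1-r$, further into subcases determined by the partition points $x-(2-2l) < z-(2-2l) < x-(1+r-2l) < z-(1+r-2l)$, and then reruns the exponential-tilting/region-splitting machinery of \refL{L:Lipschitz_2} with the roles of $l$ and $r$ adapted to $f^{(6)}$. You instead reduce to \refL{L:c_5_Lipschitz} via the reflection $U_i \mapsto 1-U_i$, which exchanges the events $E_{rll}$ and $E_{lrr}$ and the parameters $t$ and $1-t$, yielding $c^{(6)}_t(x) = c^{(5)}_{1-t}(x)$; your verification points all check out: $f^{(5)}_{l,r}(x) = f^{(6)}_{1-r,1-l}(x)$ as you compute, the joint densities for $E_{rll}$ at~$t$ and $E_{lrr}$ at~$1-t$ match under $(l_3,r_3)\mapsto(1-r_3,1-l_3)$, and the conditional law of $Y$ given $(L_3,R_3)=(l,r)$ is that of $(r-l)\,Z\bigl(\tfrac{t-l}{r-l}\bigr)$, which transforms to $(r-l)\,Z\bigl(1-\tfrac{t-l}{r-l}\bigr)$ and is therefore law-invariant by the symmetry $Z(s)\eqd Z(1-s)$. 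This is exactly the mechanism the paper itself uses to deduce \refL{L:c_2_Lipschitz} from \refL{L:c_1_Lipschitz} (subcases (a) and (b) form the analogous reflected pair), so your argument is fully in the spirit of the text and is shorter; what the paper's direct computation buys is an explicit, self-contained set of constants for $c^{(6)}$, but those same constants are recovered from your identity simply by substituting $1-t$ for $t$ in the bounds of \refL{L:c_5_Lipschitz}. Note, incidentally, that the same symmetry would also dispatch $c^{(4)}$ from $c^{(3)}$, a shortcut the paper likewise declines to take.
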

\begin{proof} 
Redefine
\[
d_{l,r}(x, z, y) := f_{l,r}^{(6)}(z-y) - f_{l,r}^{(6)}(x-y)
\]
and recall
\[
f_{l,r}^{(6)}(x) = \mathbb{1}(1+r-2l\leq x < 2-2l) \frac{1}{1-l} \frac{1}{x-1+l}.
\]
We are interested in bounding the quantity
\begin{equation}
\label{E:c_6_Lip}
|c^{(6)}(z) - c^{(6)}(x)| \leq \int_{l, r, y}\!{|d_{l,r}(x, z, y)|\,\P(Y \in \ddx y \mid (L_3, R_3) = (l, r))}\dd l\dd r,
\end{equation}
where the conditional probability can also be written in density terms as
\[
\P(Y \in \ddx y \mid (L_3, R_3) = (l, r)) = \frac{1}{r-l} f_{\frac{t-l}{r-l}}\left(\frac{y}{r-l} - 1\right)\dd y.
\]
Just as we did for \refL{L:Lipschitz_1}, we break the proof into consideration of two cases. 
\smallskip

\par\noindent
\emph{Case} 1.\ $z -x < 1-r$. As in the proof for Case~1 of \refL{L:Lipschitz_2}, we bound $d_{l, r}(x, z, y)$ for~$y$ in each of the five subintervals of the real line determined by the four partition points  
\[
x-(2-2l) < z -(2-2l) < x - (1+r - 2l) < z-(1+r-2l).
\]
For the two subcases $y \leq x-(2-2l)$ and $y > z-(1+r-2l)$, we have $d_r(x, z, y) = 0$.  We bound the three nontrivial subcases (listed in order of convenience of exposition, not in natural order) as follows. 
\smallskip

\par\noindent
\emph{Subcase} 1(a).\ $z -(2-2l) < y \leq x - (1+r - 2l)$.
We have
\begin{align*}
d_{l,r}(x, z, y) &= \frac{1}{1-l} \left(\frac{1}{z-y-1+l} - \frac{1}{x-y-1+l}\right) \\
&= - \frac{z-x}{1-l} \frac{1}{(z-y-1+l)(x-y-1+l)}.
\end{align*}
Note that since $1- l > 1-t > 0$, we need only bound
\begin{equation}
\label{focus_6}
\int_{l, r, y}\!{\frac{1}{(z-y-1+l)(x-y-1+l)}\,\P(Y \in \ddx y \mid (L_3, R_3) = (l, r))}\dd l\dd r
\end{equation}
by a constant (which is allowed to depend on~$t$, but our constant will not).

We first focus on the integral in~\eqref{focus_6} with respect to~$y$ and write it, using a change of variables, as
\begin{equation}
\label{E:int_y_6}
\int_{y \in I}\!{d_{l,r}^*(x, z, y)\,f_{\frac{t-l}{r-l}}(y) \dd y},
\end{equation}
with
\[
d_{l,r}^*(x, z, y) = \frac{1}{[z-(r-l)(y+1)-1+l][x-(r-l)(y+1)-1+l]}
\] 
and
 $I := \left\{y: \frac{z-(2-2l)}{r-l} -1 < y \leq \frac{x-(1+r-2l)}{r-l} - 1\right\}$. 
Because the support of the density $f_{\frac{t - l}{r - l}}$ is contained in the nonnegative real line, the integral~\eqref{E:int_y_6} vanishes unless the right endpoint of the interval~$I$ is positive, which is true if and only if
 \[
r < \frac{x-1+3l}{2}.
\]
So we see that the integral of~\eqref{E:int_y_6} over $r \in (t, 1)$ vanishes unless this upper bound on~$r$ is larger than~$t$, which is true if and only if 
\begin{equation}
\label{lbound_6}
l > \frac{1-x+2t}{3}.
\end{equation}
But then the integral of~\eqref{E:int_y_6} over $\{(l, r):0 < l < t < r < 1\}$ vanishes unless this lower bound on~$l$ is smaller than~$t$, which is true if and only if $x > 1 - t$; we conclude that for $x \leq 1-t$, that integral vanishes. 

So we may now suppose $x > 1 - t$, and we have seen that the integral of~\eqref{E:int_y_6} over $\{(l, r):0 < l < t < r < 1\}$ is bounded above by its integral over the region
\[
R := \left\{ (l, r) : \frac{1 - x + 2 t}{3} \vee 0 < l < t < r < 1 \wedge \frac{x - 1 + 3 l}{2} \right\}.
\]
Observe that on~$R$ we have
\begin{equation}
\label{on_R_6}
\frac{x-(1+r-2l)}{r-l} - 1 = \frac{x-1+l}{r-l} -2 > \frac{2}{3} \frac{x+t-1}{r-l} -2 > \frac{1}{2} \frac{x+t-1}{r-l} -2.
\end{equation}
Define
\[
B := \left\{ (l,r) : \frac{x+t-1}{2(r-l)} - 2 > 0 \right\}.
\]
We now split our discussion of the contribution to the integral of~\eqref{E:int_y_6} over $(l, r) \in R$ into two terms, corresponding to (i) $R \cap B^c$ and (ii) $R \cap B$. 
\smallskip

\par\noindent
\emph{Term} (i).\ $R \cap B^c$.
Since the support of the density $f_{\frac{t - l}{r - l}}$ is contained in the nonnegative real line, the value of~\eqref{E:int_y_6} remains unchanged if we integrate $y$ over the positive part of $I$ instead.
Let
\[
I^* := \left\{y : \frac{x+t-1}{2(r-l)} - 2 < y \leq \frac{x-(1+r-2l)}{r-l} - 1\right\}.
\]
Since the left endpoint of $I^*$ is nonpositive, we can bound~\eqref{E:int_y_6} by extending the range of integration from~$I$ to $I^*$.
Making use of the inequality~\eqref{E:exp_bdd}, the integral~\eqref{E:int_y_6} is bounded, for any $\theta > 0$,  by
\[
\int_{y \in I^*} {\frac{1}{(r-l)^2}\, C_{\theta} e^{-\theta y} \dd y} \leq \frac{C_{\theta}}{\theta(r-l)^2} \exp \left[-\frac{x+t-1}{2(r-l)} \theta + 2 \theta \right].
\]
The integral over $(l, r) \in R \cap B^c$ of~\eqref{E:int_y_6} is therefore bounded by
\begin{align}
&\frac{C_{\theta}}{ \theta} \, e^{2 \theta} \int_{l = (1-x+2t)/3}^{t}\,\int_{r=t}^{(x-1+3l)/2} {\frac{1}{(r-l)^2} 
\exp \left[ -\frac{x+t-1}{2(r-l)} \theta \right]  \dd r  \dd l} \nonumber 
\\
&= \frac{C_{\theta}}{ \theta} \, e^{2 \theta} \int_{l = (1-x+2t)/3}^{t} \int_{s=t-l}^{(x-1+l)/2} {\frac{1}{s^2} 
\exp \left(- \frac{x+t-1}{2} \theta s^{-1} \right) \dd s \dd l} \nonumber 
\\
&\leq \frac{C_{\theta}}{ \theta} \, e^{2 \theta} \frac{2}{\theta (x+t-1)} \int_{l = (1-x+2t)/3}^{t} 
{\exp \left(- \frac{x+t-1}{2} \theta \frac{2}{x-1+l} \right) \dd l} \nonumber \\
\label{previous_6}
&\leq \frac{2 C_{\theta}}{ \theta^2} \, e^{2 \theta} \frac{1}{x+t-1} e^{-\theta} \left(t  - \frac{1-x+2t}{3} \right) 
= \frac{2 C_{\theta}}{3 \theta^2} \, e^{\theta} < \infty.
\end{align}
\smallskip

\par\noindent
\emph{Term} (ii).\ $R \cap B$.
We can bound~\eqref{E:int_y_6} by the sum of the integrals of the same integrand over the intervals $I^*$ and 
\[
I' := \left\{y: 0 < y \leq \frac{x+t-1}{2(r-l)} - 2\right\}.
\]
The bound for the integral over $I^*$ is the same as the bound for the $R \cap B^c$ term. To bound the integral over $I'$, we first observe that
\[
d_{l,r}^*(x, z, y) \leq \frac{1}{[\frac{1}{2} (x-t-1) + r]^2} \leq \frac{4}{(x+t-1)^2},
\]
where the last inequality holds because $r > t$.
The contribution to~\eqref{focus_6} can be bounded by integrating $4 / (x + t - 1)^2$ with respect to $(l,r) \in R \cap B$. We then extend this region of integration to~$R$, where $(2t+1-x) / 3$ is at most the lower bound on~$l$ and $(x - 1 + 3l) / 2$ is at least the upper bound on~$r$. Thus we bound the contribution
by
\[
\frac{4}{(x+t-1)^2} \int_{l = \frac{2t+1-x}{3}}^t {\left(\frac{x-1+3l}{2} - t\right) \dd l} = \frac{1}{3}. 
\]
This completes the proof for Subcase 1(a).
\smallskip

\par\noindent
\emph{Subcase} 1(b).\ $x -(1+r -2l) < y \leq z - (1+r-2l)$.
First note that in this subcase we have $f^{(6)}(x-y) = 0$. We proceed in similar fashion as for Subcase 1(a), this time setting
\[
I := \left\{y: \frac{x-1+l}{r-l} -2 < y \leq \frac{z-1+l}{r-l} -2\right\}.
\]
Again using a linear change of variables, the integral (with respect to~$y$ only, in this subcase) appearing on the right in~\eqref{E:c_6_Lip} in this subcase can be written as 
\begin{equation}
\label{E:int_y_6_2}
\int_{y \in I} {d^*_{l,r}(z, y) f_{\frac{t-l}{r-l}}(y) \dd y}
\end{equation}
where now
\[
d^*_{l,r}(z, y) = \frac{1}{1-l} \times \frac{1}{z-(r-l)(y+1)-1+l}.
\]
Note that, unlike its analogue in Subcase 1(a), here $d^*_{l, r}(z, y)$ does not possess an explicit factor $z - x$. 

By the same discussion as in Subcase 1(a), 
we are interested in the integral of~\eqref{E:int_y_6_2} with respect to $(l,r) \in R$, where this time
\[
R := \left\{ (l, r) : \frac{1 - z + 2 t}{3} \vee 0 < l < t < r < 1 \wedge \frac{z - 1 + 3 l}{2} \right\}.
\]
and we may suppose that $z > 1 - t$.

Observe that on~$R$ we have 
\[
\frac{z-1+l}{r-l} - 2 > \frac{2}{3} \frac{z + t - 1}{r-l} - 2 > \frac{1}{2} \frac{z + t - 1}{r-l} - 2.
\]
Following a line of attack similar to that for Subcase 1(a), we define
\[
W := \left\{(l,r) : \frac{x-1+l}{r-l} - 2 > \frac{z + t - 1}{2(r-l)} - 2\right\}
\]
and split our discussion of the integral of~\eqref{E:int_y_6_2} over $(l, r) \in R$ into two terms, corresponding to (i) $R \cap W^c$ and (ii) $R \cap W$.
\smallskip

\par\noindent
\emph{Term} (i).\ $R \cap W$.
We bound~\eqref{E:int_y_6_2} by using the inequality~\eqref{E:exp_bdd} (for any $\theta > 0$) and obtain
\begin{align*}
\lefteqn{\hspace{-0.5in}\int_{y \in I} {\frac{1}{1-l} \frac{1}{r-l} C_{\theta} 
\exp\left[ -\theta \left(\frac{z + t - 1}{2(r-l)} - 2\right) \right] \dd y}} \\ 
&\leq 
\frac{1}{1-t} \frac{1}{(r-l)^2} C_{\theta} e^{2\theta} \exp\left[ -\theta \left(\frac{z + t - 1}{2(r-l)} \right) \right] (z-x).
\end{align*}
Integrating this bound with respect to $(l,r) \in R \cap W$, we get no more than
\[
(z-x) \frac{1}{1-t} C_{\theta} e^{2\theta} \int_{l = (1-z+2t)/3}^t \int_{r = t}^{(z-1+3l)/2} {\frac{1}{(r-l)^2} 
\exp\left[ - \frac{z + t - 1}{2(r-l)} \theta \right] \dd x},
\]
which [see~\eqref{previous}] is bounded by $(z-x)$ times a constant depending only on~$t$ and~$\theta$.
\smallskip

\par\noindent
\emph{Term} (ii).\ $R \cap W^c$.
We partition the interval~$I$ of $y$-integration into the two subintervals 
\[
I^* := \left\{y : \frac{z + t - 1}{2(r-l)} - 2 < y \leq \frac{z-1+l}{r-l} -2\right\}
\]
and
\[
I' := \left\{y: \frac{x-1+l}{r-l} -2 < y \leq \frac{z + t - 1}{2(r-l)} - 2\right\}.
\]
Observe that
the length of each of the intervals $I^*$ and $I'$ is no more than the length of~$I$, which is $(z - x) / (r - l)$. 
We can bound the integral over $y \in I^*$ and $(l,r) \in R \cap W^c$ just as we did for Term~(i). For the integral over 
$y \in I'$  and $(l,r) \in R \cap W^c$, observe the following
inequality:
\begin{align*}
d^*_{l,r}(z, y)  \leq \frac{1}{1-t} \times \frac{1}{\frac{1}{2} (z - t - 1) + r}.
\end{align*}
Using the constant bound in~\refT{T:bdd}, the integral of $d^*_{l, r}(z, y) f_{\frac{t - l}{r - l}}(y)$ with respect to 
$y \in I'$ and $(l,r) \in R \cap W^c$ is bounded above by
\begin{equation}
\label{10eq_6}
10 \frac{z - x}{1-t} \int_{l = (1-z+2t)/3}^t \int_{r = t}^{(z-1+3l)/2} {\frac{1}{r-l} \frac{1}{\frac{1}{2} (z - t - 1) + r} \dd r \dd l}.
\end{equation}
Write the integrand here in the form
\[
\frac{1}{r-l} \frac{1}{\frac{z - t - 1}{2} + r} = \left(\frac{1}{r-l} - \frac{1}{r + \frac{z - t - 1}{2} }\right) \frac{1}{l+\frac{z - t - 1}{2}},
\]
and observe that $l+\frac{z - t - 1}{2} > \frac{z + t - 1}{6} > 0$.  Hence we can bound~\eqref{10eq_6} by
\begin{align*}
&10 \frac{z - x}{1-t} \int_{l = (1-z+2t)/3}^t {\frac{1}{l+\frac{z - t - 1}{2}} \left[ \ln \frac{z-1+l}{2} - \ln (t-l) \right] \dd l}\\
&\leq 
10 \frac{z - x}{1-t} \frac{6}{z + t - 1} \left[ \frac{z + t - 1}{3} \ln \frac{z + t - 1}{2} - \int_{l = \frac{1-z+2t}{3}}^t {\ln (t-l) \dd l }\right]\\
&= 20 \frac{z - x}{1-t} \left[ \ln \frac{z + t - 1}{2} - \ln \left(\frac{z + t - 1}{3}\right) + 1 \right] \\
&= 20 \left(1 + \ln 3 - \ln 2 \right) \frac{(z-x)}{1-t}.
\end{align*}
This completes the proof for Subcase 1(b). 
\smallskip

\par\noindent
\emph{Subcase} 1(c).\ $x -(2-2l) < y \leq z - (2-2l)$.
In this case, the contribution from $f^{(6)}(z-y)$ vanishes. Without loss of generality we may suppose $z - x < (1-t)/2$, otherwise we can insert a factor $2 (z-x)/(1-t)$ in our upper bound, and the desired upper bound follows from the fact that the densities $f_{\tau}$ are all bounded by~$10$. Observe that the integrand $|d_{l, r}(x, z, y)|$ in the bound~\eqref{E:c_6_Lip} is
\[
\frac{1}{1-l} \frac{1}{x - y - 1 + l} \leq \frac{1}{1-l} \frac{1}{x - z + 1 - l} \leq \frac{1}{1-t} \frac{2}{1+t-2l} 
\leq \frac{2}{(1-t)^2}.
\]
Integrate this constant bound directly with respect to 
\[
P(Y \in \ddx y | (L_3,R_3) = (l,r)) \dd r \dd l
\]
on the region $x -(2-2l) < y \leq z - (2-2l)$ and $0 < l < t < r < 1$ and use the fact that the density is bounded by $10$; we conclude that this contribution is bounded by $(z-x)$ times a constant that depends on $t$. This completes the proof for Subcase 1(c) and also for Case~1.
\smallskip

\par\noindent
\emph{Case}~2.\ $z -x \geq 1-r$. In this case we 
simply use
\[
| d_{l,r}(x,z, y) | \leq f^{(6)}(z-y) + f^{(6)}(x-y),
\]
and show that each of the two terms on the right contributes at most a constant (depending on~$t$) times $(z - x)$ to the bound in~\eqref{E:c_6_Lip}.  Accordingly, let~$w$ be either~$x$ or~$z$.  We are interested in bounding 
\begin{equation}
\label{E:int_y_6_3}
\int_{r = [1-(z-x)]\vee t}^{1} \int_{l=0}^t \int_{y = w-(2-2l)}^{w-(1+r-2l)} {\frac{1}{1-l} \frac{2}{w-y-1+l} \mu(\ddx y, \ddx l, \ddx r)}
\end{equation}
with $\mu(\ddx y, \ddx l, \ddx r) := P(Y \in \ddx y \mid (L_3,R_3) =(l,r)) \dd l \dd r$. We bound the integrand as 
follows:
\[
\frac{1}{1-l} \frac{2}{w-y-1+l} \leq \frac{1}{1-l} \frac{2}{r - l} \leq \frac{1}{1-t} \frac{2}{r-l}.
\]
We first suppose $z-x < (1-t)/2$ and bound~\eqref{E:int_y_6_3} by
\begin{align*}
\frac{2}{1-t} \int_{r = 1-(z-x)}^{1} \int_{l=0}^t {\frac{1}{r-l} \dd l \dd r} 
&\leq \frac{2}{1-t} \int_{r = 1-(z-x)}^{1} {[- \ln (r-t)] \dd r}\\
&\leq \frac{2}{1-t} \left[ - \ln \left(1 - (z-x) - t\right) \right] (z-x)\\
&\leq 2 (z-x) \frac{\ln[2 / (1-t)]}{(1-t)}.
\end{align*}
If instead $z - x \geq (1-t)/2$, we bound~\eqref{E:int_y_6_3} by
\[
4 \frac{z-x}{1 - t} \int_{l = 0}^{t} \int_{r=t}^1 {\frac{1}{1-l}\frac{1}{r-l} \dd r \dd l} \leq (z-x) \frac{2 \pi^2}{3\,(1-t)}.
\]
This completes the proof for Case~2 and thus the proof of Lipschitz continuity of $c^{(6)}$.
\end{proof}

Lemmas \ref{L:c_1_Lipschitz}--\ref{L:c_6_Lipschitz}, together with Lemmas \ref{L:Lipschitz_1}--\ref{L:Lipschitz_2}, complete the proof of ~\refT{T:Lipschitz_cont}.

\bibliography{bib_file}
\bibliographystyle{plain}

\end{document}